\documentclass[11pt,reqno]{amsart} 

\usepackage[margin=1in]{geometry}  
\usepackage[utf8]{inputenc}  
\usepackage{amsmath,amsthm,amsfonts,amssymb}
\numberwithin{equation}{section}
\usepackage{times}
\usepackage{enumitem}
\setlist[enumerate]{label=(\roman*), ref=\roman*}
\usepackage{mathrsfs}
\usepackage{comment}
\usepackage{graphicx}
\graphicspath{{./}{paper/}}
\usepackage{algpseudocode}
\usepackage{algorithm}
\usepackage{algorithmicx}

\usepackage{caption}
\usepackage{subcaption}
\usepackage{makecell}
\usepackage{xcolor}
\usepackage{fancyvrb}

\usepackage{tikz}
\usetikzlibrary{arrows.meta, decorations.pathmorphing, decorations.markings, patterns,calc,arrows}
\usepackage{booktabs,longtable}

\usepackage[nosort]{cite}
\usepackage[colorlinks=true, pdfstartview=FitV, linkcolor=blue,citecolor=blue, urlcolor=blue]{hyperref}
\usepackage[nameinlink,capitalise]{cleveref}
\hypersetup{pdftitle={explosions}, pdfauthor={Jiajie Chen, Giorgio Cialdea, Steve Shkoller, Vlad Vicol}}

\newtheorem{theorem}{Theorem}[section]
\newtheorem{lemma}[theorem]{Lemma}
\newtheorem{proposition}[theorem]{Proposition}
\newtheorem{corollary}[theorem]{Corollary}
\newtheorem{definition}[theorem]{Definition}

\newtheorem{remark}[theorem]{Remark}

\newcommand{\p}{\partial}

\newcommand{\uu}{\mathbf{u}}

\newcommand{\s}{\mathsf{s}}
\newcommand{\ds}{\dot{\mathsf{s}}}

\newcommand{\usp}{u{\scriptstyle|_{\s}^+}}

\newcommand{\rsp}{\rho{\scriptstyle|_{\s}^+}}

\newcommand{\rsm}{\rho{\scriptstyle|_{\s}^-}}

\newcommand{\usm}{u{\scriptstyle|_{\s}^-}}

\newcommand{\ssm}{\sigma{\scriptstyle|_{\s}^-}}
\newcommand{\ssp}{\sigma{\scriptstyle|_{\s}^+}}

\newcommand{\crr}{\mathsf{c}_\mathsf{r}}

\newcommand{\crrs}{\crr^*}

\newcommand{\cbb}{\mathsf{c}_\mathsf{b}}

\newcommand{\cbbs}{\cbb^*}

\newcommand{\rr}{\mathsf{r}}
\newcommand{\PP}{\mathsf{P}}
\newcommand{\cp}{\mathsf{p}}

\newcommand{\QQ}{\mathsf{Q}}
\newcommand{\cq}{\mathsf{q}}

\newcommand{\BB}{\mathfrak{b}}
\newcommand{\NNN}{\mathsf{N}}
\newcommand{\xiRH}{\xi_{\mathrm{RH}}}
\newcommand{\VR}{V_{\mathrm{RH}}}
\newcommand{\QR}{Q_{\mathrm{RH}}}

\newcommand{\mm}{\mathsf{m}}
\newcommand{\mmp}{\mathsf{m}_+}
\newcommand{\mmm}{\mathsf{m}_-}

\newcommand{\sage}{\textsc{SageMath}}
\newcommand{\RBF}{\ensuremath{\mathrm{RBF}}}

\newcommand{\pp}{\partial}

\newcommand{\RR}{\mathbb{R}}

\def\jump#1{{[\hspace{-1.5pt}[#1]\hspace{-1.5pt}]}}

\title{A new class of Euler explosions}

\author{Jiajie Chen}
\address{Department of Mathematics, University of Chicago, Chicago, IL 60637.}
\email{\href{jiajiechen@uchicago.edu}{jiajiechen@uchicago.edu}}

\author{Giorgio Cialdea}
\address{Courant Institute, Department of Mathematics, New York University, New York, NY 10012.}
\email{\href{giorgio.cialdea@cims.nyu.edu}{giorgio.cialdea@cims.nyu.edu}}

\author{Steve Shkoller}
\address{Department of Mathematics, University of California Davis, Davis, CA 95616.}
\email{\href{shkoller@math.ucdavis.edu}{shkoller@math.ucdavis.edu}}

\author{Vlad Vicol}
\address{Courant Institute, Department of Mathematics, New York University, New York, NY 10012.}
\email{\href{vicol@cims.nyu.edu}{vicol@cims.nyu.edu}}

\date{\today}

\begin{document}

\begin{abstract}
We study the \emph{global-in-time continuation}, past the singularity, of the smooth, non-isentropic, radially symmetric imploding solutions of the compressible Euler equations recently constructed by Chen, Shkoller, and Vicol \cite{CSV26}.
In three space dimensions, for all physically relevant adiabatic exponents $\gamma>1$, we consider the Euler solution that evolves smoothly until an implosion singularity forms at the origin at time $t=0$. We then prove that this solution can be uniquely continued for $t>0$ as a \emph{reflected outward-propagating shock}, sometimes called a \emph{reflected blast wave}.
For $t>0$,  the continuation is a globally \emph{forward self-similar} weak solution of the Euler equations, selected by the Rankine--Hugoniot conditions and the Lax entropy inequality; it is smooth away from the expanding shock sphere and the spatial origin.
The structure at the center of symmetry distinguishes these explosions from the classical Guderley reflected shock. In Guderley's continuation, the reflected blast wave leaves a \emph{point vacuum} at the origin, where the density vanishes. The solutions constructed here exhibit the \emph{opposite} behavior: for every fixed $t>0$ the density is \emph{unbounded} at $r=0$ (though it remains locally integrable), while the pressure stays bounded and the temperature vanishes there.
\end{abstract}

\maketitle

\setcounter{tocdepth}{1}
\tableofcontents


\section{Introduction} 
\label{introduction}
Implosions form a remarkable class of singularities in compressible fluid motion, in which at least one of the primary thermodynamic flow variables (density, pressure, temperature) becomes unbounded at a single point in spacetime.
A classical problem in fluid dynamics is to describe the evolution of such solutions beyond the singularity time; see~\cite{Gud42,Sed59, Sta60,Laz81,Chi98,Bar96}.
Describing the solution after the singularity has formed, the \emph{life after death of the smooth flow}, is at the forefront of modern PDE theory, and is most demanding precisely when this continuation is global-in-time.\footnote{The most thoroughly studied instance of this problem is shock formation, in which an initially smooth compressible flow develops a gradient catastrophe in finite time: the solution itself remains bounded while its first derivatives blow up. The structure of the spacetime beyond this first singularity, which forms the boundary of the maximal globally hyperbolic development of the smooth data, is the subject of the maximal development problem~\cite{ShVi24}.  By contrast, an implosion is a stronger form of singularity, at which the flow variables themselves, rather than only their derivatives, become unbounded.}

For implosions this continuation takes a concrete form: it leads to the study of explosions, the outward-propagating blast waves generated by the singular state produced at the collapse.\footnote{The most famous self-similar explosion solution of the Euler equations is the Sedov--von Neumann--Taylor blast wave~\cite{Sed46,vN47,Tay50a}. It is currently \emph{not known} whether this solution arises as the continuation of an Euler implosion.}
The question has recently returned to the forefront, motivated by the rigorous constructions of smooth, radially symmetric imploding solutions to the compressible Euler equations~\cite{MRRS22a,CSV26}. Once such singularities form from smooth initial data, it is natural to ask whether the solution admits a global-in-time continuation past the collapse, and what structure this solution has at the center of symmetry, where the implosion was concentrated.

In this paper we analyze implosions and explosions in the context of the compressible Euler equations. For the unknown density $\rho$, momentum $\rho \uu$, and total energy $E$, these equations are given by the system of conservation laws
\begin{subequations} \label{eq:euler:cl}
 \begin{align}  
 \partial_t \rho +  \operatorname{div}(\rho   \uu) & = 0  \,, \\ 
 \partial_t (\rho \uu) + \operatorname{div}(\rho \uu \otimes \uu  +  p I) & = 0 \,,  \\ 
 \partial_t E  + \operatorname{div} ((p+E) \uu) &=0 \,,  
 \end{align}
 \end{subequations}  
 where the pressure $p$ is given by 
 \begin{equation}  \label{def:pres}
 p := (\gamma-1) \bigl( E - \tfrac 12 \rho |\uu|^2 \bigr),
 \end{equation} 
and $\gamma>1$ is the adiabatic exponent.
We also set $\alpha:=\frac{\gamma-1}{2}$ and introduce the square root of the pseudo-entropy\footnote{Usually, the quantity $\frac{\gamma p}{\rho^\gamma}$ is denoted as $e^S$, where $S$ is the specific entropy.}
 \begin{equation} \label{def:b}
b:= \sqrt{\tfrac{\gamma p}{\rho^\gamma}}
\end{equation} 
and the rescaled sound speed
\begin{equation} \label{def:sigma}
\sigma := \tfrac{1}{\alpha} \rho^\alpha b \,.
\end{equation}

\subsection{Self-similar ansatz}
All known implosions and consequent explosions for the Euler equations arise as examples of globally self-similar solutions in radial/spherical symmetry, where the $(d+1)$-dimensional PDE reduces to a system of ODEs. 
By making a radial/spherical ansatz
\begin{equation*} 
 \sigma(x,t) =  \sigma(|x|,t), \quad b(x,t) =  b(|x|,t), \quad \uu(x,t) = \tfrac{x}{|x|} u^r(|x|,t),
\end{equation*}
where $|x|=r$, and using that $\operatorname{div}\uu= \pp_r u^r + \tfrac{d-1}{r} u^r$, we may write \eqref{eq:euler:cl} as
\begin{subequations}\label{eq:euler:rad}
     \begin{align}
\partial_t \sigma + u^r \, \partial_r \sigma + \alpha \sigma \left( \partial_r u^r + \tfrac{d-1}{r}u^r \right) &= 0
\,, \\
\partial_t u^r + u^r \, \partial_r u^r + \alpha \sigma \partial_r \sigma - \tfrac{\alpha \sigma^2}{\gamma b} \partial_r b &= 0 \,, \\
\partial_t b + u^r \, \partial_r b &= 0 \,.
\end{align}\end{subequations}
We now introduce the self-similar variables used throughout the paper. Following Lazarus~\cite{Laz81}, one could introduce the global self-similar variable $x=\frac{t}{r^{\lambda}}$, where $x<0$ corresponds to the implosion phase and $x>0$ corresponds to the explosion phase. In order to remain consistent with~\cite{CSV26}, we instead use two~\emph{separate self-similar variables}: 
\begin{align*}
&R>0 \qquad \mbox{is used in the implosion phase and is defined in }~\eqref{radial:self:similar},\\ 
&\xi>0  \qquad \mbox{is used for the explosion phase and is defined in }~\eqref{def:xi}.
\end{align*}

\subsubsection{Globally self-similar implosions: backward self-similar ansatz}

Given the similarity exponents $\crr>0$ and $\cbb \in \RR$, we say a solution $(\bar u, \bar \sigma, \bar b)$ is a globally self-similar implosion for $t<0$ if it is of the form\footnote{In the notation of Lazarus~\cite{Laz81}, see also~\cite{JK18,JT23,Jen25,JLS25a}, the exponent $\lambda$ corresponds to $\frac{1}{\crr}$, while $\kappa$ corresponds to $\frac{\crr-1-\cbb}{\alpha\crr}$.}
\begin{subequations}\label{bw:ss}
\begin{align} 
\bar u(r,t) &:= (-t)^{\crr -1} \bar U\left( \tfrac{r}{(-t)^{\crr}} \right), \\
 \bar \sigma(r,t) &:= (-t)^{\crr-1} \bar \Sigma\left( \tfrac{r}{(-t)^{\crr}} \right), \\ 
  \bar b(r,t) &:= (-t)^{\cbb} \bar B\left( \tfrac{r}{(-t)^{\crr}} \right).
\end{align}
\end{subequations}
By introducing the (backward) self-similar radial variable
\begin{equation} \label{radial:self:similar}
R := \tfrac{r}{(-t)^{\crr}}, 
\end{equation}
the system \eqref{eq:euler:rad} reduces to a system of ODEs for $(\bar U, \bar \Sigma, \bar B)$ in the variable $R$
\begin{subequations} \label{bw:ss:euler}
\begin{align}
 (1-\crr) \bar \Sigma + \crr R \pp_R \bar \Sigma + \bar U \pp_R \bar \Sigma + \alpha \bar \Sigma ( \pp_R \bar U + \tfrac{d-1}{R} \bar U) &=0,  \\
(1- \crr) \bar U + \crr R \pp_R \bar U + \bar U \pp_R \bar U + \alpha \bar \Sigma \pp_R \bar \Sigma - \tfrac{\alpha}{\gamma} \bar \Sigma^2 \tfrac{\pp_R \bar B}{\bar B}&=0, \\
- \cbb \bar B + \crr R \pp_R \bar B + \bar U \pp_R \bar B &=0. \label{bw:ss:euler:B}
\end{align}\end{subequations}

\subsubsection{Globally self-similar explosions: forward self-similar ansatz}

Given the similarity exponents $\crr>0$ and $\cbb \in \RR$, we say a solution $( u,  \sigma,  b)$ is a globally self-similar solution to \eqref{eq:euler:rad} for $t>0$ if it is of the form 
\begin{subequations}
 \label{forward:profiles}
\begin{align}
u(r,t) &:= t^{\crr-1} U\left( \tfrac{r}{t^{\crr}}\right) 
\,,
\\
\sigma(r,t) &:= t^{\crr-1} \Sigma \left( \tfrac{r}{t^{\crr}}\right)
\,,
\\
b(r,t) &:= t^{\cbb} B\left( \tfrac{r}{t^{\crr}}\right)
\,.
\end{align}
\end{subequations}
Introduce the (forward) self-similar radial variable
\begin{equation} \label{def:xi}
\xi :=\tfrac{r}{t^{\crr}} \,.
\end{equation}
Then, similarly to \eqref{bw:ss:euler},
the system of PDEs \eqref{eq:euler:rad} reduces to a system of ODEs for the functions $( U, \Sigma, B)$ in the self-similar variable $\xi$
\begin{subequations} \label{fw:ss:eq}
\begin{align}
(U(\xi) - \crr \xi) \pp_\xi \Sigma(\xi) + (\crr-1) \Sigma(\xi) + \alpha \Sigma(\xi) \left( \pp_\xi U(\xi) + \tfrac{d-1}{\xi}U(\xi) \right) & = 0
\,,\\
(U(\xi) - \crr \xi) \pp_\xi U(\xi) + (\crr-1) U(\xi) + \alpha \Sigma(\xi) \pp_\xi \Sigma(\xi) - \tfrac{\alpha \Sigma(\xi)^2}{\gamma B(\xi)} \pp_\xi B(\xi) & = 0
\,, \\
(U(\xi) - \crr \xi) \pp_\xi B(\xi) + \cbb B(\xi) &= 0\,.
\end{align}
\end{subequations}

\begin{remark} \label{remark:time:rev}
     Since the Euler equations are time-reversible for smooth solutions under the transformation
     \begin{equation*}
(\uu(x,t), \sigma(x,t), b(x,t)) \mapsto (-\uu(x,-t), \sigma(x,-t), b(x,-t))
\end{equation*}
the backward self-similar system \eqref{bw:ss:euler} and the forward self-similar system \eqref{fw:ss:eq} are equivalent in the class of smooth solutions.
It is important to note that the two systems are \emph{not equivalent} when considering solutions that contain shocks, due to the arrow of time induced by the entropy inequality.
\end{remark}

\subsection{Brief summary of known results}
All known examples of implosions and consequent explosions arise as exact radially self-similar solutions, constructed by solving \eqref{bw:ss:euler} and then \eqref{fw:ss:eq}. 

We summarize here the main features of the known solutions of the Euler equations that exhibit an implosion followed by an explosion.\footnote{The smooth isentropic implosions from~\cite{MRRS22a,MRRS22b} have not been shown to admit a continuation past the implosion time, but we expect that techniques similar to the ones used here can be used to prove a continuation result for those implosions.} All these solutions are constructed as exact solutions to the ODEs \eqref{bw:ss:euler} and \eqref{fw:ss:eq}; no rigorous nonlinear stability analysis, for either the implosion phase or the explosion phase, is available for any of them.
\begin{itemize}[leftmargin=2em]
\item
\textbf{The Guderley solution.} The classical imploding scenario is given by Guderley's 1942 self-similar solution~\cite{Gud42}, re-discovered by Landau and Stanyukovich~\cite{LS45}, and rigorously constructed by Jang--Liu--Schrecker in~\cite{JLS25b}.\footnote{In~\cite{CSV25} Cialdea-Shkoller-Vicol have proven that Guderley's imploding shock arises dynamically as a solution of the Euler equations with classical (shock-free) initial data; it cannot however arise from initial data that is globally $C^2$-smooth.} It describes an inward-propagating radial/spherical shock in a quiescent medium (constant density, zero velocity and zero temperature). At the time of the implosion, both the velocity and the temperature diverge while the density remains bounded. This solution applies to the full Euler system (not isentropic). The continuation past the time of the blowup has also been the subject of classical studies~\cite{Gud42,LS45}, but has only been rigorously constructed by Jang--Jiu--Schrecker in~\cite{JLS25a}. After the implosion, a shock of finite strength arises and propagates outward. Moreover, at the center of symmetry a point-vacuum singularity arises (that is, the density vanishes at the origin). The size of the jump of the velocity and sound speed decreases as $t \to + \infty$ (this is equivalent to the fact that, in the Guderley implosion, $\crr<1$), while the jump size of the density remains bounded as $t \to +\infty$.

\item
\textbf{Collapsing cavities and the reflection wave~\cite{Hun60,Laz81}.} The collapsing-cavity problem is another classical self-similar scenario for the isentropic Euler equations. Initiated by Hunter~\cite{Hun60} and later treated systematically by Lazarus~\cite{Laz81}, it describes the collapse of an empty spherical cavity in a compressible fluid. At the initial time the density vanishes in a spherical cavity; as $t \to 0^-$ the vacuum region shrinks and both the density and velocity blow up. The continuation beyond collapse produces an outward-propagating reflection wave, where the density is bounded and bounded away from zero everywhere.

\item
\textbf{Continuous implosions followed by shockless explosions~\cite{Jen25}.} In~\cite{Jen25}, Jenssen constructs a continuous imploding solution to the isentropic Euler equations. At the time of the implosion $t=0$, both the velocity and the density blow up. The author also discusses the subsequent explosion, which, unlike the solutions described earlier and the ones in this manuscript, contains no shock. The resulting self-similar flow is shock-free and continuous away from the collapse point; for each finite time after collapse, the density at the center remains finite and strictly positive.

\item
\textbf{Isentropic self-similar shock waves~\cite{SWWZ26}.} In~\cite{SWWZ26}, Shao--Wang--Wei--Zhang construct a continuous solution to the isentropic compressible Euler equations that has a point-vacuum singularity at the origin prior to the singular time, and whose gradient blows up at the center of symmetry. At the time of the implosion, both the velocity and sound speed remain locally bounded. The authors are then able to continue the solution for $t>0$ as a shock of finite strength that propagates outward. For $t>0$ there is no vacuum singularity at the origin, and the size of the shock increases as $t \to +\infty$ (this is equivalent to $\crr>1$)\footnote{
    Using the notation of the authors in~\cite{SWWZ26}, the solutions we just described correspond to the case $\operatorname{Ma} \le \tilde{l}$. The authors construct other solutions to the compressible isentropic Euler equations for $\operatorname{Ma} > \tilde{l}$ that have a shock for $t<0$. While these solutions are weak solutions of the isentropic Euler equations, they violate the Lax geometric inequality and hence are not entropy solutions.
}. 
\item \textbf{Other results.} In~\cite{JT23}, Jenssen and Tsikkou construct an imploding solution to the non-isentropic Euler system and discuss the continuation after the singularity time. Although no proof is given
there, the authors present numerical evidence for the existence of a shock solution past the implosion time.
\end{itemize}
 
\subsection{Main results}
Recently, in~\cite{CSV26}, Chen--Shkoller--Vicol constructed a new class of smooth, non-isentropic, globally self-similar implosions. In the present paper, we continue the ground state of this class of implosions to all times $t>0$. We are not aware of a previous continuation result for implosions of this type (that is, non-isentropic and $C^\infty$ smooth). The continuation we obtain has a feature at the center of symmetry that the previously known solutions do not: for each fixed $t>0$ the density is unbounded at the origin $r=0$, while the pressure stays bounded and the temperature vanishes there. This is to be contrasted with the Guderley continuation~\cite{Gud42,LS45,JLS25a}, in which the reflected wave leaves a point vacuum at the center (that is, the density vanishes at $r=0$). Next, we recall the existence result established in~\cite{CSV26}:

\begin{theorem}[\textbf{Theorem 1.7 in~\cite{CSV26}}] \label{th:csv}
Let $d \in \{1, 2, 3\}$, and $1 < \gamma \le 2 d +1$ be arbitrary. For each integer $\NNN \ge 1$, use the closed form expressions \eqref{cr:formula:g}--\eqref{cb:formula:g} to define the similarity exponents $\crr:= \crrs(d, \gamma, \NNN)$ and $\cbb:= \cbbs(d, \gamma, \NNN)$. Then, there exists a smooth analytic globally self-similar solution $( \bar u,\bar \sigma, \bar b)(r,t)$ to the radially symmetric Euler  equations \eqref{eq:euler:rad}, given in terms of the profiles $(\bar U, \bar \Sigma, \bar B)$ via the ansatz \eqref{bw:ss}, which solve the system~\eqref{bw:ss:euler}. Moreover, there exist constants $\underline v_1\in\RR$, $\underline q_1>0$, and $\underline h_1>0$ such that as $R \to +\infty$ the profiles satisfy the asymptotic power law behavior
\begin{equation} \label{eq:lim:infty}
\lim_{R \to + \infty} R^{\frac{1}{\crr}-1} \bar \Sigma(R) = \underline q_1, \quad \lim_{R \to + \infty} R^{\frac{1}{\crr}-1} \bar U(R) = \underline v_1, \quad \lim_{R \to + \infty} R^{-\frac{\cbb}{\crr}} \bar B(R) = \underline h_1.
\end{equation}
\end{theorem}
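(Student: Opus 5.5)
This is Theorem 1.7 of \cite{CSV26}, so the natural plan is to reproduce the architecture of the construction there rather than invent a new one. First, I would reduce the non-autonomous system \eqref{bw:ss:euler} to an autonomous planar system. In the variables
\[
W := \bar U/(\crr R),\qquad Z := \bar\Sigma/(\crr R),\qquad \log R
\]
as the independent variable, the $\bar B$ equation \eqref{bw:ss:euler:B} decouples after integration: $\bar B$ enters the momentum equation only through $\partial_R \bar B/\bar B = \cbb/(\crr R + \bar U)$. The result is a two-dimensional ODE $dW/d\log R = \Delta_W/\Delta$, $dZ/d\log R=\Delta_Z/\Delta$, with polynomial numerators and denominator $\Delta = (1+W)^2 - Z^2$, which is quadratic in $(W,Z)$. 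A solution smooth in $(r,t)$ for $t<0$ must satisfy three requirements:
\begin{enumerate}
\item start at the origin $R=0$ from the regular point $(W,Z)=(w_0,z_0)$ with $z_0\to\infty$; by symmetry this is the point at infinity in the $Z$ direction, with an analytic expansion in $R^2$;
\item cross the sonic curve $\Delta=0$ smoothly, at a point where $\Delta_W=\Delta_Z=0$ as well;
\item end at the stationary point $(W,Z)=(0,0)$ as $R\to\infty$.
\end{enumerate}

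Second, I would choose the exponents. The formulas \eqref{cr:formula:g}--\eqref{cb:formula:g} are, I expect, exactly the condition that at the sonic point $P_s$ the linearization has a smooth eigen-direction. More precisely, the ratio of eigenvalues there should equal a value making the analytic continuation through $P_s$ possible; the integer $\NNN$ indexes these smooth solutions, with $\NNN=1$ the ground state. With $(\crr,\cbb)=(\crrs,\cbbs)$ fixed, I would construct the local analytic solution through $P_s$ by a power series: a recursion for the Taylor coefficients whose leading coefficient is nonvanishing away from the resonances, with coefficient bounds proved by induction.

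Third, the global pieces. On the left I would connect $P_s$ to the origin $R=0$, and on the right connect $P_s$ to $(0,0)$, using barrier functions. These are explicit curves in the $(W,Z)$ plane that the vector field crosses in a fixed direction. The trajectory leaving $P_s$ is then trapped in an invariant region and cannot hit $\Delta=0$ again. For the region $R\to\infty$, linearizing at $(0,0)$ gives $W,Z\sim R^{-1/\crr}$, i.e.\ $\bar U,\bar\Sigma\sim R^{1-1/\crr}$. Integrating the $\bar B$ equation then gives $\bar B\sim R^{\cbb/\crr}$, which yields the limits \eqref{eq:lim:infty} with $\underline q_1>0$ and $\underline h_1>0$ by positivity along the trajectory. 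Undoing the scaling in \eqref{bw:ss} then shows smoothness in $(r,t)$ for $t<0$.

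The main obstacle is the global connection step, uniformly in $1<\gamma\le 2d+1$, $d\le3$, and all $\NNN$. The barriers must be valid over the whole parameter range, and the trajectory must be shown not to reach the sonic curve a second time. I would first verify the sign conditions along polynomial barriers symbolically. Where they are tight, I would use computer-assisted interval arithmetic on compact parameter ranges, together with asymptotic analysis as $\NNN\to\infty$ and as $\gamma\to1$. Convergence of the power series at $P_s$ with an explicit radius large enough to reach the barriers is the second delicate point.
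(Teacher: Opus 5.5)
\textbf{There is a genuine gap: the approach would fail at its first step.} Your plan is the isentropic sonic-point architecture of \cite{MRRS22a}: center at $Z=\infty$, smooth passage through a sonic triple point, and exponents selected by the eigenvalue ratio there. That architecture is incompatible with the exponents in the statement.

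\emph{Step (i) cannot hold.} Since the radicand in \eqref{cb:formula:g} exceeds $\tfrac{(1-\alpha d)^2}{16\NNN^2}$, we have $\cbbs>0$. Evaluate \eqref{bw:ss:euler:B} at $R=0$, where $\bar U(0)=0$ and $(\crr R+\bar U)\pp_R\bar B\to0$. This gives $\cbb\bar B(0)=0$, hence $\bar B(0)=0$. Since $\bar\rho$ is bounded, $\bar\Sigma=\tfrac1\alpha\bar\rho^\alpha\bar B$ also vanishes at $R=0$. So $\bar\Sigma/R$ stays bounded and the center has zero temperature. In the variables of \eqref{renorm:imp} (with $d=3$), the center is the finite stationary point $(-V_0,Q_0)$ of \eqref{2x2:imp}. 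The relation $\cbb=\crr-V_0$ is exactly what makes $\bar H=\bar B/R$ finite and nonzero there, by \eqref{eq:H:i}.

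\emph{There is no sonic crossing.} For the ground state recorded in Section~2, $\bar Q$ decreases from $Q_0$ to $0$ and the profile lies above the segment $\bar V=-\sqrt{2\alpha/(3\gamma)}\,\bar Q$. Hence $\crr+\bar V-\alpha\bar Q\ge\cbb-\alpha Q_0>0$ throughout, and $\bar\Delta$ never vanishes. Your step (ii), and the ``main obstacle'' of avoiding a second sonic hit, address a problem that does not arise. Incidentally, after eliminating $\pp_R\bar B/\bar B=\cbb/(\crr R+\bar U)$, the denominator acquires the factor $\crr+\bar V$ and becomes the cubic $\bar\Delta$ of \eqref{deltas:imp}.

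\emph{Where the exponents actually come from.} The closed-form exponents encode regularity at the center, not at a sonic point. For $d=3$, linearize \eqref{2x2:imp} at $(-V_0,Q_0)$ and use $\gamma=1+2\alpha$. The eigenvalues $\ell$ (with respect to $\log R$) solve
\[
\ell^2\bigl(\cbb^2-\alpha^2Q_0^2\bigr)+\ell\bigl((1-2V_0)\cbb-5\alpha^2Q_0^2\bigr)-\tfrac{4(1+3\alpha)}{\gamma}\,\alpha^2Q_0^2=0 .
\]
Because $\cbb>\alpha Q_0$, the roots have negative product, so $(-V_0,Q_0)$ is a saddle. Imposing $\ell=2\NNN$ and solving for $\cbb$, using $\alpha^2Q_0^2=\tfrac{3\alpha\gamma}{2(1+3\alpha)^2}$, reproduces \eqref{cb:formula:g} exactly. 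For example, for $\gamma=\tfrac53$ and $\NNN=1$ the eigenvalues are $2$ and $-\tfrac{24}{37}$.

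\emph{Consequences for the construction.} The branch leaving the center is the unstable manifold of a saddle, so there is no free parameter to shoot with. Along it, $\bar V+V_0$ and $\bar Q-Q_0$ are power series in $R^{2\NNN}$. This is what yields smoothness at $r=0$: $\bar\Sigma$ and $\bar B$ vanish to first order, the density is bounded and positive, and the pressure vanishes at the origin. The real global task is to show that this specific branch is trapped in the subsonic region (by monotonicity of $\bar Q$ and barriers such as the segment above) and enters the star $(0,0)$. Your final step is correct and is how \eqref{eq:lim:infty} follows: the linearization at $(0,0)$ is $-\tfrac1\crr I$, giving the power laws $R^{1-1/\crr}$, and integrating the $\bar B$ equation gives $\underline h_1>0$.
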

For the ground state $\NNN=1$ used below, Proposition A.1 in~\cite{CSV26} gives $\underline v_1<0$.
Using the transformation \eqref{bw:ss}, the profiles $(\bar U, \bar \Sigma, \bar B)$ naturally define solutions of the Euler equations in terms of the variables $(\bar u, \bar \sigma,\bar b)$. For each fixed $t<0$, the density $\bar \rho(\cdot,t)$ is bounded and strictly positive on $\RR^d$, but it becomes unbounded as $t \to 0^-$. Similarly, the velocity vector $\bar \uu$ is smooth on $\RR^d \times [-1,0)$. The pressure $\bar p$ is likewise smooth on $\RR^d \times [-1,0)$, and vanishes at the origin.
From the asymptotic power law behavior of the profiles $(\bar U, \bar \Sigma, \bar B)$,  for any $r>0$ we have the pointwise limits
\begin{subequations} \label{pw:limits}
\begin{align}
&\lim_{t\to 0^-} \bigl(\bar u,\bar \sigma,\bar b\bigr)(r,t) = \bigl( \underline{v}_1 r^{1-\frac{1}{\crr}}, \underline{q}_1 r^{1-\frac{1}{\crr}}, \underline{h}_1 r^{\frac{\cbb}{\crr}}\bigr),
\label{eq:fields:at:time:of:implosion:a} 
\end{align}
which implies, from the definitions \eqref{def:pres}, \eqref{def:b} and \eqref{def:sigma}, that
\begin{align}
&\lim_{t\to 0^-} \bar \rho(r,t) = \underline \rho_1 r^{\frac{\crr - 1 - \cbb}{\alpha \crr}} 
= \underline \rho_1  r^{-\frac{d}{(1+\alpha d) \crr}}
\label{eq:fields:at:time:of:implosion:b}
, \\
&\lim_{t\to 0^-} \bar p(r,t) = \underline p_1   r^{\frac{\gamma (\crr - 1) - \cbb}{\alpha \crr}},
\label{eq:fields:at:time:of:implosion:d}\\
&\lim_{t\to 0^-} \bar E(r,t) = \underline e_1 r^{\frac{\gamma (\crr - 1) - \cbb}{\alpha \crr}} 
\label{eq:fields:at:time:of:implosion:f},
\end{align}
\end{subequations}
where
\begin{equation*}
\underline{\rho}_1 = \left( \tfrac{\alpha \underline q_1}{\underline h_1} \right)^{\frac{1}{\alpha}},
 \quad \underline p_1= \tfrac{1}{\gamma} (\alpha \underline q_1)^{\frac{\gamma}{\alpha}} \underline h_1^{-\frac{1}{\alpha}}, 
 \quad \underline e_1= \tfrac{1}{2} (\tfrac{\alpha}{\gamma}\underline q_1^2 +\underline v_1^2 )(\alpha \underline q_1)^{\frac{1}{\alpha}} \underline h_1^{-\frac{1}{\alpha}}.
\end{equation*}

In particular, at the time of the implosion, the density becomes unbounded. Whether the pressure or the sound speed blows up depends on the particular values of $\crr$ and $\cbb$. We refer the interested reader to the discussion in~\cite{CSV26}.
Nevertheless, as observed in~\cite{CSV26}, at the time of implosion \textit{the mass, energy, and momentum are locally finite}.\footnote{For globally self-similar profiles it is not reasonable to expect  finite \emph{global} mass, momentum, or energy, due to the natural tail behavior at infinity in self-similar coordinates.} For the mass, this follows from the inequality $\crr > \frac{1}{1+\alpha d}$, which follows directly from~\eqref{cr:formula:g}, and implies
\begin{subequations} \label{loc:fin}

\begin{equation} \label{loc:fin:m}
    \lim_{t \to 0^-} \int_0^1 \bar \rho(r,t) r^{d-1} dr < +\infty.
\end{equation}
 Similarly, using $d > \tfrac{\cbb - \gamma (\crr-1)}{\alpha \crr} \iff \crr > \tfrac{d+2 + 2 \alpha d}{(d+2)(1+\alpha d)}$ (see Lemma 2.8 in~\cite{CSV26}), we have 
\begin{equation} \label{loc:fin:e}
\lim_{t \to 0^-}\int_0^1 \bar E(r,t) r^{d-1} dr < + \infty. 
\end{equation} 
 Lastly, since $d > \tfrac{d}{(1+\alpha d)\crr}+\tfrac{1}{\crr}-1 \iff \crr> \tfrac{d+1+\alpha d}{(d+1)(1+\alpha d)}$ (see Lemma 2.8 in~\cite{CSV26}), we deduce that
 \begin{equation} \label{loc:fin:mom}
  \lim_{t \to 0^-} \int_0^1 |\bar \rho \bar u^r| r^{d-1} dr<+\infty.
 \end{equation}
     
\end{subequations}
 The goal of this paper is to construct the continuation of these solutions past the time of the implosion singularity.
The pointwise limits \eqref{pw:limits} and the local integrability conditions \eqref{loc:fin} are the starting point of the analysis for the explosion phase $t>0$. We will construct globally forward self-similar solutions $(u, \sigma, b)$ to the Euler equations, via the ansatz \eqref{forward:profiles}, by solving the system \eqref{fw:ss:eq}.

We state our main result. For simplicity we will consider the case $d=3$, $\NNN=1$, and $\gamma \in (1, 3+ 2 \sqrt{2}]$. We expect that similar arguments apply to the remaining parameter regimes.

\begin{theorem}[\textbf{Main Theorem}] \label{th:main}
Fix $d=3$, $1<\gamma \le 3+ 2\sqrt{2}$, and $\NNN=1$. Define $\crr=\crrs(3, \gamma, 1)$ and $\cbb=\cbbs(3, \gamma, 1)$ using the expressions \eqref{cr:formula} and \eqref{cb:formula}. Fix the constants $\underline v_1$, $\underline q_1$, and $\underline h_1$ from~\eqref{eq:lim:infty}. Then, there exists a unique globally forward self-similar shock solution $(u, \sigma , b)(r,t)$ to the radially symmetric Euler equations \eqref{eq:euler:rad}, given in terms of unique piecewise smooth profiles $(U, \Sigma, B)$ such that
\begin{enumerate}[leftmargin=2em]
\item Piecewise smoothness: there exists $\xiRH$ such that $(U,\Sigma, B)$ are $C^\infty$ smooth on $(0, \xiRH)$ and $(\xiRH, +\infty)$,
\item Rankine--Hugoniot jump conditions: across the jump at $\xi=\xiRH$,  $(U, \Sigma, B)$ satisfy the jump conditions \eqref{RH:jump} and the Lax entropy inequality \eqref{lax:ineq},
\item Regularity at $0$: there exist constants $\overline v_0$ and $\overline q_0,\overline h_0>0$ such that as $\xi\to0^+$ the solution has the following local behavior
\begin{equation} \label{asy:origin}
U(\xi) =\overline v_0 \xi + O(\xi^{\frac{9\gamma}{2+3\gamma}}), \quad \Sigma(\xi)=\overline q_0 \xi^{\frac{3}{2+3\gamma}} + O(\xi^{\frac{1+6\gamma}{2+3\gamma}}), \quad B(\xi) =\overline h_0 \xi^{\frac{3\gamma}{2+3\gamma}}  + O(\xi^{\frac{9\gamma-2}{2+3\gamma}}),
\end{equation}
\item Positivity: for $\xi>0$ we have $\Sigma(\xi)>0$ and $B(\xi)>0$,
\item Matching at $\xi=+\infty$: the asymptotic behavior of $(U,\Sigma, B)$ matches the asymptotic behavior \eqref{eq:lim:infty} coming from the implosion
\begin{equation} \label{lim:infty:ex}
\lim_{\xi \to +\infty} \xi^{\frac{1}{\crr}-1}U(\xi) = \underline v_1, \quad \lim_{\xi \to +\infty}  \xi^{\frac{1}{\crr}-1} \Sigma (\xi) =\underline q_1, \quad \lim_{\xi \to +\infty} \xi^{-\frac{\cbb}{\crr}} B(\xi) = \underline h_1.
\end{equation}
The matching condition at $\xi=+\infty$ will allow us to prove, in Corollary~\ref{cor:main}, that these solutions are indeed the continuation of the imploding solutions constructed in Theorem~\ref{th:csv}.
\end{enumerate}

\end{theorem}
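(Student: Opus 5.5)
We plan to reduce the forward self-similar system \eqref{fw:ss:eq} to an autonomous planar ODE and build the explosion profile from two pieces. The outer piece lives on $(\xiRH,+\infty)$ and is fixed by the matching data \eqref{lim:infty:ex}. The inner piece lives on $(0,\xiRH)$ and is fixed by the asymptotics \eqref{asy:origin}. The shock location $\xiRH$ is determined by requiring that the Rankine--Hugoniot image of the outer state lies on the inner trajectory.

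\textbf{Reduction.} The $B$-equation is a transport equation, so we use $B$ only through the characteristic relation $B\propto|U-\crr\xi|^{-\cbb/\crr}\cdots$. Concretely, we introduce the Lazarus-type variables $W=U/\xi$, $Z=\Sigma/\xi$ together with $\log\xi$ as time. Combining the entropy equation with conservation of mass in self-similar form gives an algebraic integral relating $B$, $\rho$, $W$, $\xi$. This is the standard adiabatic integral: since $b$ is constant along particle paths and $\rho\xi^3(\crr-W)$ is a mass-flux invariant, $B$ can be eliminated. The result is a planar autonomous system $dW/d\log\xi=\Delta_1/\Delta$, $dZ/d\log\xi=\Delta_2/\Delta$, where the sonic determinant is $\Delta=(W-\crr)^2-\alpha^2 Z^2$ up to constants.

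\textbf{Outer branch.} Because smooth solutions of \eqref{bw:ss:euler} and \eqref{fw:ss:eq} correspond under the time reversal of Remark~\ref{remark:time:rev}, the behavior \eqref{lim:infty:ex} corresponds to the critical point $(W,Z)=(0,0)$ at $\xi=\infty$. The linearization there has eigen-directions along $\xi^{-1/\crr}$. We plan to show that the three free constants $(\underline v_1,\underline q_1,\underline h_1)$ select a unique trajectory emanating from $(0,0)$, and that this trajectory, followed inward, stays supersonic relative to the moving frame ($\Delta>0$) up to and beyond the candidate shock position.

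\textbf{Inner branch.} Next we analyze the origin. The ansatz \eqref{asy:origin} gives $W\to\overline v_0$ and $Z\sim\xi^{3/(2+3\gamma)-1}\to\infty$. We will show that near $\xi=0$, for each admissible $\overline v_0$ (in fact $\overline v_0=\crr/\gamma$ should be forced by regularity), there is a one-parameter family obtained by a fixed-point argument in weighted spaces. The exponents appearing in \eqref{asy:origin} come from balancing $U-\crr\xi\approx(\overline v_0-\crr)\xi$ in the $B$-equation, which gives $B\sim\xi^{\cbb/(\crr-\overline v_0)}$. Scaling invariance $\xi\mapsto\lambda\xi$ then reduces this family to a single curve $\Gamma_0$ in the $(W,Z)$ plane, which stays subsonic ($\Delta<0$).

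\textbf{Shock connection.} For each $\xi_s$ on the outer trajectory, we apply the Rankine--Hugoniot map \eqref{RH:jump} to $(W,Z)$. In these variables the map is explicit: it sends $\crr-W$ to a Mach-number reflection and maps the supersonic region to the subsonic one. The Lax inequality \eqref{lax:ineq} holds precisely because the upstream state is supersonic. The resulting post-shock curve $\Gamma_{RH}$ must intersect $\Gamma_0$, and $\xiRH$ is defined by the intersection.

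\textbf{Main obstacle.} The hard step is showing that the intersection exists and is unique, and that the downstream trajectory from it reaches the origin without crossing the sonic curve $\Delta=0$. For this we would use the convexity and monotonicity of the RH image and a barrier argument. Specifically, we would construct explicit polynomial sub- and supersolution curves trapping the downstream orbit in a region where $\Delta<0$ and $\Delta_1,\Delta_2$ have fixed signs. This yields monotonicity in $\xi$ and hence a unique crossing. Since the parameters depend on $\gamma\in(1,3+2\sqrt2]$ through the closed forms \eqref{cr:formula}--\eqref{cb:formula}, we anticipate verifying the sign conditions via interval arithmetic over $\gamma$-subintervals. Positivity of $\Sigma$ and $B$ follows along the trajectories since $Z>0$ is invariant and $B$ solves a linear equation with positive data. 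Uniqueness follows because each of the three pieces is uniquely determined.
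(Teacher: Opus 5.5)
\textbf{There is a genuine gap in your shock-connection step.} You assert that the post-shock curve $\Gamma_{RH}$ ``must intersect'' $\Gamma_0$, but nothing in your outline forces this, and it is exactly where the paper's new work lies. (Your $\Delta>0$ region is what the paper calls subsonic; that is only a naming difference.)

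The inner curve $\Gamma_0$, i.e.\ the paper's $(V^\infty,Q^\infty)$, runs from $(V_\infty,+\infty)$ into the triple point $P_2=(V_2,Q_2)$ on the sonic line $V+\alpha Q=\crr$. The Hugoniot image of the outer branch runs between two endpoints:
\begin{itemize}
\item the point $\mathrm{RH}(0,0)$, whose $V$-coordinate $\crr/(1+\alpha)$ exceeds $V_2$;
\item the point $(V_s,Q_s)$ where the outer branch itself reaches the sonic line, where $\mathrm{RH}$ is the identity.
\end{itemize}
An intermediate value argument gives a crossing only if $Q_s>Q_2$. If the outer branch reached the sonic line to the right of $P_2$, entered $P_2$, or escaped to $Q=+\infty$, there would be no reason for a crossing.

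Which case occurs depends on the direction $\underline v_1/\underline q_1$ in which the outer branch leaves the star node $(0,0)$. The paper's closing remark in Section~\ref{smooth:explosion} indicates that above a threshold $\mathsf M_{\mathrm s}$ the orbit instead runs into $P_2$. The only information supplied by the implosion construction, $\underline v_1<0$, is not enough to exclude this. The paper therefore proceeds in two steps:
\begin{itemize}
\item It proves the sharper bound $\underline v_1/\underline q_1<-\tfrac35\sqrt{2\alpha/(3\gamma)}$ via an explicit quadratic upper barrier in the \emph{implosion} phase portrait (Proposition~\ref{prop:ub:imp}).
\item It traps the outer branch to the left of an explicit barrier $V=\mathcal W_E(Q)$ joining $(0,0)$ to $P_2$, and rules out escape to infinity (Proposition~\ref{prop:barrier:ex}, Lemma~\ref{lemma:escape}). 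The slope of $\mathcal W_E$ at $P_2$ exceeds both eigendirection slopes of the desingularized sink.
\end{itemize}
Your barrier and interval-arithmetic program targets the downstream orbit instead. That is the easy part: it is trapped by the nullcline $P_q=0$ and the line $V=V_2$. So even if carried out, your program would not produce the intersection.

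\textbf{Smaller points.}
\begin{itemize}
\item Your guess $\overline v_0=\crr/\gamma$ is wrong. Balancing both equations at leading order forces $\overline v_0=V_\infty=\frac{\cbb-\gamma(\crr-1)}{3\alpha\gamma}$ and $Q\sim\overline q_0\,\xi^{-\mu}$ with $\mu=\frac{2+3(\gamma-1)}{2+3\gamma}$. Your formula $B\sim\xi^{\cbb/(\crr-\overline v_0)}$ yields the exponent $\frac{3\gamma}{2+3\gamma}$ only with this value.
\item The identification of the outer branch with $(0,0)$ has nothing to do with time reversal; it just reflects $V=U/\xi\to0$. The time-reversed implosion has coefficient $-\underline v_1$ and is a different orbit.
\item ``Each piece is uniquely determined'' is not yet a uniqueness proof. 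You need that every post-shock trajectory other than a rescaling of $\Gamma_0$ hits the sonic line as $\xi$ decreases (Lemma~\ref{lemma:supersonic:uniqueness}, which uses a Dulac function to exclude periodic orbits). You also need that the crossing is unique. The paper gets this from the monotonicity of $V^\infty$ and $\mathsf G_{\mathrm{RH},v}$ in $\xi$, together with the opposite monotonicity of the Mach ratios $(\crr-V)/Q$ along the two curves (Proposition~\ref{prop:infty} and Lemmas~\ref{transv:hug},~\ref{lemma:supersonic:ratio}).
\end{itemize}
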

\begin{figure}[htbp]
\centering
\begin{subfigure}{0.9\textwidth}
\centering
\includegraphics[width=0.9\textwidth]{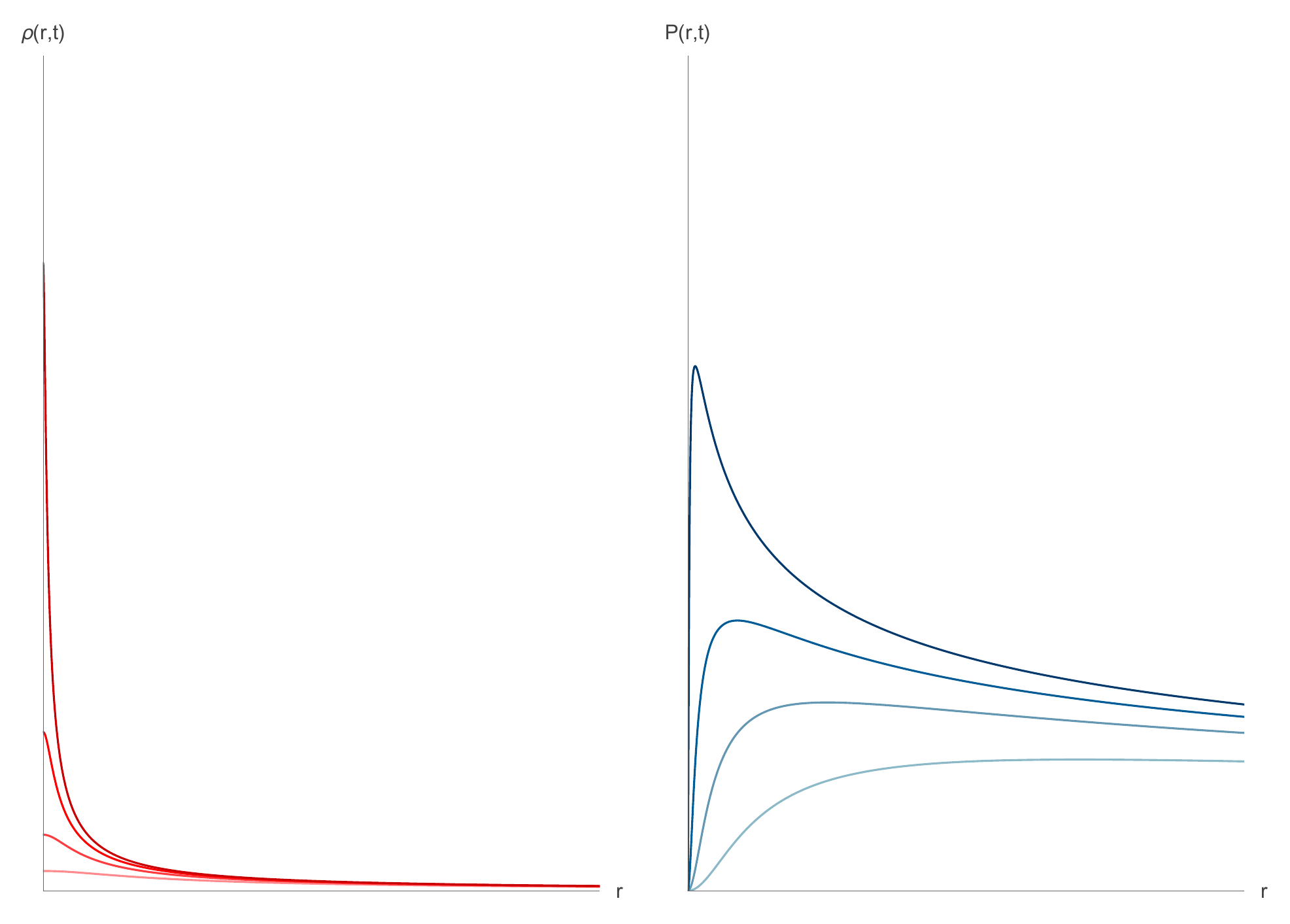}
\caption{Implosion.}
\end{subfigure}
\medskip
\begin{subfigure}{0.9\textwidth}
\centering
\includegraphics[width=0.9\textwidth]{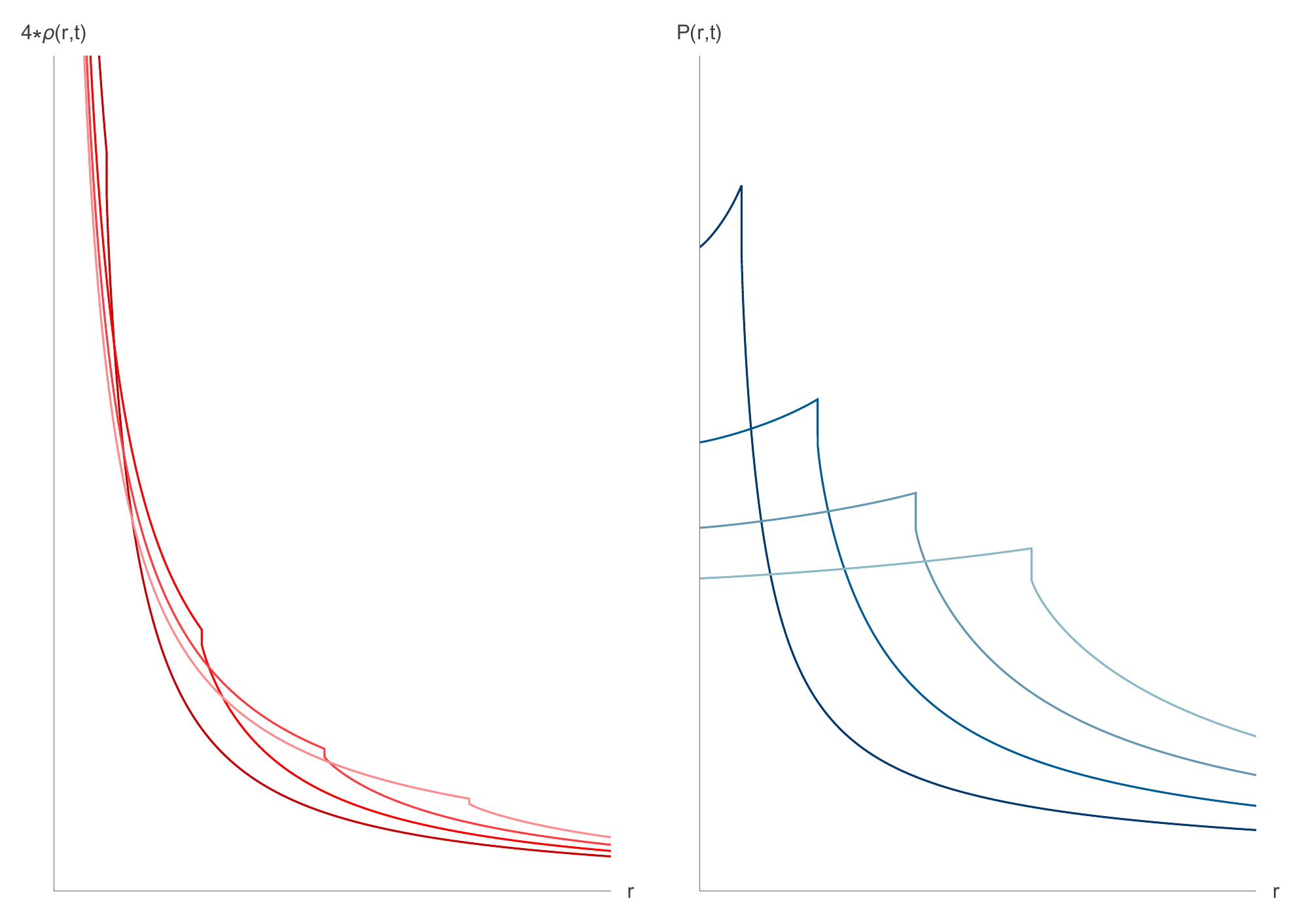}
\caption{Explosion.}
\end{subfigure}
\caption{\textbf{Implosion and explosion snapshots for $d=3$, $\gamma=\frac{5}{3}$, and $\NNN=1$.} \emph{Top}: radial density $\rho(r,t)$ (red, left) and pressure $p(r,t)$ (blue, right) for the ground-state implosion profile of~\cite{CSV26}, shown at four times, with darker shades indicating later times, closer to $t=0$. \emph{Bottom}: corresponding snapshots for the forward self-similar explosion constructed here (with darker shades indicating earlier times, closer to $t=0$); the shock propagates outward, the density remains singular at $r=0$, and the pressure stays bounded near the origin.}
\label{fig:implosion:explosion:snapshots}
\end{figure}
\begin{remark}[Behavior of the profiles at $\xi=0$ in the monatomic and diatomic cases]
The asymptotics \eqref{asy:origin} give explicit fractional power laws at the origin.
In the monatomic case $\gamma=\frac{5}{3}$, they reduce to
\begin{equation*}
U(\xi) \sim \xi, \qquad \Sigma(\xi) \sim \xi^{\frac{3}{7}}, \qquad B(\xi) \sim \xi^{\frac{5}{7}}.
\end{equation*}
In the diatomic case $\gamma=\frac{7}{5}$, they reduce to
\begin{equation*}
U(\xi) \sim \xi, \qquad \Sigma(\xi) \sim \xi^{\frac{15}{31}}, \qquad B(\xi) \sim \xi^{\frac{21}{31}}.
\end{equation*}
\end{remark}
\subsubsection{Description of the explosion in the original variables}
The profiles $(U, \Sigma, B)$, through the transformation \eqref{forward:profiles}, define natural solutions to the Euler equations, with a shock at the location 
\begin{equation*}
\s(t) =\xiRH t^{\crr}.
\end{equation*} 
The original flow variables are smooth on either side of the shock $\s(t)$ for $t>0$ and $r>0$. At the origin $r=0$ the velocity field $\uu$ is $C^1$, the pressure $p$ is bounded, while the density $\rho$ is unbounded (but locally $L^1$). If we introduce the temperature variable $\theta =\frac{p}{\rho}$, then \eqref{asy:origin}, together with $\cbb=\crr-\frac{2}{3\gamma-1}$ (see~\eqref{eq:cb:value}), gives, for each fixed $t>0$, as $r\to0^+$, the asymptotic behavior
\begin{align*}
u(r,t) \sim t^{-1} r, \quad
\rho(r,t) \sim  t^{\frac{6\crr}{2+3\gamma}-\frac{6}{3\gamma-1}} r^{-\frac{6}{2+3\gamma}},\quad
p(r,t) \sim t^{2\crr-2-\frac{6}{3\gamma-1}},\quad
\theta(r,t) \sim t^{\frac{2(3\gamma-1)\crr}{2+3\gamma}-2} r^{\frac{6}{2+3\gamma}}.
\end{align*}
Thus the pressure remains bounded and the temperature vanishes at $r=0$.
As $t \to +\infty$, the velocity jump  $\jump{u}$ and temperature jump $\jump{\theta}$ across the shock surface grow (in time), while the density jump $\jump{\rho}$ decays. Whether the pressure jump $\jump{p}$ grows or not depends on the particular value of $\crr$:
\begin{equation*}
\jump{u}(t) \sim t^{\crr-1}, \quad \jump{\rho}(t) \sim t^{-\frac{6}{3\gamma-1}}, \quad \jump{p}(t) \sim t^{2\crr-2-\frac{6}{3\gamma-1}}, \quad \jump{\theta}(t) \sim t^{2\crr-2}.
\end{equation*}

\subsubsection{The explosion is the continuation of the implosion}
Using the transformation \eqref{forward:profiles}, the profiles $(U, \Sigma, B)$ naturally define globally self-similar shock solutions for the Euler equations. Moreover, the matching condition \eqref{lim:infty:ex} implies that, for each fixed $r>0$,
\begin{equation*}
\lim_{t\to 0^+} \bigl( u, \rho, \theta \bigr)(r,t) = \left( \underline{v}_1 r^{1-\frac{1}{\crr}}, \underline{\rho}_1 r^{-\frac{6}{(3\gamma-1)\crr}}, \tfrac{(\gamma-1)^2\underline q_1^2}{4\gamma} r^{2-\frac{2}{\crr}}\right),
\end{equation*}
which we use to show that such explosions arise naturally as a continuation of the implosion solutions of Theorem~\ref{th:csv}. We have the following corollary.
\begin{corollary} \label{cor:main}
    Fix $d=3$, $1<\gamma \le 3+2 \sqrt{2}$. There exist smooth initial radial data $(u_\mathrm{in}, \sigma_\mathrm{in}, b_\mathrm{in} )$ at the initial time $t=-1$ that generate a weak global Euler solution, in the sense of Definition~\ref{def:radial:weak:solution}, on the time interval $[-1, +\infty)$. The evolution unfolds in the following stages:
    \begin{enumerate}[leftmargin=2em]
\item Smooth evolution: for $t \in [-1, 0)$ the solution $(u, \sigma, b)$ evolves smoothly, 
\item Implosion: at $t=0$ the evolution culminates in an implosion singularity at the origin $r=0$, where the density $\rho \to +\infty$,
\item Reflected blast wave: for $t>0$ the solution reflects as a strong, outgoing shock wave.
\end{enumerate}

\end{corollary}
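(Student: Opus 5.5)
The plan is to glue the implosion of Theorem~\ref{th:csv} on $[-1,0)$ to the explosion of Theorem~\ref{th:main} on $(0,\infty)$. Then I would check that the glued object satisfies the weak formulation of Definition~\ref{def:radial:weak:solution} across $t=0$, and that it arises from smooth data at $t=-1$.

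\textbf{Initial data and the two phases.} Set $(u_\mathrm{in},\sigma_\mathrm{in},b_\mathrm{in})(r):=(\bar u,\bar\sigma,\bar b)(r,-1)=(\bar U,\bar\Sigma,\bar B)(r)$. This is smooth and analytic by Theorem~\ref{th:csv}, with $\bar\Sigma,\bar B>0$, and it generates the smooth solution \eqref{bw:ss} on $[-1,0)$. That gives stages (i) and (ii): the pointwise limit \eqref{eq:fields:at:time:of:implosion:b} shows $\rho\to+\infty$ at $r=0$ as $t\to0^-$. For $t>0$ I define the solution by \eqref{forward:profiles} with the profiles of Theorem~\ref{th:main}. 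On each side of $r=\s(t)=\xiRH t^{\crr}$ it is a classical solution of \eqref{eq:euler:rad}, hence of the conservative form \eqref{eq:euler:cl}. Across the shock, the Rankine--Hugoniot conditions \eqref{RH:jump} guarantee that the weak formulation holds, and \eqref{lax:ineq} makes it an entropy (outgoing, strong) shock. This is stage (iii).

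\textbf{Weak formulation on $[-1,\infty)$.} Take a radial test function $\varphi(r,t)$ compactly supported in $[0,\infty)\times[-1,\infty)$, and split the spacetime integrals into $t<0$ and $t>0$. I have to check three things.
\begin{enumerate}
\item \emph{The origin is not a source.} The fluxes $\rho u$, $\rho u^2+p$, $(E+p)u$ must be $L^1_{loc}$ with weight $r^{2}$ near $r=0$, for each $t$ and uniformly on compact time intervals. For $t>0$ this follows from \eqref{asy:origin}: $\rho\sim r^{-6/(2+3\gamma)}$ is integrable against $r^2$, $p$ is bounded, and $u\sim r$. Moreover the boundary terms $r^2\cdot\text{flux}$ tend to $0$ as $r\to0$. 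For $t<0$ the solution is smooth.
\item \emph{Continuity at $t=0$.} The boundary terms at $t=0^\pm$ must cancel. This amounts to $\rho,\rho u,E$ converging in $L^1_{loc}(r^2dr)$ as $t\to0^\pm$ to the same limits. Pointwise, both sides converge to the common power laws in \eqref{pw:limits}, by \eqref{eq:lim:infty} and \eqref{lim:infty:ex}. For $L^1$ convergence I would use dominated convergence, with domination by the self-similar bounds. For example, for $t<0$, $\bar\rho(r,t)\lesssim (|t|^{\crr}+r)^{-6/((3\gamma-1)\crr)}\lesssim r^{-6/((3\gamma-1)\crr)}$, and this bound is integrable by \eqref{loc:fin}. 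The same structure holds for $E$ and $\rho u$ via \eqref{loc:fin:e} and \eqref{loc:fin:mom}. For $t>0$ I would prove the analogous bound: inside $\xi<\xiRH$ the profiles are bounded by the origin asymptotics, and outside by the tail asymptotics.
\item \emph{Integrability across the whole strip.} The fluxes must be integrable over $|t|\le T$, so that Fubini and the splitting are legitimate. This again follows from the uniform domination in the previous step.
\end{enumerate}

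\textbf{Main obstacle.} The hard step is the uniform domination near $(r,t)=(0,0)$ for $t>0$. For small $t$ the shock sits at $r=\xiRH t^{\crr}\to0$, so inside the shock ball one must show that $t^{\alpha\text{-exponents}}\rho(\xi)$ is bounded by $C r^{-6/((3\gamma-1)\crr)}$ uniformly. I would do this by writing $\rho(r,t)=r^{-6/((3\gamma-1)\crr)}\,\xi^{6/((3\gamma-1)\crr)}\tilde\rho(\xi)$, where $\tilde\rho$ is the density profile. It then suffices that $\xi^{6/((3\gamma-1)\crr)}\tilde\rho(\xi)$ is bounded on $(0,\infty)$. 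Near $0$ this holds because the origin singularity $\xi^{-6/(2+3\gamma)}$ is milder: the exponent comparison $\frac{6}{(3\gamma-1)\crr}\ge\frac{6}{2+3\gamma}$ follows from $\crr<\frac{2+3\gamma}{3\gamma-1}$, which I would verify from \eqref{cr:formula}. At infinity it holds by \eqref{lim:infty:ex}, and in between by piecewise smoothness. The same rescaling argument applies to $E$ and $\rho u$, since the limiting exponents make all of them homogeneous of the same degree as the $t=0$ profile.
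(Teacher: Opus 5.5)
Your proposal is correct and follows the paper's proof essentially step by step. You glue the two self-similar phases and obtain $L^1_{loc}(r^2dr)$ continuity at $t=0^\pm$ by dominated convergence from the uniform bound $|\rho(r,t)|\lesssim r^{-6/((3\gamma-1)\crr)}$ and its analogues for $\rho u$ and $E$; the $t>0$ version of that bound rests on exactly your exponent comparison, which the paper phrases as $V_\infty\ge0$. For $t>0$ you then use the Rankine--Hugoniot conditions together with the vanishing of the $r=0$ boundary terms, which follows from \eqref{asy:origin}. The only slip is that the inequality you need is $\crr\le\frac{2+3\gamma}{3\gamma-1}$, not the strict one: equality holds at $\gamma=3+2\sqrt2$ (where $V_\infty=0$), and the non-strict version is all the domination argument requires.
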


\subsubsection{Examples of smooth explosions}
As a byproduct of our analysis, we obtain the existence of smooth, globally forward self-similar solutions to the Euler equations. These solutions are time reversals of the smooth implosions from Theorem~\ref{th:csv}, but they do not satisfy the matching condition at $t=0$ required to continue those implosions.
\begin{theorem}[\bf Smooth explosions] \label{thm:2}
Fix $d\in \{1,2,3\}$, $1<\gamma \le 2d+1$. For each $\NNN \ge 1$, use the explicit expressions \eqref{cr:formula:g} and \eqref{cb:formula:g} to define $\crr=\crrs(d, \gamma, \NNN)$ and $\cbb=\cbbs(d, \gamma, \NNN)$. Then, there exists an exact globally forward self-similar solution $(u, \sigma, b)(r,t)$ to the radially symmetric Euler equations \eqref{eq:euler:rad}, given in terms of smooth profiles $(U, \Sigma, B)$. The profiles $(U, \Sigma, B)$ solve the system \eqref{fw:ss:eq}. \end{theorem}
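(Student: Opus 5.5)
The plan is to obtain the smooth explosion profiles by time-reversing the smooth implosion profiles of Theorem~\ref{th:csv}, as suggested by Remark~\ref{remark:time:rev}. Fix $d$, $\gamma$, $\NNN$, let $\crr=\crrs(d,\gamma,\NNN)$ and $\cbb=\cbbs(d,\gamma,\NNN)$, and let $(\bar U,\bar\Sigma,\bar B)$ be the smooth analytic profiles solving \eqref{bw:ss:euler}. We define
\begin{equation*}
U(\xi):=-\bar U(\xi),\qquad \Sigma(\xi):=\bar\Sigma(\xi),\qquad B(\xi):=\bar B(\xi),\qquad \xi>0,
\end{equation*}
and then define $(u,\sigma,b)$ through the forward ansatz \eqref{forward:profiles}.

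The first step is to check by direct substitution that $(U,\Sigma,B)$ solves \eqref{fw:ss:eq}.

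In the $\Sigma$ equation, replacing $\bar U$ by $-U$ in the first line of \eqref{bw:ss:euler} gives
\begin{equation*}
(1-\crr)\Sigma+(\crr\xi-U)\pp_\xi\Sigma-\alpha\Sigma\bigl(\pp_\xi U+\tfrac{d-1}{\xi}U\bigr)=0 .
\end{equation*}
Multiplying by $-1$ yields exactly the first line of \eqref{fw:ss:eq}.

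In the momentum equation, the terms $(1-\crr)\bar U$ and $\crr R\pp_R\bar U$ change sign. The term $\bar U\pp_R\bar U=U\pp_\xi U$ does not change sign, and neither do the $\Sigma$ terms. Multiplying by $-1$ gives
\begin{equation*}
(\crr-1)U+(U-\crr\xi)\pp_\xi U-\alpha\Sigma\pp_\xi\Sigma+\tfrac{\alpha}{\gamma}\Sigma^2\tfrac{\pp_\xi B}{B}=0 .
\end{equation*}
The last two terms have the wrong sign. The correct substitution therefore has to be checked carefully against the PDE level, and I would do it there. Take $\bar u(r,t)=(-t)^{\crr-1}\bar U(r/(-t)^{\crr})$ for $t<0$, a smooth solution of \eqref{eq:euler:rad}, and set $u(r,t):=-\bar u(r,-t)$, $\sigma(r,t):=\bar\sigma(r,-t)$, $b(r,t):=\bar b(r,-t)$ for $t>0$. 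Remark~\ref{remark:time:rev} shows this is a smooth solution of \eqref{eq:euler:rad} for $t>0$. Since $-(-s)=s$ for $s=-t$, it has exactly the form \eqref{forward:profiles} with $U=-\bar U$, $\Sigma=\bar\Sigma$, $B=\bar B$.

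Plugging the forward ansatz into \eqref{eq:euler:rad} then gives \eqref{fw:ss:eq} identically. The apparent sign discrepancy above only reflects a bookkeeping convention in how \eqref{bw:ss:euler} was written. The time-reversal at the PDE level is the robust route, and I would verify the invariance of \eqref{eq:euler:rad} line by line. Under $t\mapsto -t$, $u\mapsto -u$: the $\pp_t\sigma$ and $u\pp_r\sigma$ terms both flip sign. The $\alpha\sigma(\pp_r u+\tfrac{d-1}{r}u)$ term flips. In the momentum equation, $\pp_t u$ is invariant, $u\pp_r u$ is invariant, and the pressure terms are invariant. In the $b$ equation, both terms flip. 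So each equation is invariant up to an overall sign.

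The remaining points are regularity and positivity. Smoothness and analyticity of $(U,\Sigma,B)$ on $[0,\infty)$ are inherited from Theorem~\ref{th:csv}, including regularity at $\xi=0$ and across the sonic point of the implosion profile. Positivity of $\Sigma$ and $B$ is likewise inherited. Hence $(u,\sigma,b)$ is smooth on $\RR^d\times(0,\infty)$.

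The only step requiring care, and the main obstacle, is making sure the sonic-point smoothness survives the reversal. The sonic condition in backward variables is $(\bar U+\crr R)^2=\alpha^2\bar\Sigma^2$. In forward variables it becomes $(U-\crr\xi)^2=\alpha^2\Sigma^2$, which is the same set. So the analytic crossing is preserved and no new analysis is needed.

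Finally, we would remark, as the statement notes, why this solution does not continue the implosion. As $\xi\to\infty$ we have $U\sim-\underline v_1\xi^{1-1/\crr}$, whose sign is opposite to that required in \eqref{lim:infty:ex}. This is why Theorem~\ref{th:main} must instead be proved with a shock.
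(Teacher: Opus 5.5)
Your construction is the paper's own proof: set $U=-\bar U$, $\Sigma=\bar\Sigma$, $B=\bar B$ and invoke the time reversal of Remark~\ref{remark:time:rev}. Your PDE-level invariance check is correct and on its own suffices.

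However, the ``sign discrepancy'' you report in the momentum equation is an algebra slip, not a feature of \eqref{bw:ss:euler}. Substituting $\bar U=-U$, $\bar\Sigma=\Sigma$, $\bar B=B$, $R=\xi$ into the second line of \eqref{bw:ss:euler} gives
\[
(\crr-1)U+(U-\crr\xi)\pp_\xi U+\alpha\Sigma\pp_\xi\Sigma-\tfrac{\alpha}{\gamma}\Sigma^2\tfrac{\pp_\xi B}{B}=0,
\]
which is already the second line of \eqref{fw:ss:eq}, with no multiplication by $-1$. Your displayed ``multiplied'' equation keeps the first two terms unflipped and flips only the pressure terms, which is inconsistent.

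Only the $\Sigma$ and $B$ equations need the overall factor $-1$. This is exactly what your own PDE computation shows: every term of the momentum equation is invariant, while every term of the other two equations flips. So the claim that \eqref{bw:ss:euler} hides a ``bookkeeping convention'' is false and should be deleted. The direct ODE substitution is precisely the paper's one-line ``equivalently'' argument, and the PDE detour is unnecessary.

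Likewise, the sonic point is not an obstacle. The profiles are literally the same functions up to the sign of $U$, so smoothness is inherited pointwise.

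One minor overstatement: $\Sigma$ and $B$ vanish to first order at $\xi=0$, so $\sigma$ and $b$ behave like $|x|$ near the origin. The smoothness asserted in the theorem is that of the profiles $(U,\Sigma,B)$, not of $\sigma,b$ as functions on $\RR^d$.
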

These exploding solutions are \textit{exactly} the time reversal of the implosions from Theorem~\ref{th:csv}.
Using the transformation \eqref{forward:profiles}, we obtain smooth solutions $u, \sigma, b$ to the Euler equations for all $t>0$. Both $\sigma(r,t)$ and $b(r,t)$ vanish at $r=0$, while the density is strictly positive for all $t>0$.
\subsection{Comparison with previous results}

The continuation constructed in Theorem~\ref{th:main} is close in spirit to the classical Guderley continuation~\cite{Gud42,LS45,JLS25a}. In the explosion phase the flow is selected by a self-similar phase portrait and by the Rankine--Hugoniot matching condition. The structure at the center $r=0$, however, is different. In the Guderley continuation, the reflected wave leaves a point vacuum at the origin. On the other hand, the solutions of Theorem~\ref{th:main} have, for any fixed $t>0$, unbounded density at $r=0$, while the pressure $p$ remains bounded and the temperature $\theta$ vanishes there.

At the level of the proof, the construction follows the same general mechanism as the classical self-similar shock constructions. One constructs a branch determined by the matching condition at $\xi=+\infty$ (see Proposition~\ref{prop:sub:sonic}), controls its continuation until it reaches the sonic line, and constructs a second branch at $\xi=0$ (see Proposition~\ref{prop:infty}). These two branches lie on opposite sides of the sonic line, one in the subsonic region and one in the supersonic region. The shock location $\xiRH$ is then selected by the Rankine--Hugoniot jump conditions together with the Lax entropy inequality.

\subsection{Strategy of the proof}
Proving Theorem~\ref{th:main} amounts to finding a solution to the system of ODEs \eqref{fw:ss:eq} such that the boundary conditions \eqref{lim:infty:ex} hold. 
It will be convenient to introduce the renormalized profiles
\begin{equation*}
 U(\xi) :=\xi  V (\xi), \quad  \Sigma(\xi) :=\xi  Q(\xi), \quad  B(\xi):=\xi  H(\xi).t
\end{equation*}
In these variables, the system \eqref{fw:ss:eq} becomes \eqref{eq:main}. From \eqref{eq:H:main}, we can isolate
\begin{equation*}
\tfrac{\xi \pp_\xi H}{H}= -\tfrac{V-\crr+\cbb}{V-\crr}.
\end{equation*}
By substituting this expression into \eqref{eq:main}, the system of three ODEs then reduces to a $2\times 2$ closed system for $(V,Q)$, which can be solved for $\xi\partial_\xi V$ and $\xi\partial_\xi Q$ to obtain the autonomous system \eqref{eq:euler:ODE}, where $\xi \pp_\xi$ acts as the time variable. After solving the system \eqref{eq:euler:ODE},  $H$ is recovered through $V$ by a direct integration and by imposing the boundary condition \eqref{lim:infty:ex} at $\xi=+\infty$. In the renormalized variables, the conditions \eqref{lim:infty:ex} become \begin{equation} \label{boundary:infty}
\lim_{\xi \to +\infty} \xi^{\frac{1}{\crr}}V(\xi) = \underline v_1, \quad \lim_{\xi \to +\infty} \xi^{\frac{1}{\crr}}Q(\xi) = \underline q_1, \quad \lim_{\xi \to +\infty} \xi^{1-\frac{\cbb}{\crr}}H(\xi) = \underline h_1.
\end{equation}
The first task is then local, to construct a solution $(V,Q)$ to \eqref{eq:euler:ODE} near $\xi=+\infty$ that satisfies the boundary conditions \eqref{boundary:infty}. 
We then proceed to prove that such a solution can be continued up to a time $\xi=\xi_s$, and that at such a time $(V,Q)$ hits the sonic line $V+\alpha Q=\crr$; crucially, \emph{this intersection lies to the left of the sonic point} $P_2=(V_2, Q_2)$ (see \eqref{def:V2}). To prove this we first need to establish sharper bounds on the local Mach number at infinity $\frac{\underline v_1}{\underline q_1}$. In~\cite{CSV26} it was shown (see Proposition A.1 there) that $\frac{\underline v_1}{\underline q_1}<0$, but for the purpose of showing that the solution $(V,Q)$ misses the sonic point this bound turns out to be insufficient. In Proposition~\ref{prop:ub:imp}, we establish the new improved bound $\frac{\underline v_1}{\underline q_1} < -\tfrac{3}{5}\sqrt{\tfrac{2\alpha}{3\gamma}}$.

To show this bound we construct, \textit{in the implosion phase portrait}, a polynomial upper barrier $(\mathcal{W}(q),q)$ for the imploding trajectory $(\bar V,\bar Q)$. Establishing that $(\mathcal{W}(q),q)$ is an upper barrier reduces to checking the sign of the scalar product of the tangent to the curve against the vector field generating the ODE \eqref{2x2:imp}. After some algebra, we are left with checking the sign of a polynomial $\PP(t)$ (defined in \eqref{p:def:I}) of degree $6$ in $t$ on $[0,1]$, where the coefficients are explicit functions of $\alpha$. By writing the polynomial $\PP$ in a Bernstein basis of degree $8$, we reduce the problem to checking the sign of $10$ explicit coefficients $\beta_j(\alpha)$ (see Lemma~\ref{lemma:Bernstein}). 

Once the bound $\frac{\underline v_1}{\underline q_1}<-\tfrac{3}{5}\sqrt{\tfrac{2\alpha}{3\gamma}}$ is established, we use this information to construct a polynomial lower barrier \textit{in the explosion phase portrait} $(\mathcal{W}_E(q), q)$, in order to show that the solution $(V,Q)$ does not hit the sonic point $P_2$. As before, proving that $(\mathcal{W}_E(q), q)$ is a lower barrier reduces to checking the sign of a polynomial $\QQ(t)$ of degree $5$ in $t$. Again, we rewrite this polynomial in a Bernstein basis of degree $5$ with coefficients $\BB_j(\alpha)$ (see Lemma~\ref{lemma:Bernstein:q}). For each fixed rational value of $\alpha$, the sign of the coefficients $\BB_j(\alpha)$ can be verified elementarily, since each of them reduces to determining the signs of six constant algebraic expressions. Since extending this argument uniformly to the entire interval $\alpha \in (0,1+\sqrt{2}]$ would not provide any additional insight, we instead give two computer-assisted proofs; see Appendix~\ref{app:rational-cover-bernstein-q} and Appendix~\ref{app:CA} for the details.

Once we know that the solution $(V,Q)$ does not hit the sonic point $P_2$, we construct a trajectory $(V^\infty,\allowbreak Q^\infty)$ connecting the point $P_2$ to the point $P_6$ at $Q=+\infty$ in Proposition~\ref{prop:infty}. We then consider the \textit{Hugoniot locus} of the trajectory $(V,Q)$, defined as $\mathrm{RH}(V(\xi), Q(\xi))$ (see \eqref{RH:jump}). An application of the intermediate value theorem first gives an intersection between the Hugoniot image of the outer branch and the inner trajectory $(V^\infty,Q^\infty)$; rescaling the inner trajectory in $\xi$, we arrange that this intersection occurs at the same value $\xi=\xiRH$. The proof is then concluded after redefining
\begin{equation*}
(V,Q)(\xi)= \begin{cases} 
     (V,Q)(\xi) \quad & \xiRH < \xi < +\infty, \\
(V^\infty,Q^\infty)(\xi) \quad & 0 < \xi <\xiRH, 
\end{cases}
\end{equation*}
 and observing that such a solution naturally induces a shock solution for the Euler equations \eqref{eq:euler:cl} via the transformation \eqref{forward:profiles}.


\section{A new bound for the imploding self-similar profiles} 
\label{nb:imp}
The goal of this section is to show upper bounds for $\frac{\underline v_1}{\underline q_1}$ for the imploding solutions $(\bar U, \bar \Sigma, \bar B)$ of Theorem~\ref{th:csv}, where $\underline v_1$ and $\underline q_1$ are defined in \eqref{eq:lim:infty}. This section is devoted to constructing a suitable barrier for the system of ODEs \eqref{bw:ss:euler} that will allow us to prove the following bound.
\begin{proposition} \label{prop:ub:imp}
Let $d=3$, $\alpha \in (0,1+\sqrt{2}]$, $\gamma=2\alpha+1$, and $\NNN = 1$. Let $(\bar U, \bar \Sigma, \bar B)$ be the global-in-$R$ imploding solution from Theorem~\ref{th:csv}, and let $\underline v_1$, $\underline q_1$ be the leading coefficients as $R \to +\infty$ in \eqref{eq:lim:infty}. Then,
\begin{equation} \label{upp:bound:imp}
- \sqrt{\tfrac{2 \alpha}{3 \gamma}} \le \tfrac{\underline v_1}{\underline q_1} < \rr=\rr(\alpha)  := - \tfrac{3}{5} \sqrt{\tfrac{2\alpha}{3\gamma}} <0.
\end{equation}
\end{proposition}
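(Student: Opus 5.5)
We work in the renormalized implosion variables $\bar U=R\bar V$, $\bar\Sigma=R\bar Q$, in which \eqref{bw:ss:euler} closes, after eliminating $\bar B$ through \eqref{bw:ss:euler:B}, to an autonomous $2\times2$ system for $(\bar V,\bar Q)$ in the time $\log R$, namely the system \eqref{2x2:imp}. As $R\to+\infty$, the asymptotics \eqref{eq:lim:infty} give $\bar V\sim \underline v_1 R^{-1/\crr}$ and $\bar Q\sim\underline q_1R^{-1/\crr}$. The trajectory therefore enters the origin $(0,0)$ of the phase plane tangent to the ray $V=\frac{\underline v_1}{\underline q_1}Q$. So the slope $\underline v_1/\underline q_1$ is exactly the limiting slope $\lim \bar V/\bar Q$ along the imploding trajectory as $Q\to0^+$. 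Both bounds in \eqref{upp:bound:imp} become statements about which side of certain curves through $(0,0)$ the trajectory lies on near the origin.

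\textbf{Lower bound.} For the lower bound we linearize \eqref{2x2:imp} at $(0,0)$. The admissible tangent directions at this node are the rays $V=\mu Q$ whose slope $\mu$ solves the quadratic obtained by inserting $V=\mu Q$ into the leading-order (quadratic in $(V,Q)$) part of the field. I expect that the set of slopes compatible with the trajectory's approach, given $\bar Q>0$ and the sign structure already known from \cite{CSV26} (namely $\underline v_1<0$), is bounded below by the critical value $-\sqrt{2\alpha/(3\gamma)}$. This value is where the lines $V=\mu Q$ become invariant or degenerate; for $d=3$ it plausibly comes from balancing $\alpha Q^2$ against $\frac{\gamma}{2}V^2\cdot\frac{3}{\alpha}$-type terms. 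So the lower bound should follow from a short invariant-region argument: the ray $V=-\sqrt{2\alpha/(3\gamma)}\,Q$ is not crossed, which we check by the sign of the field's normal component along it.

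\textbf{Strict upper bound.} For the strict upper bound we construct an explicit polynomial upper barrier $V=\mathcal W(Q)$ on $Q\in[0,Q_*]$. We require $\mathcal W(0)=0$ and $\mathcal W'(0)=\rr=-\tfrac35\sqrt{2\alpha/(3\gamma)}$, plus a quadratic or cubic correction. The correction's coefficients are chosen as explicit functions of $\alpha$ so that the curve joins to a region where the position of $(\bar V,\bar Q)$ is already known from \cite{CSV26}, for instance near the sonic point the ground-state trajectory passes through, or the smooth crossing point at $R=0$. We then show the trajectory stays strictly below $\mathcal W$ by the standard barrier argument. First, the trajectory starts below the curve at the far end. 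Second, along the curve the scalar product of the normal $(1,-\mathcal W'(Q))$ with the vector field of \eqref{2x2:imp} has a fixed sign, after multiplying by the (sign-definite) common denominator, which is the sonic factor. Hence the trajectory can never cross the curve as $\log R$ increases toward the origin. Substituting $Q=Q_*t$ turns this sign condition into the nonnegativity of a polynomial $\PP(t)$ on $[0,1]$ with coefficients rational in $\alpha$ and $\sqrt{\alpha/\gamma}$. Strictness at the origin means the trajectory lies below the curve to second order; we obtain it from a strict inequality of the first nontrivial Taylor coefficient of the field, or by perturbing the slope slightly.

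\textbf{Main obstacle.} The hardest step will be verifying $\PP(t)\ge0$ on $[0,1]$ uniformly for $\alpha\in(0,1+\sqrt2]$. I would first try degree elevation of $\PP$ into a Bernstein basis of degree $8$, giving coefficients $\beta_j(\alpha)$, and then check each $\beta_j(\alpha)\ge0$ by clearing denominators and the square root. To handle the square root, substitute $\alpha$ in terms of $s=\sqrt{2\alpha/(3\gamma)}\in(0,s_{\max}]$ so everything is polynomial in $s$. Each coefficient then reduces to a one-variable polynomial sign check on an interval, done by Sturm sequences or a further Bernstein expansion in $s$. If some coefficient fails near the endpoints $\alpha\to0$ or $\alpha=1+\sqrt2$, I would tune the free coefficient of $\mathcal W$ as an $\alpha$-dependent function, or split the $\alpha$ range.
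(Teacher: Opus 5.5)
Your strategy for the strict upper bound is the paper's. It takes $\mathcal W(q)=G(q/Q_0)$ with $G(t)=-\tfrac{3}{10}V_0t^2-\tfrac35V_0t$, so that $\mathcal W'(0)=\rr$. It reduces the barrier property to $\mathcal B^I=\Delta_{\bar V}-\tfrac{G'}{Q_0}\Delta_{\bar Q}\le0$ on $[0,1]$, factors $\mathcal B^I=t^2\PP(t)$, and expands $\PP$ in the degree-$8$ Bernstein basis. Three details of your plan need fixing.

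\emph{Anchor.} The anchor is the point $(-V_0,Q_0)$ from which the trajectory emanates as $R\to0^+$, not a sonic point. The paper takes $Q_*=Q_0$, needs only $\mathcal W(Q_0)=-\tfrac{9}{10}V_0\ge -V_0$, and uses from \cite{CSV26} that $\bar Q$ decreases from $Q_0$ to $0$ and that $\Delta_{\bar Q}<0$ on $(-V_0,0)\times(0,Q_0)$.

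\emph{Radical.} The radical in the coefficients is $\sqrt{1+102\alpha+249\alpha^2}$, coming from $\crr$; with $t=q/Q_0$ the factor $\sqrt{\alpha/\gamma}$ cancels. Your substitution $s=\sqrt{2\alpha/(3\gamma)}$ therefore does not make the coefficients polynomial. The paper writes $\beta_j=M_j+\sqrt{1+102\alpha+249\alpha^2}\,N_j$ and bounds the concave radical by tangent lines, which reduces everything to sign checks of cubics.

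\emph{Termwise sign check.} This fails: $\beta_6$ has the wrong sign in the interior of the $\alpha$-range. For example, $\beta_6(\tfrac13)\approx 54>0$, while the target is $\beta_j<0$, so tuning near the endpoints would not catch it. The paper instead proves $\beta_5+\tfrac23\beta_6<0$ and $\beta_7+\tfrac23\beta_6<0$, and uses $\binom86t^6(1-t)^2\le\tfrac23\bigl(\binom85t^5(1-t)^3+\binom87t^7(1-t)\bigr)$ on $[0,1]$.

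Your lower-bound heuristic does not work. The linear part of \eqref{2x2:imp} at $(0,0)$ is $-\crr^{-1}I$, so the origin is a star node. Every slope is the tangent direction of some trajectory, and no quadratic for ``admissible slopes'' constrains $\underline v_1/\underline q_1$. The constant $\sqrt{2\alpha/(3\gamma)}$ equals $V_0/Q_0$, the slope of the segment joining $(0,0)$ to $(-V_0,Q_0)$. This segment is the lower barrier of Proposition A.1 in \cite{CSV26}, which the paper simply cites. Your invariant-ray fallback is therefore the right idea, but the trajectory starts \emph{on} this ray, at the stationary point $(-V_0,Q_0)$. So checking the sign of the normal component along the ray is not enough. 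You must also show that the trajectory leaves $(-V_0,Q_0)$ into the side $V>-\tfrac{V_0}{Q_0}Q$, or else cite \cite{CSV26}.

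For strictness you take a genuinely different route, and it works. The paper compares with a second trajectory starting at $Q_0$ strictly between $-V_0$ and $\mathcal W(Q_0)$, which is trapped between $\mathcal V$ and $\mathcal W$. It then invokes the fact that at the star node each tangent direction carries at most one trajectory.

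Your perturbation idea avoids that input. The $t^2$-divisibility of $\mathcal B^I$ holds for any $G$ with $G(0)=0$. Since $\PP<0$ on the closed interval $[0,1]$ and $G(1)>-V_0$, the barrier $G-\epsilon t$ still satisfies all conditions for small $\epsilon>0$ at fixed $\alpha$. This gives $\underline v_1/\underline q_1\le\rr-\epsilon/Q_0$.

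Alternatively, your Taylor-coefficient idea also works. Suppose $\tfrac{\mathcal W-\mathcal V}{q}\to0$. Then $\PP(0)<0$ forces $\tfrac{d}{dq}\bigl(\tfrac{\mathcal W-\mathcal V}{q}\bigr)$ to be bounded above by a negative constant near $q=0$, which contradicts $\mathcal W\ge\mathcal V$. Either way, strictness follows from the strict form of the barrier estimate alone.
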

\begin{remark}[\textbf{ $\underline v_1 <0$ is not sufficient}]
    In Proposition A.1 in~\cite{CSV26}, the authors proved $\underline v_1 <0$. This bound is not sufficient to describe the maximal development of the exploding trajectory $(V,Q)$ in Section~\ref{exp:pp}. In particular, in order to prove Proposition~\ref{prop:sub:sonic}, it is essential to use the stronger upper bound from \eqref{upp:bound:imp}.
\end{remark}
\subsection[A closed system for (bar V, bar Q)]{A closed system for \texorpdfstring{$(\bar V,\bar Q)$}{(bar V, bar Q)}}
We start by recalling some details of the construction carried out in~\cite{CSV26} to prove Theorem~\ref{th:csv}. We introduce the constants
\begin{equation}
V_0 = \tfrac{1}{1+3 \alpha }, \quad Q_0 = \tfrac{1}{1+3 \alpha }\sqrt{\tfrac{3\gamma}{2\alpha}},
\end{equation}
and fix
\begin{equation} \label{eq:cb:value}
\cbb= \crr-\tfrac{1}{1+3\alpha} = \crr-V_0.
\end{equation}
Moreover, we introduce the following renormalization for the profiles $(\bar U, \bar \Sigma, \bar B)$
\begin{equation} \label{renorm:imp}
\bar U(R) =: R \bar V(R), \quad \bar \Sigma(R) =: R \bar Q(R), \quad \bar B(R)=: R \bar H(R).
\end{equation}
For $d=3$, the system \eqref{bw:ss:euler} transforms into
\begin{subequations} \label{eq:ss:eul:v}
\begin{align}
(1+(1+3 \alpha)\bar V)\bar Q+(\crr+\bar V) R \pp_R \bar Q + \alpha \bar Q R \pp_R \bar V&=0, \\
(\bar V + \bar V^2 + \tfrac{2\alpha^2}{\gamma}\bar Q^2)+ (\crr + \bar V)R \pp_R \bar V+ \alpha \bar Q R \pp_R \bar Q - \tfrac{\alpha}{\gamma} \bar Q^2\tfrac{R \pp_R \bar H}{\bar H}&=0, \label{eq:Q:i} \\
(\crr-\cbb+ \bar V)\bar H+ (\crr+ \bar V)R\pp_R \bar H&=0. \label{eq:H:i}
\end{align} 
\end{subequations}
If $\bar V \neq -\crr$ and $\bar H \neq 0$, we may use \eqref{eq:H:i} to express  $\frac{R\pp_R \bar H}{\bar H} = - \frac{\crr-\cbb+\bar V}{\crr+\bar V}$, plug it into \eqref{eq:Q:i} and reduce the system \eqref{eq:ss:eul:v} to a closed system for the unknowns $\bar V$ and $\bar Q$. On the set where $\bar \Delta[\bar V,\bar Q]\neq0$, solving the resulting $2\times2$ linear system for $R\partial_R\bar V$ and $R\partial_R\bar Q$, we obtain the autonomous ODE
\begin{subequations} \label{2x2:imp} \begin{align}
R \pp_R \bar V = \frac{ \Delta_{\bar V}[\bar V, \bar Q]}{\bar \Delta[\bar V, \bar Q]},  \\
R \pp_R \bar Q = \frac{ \Delta_{\bar Q}[\bar V, \bar Q]}{\bar \Delta[\bar V, \bar Q]},
  \end{align}
 \end{subequations}
 where\footnote{In~\cite{CSV26}, the authors use $\Delta$ to denote the denominator of~\eqref{2x2:imp}. Here we use $\bar \Delta$ for the same object, since we reserved the notation $\Delta$ for the explosion phase, see \S~\ref{P:exp}.}
\begin{subequations} \label{deltas:imp}\begin{align}
 \Delta_{\bar V}[\bar V, \bar Q]&:= \tfrac{\alpha^2 (2+3 \gamma)}{\gamma} (\crr +\bar V) \bar Q^2 (\bar V+V_0) - (\crr+\bar V)^2 (\bar V+\bar V^2 + \tfrac{2 \alpha^2}{\gamma} \bar Q^2),\\
  \Delta_{\bar Q}[\bar V, \bar Q] &:= \alpha \bar Q  (\crr+\bar V)(\bar V+\bar V^2 + \tfrac{2 \alpha^2}{\gamma} \bar Q^2) + \tfrac{\alpha^2}{\gamma} \bar Q^3 ( \bar V + V_0) - (1+3 \alpha )(\crr+ \bar V)^2 \bar Q (\bar V+V_0), \\
  \bar \Delta[\bar V, \bar Q] &:= (\crr+\bar V)(\crr+ \bar V - \alpha \bar Q) ( \crr + \bar V + \alpha \bar Q).
 \end{align}\end{subequations}

\subsection[Properties of (bar V, bar Q)]{Properties of \texorpdfstring{$(\bar V, \bar Q)$}{(bar V, bar Q)}}

For an integer $n \ge 1$, introduce the expression $\mathsf{E}_n:=\mathsf{E}_n(\gamma, d)$
\begin{equation*}
\mathsf{E}_n:= \tfrac{\alpha \gamma d (d+2)}{4 n} + \tfrac{\alpha d (1+\alpha d)}{2 n^2},
\end{equation*}
and define the similarity exponents
\begin{equation} \label{cr:formula:g}
\crrs(d, \gamma, \NNN):= \tfrac{1}{1+\alpha d} \left(1 + \sqrt{\tfrac{\alpha d \gamma}{2} + \mathsf{E}_\NNN + \tfrac{(1-\alpha d)^2}{16 \NNN^2}} + \tfrac{1-\alpha d}{4 \NNN} \right),
\end{equation}
and
\begin{equation} \label{cb:formula:g}
\cbbs(d, \gamma, \NNN) := \crrs(d, \gamma, \NNN)- \tfrac{1}{1+\alpha d} = \tfrac{1}{1+\alpha d} \left( \sqrt{\tfrac{\alpha d \gamma}{2} + \mathsf{E}_\NNN + \tfrac{(1-\alpha d)^2}{16 \NNN^2}} + \tfrac{1-\alpha d}{4 \NNN} \right).
\end{equation}
For most of the paper, we will focus on the case $d=3$ and $\NNN=1$, in which case the expressions above reduce to
\begin{equation}\label{En:exp}
\mathsf{E}_1= \tfrac{15\alpha \gamma}{4 } + \tfrac{3 \alpha (1+3 \alpha)}{2 },
\end{equation}
\begin{equation} \label{cr:formula}
\crrs(3, 1+2\alpha, 1):= \tfrac{1}{1+3 \alpha} \left(1 + \sqrt{\tfrac{3 \alpha \gamma}{2} + \mathsf{E}_1 + \tfrac{(1-3 \alpha)^2}{16 }} + \tfrac{1-3 \alpha}{4 } \right),
\end{equation}
and
\begin{equation} \label{cb:formula}
\cbbs(3, 1+2\alpha, 1) := \crrs(3, 1+2\alpha, 1)- \tfrac{1}{1+3 \alpha} = \tfrac{1}{1+3 \alpha} \left( \sqrt{\tfrac{3 \alpha \gamma}{2} + \mathsf{E}_1 + \tfrac{(1-3 \alpha)^2}{16 }} + \tfrac{1-3 \alpha}{4 } \right).
\end{equation}
Set $d=3$, $\alpha \in (0,1+\sqrt{2}]$, $\crr=\crrs(3, \gamma, 1)$, and $\cbb=\cbbs(3, \gamma, 1)$. Then, $(\bar V(R), \bar Q(R))$ are global, smooth solutions (for $R \in(0, +\infty)$) to the system \eqref{2x2:imp}. As $R \to 0^+$, we have
\begin{equation*}
\lim_{R \to 0^+} \bar V(R) = -V_0, \quad \lim_{R \to 0^+} \bar Q(R) = Q_0,
\end{equation*}
while, from \eqref{eq:lim:infty}, as $R \to +\infty$ we have
\begin{equation} \label{eq:infty:bar:V}
\lim_{R \to \infty} R^{\frac{1}{\crr}} \bar V = \underline v_1, \quad \lim_{R \to \infty} R^{\frac{1}{\crr}} \bar Q = \underline q_1.
\end{equation}
Moreover, it was shown in Corollary 2.16 in~\cite{CSV26} that there exists a map $\mathcal{V}: (0, Q_0) \to \mathbb{R}$ such that $\mathcal{V}(q) = \bar V ( \bar Q^{-1}(q))$\footnote{This follows from the fact that $\bar Q(R)$ is a strictly decreasing function in $R$.}. By taking limits as $R \to 0^+$ and $R \to +\infty$, $\mathcal{V}$ can be extended continuously to all $[0,Q_0]$, with $\mathcal{V}(0)=0$ and $\mathcal{V}(Q_0)=-V_0$. For $q \in (0, Q_0)$, the chain rule and \eqref{2x2:imp} give
\begin{equation} \label{eq:mc:V:i}
\frac{d \mathcal{V}}{d q} = \tfrac{\Delta_{\bar V}}{\Delta_{\bar Q}} [ \mathcal{V}(q), q],
\end{equation}
and as a consequence of \eqref{eq:infty:bar:V}, we have
\begin{equation*}
\lim_{q \to 0^+} \tfrac{ \mathcal{V}(q)}{ q} = \partial_q^+\mathcal{V}(0) = \tfrac{\underline v_1}{\underline q_1}.
\end{equation*}

\subsection{An explicit upper barrier}
We seek a $C^1$ function $\mathcal{W} \colon [0,Q_0] \to [-V_0, 0]$ such that $\mathcal{W}$ serves as an upper barrier for the function $\mathcal{V}$. It is convenient to introduce the variable $t:=\tfrac{q}{Q_0}$, and a function $G \colon [0,1] \to [-V_0, 0]$ such that
\begin{equation*}
\mathcal{W}(q) := G(t).
\end{equation*}
To guarantee that $\mathcal{W}$ is an upper barrier for $\mathcal{V}$ we must ensure the following
\begin{enumerate}[leftmargin=2em]
\item \label{cond:i:bar} Endpoint conditions: $G(0)=0$ and $G(1)\ge -V_0$. This ensures $\mathcal{W}(0)=0$ and $\mathcal{W}(Q_0)\ge\mathcal{V}(Q_0)=-V_0$.
\item  \label{cond:ii:bar} Derivative upper bound:  $ G'(0) \le -\tfrac{3}{5+15\alpha}$. This condition is equivalent to  $\mathcal{W}'(0) \le \rr$. 
\item \label{cond:iii:bar}  Barrier condition: we must ensure that for all $t \in (0, 1)$ we have  
\begin{equation} \label{eq:upper:barrier:cond}
\tfrac{\Delta_{\bar V}}{\Delta_{\bar Q}}[G(t), Q_0 t] \ge \tfrac{G'(t)}{Q_0}.
\end{equation}
Equation \eqref{eq:upper:barrier:cond} is a restatement of the inequality $\tfrac{\Delta_{\bar V}}{\Delta_{\bar Q}}[\mathcal{W}(q), q] \ge \mathcal{W}'(q)$.
\end{enumerate}
We now choose $G$ to be an explicit polynomial of degree $2$ in $t$.
\begin{proposition} \label{prop:barrier:imp} Let $d=3$, $\alpha \in (0,1+\sqrt{2}]$ and $\NNN=1$. Then, the function
\begin{equation} \label{def:G:imp}
G(t) := - \tfrac{3}{10} \tfrac{1}{1+3 \alpha} t^2 -\tfrac{3}{5+15\alpha}t = - \tfrac{3}{10}V_0 t^2 - \tfrac{3}{5} V_0 t
\end{equation}
satisfies the conditions \eqref{cond:i:bar}--\eqref{cond:iii:bar}.
\end{proposition}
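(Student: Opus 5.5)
Conditions \eqref{cond:i:bar} and \eqref{cond:ii:bar} are immediate from \eqref{def:G:imp}. We have $G(0)=0$ and $G'(0)=-\tfrac35 V_0=-\tfrac{3}{5+15\alpha}$, so \eqref{cond:ii:bar} holds with equality. We also have $G(1)=-\tfrac{9}{10}V_0\ge -V_0$, and $G$ is decreasing on $[0,1]$, so it indeed maps into $[-V_0,0]$. All of the work is therefore in the barrier condition \eqref{cond:iii:bar}.

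For \eqref{cond:iii:bar}, we substitute $\bar V=G(t)$, $\bar Q=Q_0t$ into \eqref{deltas:imp}. The first step is to determine the sign of $\Delta_{\bar Q}[G(t),Q_0t]$ on $(0,1)$, so that the inequality can be cross-multiplied. Since $\bar Q$ is decreasing along the imploding trajectory and $\bar\Delta$ has a fixed sign away from the sonic line, we expect $\Delta_{\bar Q}$ to have a definite sign near the trajectory. We will check this directly. Each term of $\Delta_{\bar Q}$ carries a factor $Q_0t$, and after dividing it out we obtain a polynomial in $t$ in which the factors $\crr+G$, $G+V_0=V_0(1-\tfrac35t-\tfrac3{10}t^2)>0$ and $G+G^2+\tfrac{2\alpha^2}{\gamma}Q_0^2t^2$ appear. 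Here we use $\tfrac{2\alpha^2}{\gamma}Q_0^2=3\alpha V_0^2$, and $\crr>V_0$ gives $\crr+G>0$. Once the sign is fixed, \eqref{eq:upper:barrier:cond} becomes the polynomial inequality
\begin{equation*}
\PP(t):=\pm\Bigl( Q_0\Delta_{\bar V}[G(t),Q_0t]-G'(t)\Delta_{\bar Q}[G(t),Q_0t]\Bigr)\ge 0 \quad\text{on } (0,1).
\end{equation*}
The expression carries a common factor $t$, coming from $G(0)=0$, and possibly $t^2$ after using $G'(0)$. After removing these factors, we expect a polynomial of degree $6$ in $t$ whose coefficients are explicit in $\alpha$, $\crr$ and $\gamma=2\alpha+1$, with $V_0=1/(1+3\alpha)$.

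The coefficients still involve $\crr$, which contains a square root in $\alpha$. The next step is therefore to exploit that $\PP$ is polynomial, typically quadratic, in $\crr$. We can either use the algebraic relation satisfied by $\crr$ (from \eqref{cr:formula}, $(1+3\alpha)\crr-1-\tfrac{1-3\alpha}{4}$ squares to an explicit polynomial in $\alpha$) to reduce the dependence on $\crr$ to linear, or bound $\crr$ between simple rational functions of $\alpha$ and use monotonicity in $\crr$. Then we expand $\PP$ in a Bernstein basis on $[0,1]$ of degree $8$, elevating the degree if the degree-$6$ coefficients are not all of one sign. Since Bernstein polynomials are nonnegative on $[0,1]$, it suffices to show that each coefficient $\beta_j(\alpha)$ is nonnegative for $\alpha\in(0,1+\sqrt2]$, with strict positivity somewhere so that the inequality is strict where needed.

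The main obstacle is this final verification. There are about ten coefficients $\beta_j(\alpha)$, each an explicit expression in $\alpha$ and the surd $\sqrt{\cdot}$ from \eqref{cr:formula}, and each must be shown positive uniformly on $(0,1+\sqrt2]$. The endpoints $\alpha\to0$ (where $V_0\to1$ and $Q_0\to\infty$ relative to the other scales) and $\alpha=1+\sqrt2$ are the most delicate. I would first try to isolate the square root, writing $\beta_j=A_j(\alpha)+B_j(\alpha)\sqrt{D(\alpha)}$, and prove positivity by a case split on the signs of $A_j$ and $B_j$ followed by squaring, which reduces everything to sign checks of one-variable polynomials (Sturm sequences or interval arithmetic). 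Failing that, I would fall back on rigorous interval arithmetic over a fine cover of $(0,1+\sqrt2]$, with a separate asymptotic expansion near $\alpha=0$.
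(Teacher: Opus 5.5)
Your reduction follows the paper's route: fix the sign of $\Delta_{\bar Q}$, clear denominators, extract the factor $t^2$, and expand the degree-$6$ polynomial in degree-$8$ Bernstein form with coefficients of the shape $M_j+\sqrt{1+102\alpha+249\alpha^2}\,N_j$. The gap is that the step you declare sufficient is false: the degree-$8$ Bernstein coefficients do not all have one sign on $(0,1+\sqrt2]$.

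In the paper's normalization $\beta_j$, where one needs $\PP\le0$, the coefficient $\beta_6$ is positive on part of the range, including the physical values.
\begin{itemize}
\item At $\alpha=\tfrac15$ one has $\sqrt{1+102\alpha+249\alpha^2}=\tfrac{28}{5}$, and one finds $\beta_5\approx-317.9$, $\beta_6=55$, $\beta_7=-51.8$, $\beta_8=-617.6$.
\item At $\alpha=\tfrac13$ again $\beta_6\approx54>0$.
\end{itemize}
So the verification you call the main obstacle cannot be completed as stated, however the radical is handled. Note also that the difficulty sits in the interior of the $\alpha$-range, not at the endpoints you flagged.

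The missing ingredient is to absorb the one bad coefficient into its neighbours. On $[0,1]$ one has
\[
\tbinom{8}{6}t^6(1-t)^2\le\tfrac23\Bigl(\tbinom{8}{5}t^5(1-t)^3+\tbinom{8}{7}t^7(1-t)\Bigr),
\]
because the difference equals $\tfrac43t^5(1-t)(53t^2-77t+28)$ and this quadratic has discriminant $-7$. Hence it suffices to prove $\beta_j<0$ for $j\ne6$, together with $\beta_5+\tfrac23\beta_6<0$ and $\beta_7+\tfrac23\beta_6<0$. The paper then bounds the concave radical above by two tangent lines, which reduces these ten conditions to twenty cubics in $\alpha$.

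Your fallback of further degree elevation is not a cheap substitute. At $\alpha=\tfrac15$ the degree-$9$ coefficient $\tfrac79\beta_6+\tfrac29\beta_7$ is still positive. So is the degree-$10$ coefficient $\tfrac1{45}(28\beta_6+16\beta_7+\beta_8)$. You would therefore need degree at least $11$, with correspondingly heavier uniform-in-$\alpha$ checks.

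Separately, you leave the sign of $\Delta_{\bar Q}[G(t),Q_0t]$ as ``$\pm$''. The paper takes $\Delta_{\bar Q}<0$ on $(-V_0,0)\times(0,Q_0)$ from Corollary~2.16 of~\cite{CSV26}. You need that citation, or a completed direct check, before you can cross-multiply.
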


We observe, from the definition of $G$ in \eqref{def:G:imp}, that $(G(t), Q_0 t) \in (-V_0, 0) \times (0, Q_0)$ for every $t \in (0,1)$. In particular, by Corollary 2.16 in~\cite{CSV26} we have
\begin{equation*}
\Delta_{\bar Q}[G(t), Q_0 t] < 0 \quad \forall t \in (0, 1).
\end{equation*}
This implies that the inequality \eqref{eq:upper:barrier:cond} can be rearranged as 
\begin{equation} \label{ineq:upper:im}
\mathcal{B}^I(t):=\Delta_{\bar V}[G(t), Q_0 t] - \tfrac{G'(t)}{Q_0}  \Delta_{\bar Q}[G(t), Q_0 t] \le 0 \quad \forall t \in [0, 1].
\end{equation}
A direct computation shows that $\mathcal{B}^I(t)$ is a polynomial of degree $8$ in $t$; but we observe that $\mathcal{B}^I(t)$ is divisible by $t^2$. 
\begin{lemma} \label{pol:division}
Let $\mathcal{B}^I(t)$ be the polynomial defined in \eqref{ineq:upper:im}. Then,  $\mathcal{B}^I$ is divisible by $t^2$. In particular, 
\begin{equation} \label{p:def:I}
\PP(t):=  t^{-2} \mathcal{B}^I(t),
\end{equation}
is a polynomial of degree at most $6$ in $t$.
\end{lemma}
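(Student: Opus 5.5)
The plan is to expand $\mathcal{B}^I(t)$ around $t=0$ and show directly that its constant and linear coefficients both vanish. Since $\mathcal{B}^I$ is a polynomial in $t$ (it is built from the polynomials $G$, $G'$, $Q_0t$ and the polynomial expressions \eqref{deltas:imp}), this is equivalent to divisibility by $t^2$. The degree count then gives that $\PP$ has degree at most $6$.

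\emph{Structure of the two terms.} The key facts are $G(0)=0$, $G'(0)=-\tfrac35 V_0$, and $Q_0 t$ vanishing at $t=0$.

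In $\Delta_{\bar V}[G(t),Q_0t]$, the first summand $\tfrac{\alpha^2(2+3\gamma)}{\gamma}(\crr+G)Q_0^2t^2(G+V_0)$ is $O(t^2)$. The second summand is $-(\crr+G)^2(G+G^2+\tfrac{2\alpha^2}{\gamma}Q_0^2t^2)$. Because $G=O(t)$, its only possible linear contribution comes from $-\crr^2 G$. Hence
\begin{equation*}
\Delta_{\bar V}[G(t),Q_0t] = -\crr^2 G'(0)\, t + O(t^2).
\end{equation*}

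Every summand of $\Delta_{\bar Q}$ carries a factor $\bar Q$, so $\Delta_{\bar Q}[G(t),Q_0t]=Q_0t\cdot \mathcal{R}(t)$ for a polynomial $\mathcal{R}$. Evaluating at $t=0$, the first term of $\mathcal{R}$ vanishes since $G(0)=0$, and the $\bar Q^2$ term vanishes as well. Only $-(1+3\alpha)\crr^2 V_0=-\crr^2$ survives, using $(1+3\alpha)V_0=1$. Therefore
\begin{equation*}
\tfrac{G'(t)}{Q_0}\Delta_{\bar Q}[G(t),Q_0t] = G'(t)\,t\,\mathcal{R}(t)= -\crr^2G'(0)\,t+O(t^2).
\end{equation*}

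\emph{Conclusion.} Subtracting the two expansions, the constant terms of $\mathcal{B}^I$ are both zero and the linear terms cancel exactly. This gives $\mathcal{B}^I(0)=(\mathcal{B}^I)'(0)=0$, so $t^2\mid\mathcal{B}^I$. The cancellation does not use the specific value $G'(0)=-\tfrac35V_0$, only $G(0)=0$ and the normalization $(1+3\alpha)V_0=1$.

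\emph{Degree count.} With $\deg G=2$:
\begin{itemize}
\item In $\Delta_{\bar V}$, the term $(\crr+\bar V)^2\bar V^2$ has degree $8$, and all other terms have lower degree.
\item In $G'\Delta_{\bar Q}$, the terms $\alpha\bar Q(\crr+\bar V)\bar V^2$ and $(\crr+\bar V)^2\bar Q\bar V$ have degree $7$; multiplied by $G'$ (degree $1$) they give degree $8$.
\end{itemize}
Thus $\deg\mathcal{B}^I\le 8$ and $\deg\PP\le 6$.

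There is no real obstacle here. The only care needed is bookkeeping the low-order terms, in particular noticing the identity $(1+3\alpha)V_0=1$ that makes the linear coefficients match.
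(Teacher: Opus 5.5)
Your proposal is correct and follows the paper's own route: check that $\mathcal{B}^I(0)=0$ and $(\mathcal{B}^I)'(0)=0$, with the linear coefficients $-\crr^2G'(0)$ cancelling. The paper phrases this via the gradients of $\Delta_{\bar V}$ and $\Delta_{\bar Q}$ at $(0,0)$, while you factor $\bar Q$ out of $\Delta_{\bar Q}$; both rely on $(1+3\alpha)V_0=1$, and your degree count supplies the detail the paper leaves to ``a direct computation.''
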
 
\begin{proof}[Proof of Lemma~\ref{pol:division}]
Since $\mathcal{B}^I(t)$ is a polynomial, it is sufficient to prove that $t=0$ is a root of multiplicity at least two, namely
\begin{equation} \label{const:div}
\mathcal{B}^I(0) = 0, \quad \frac{d}{dt}\mathcal{B}^I (0) =0.
\end{equation}
By a direct computation we obtain
\begin{equation*}
\mathcal{B}^I(0) = \Delta_{\bar V}[0,0] - \tfrac{G'(0)}{Q_0} \Delta_{\bar Q}[0,0] =0.
\end{equation*}
Another direct computation gives 
\begin{equation*}
( \partial_V \Delta_{\bar V}[V,Q], \partial_Q \Delta_{\bar V}[V,Q] )[0,0]= (-\crr^2, 0), \quad ( \partial_V \Delta_{\bar Q}[V,Q], \partial_Q \Delta_{\bar Q}[V,Q] )[0,0]= (0, -\crr^2),
\end{equation*}
which yields
\begin{align*}
\tfrac{d}{dt} \mathcal{B}^I(0) &=  \left.\tfrac{d}{dt}\Delta_{\bar V}[G(t), Q_0 t]\right|_{t=0}  -  \left.\tfrac{d}{dt}\Delta_{\bar Q}[G(t), Q_0 t]\right|_{t=0} \tfrac{G'(0)}{Q_0} - \Delta_{\bar Q}[0,0]  \tfrac{G''(0)}{Q_0}   \\
&= -  G'(0) \crr^2 + G'(0) \crr^2 =0.
\end{align*}
\end{proof}
\subsection{Reduction to verifying the sign of a polynomial}
By Lemma~\ref{pol:division}, it is enough to prove
\begin{equation} \label{eq:P:sign}
\PP(t) \le 0 \quad \forall t \in [0, 1].
\end{equation}
We introduce the following notation for the coefficients of $\PP$
\begin{equation} \label{polynomial:coeff}
\PP(t) := \cp_0(\alpha)+\cp_1(\alpha) t + \cp_2(\alpha) t^2 + \cp_3(\alpha) t^3 + \cp_4(\alpha) t^4 + \cp_5(\alpha) t^5 + \cp_6(\alpha) t^6.
\end{equation}
Using \eqref{cr:formula} and \eqref{cb:formula}, specialized to $\NNN=1$ and with $\gamma=1+2\alpha$, we obtain
\begin{equation} \label{formula:NNN:1}
\crr = \crrs(3, 1+2\alpha, 1)=\tfrac{5-3\alpha+\sqrt{1+102\alpha+249\alpha^2}}{4+12\alpha},  \quad \cbb = \cbbs(3, 1+2\alpha, 1)=\tfrac{1-3\alpha+\sqrt{1+102\alpha+249\alpha^2}}{4+12\alpha}.
\end{equation}
Using \eqref{formula:NNN:1}, we can explicitly compute the coefficients $\cp_j$ solely in terms of $\alpha$.
The explicit expressions of these coefficients are recorded in \eqref{coeff:PP}, in Appendix~\ref{app:coef:p}.

To show that $\PP$ is negative on $[0,1]$, we introduce the degree $8$ Bernstein coefficients associated with $\PP$.  We prove that all but possibly one of these coefficients are negative, and that the remaining one is controlled by its neighboring coefficients.  The argument is elementary, but it requires checking the signs of $20$ cubics in $\alpha$ on $(0,1+\sqrt{2}]$.

\begin{lemma} \label{lemma:Bernstein}
We introduce the (rescaled) Bernstein coefficients of degree 8 associated to the polynomial \eqref{polynomial:coeff}. For any integer $0 \le j \le 8$ we define
\begin{equation} \label{Bernstein:coeff:imp}
\beta_j=\beta_j(\alpha) := \tfrac{20000(1+3\alpha)^4}{3} \sum_{i=0}^8 \cp_i(\alpha) \binom{j}{i}\binom{8}{i}^{-1},
\end{equation}
with the convention that $\cp_i(\alpha)=0$ for every $i \ge 7$.
For every $\alpha \in (0,1+\sqrt{2}]$ we have
\begin{subequations} \label{eq:lemma:pr}
\begin{align}
&\beta_j(\alpha) < 0, 
\quad \mbox{for all} \quad j \neq 6,\\
&\beta_5(\alpha) + \tfrac{2}{3} \beta_6(\alpha) <0, \\
&\beta_7(\alpha) + \tfrac{2}{3} \beta_6(\alpha) <0.
\end{align}
\end{subequations}
In particular, for $\alpha \in (0,1+\sqrt{2}]$ and $t \in [0,1]$ we have
\begin{equation} \label{PP:negative}
\PP(t) < 0.
\end{equation}
\end{lemma}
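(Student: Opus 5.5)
The argument has two parts: a combinatorial step showing that the sign conditions \eqref{eq:lemma:pr} imply \eqref{PP:negative}, and a verification of those sign conditions as explicit inequalities in $\alpha$.

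\emph{Combinatorial step.} Write $\PP$, viewed as a polynomial of degree at most $8$, in the Bernstein basis $b_{j,8}(t)=\binom{8}{j}t^j(1-t)^{8-j}$. Substituting $t^i=\sum_{j\ge i}\binom{j}{i}\binom{8}{i}^{-1}b_{j,8}(t)$ gives
\[
\PP(t)=\tfrac{3}{20000(1+3\alpha)^4}\sum_{j=0}^8\beta_j(\alpha)\, b_{j,8}(t).
\]
The prefactor is positive, so it suffices to show that $\sum_j\beta_j b_{j,8}(t)<0$ for $t\in[0,1]$. If $\beta_6<0$ all coefficients are negative; since the $b_{j,8}$ are nonnegative and sum to $1$, we are done. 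If $\beta_6\ge0$, split $\beta_6 b_{6,8}$ between its neighbours. On $[0,1]$,
\[
b_{6,8}=\tfrac{28}{56}\, t^6(1-t)^2\cdot 2
\le \tfrac{\binom86}{2\binom85}\tfrac{1-t}{t}\,b_{5,8}\ \text{ or }\ \tfrac{\binom86}{2\binom87}\tfrac{t}{1-t}\,b_{7,8},
\]
and the AM--GM inequality $t^6(1-t)^2\le \tfrac12\bigl(c\,t^5(1-t)^3+c^{-1}t^7(1-t)\bigr)$ holds for any $c>0$. Choosing $c$ to match the binomial weights $\binom85=56$, $\binom86=28$, $\binom87=8$, I expect to obtain
\[
b_{6,8}\le \theta_5 b_{5,8}+\theta_7 b_{7,8}
\]
with explicit constants $\theta_5,\theta_7$. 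The statement's factor $\tfrac23$ suggests the authors' bound has $\theta_5,\theta_7\le 3/2$. Checking: $2\sqrt{\theta_5\theta_7\binom85\binom87}\ge\binom86$, i.e. $\theta_5\theta_7\ge 784/1792=0.4375$, which $\theta=3/2$ easily satisfies. Then
\[
\beta_6 b_{6,8}\le \tfrac32\beta_6(b_{5,8}+b_{7,8}),
\]
so the sum is at most
\[
\sum_{j\ne5,6,7}\beta_jb_{j,8}+\tfrac32\bigl(\tfrac23\beta_5+\beta_6\bigr)b_{5,8}+\tfrac32\bigl(\tfrac23\beta_7+\beta_6\bigr)b_{7,8}.
\]
The precise pairing (whether the condition should read $\beta_5+\tfrac23\beta_6$ or $\tfrac23\beta_5+\beta_6$) needs to be adjusted so that the statement's conditions, with factor $\tfrac23$ on $\beta_6$, are what is required. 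I would fix the constant $c$ so that exactly those conditions appear. With that done, every term is $\le0$ and at least one is strictly negative, since not all the $b_{j,8}$ vanish at any $t$. This gives \eqref{PP:negative}.

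\emph{Verifying the sign conditions.} Use \eqref{coeff:PP} and \eqref{formula:NNN:1}. The coefficients $\cp_i$ depend on $\alpha$ and on $S:=\sqrt{1+102\alpha+249\alpha^2}$ polynomially, after the factor $(1+3\alpha)^4$ clears the denominators. Reducing $S^2$, each $\beta_j$ (and each of the two combinations) takes the form $A_j(\alpha)+B_j(\alpha)S$ with $A_j,B_j$ cubics; this gives the "20 cubics". For each such expression, determine the signs of $A_j$ and $B_j$ on $(0,1+\sqrt2]$ by root location or Sturm sequences. If both are negative we are done. If they have opposite signs, say $A_j<0<B_j$, the condition $A_j+B_jS<0$ is equivalent to $A_j^2-B_j^2S^2>0$, a polynomial inequality in $\alpha$ alone, checked the same way.

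\emph{Main obstacle.} I expect the hard step to be the sign verification near the endpoint $\alpha=0$, where the coefficients may degenerate. After factoring out the appropriate power of $\alpha$, strict negativity has to be confirmed from the leading-order terms.
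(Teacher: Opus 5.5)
\textbf{The combinatorial step fails as written.} The problem is exactly the pairing you left open.

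With $\theta_5=\theta_7=\tfrac32$, the splitting $\beta_6 b_{6,8}\le\tfrac32\beta_6(b_{5,8}+b_{7,8})$ (for $\beta_6\ge0$) requires $\beta_5+\tfrac32\beta_6<0$ and $\beta_7+\tfrac32\beta_6<0$. These conditions are strictly stronger than the hypotheses of the lemma, and they are in fact false on part of the range. At $\alpha=\tfrac15$ the radical equals $\tfrac{28}{5}$, and \eqref{expr:M}--\eqref{expr:N} (equivalently, \eqref{coeff:PP}) give $\beta_6=55$ and $\beta_7=-51.8$. So $\beta_7+\tfrac32\beta_6>0$, even though $\beta_7+\tfrac23\beta_6\approx-15.1<0$.

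The $\tfrac23$ in the statement is the weight on $\beta_6$. What you need is therefore $b_{6,8}\le\tfrac23\,(b_{5,8}+b_{7,8})$, i.e.\ $\theta_5=\theta_7=\tfrac23$. Your own criterion $\theta_5\theta_7\ge\tfrac7{16}$, which is necessary as well as sufficient, admits this, since $\tfrac49>\tfrac7{16}$ (i.e.\ $64>63$), but only barely. This is precisely the paper's inequality \eqref{interp:bernstein:coeff}. There the difference is $\tfrac43t^5(1-t)(53t^2-77t+28)$, whose quadratic factor has discriminant $-7$. With that constant your argument becomes the paper's, and the case $\beta_6<0$ is trivial. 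Your displayed identities for $b_{6,8}$ are also garbled; in fact $b_{6,8}=28\,t^6(1-t)^2=\tfrac{\binom86}{\binom85}\tfrac{t}{1-t}\,b_{5,8}$.

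\textbf{The sign checks take a different, lossless route, which is valid in principle.} The paper does not square. It uses concavity of $\sqrt{1+102\alpha+249\alpha^2}$ to bound the radical by the tangent lines $\tfrac{2061\alpha+271}{121}$ (at $\tfrac5{16}$) and $18\alpha+2$ (at $\tfrac15$). Since $\beta_j=M_j+\sqrt{\cdot}\,N_j$ is affine in the radical, this gives $\beta_j\le\max\{M_j,\,M_j+LN_j\}$. These are the paper's ``20 cubics'', not your $A_j,B_j$, and each is settled by Lemma~\ref{lemma:cubic:criterion:new} or Lemma~\ref{lemma:cubic:onepoint:new}.

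Your squaring leads instead to sextics $M_j^2-N_j^2(1+102\alpha+249\alpha^2)$. It must also be run piecewise, because the $N_j$ (quadratics, not cubics) change sign inside $(0,1+\sqrt2]$; for example, $N_0$ vanishes at $\alpha\approx0.227$. Your approach buys exactness. The paper's buys short hand-checkable computations. Note that its tangent point $\tfrac15$ sits near where $\beta_{76}$ is closest to zero: $\beta_{76}(\tfrac15)\approx-15$, against individual terms of size about $4\cdot10^3$.

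Finally, the obstacle you anticipate at $\alpha\to0$ does not occur. The normalization involves only $(1+3\alpha)^4$, so $\beta_j(0)=M_j(0)+N_j(0)<0$ (e.g.\ $\beta_0(0)=-4500$), and there is no power of $\alpha$ to factor out.
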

\begin{proof}[Proof of Lemma~\ref{lemma:Bernstein}]
The explicit expressions for the coefficients $\beta_j$  are recorded in Appendix~\ref{app:coef:p}, see \eqref{bernstein:12}.  The proof of the inequalities \eqref{eq:lemma:pr} is elementary but lengthy; so we defer it to \S~\ref{app:proof:PP}.

To prove \eqref{PP:negative}, we first observe that for $t \in [0,1]$
\begin{equation} \label{interp:bernstein:coeff}
\binom{8}{6}t^6(1-t)^{2}\le \frac{2}{3}\left(\binom{8}{5}t^5(1-t)^{3}+\binom{8}{7}t^7(1-t)\right).
\end{equation}
This follows from a direct computation, 
\begin{equation*}
\frac23\left(\binom{8}{5}t^5(1-t)^{3}+\binom{8}{7}t^7(1-t)\right)- \binom{8}{6}t^6(1-t)^{2} =\tfrac43 t^5(1-t)(53t^2-77t+28) \ge 0,
\end{equation*}
since the discriminant of the quadratic factor is equal to $-7<0$ and its leading coefficient is positive. 

Using the inequalities \eqref{eq:lemma:pr} and \eqref{interp:bernstein:coeff}, for $t \in [0,1]$ and $\alpha \in (0, 1+\sqrt{2}]$ we have
\begin{equation*}
\PP(t) = \tfrac{3}{20000(1+3\alpha)^4}\sum_{j=0}^8 \beta_j \binom{8}{j} t^j (1-t)^{8-j}  <\tfrac{3}{20000(1+3\alpha)^4}  \sum_{j=5}^7 \beta_j \binom{8}{j} t^j (1-t)^{8-j} \le 0.
\end{equation*}
This completes the proof, modulo verifying the bounds \eqref{eq:lemma:pr} in \S~\ref{app:proof:PP}.
\end{proof}

\begin{figure}[h]
\centering
\includegraphics[width=0.4\textwidth]{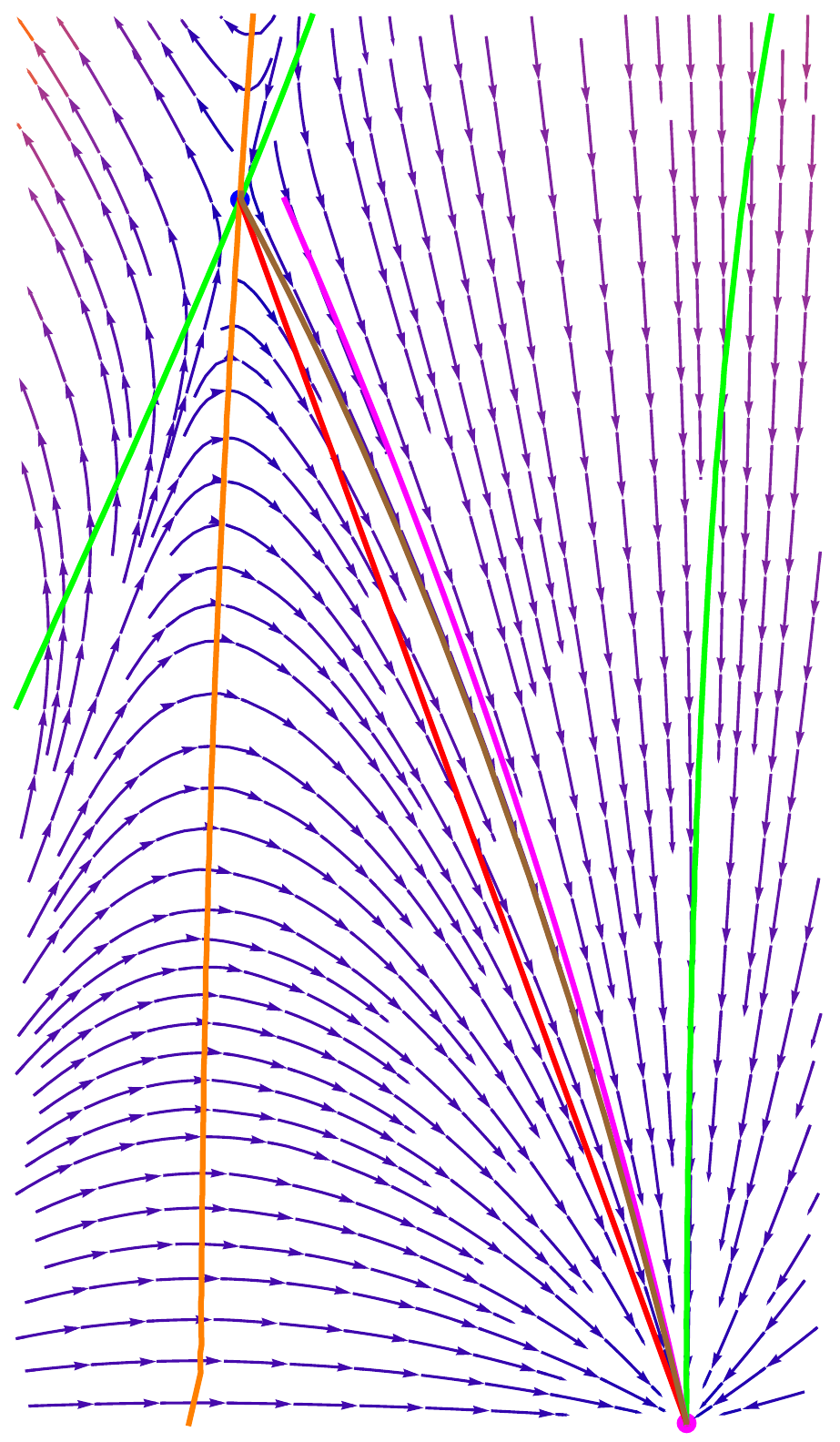}
\caption{\textbf{The implosion phase portrait for $\gamma=\frac{5}{3}$,  $d=3$ and $\NNN=1$.} The vertical axis represents $\bar Q$, while the horizontal axis represents $\bar V$. The pink point represents $(0,0)$, while the blue one represents $(-V_0, Q_0)$. The green curve represents $\{ \Delta_{\bar V}[\bar V, \bar Q]=0\}$, while the orange one represents $\{ \Delta_{\bar Q}[\bar V, \bar Q]=0\}$. In brown we have the trajectory $(\bar V(R), \bar Q(R))$. In red we have the straight line connecting $(0,0)$ to $(-V_0, Q_0)$, which serves as a lower barrier for $(\bar V(R), \bar Q(R))$. In magenta we have the upper barrier $(\mathcal{W}(q),q)$.}
 \label{fig:impl}
\end{figure}

 \begin{proof}[Proof of Proposition~\ref{prop:barrier:imp}]
From the definition \eqref{def:G:imp}, we have $G(0)=0$ and
\begin{equation*}
G(1)=-\tfrac{9}{10(1+3\alpha)}= -\tfrac{9}{10} V_0 \ge -V_0.
\end{equation*}
Thus \eqref{cond:i:bar} is satisfied. Moreover,
\begin{equation*}
G'(0)=-\tfrac{3}{5+15\alpha},
\end{equation*}
 and hence \eqref{cond:ii:bar} holds.

It remains to verify the barrier condition \eqref{cond:iii:bar}. By \eqref{ineq:upper:im}, it is enough to prove $\mathcal{B}^I(t)\le0$ on $[0,1]$. Since Lemma~\ref{pol:division} gives $\mathcal{B}^I(t)=t^2\PP(t)$, this reduces to proving
\begin{equation*}
\PP(t)\le 0 \quad \forall t\in[0,1],
\end{equation*}
which we have shown in Lemma \ref{lemma:Bernstein}, see  \eqref{PP:negative}.
\end{proof}

\subsection{Proof of Proposition~\ref{prop:ub:imp}}
\begin{proof}
The lower bound in \eqref{upp:bound:imp} has already been proved in~\cite{CSV26} (see Proposition A.1 there) by showing $\mathcal{V}(q) \ge -\sqrt{\tfrac{2\alpha}{3\gamma}} q$ for $q \in [0, Q_0]$. Hence, we focus here on showing the upper bound. 

In light of \eqref{eq:mc:V:i}, which gives $\tfrac{\Delta_{\bar V}}{\Delta_{\bar Q}}[\mathcal{V}(q),q] = \mathcal{V}'(q)$, the conditions \eqref{cond:i:bar} and \eqref{cond:iii:bar} guarantee $\mathcal{W}(q) \ge \mathcal{V}(q)$ for all $q \in [0, Q_0]$ (see also Figure \ref{fig:impl}). Therefore, using \eqref{cond:ii:bar}, we have
\begin{equation} \label{ineq:msfc}
 \rr \ge \lim_{q \to 0^+} \tfrac{\mathcal{W}(q)}{q} \ge \lim_{q \to 0^+} \tfrac{\mathcal{V}(q)}{q} = \tfrac{\underline v_1}{\underline q_1}.
\end{equation} 
To conclude the strict inequality, choose a solution $(\bar V_{\mathsf{c}},\bar Q_{\mathsf{c}})$ of \eqref{2x2:imp} with $\bar Q_{\mathsf{c}}(1)=Q_0$ and $-V_0<\bar V_{\mathsf{c}}(1)<\mathcal{W}(Q_0)<0$. By the same arguments in Proposition 2.15 and Corollary 2.16 in~\cite{CSV26}, this solution satisfies $\lim_{R \to +\infty}(\bar V_{\mathsf{c}}(R),\bar Q_{\mathsf{c}}(R))=(0,0)$, and $\bar Q_{\mathsf{c}}(R)$ is monotone decreasing. Hence we may define $\mathcal{V}_{\mathsf{c}}(q):=\bar V_{\mathsf{c}}(\bar Q_{\mathsf{c}}^{-1}(q))$ on $(0,Q_0]$.

By the same arguments as above, $\mathcal{W}$ is an upper barrier for $\mathcal{V}_{\mathsf{c}}(q)$, while $\mathcal{V}$ is a lower barrier. For a given ratio $\mathsf{R}$, there is at most one possible solution $(\bar V,\bar Q)$ to~\eqref{2x2:imp}, up to rescaling in $R$, such that $\lim_{R \to +\infty} \frac{\bar V(R)}{\bar Q(R)}=\mathsf{R}$ (since $(0,0)$ is a ``star'' for the ODE, see also Proposition 2.18 in~\cite{CSV26}). This implies that $\lim_{q \to 0^+} \tfrac{\mathcal{V}_{\mathsf{c}}(q)}{q} > \lim_{q \to 0^+} \tfrac{\mathcal{V}(q)}{q}$; and since the inequality~\eqref{ineq:msfc} applies with $\mathcal{V}_{\mathsf{c}}$ in place of $\mathcal{V}$ we obtain
\begin{equation*}
\rr \ge \lim_{q \to 0^+} \tfrac{\mathcal{V}_{\mathsf{c}}(q)}{q} > \lim_{q \to 0^+} \tfrac{\mathcal{V}(q)}{q} = \tfrac{\underline v_1}{\underline q_1}.
\end{equation*}
\end{proof}


\section{The explosion phase portrait} 
\label{exp:pp}
The goal of this section is to construct the profiles $(U, \Sigma, B)$ from Theorem~\ref{th:main} that arise as a natural continuation of the imploding solutions $(\bar U, \bar \Sigma, \bar B)$ from Theorem~\ref{th:csv}. That is, for $d=3$ and $\alpha \in (0, 1+\sqrt{2}]$, we search for solutions of \eqref{fw:ss:eq} satisfying the boundary conditions \eqref{lim:infty:ex} at $\xi=+\infty$, with $\crr = \crrs(3, 1+2\alpha, 1)$ and $\cbb=\cbbs(3, 1+2\alpha, 1)$. The existence and main properties of such profiles are stated in Theorem~\ref{th:main:sec}.

\subsection{Setup}
The analysis will be carried out in the renormalized variables, analogous to the variables in~\eqref{renorm:imp} used in the implosion phase portrait:
\begin{equation} \label{renormalized}
U(\xi) =:\xi  V (\xi), \quad  \Sigma(\xi) =:\xi  Q(\xi), \quad  B(\xi)=:\xi  H(\xi).
\end{equation}
For $d=3$, substituting \eqref{renormalized} into \eqref{fw:ss:eq}, dividing by $\xi>0$, and using $\gamma=1+2\alpha$, the equations \eqref{fw:ss:eq} become
\begin{subequations} \label{eq:main}
\begin{align}
    (V - \crr)\xi \pp_\xi Q + (\alpha Q) \xi \pp_\xi V + Q\big(V-1 + 3 \alpha  V \big)
    &= 0
    \,
    \label{eq:Q:main}
    \\
    (V - \crr)\xi \pp_\xi V + (\alpha Q)\xi \pp_\xi Q - \left(\tfrac{\alpha Q^2}{\gamma H}\right) \xi \pp_\xi H + V(V-1) + \tfrac{2\alpha^2}{\gamma} Q^2
    &= 0
    \,
    \label{eq:V:main}
    \\
   (V - \crr) \xi\pp_\xi H + (V - \crr + \cbb) H &= 0
    \,.
    \label{eq:H:main}
\end{align}
\end{subequations}
\subsection[A coupled system for V and Q alone]{A coupled system for \texorpdfstring{$V$ and $Q$}{V and Q} alone}
On any interval where $V-\crr$ and $H$ do not vanish, \eqref{eq:H:main} gives
\begin{equation} \label{eq:H}
\frac{\xi \pp_\xi H}{H} = - \frac{V - \crr + \cbb}{V - \crr}
\,.
\end{equation}
\begin{subequations} \label{eq:new}
Substituting \eqref{eq:H} into~\eqref{eq:V:main}, multiplying by $V-\crr$, and using $\gamma=1+2\alpha$, we obtain
\begin{equation}
\xi (V-\crr)^2 \pp_\xi V+ \xi \alpha Q (V-\crr)\pp_\xi Q+ V(V-1)(V-\crr)+ \alpha Q^2 (V-\crr)+ \tfrac{\alpha \cbb Q^2}{\gamma} = 0
\,.
\label{eq:V:new}
\end{equation}
This ODE is coupled with~\eqref{eq:Q:main}, which we recall is
\begin{equation}
    \xi(V - \crr) \pp_\xi Q + \xi(\alpha Q) \pp_\xi V + Q\big(V-1 + 3 \alpha V\big) = 0
    \,.
    \label{eq:Q:new}
\end{equation}
\end{subequations}
On the set where $\Delta[V,Q]\neq0$, solving the resulting $2\times2$ linear system for $\xi\partial_\xi V$ and $\xi\partial_\xi Q$, whose determinant is $\Delta[V,Q]$, we obtain
\begin{equation}
    \xi \frac{dV}{d\xi} = \frac{P_v[V,Q]}{\Delta[V,Q]}
    \,,
    \qquad
    \xi \frac{dQ}{d\xi} = \frac{P_q[V,Q]}{\Delta[V,Q]}
    \,,
    \label{eq:euler:ODE}
\end{equation}
where
\begin{subequations} \label{P:exp}
    \begin{align}
    P_q[V,Q] &:=
     Q\left[-(V-\crr)^2(V (1+ 3 \alpha )-1) + \alpha  (V-\crr) \left[ V(V-1) + \alpha Q^2 \right] + \tfrac{\alpha^2 \cbb}{\gamma} Q^2\right]
    \,
    \\
    P_v[V,Q] &:=
    (V-\crr) \left[ - V(V-1)(V-\crr) + \alpha Q^2 (\crr - \tfrac{\cbb}{\gamma} - 1 + 3 \alpha  V) \right]
    \,,
    \\
    \Delta[V,Q] &:= (V - \crr)(V-\crr -\alpha Q)(V-\crr + \alpha Q)
    \,.
\end{align}
\end{subequations}

We can then study the phase portrait associated to the autonomous system \eqref{eq:euler:ODE} (with respect to the derivative $\xi \pp_\xi$), depicted in Figure~\ref{fig:phase:portrait:1} below.
\begin{figure}[ht] 
\centering
\begin{subfigure}{0.49\textwidth}
\centering
\includegraphics[width=\textwidth]{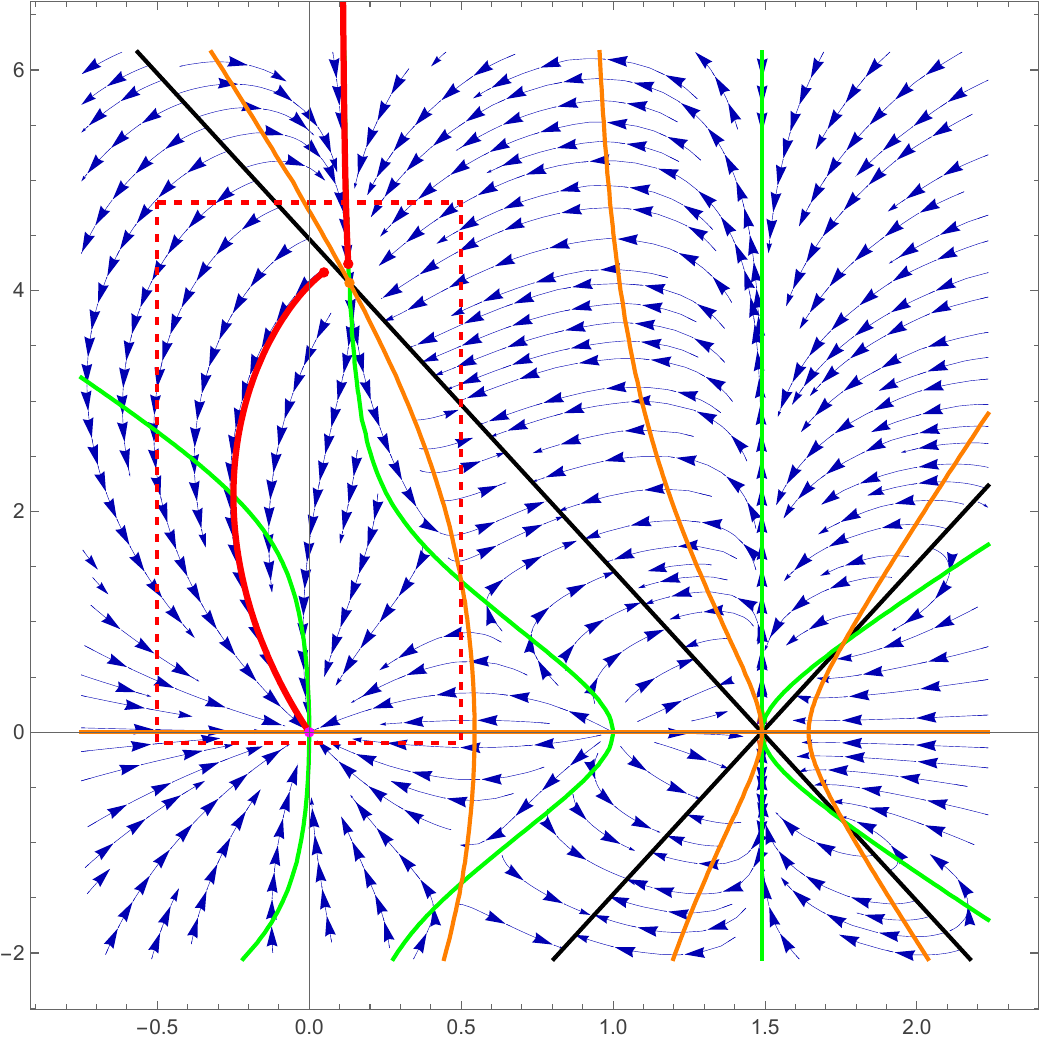}
\end{subfigure}
\hfill
\begin{subfigure}{0.49\textwidth}
\centering
\includegraphics[width=\textwidth,height=\textwidth]{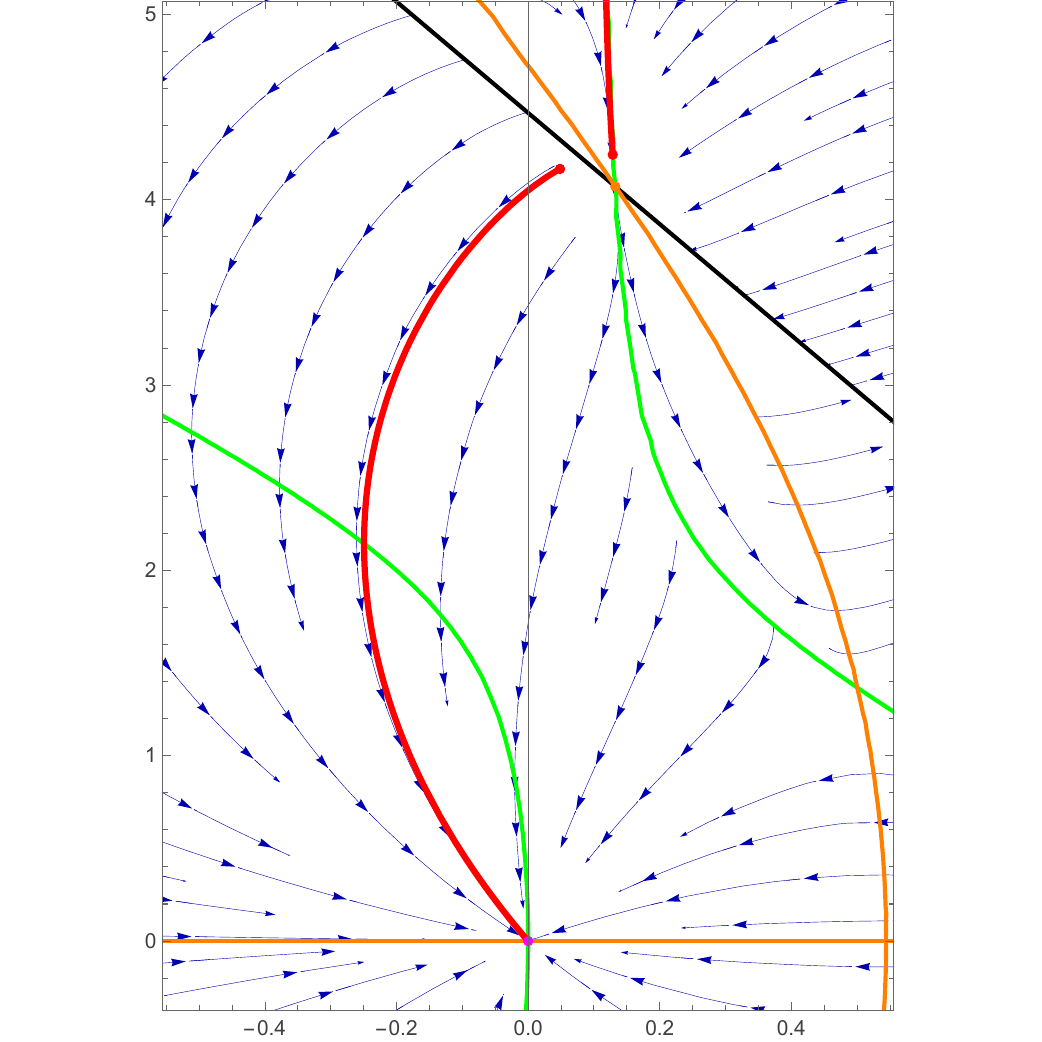}
\end{subfigure}
\caption{\textbf{The explosion phase portrait for $\gamma=\frac{5}{3}$, $d=3$ and $\NNN=1$.}
The vertical axis represents $Q$, while the horizontal axis represents $V$. The pink point represents $(0,0)$. The orange point represents the triple point $P_2$ defined in \eqref{def:V2} below. The green curve represents $\{ P_{v}[ V,  Q]=0\}$, the orange one $\{ P_{q}[ V,  Q]=0\}$ and the black one $\{ \Delta[V,Q]=0\}$. The two red curves represent the piecewise smooth profile $(V, Q)$ from Theorem~\ref{th:main:sec}.
The second figure shows a zoom of the dotted red rectangle, where most of the construction below takes place.}
\label{fig:phase:portrait:1}

\end{figure}
\subsection[A lower bound for crr]{A lower bound for \texorpdfstring{$\crr$}{crr}}
We remind the reader that the values $\crr= \crrs(3, 2\alpha+1,  1)$ and $\cbb:= \cbbs(3, 2\alpha+1, 1) =\crr-\tfrac{1}{1+3\alpha}$ have been fixed. In particular, using \eqref{cr:formula} and \eqref{cb:formula} we have
\begin{equation} \label{eq:cr:1}
\crr=\tfrac{5-3\alpha+\sqrt{1+102\alpha+249\alpha^2}}{4+12\alpha}, \quad \cbb=\tfrac{1-3\alpha+\sqrt{1+102\alpha+249\alpha^2}}{4+12\alpha}.
\end{equation}
We will use that $\crr>1$. 
Since $1+3\alpha>0$, from~\eqref{eq:cr:1} $\crr>1$ is equivalent to
\[
1+\tfrac14\sqrt{1+102\alpha+249\alpha^2}+\tfrac{1-3\alpha}{4}>1+3\alpha,
\]
or, equivalently,
\begin{equation} \label{crr:lb}
\sqrt{1+102\alpha+249\alpha^2}>15\alpha-1.
\end{equation}
If $15\alpha-1\le0$, this is immediate. If $15\alpha-1>0$, we can square both sides and obtain
\[
1+102\alpha+249\alpha^2-(15\alpha-1)^2=24\alpha^2+132\alpha>0,
\]
which proves the claim. 
We will also use that
\begin{equation} \label{eq:crr:lb}
\crr > \tfrac{3+2\alpha}{2(1+3\alpha)}.
\end{equation}
From \eqref{eq:cr:1}, using $\alpha>0$, we have
\begin{align*}
 \crr > \tfrac{3+2\alpha}{2(1+3\alpha)}\iff \sqrt{1+102 \alpha + 249\alpha^2}> 1+7\alpha \iff 88\alpha+200\alpha^2>0.
\end{align*}
\subsection{General properties of the phase portrait} \label{subsec:general:phase:portrait}
The goal of this subsection is to describe some basic features of the phase portrait for the autonomous system
\begin{equation}\label{main:ODE}
\xi \p_\xi V = \tfrac{P_v[V, Q]}{\Delta[V,Q]}, \quad \xi \p_\xi Q=\tfrac{P_q[V, Q]}{\Delta[V,Q]}.
\end{equation}
We introduce the notation
\begin{equation} 
V_0:= \tfrac{1}{1+3\alpha}, \quad Q_0 := \tfrac{1}{1+3\alpha}\sqrt{\tfrac{3 \gamma}{2\alpha}}\,.
\end{equation}
and define
\begin{equation} \label{Vinfty:def}
V_\infty:= \tfrac{-\gamma(\crr-1)+\cbb}{3\alpha\gamma} = \tfrac{1}{3 \gamma} \left( \tfrac{3}{1+3\alpha} +2 - 2 \crr\right).
\end{equation}
We will use that $V_\infty \ge0$.  Using \eqref{eq:cr:1}, and $\alpha\in(0, 1+\sqrt{2}]$, we get
\begin{align} 
 V_\infty \ge0 \iff \crr \le 1+ \tfrac{3}{2+6\alpha} \iff \tfrac{5+15 \alpha}{6+30\alpha+36\alpha^2} \ge \tfrac{\sqrt{1+102\alpha+249\alpha^2}}{6+30\alpha+36\alpha^2} \iff \notag \\
(1+2\alpha-\alpha^2)\ge 0 \iff 0<\alpha \le 1+\sqrt{2}. \label{pos:vinfty:range}
\end{align}
We will also use that $V_\infty< \tfrac{2 V_0}{3}<V_0<1$, which follows from \eqref{eq:crr:lb} and the identity
\begin{equation} \label{eq:v0:ub}
V_\infty - \tfrac{2}{3}V_0
= \tfrac{2}{3\gamma}
\left(\tfrac{3+2\alpha}{2(1+3\alpha)}-\crr\right).
\end{equation}
Indeed, since $\gamma>0$, \eqref{eq:crr:lb} shows that the right-hand side is negative, and hence
$V_\infty<\tfrac{2}{3}V_0$. The inequalities $\tfrac{2}{3}V_0<V_0<1$ follow directly from
$V_0=\tfrac{1}{1+3\alpha}$ and $\alpha>0$.

\subsubsection{The triple points}
A triple point is a point $P_i=(V_i,Q_i)$ such that
\[
 P_v[V_i,Q_i]=P_q[V_i,Q_i]=\Delta[V_i,Q_i]=0.
 \]
 In the region $\{Q\ge0\}$, there are three such points
 \[
 P_1:=(\crr,0),\qquad
 P_2:=(V_2,Q_2),\qquad
 P_3:=(V_3,Q_3).
\]
It is immediate to verify, from the definitions \eqref{P:exp}, that $P_1$ is a triple point. If we assume $V\neq \crr$, the condition $\Delta[V,Q]=0$ is equivalent to $(\crr-V)^2=\alpha^2 Q^2$. Using this identity, $V\neq \crr$, and $Q \neq 0$, the equations $P_v[V,Q]=0$ and $P_q[V,Q]=0$ collapse into the same quadratic polynomial
\begin{equation} \label{triple:eq}
F_T(V):=V(V-1)-3(V-\crr)(V-V_\infty)=-2V^2+(3\crr+3V_\infty-1)V-3\crr V_\infty=0.
\end{equation}
To check that $F_T$ admits two real solutions it is enough to show that its discriminant is positive. A direct computation gives that
\[
(3\crr+3V_\infty-1)^2-24\crr V_\infty
=(3\crr-3V_\infty-1)^2+12V_\infty(\crr-1)>0,
\]
since $V_\infty \ge 0$ and $\crr>1$, and if $V_\infty=0$ then $(3\crr+3V_\infty-1)^2-24\crr V_\infty=(3\crr-1)^2>0$. We use $V_2$ and $V_3$, with $V_2<V_3$, to indicate the two distinct solutions of $F_T(V)=0$.  Moreover, we have $F_T(\crr)=\crr(\crr-1)>0$; since the leading coefficient of the quadratic $F_T$ is negative, this implies that $V_2 < \crr < V_3$.

Note that $V_2$ has the explicit form
 \begin{subequations}  \label{def:V2}
\begin{equation}
V_2:= \tfrac{1}{4}\left( -1+3 \crr + 3 V_\infty - \sqrt{(1- 3 \crr-3 V_\infty)^2 - 24 \crr V_\infty} \right),
\end{equation}
and the corresponding value of $Q_2$, since $V_2 < \crr$, is given by
\begin{equation} \label{Q2:def}
Q_2:=\tfrac{1}{\alpha}(\crr-V_2).
\end{equation}
\end{subequations}
The coordinate $V_3$ has a similar explicit form
\begin{equation*}
V_3:= \tfrac{1}{4}\left(
-1+3\crr+3 V_\infty
+\sqrt{(1-3\crr-3V_\infty)^2-24\crr V_\infty}
\right),
\end{equation*}
and $Q_3$, since $\crr<V_3$, is given by
\begin{equation*}
Q_3:= \tfrac{1}{\alpha} (V_3-\crr).
\end{equation*}
We observe that the point $P_3$ will not play any role in our analysis. 

We will use the following bounds on $V_2$
\begin{equation} \label{bound:V2}
V_\infty \le V_2 < V_0.
\end{equation}
From \eqref{eq:v0:ub}, we know that $V_\infty<V_0<1<\crr<V_3$. Since  the leading coefficient of the quadratic $F_T$ is negative,  $F_T$ is positive on $(V_2,V_3)$, and $\crr\in(V_2,V_3)$ while $V_\infty<1<\crr$, the inequality $F_T(V_\infty)= V_\infty(V_\infty-1)\le 0$ implies $V_\infty \le V_2$. To prove the upper bound $V_2 < V_0$, by a similar argument, it is enough to show $F_T(V_0)>0$. Using $V_0<1<\crr$, we have
\begin{equation*}
F_T(V_0)> 0 \iff V_0(V_0-1) > 3 (V_0 - \crr)(V_0-V_\infty)  \iff V_\infty < V_0 - \tfrac{1}{3} \tfrac{1-V_0}{\crr-V_0} V_0,
\end{equation*}
and the last inequality follows from \eqref{eq:v0:ub} and $\crr>1$
\begin{equation*}
V_\infty \le \tfrac{2}{3} V_0 < V_0 - \tfrac{1}{3}\tfrac{1-V_0}{\crr-V_0} V_0.
\end{equation*}
We also record the inequality
\begin{equation} \label{V2:crr:2}
V_2 \le \tfrac{\crr}{2}.
\end{equation}
Similarly to what we have done previously, it will be enough to show $F_T(\tfrac{\crr}{2})>0$. By \eqref{triple:eq}, we have $ F_T\left(\frac{\crr}{2}\right)=\frac{\crr}{2}\left(2\crr-1-3V_\infty\right).$ 
Using $V_\infty<\frac23 V_0$, which follows from \eqref{eq:v0:ub}, it is enough to prove $ \crr>V_0+\frac12.$
But, from \eqref{eq:cr:1} and $V_0=\frac{1}{1+3\alpha}$,
\[
\crr-V_0-\tfrac12
=
\tfrac{\sqrt{1+102\alpha+249\alpha^2}-1-9\alpha}{4(1+3\alpha)}>0,
\]
since
\[
1+102\alpha+249\alpha^2-(1+9\alpha)^2
=84\alpha(1+2\alpha)>0.
\]
\subsubsection{Stationary points}
We observe that in the region $\{ Q \ge 0\}$ there are three stationary points, that is, those points such that $P_v[V_i, Q_i]=P_q[V_i, Q_i] =0$ but $ \Delta[V_i, Q_i]  \neq 0$. Those are
\begin{equation*}
P_0 := (0,0), \quad P_4 := ( V_0, Q_0), \quad P_5:=(1, 0).
\end{equation*}
These three points will not play a role in the analysis of this section, but $P_4$ will be relevant in Section~\ref{smooth:explosion}.
There are also two stationary points at infinity (when $Q=+\infty$):
\begin{equation*}
P_6 := ( V_\infty, + \infty), \quad P_7 := (\crr, +\infty).
\end{equation*}
In particular, $P_6$ will play an important role in this section. For the profiles $(V,Q)$ from Theorem~\ref{th:main:sec}, in Proposition~\ref{prop:infty} we will show that $\lim_{\xi \to 0^+} (V(\xi), Q(\xi)) = (V_\infty, +\infty)$.

\subsubsection{Branches of $P_q[V,Q]=0$}
We now study the branches of $P_q[V,Q]=0$ in the region $\{V<\crr, Q\ge0\}$. The set $\{ Q=0\}$ is one branch. For the remaining branches with $Q>0$, we divide $P_q[V,Q]=0$ by $Q$ and study the polynomial equation
\begin{equation} \label{Pq:div:Q}
-(V-\crr)^2(V (1+3\alpha)-1) + \alpha  (V-\crr) \left[ V(V-1) + \alpha Q^2 \right] + \tfrac{\alpha^2 \cbb}{\gamma} Q^2=0.
\end{equation}
Solving for $Q$ and taking the nonnegative square root because $Q \ge 0$, the solutions are given by
\begin{equation} \label{eq:Q:lvset}
Q =f_Q(V) :=  \tfrac{1}{\alpha}\sqrt{\tfrac{(V-\crr)P_{2,Q}[V] }{V-1+3\alpha V_\infty}},
\end{equation}
where we have used the definition of $V_\infty$ \eqref{Vinfty:def} to write $V-\crr+\tfrac{\cbb}{\gamma}=V-1+3\alpha V_\infty$, and where
\[
P_{2,Q}[V]:=(V-\crr)\bigl(V(1+3\alpha)-1\bigr)-\alpha V(V-1)
=(1+2\alpha)V^2+(\alpha-1-\crr(1+3\alpha))V+\crr.
\]

We are left to determine the sign of the argument of the square root. A straightforward computation gives the roots
\begin{equation*}
V_{\pm}^Q :=\tfrac{1}{2(1+2\alpha)}
\Bigl(
\crr(1+3\alpha)+1-\alpha \pm
\sqrt{\bigl(\crr(1+3\alpha)+1-\alpha\bigr)^2-4\crr(1+2\alpha)}
\Bigr).
\end{equation*}
We also introduce the pole of the radicand
\begin{equation} \label{def:V*Q}
V_*^Q:=1-3\alpha V_\infty=\crr-\tfrac{\cbb}{\gamma}
= \left( 1-\tfrac{1}{\gamma} \right) \cbb + \tfrac{1}{1+3\alpha}.
\end{equation}
Since $\crr>1$, we have
\begin{equation*}
P_{2,Q}[0]=\crr>0, \quad P_{2,Q}[\crr]=-\alpha \crr (\crr-1)<0,
\end{equation*}
and since the coefficient of the quadratic term of $P_{2,Q}$ is positive, we deduce that $0<V_{-}^Q < \crr < V_{+}^Q$.
By \eqref{def:V*Q} and $\cbb>0$,
 \begin{equation*}
 \tfrac{1}{1+3\alpha}<V_*^Q < \crr.
\end{equation*}
For $\crr=\crrs(3,\gamma,1)$, using the explicit formula \eqref{eq:cr:1}, we compute
\begin{equation*}
P_{2,Q}[V_*^Q]
=\tfrac{\alpha\left((9\alpha^2-1)\sqrt{1+102\alpha+249\alpha^2}-(147\alpha^3+141\alpha^2+39\alpha+1)\right)}
{4(18\alpha^3+21\alpha^2+8\alpha+1)}.
\end{equation*}
If $0<\alpha\le \frac13$, this is negative. If $\alpha>\frac13$, then
\begin{align*}
&\left(147\alpha^3+141\alpha^2+39\alpha+1\right)^2
-(9\alpha^2-1)^2(1+102\alpha+249\alpha^2)\\
&\qquad=\alpha(1440\alpha^5+33192\alpha^4+35748\alpha^3+13128\alpha^2+1572\alpha-24)>0,
\end{align*}
and hence $P_{2,Q}[V_*^Q]<0$ also in this case. Therefore $V_-^Q<V_*^Q<\crr$.
Since $V-\crr<0$ in the region under consideration, the radicand is nonnegative exactly for $V\in(-\infty,V_-^Q]$ and $V\in(V_*^Q,\crr)$. To conclude, the branches in $\{V<\crr, Q\ge0\}$ are
\begin{align*}
\Gamma^Q_1&:= \{ (V, f_Q(V)), V \in (-\infty, V_-^Q] \},\\
\Gamma^Q_2&:= \{ (V, f_Q(V)), V \in (V_*^Q, \crr) \},\\
\Gamma^Q_3&:= \{ (V, 0), V \in (-\infty, \crr) \}.
\end{align*}
Since $V_0<V_*^Q$, $Q_0>0$, and $P_q[V_0,Q_0]=0$, the point $(V_0,Q_0)$ lies on the first nonzero branch $\Gamma_1^Q$. Hence $V_0<V_-^Q$. Combining this with \eqref{bound:V2} and $V_-^Q<V_*^Q<V_+^Q$, we obtain
\begin{equation} \label{ineq:series}
V_2<V_0 < V_-^Q<V_*^Q <V_+^Q.
\end{equation}
We now prove the following lemma that will be useful later.
\begin{lemma} \label{lemma:pos:Pq}
    Let $\NNN=1$, $d=3$ and $\alpha \in (0, 1+\sqrt{2}]$. 
Assume $V<V_2$, $Q>0$ and $V+\alpha Q <\crr$, then
\begin{equation*}
P_q[V,Q]>0.
\end{equation*}

\end{lemma}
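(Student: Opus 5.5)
The plan is to factor out $Q$ and exploit the fact that the remaining factor is affine in $Q^2$. By \eqref{P:exp} we have $P_q[V,Q]=Q\,g(V,Q^2)$, where $g$ is the left-hand side of \eqref{Pq:div:Q}:
\begin{equation*}
g(V,s):=-(V-\crr)^2(V(1+3\alpha)-1)+\alpha(V-\crr)V(V-1)+\alpha^2\Bigl(V-\crr+\tfrac{\cbb}{\gamma}\Bigr)s .
\end{equation*}
Since $Q>0$, it suffices to show $g(V,Q^2)>0$.

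\textbf{Step 1: $g$ is strictly decreasing in $s$.} The coefficient of $s$ is $\alpha^2(V-\crr+\tfrac{\cbb}{\gamma})=\alpha^2(V-V_*^Q)$, using \eqref{def:V*Q}. The hypothesis $V<V_2$ together with \eqref{ineq:series} gives $V<V_*^Q$, so this coefficient is negative.

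\textbf{Step 2: reduce to the sonic line.} The hypotheses $V<V_2<\crr$ and $V+\alpha Q<\crr$ give $0<Q<(\crr-V)/\alpha$. Hence $Q^2<(\crr-V)^2/\alpha^2$. By Step 1,
\begin{equation*}
g(V,Q^2)>g\bigl(V,(\crr-V)^2/\alpha^2\bigr).
\end{equation*}
The inequality is strict because the slope in $s$ is nonzero.

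\textbf{Step 3: evaluate on the sonic line.} Substitute $\alpha^2 s=(V-\crr)^2$ and factor out $(V-\crr)$. Using $1-\crr+\tfrac{\cbb}{\gamma}=1-V_*^Q=3\alpha V_\infty$, which follows from \eqref{def:V*Q}, the bracket simplifies:
\begin{equation*}
(V-\crr)\bigl[1-3\alpha V-\crr+\tfrac{\cbb}{\gamma}\bigr]+\alpha V(V-1)=-3\alpha(V-\crr)(V-V_\infty)+\alpha V(V-1)=\alpha F_T(V).
\end{equation*}
Here $F_T$ is the quadratic from \eqref{triple:eq}. Therefore
\begin{equation*}
g\bigl(V,(\crr-V)^2/\alpha^2\bigr)=\alpha(V-\crr)F_T(V).
\end{equation*}
This is consistent with $P_2$ being a triple point.

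\textbf{Step 4: sign of $F_T$.} The quadratic $F_T$ has negative leading coefficient and roots $V_2<V_3$. Hence $F_T(V)<0$ for $V<V_2$. Since also $V-\crr<0$, the product $\alpha(V-\crr)F_T(V)$ is positive.

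Combining Steps 2 and 4 gives $g(V,Q^2)>0$, and hence $P_q[V,Q]>0$. The only delicate point is the algebraic simplification in Step 3 to $\alpha F_T(V)$; I would verify it by direct expansion.
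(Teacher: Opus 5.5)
Your proof is correct and follows essentially the same route as the paper. You write $P_q/Q$ as an affine function of $Q^2$ with negative coefficient $\alpha^2(V-V_*^Q)$, using $V<V_2<V_*^Q$ from \eqref{ineq:series}, and evaluate it on the sonic line $Q=(\crr-V)/\alpha$, where it equals $\alpha(V-\crr)F_T(V)>0$. The only difference is that the paper phrases this as concavity in $Q$ and so also checks the endpoint $Q\to0^+$ via $P_{2,Q}[V]>0$; your observation that the expression is decreasing in $Q^2$ makes that second check unnecessary.
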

\begin{proof}[\it Proof of Lemma~\ref{lemma:pos:Pq}]
From \eqref{Pq:div:Q}, we have
\begin{equation*}
\tfrac{P_q[V,Q]}{Q}= \alpha^2 (V-V_*^Q) Q^2 - P_{2,Q}[V] (V-\crr).
\end{equation*}
For fixed $V<V_2$, the coefficient of $Q^2$ in $\frac{P_q[V,Q]}{Q}$ is negative from \eqref{ineq:series}, so $\frac{P_q[V,Q]}{Q}$ is concave on $0\le Q\le(\crr-V)/\alpha$. It is therefore enough to check positivity at the two endpoints of this interval.

Since $V<V_2<V_-^Q<\crr$, we have $P_{2,Q}[V]>0$ and $V-\crr<0$, hence
\[
\lim_{Q \to 0^+}\tfrac{P_q[V,Q]}{Q}
= - P_{2,Q}[V] (V-\crr)>0.
\]
By substituting $Q=\frac{\crr-V}{\alpha}$, for $V<V_2$ we obtain
\[
\tfrac{P_q[V,\tfrac{\crr-V}{\alpha}]}{\crr-V}
= (V-\crr)F_T(V)>0,
\]
 because $V-\crr<0$ and, for $V<V_2$, the quadratic $F_T$ is negative outside the interval between its two roots.

\end{proof}

\subsection[The triple point (V2, Q2)]{The triple point \texorpdfstring{$(V_2, Q_2)$}{(V2, Q2)}}
To study the triple point $(V_2, Q_2)$, it will be useful to consider the ODE \eqref{main:ODE} under the desingularized rescaling defined by $\partial_\psi:=\xi\Delta\,\partial_\xi$. The autonomous ODE \eqref{main:ODE} transforms into
\begin{equation} \label{eq:tran:ODE}
\pp_\psi V = P_v[V,Q], \quad \pp_\psi Q = P_q[V,Q].
\end{equation}
Since $P_v[V_2,Q_2]=P_q[V_2,Q_2]=\Delta[V_2,Q_2]=0$, the rational vector field \eqref{main:ODE} is singular at $(V_2,Q_2)$. After the desingularization \eqref{eq:tran:ODE}, this point becomes an equilibrium; the computation below shows that it is a sink. We compute here the eigenvalues $\lambda_\pm$ and eigenvectors $\eta_{\pm}$ associated to the sink $(V_2,Q_2)$. Since $V_2\neq\crr$, $Q_2>0$, and $P_v[V_2,Q_2]=P_q[V_2,Q_2]=0$, the bracketed factors in $P_v$ and $\frac{P_q}{Q}$ vanish at $(V_2,Q_2)$. Using \eqref{triple:eq} and the definition of $Q_2$ in \eqref{Q2:def}, a direct computation gives
\begin{subequations} \label{jacobian:P2}
\begin{equation}
\begin{aligned}\label{jacobian:P2:11}
J_{11}
&:= \pp_V P_v[V_2,Q_2] \\
&= (V_2-\crr) \pp_V \left[
- V(V-1)(V-\crr)
+ \alpha Q^2 \left(\crr - \tfrac{\cbb}{\gamma} - 1 + 3\alpha V\right)
\right]|_{V=V_2, Q=Q_2} \\
&= - (V_2 - \crr)^2 (2 V_2 -1 + 3(\crr-V_\infty)).
\end{aligned}
\end{equation}
\begin{equation}
\begin{aligned}
J_{12} \label{jacobian:P2:12}
&:= \pp_Q P_v[V_2,Q_2]  \\
&= (V_2-\crr) \pp_Q \left[
- V(V-1)(V-\crr)
+ \alpha Q^2 \left(\crr - \tfrac{\cbb}{\gamma} - 1 + 3\alpha V\right)
\right]|_{V=V_2, Q=Q_2} \\
&=  -6 \alpha (V_2 - \crr)^2 (V_2 - V_\infty).
\end{aligned}
\end{equation}
\begin{equation}
\begin{aligned} \label{jacobian:P2:21}
J_{21}
&:=\pp_V P_q[V_2,Q_2] \\
&=Q_2 \pp_V \left[
-(V-\crr)^2(V (1+3\alpha)-1)
+ \alpha  (V-\crr) \left[ V(V-1) + \alpha Q^2 \right]
+ \tfrac{\alpha^2 \cbb}{\gamma} Q^2
\right]_{V=V_2, Q=Q_2}  \\
&= -\tfrac{(V_2-\crr)^2}{\alpha}
[2-\alpha+3\alpha(\crr-V_\infty)-2(1+2\alpha)V_2].
\end{aligned}
\end{equation}
\begin{equation}
\begin{aligned} \label{jacobian:P2:22}
J_{22}
&:=\pp_Q P_q[V_2,Q_2] \\
&=Q_2 \pp_Q \left[
-(V-\crr)^2(V (1+3\alpha)-1)
+ \alpha  (V-\crr) \left[ V(V-1) + \alpha Q^2 \right]
+ \tfrac{\alpha^2 \cbb}{\gamma} Q^2
\right]_{V=V_2, Q=Q_2}  \\
&=2 (\crr-V_2)^2 (V_2-1+3\alpha V_\infty).
\end{aligned}
\end{equation}
\end{subequations}
The eigenvalues of the matrix
\begin{equation*}
J := \begin{pmatrix}
    J_{11} & J_{12} \\
    J_{21} & J_{22}
\end{pmatrix}
\end{equation*}
are the zeros of the characteristic polynomial
\begin{equation*}
\lambda^2 - (J_{11}+J_{22}) \lambda + (J_{11}J_{22}-J_{12}J_{21}).
\end{equation*}
In particular, we have
\begin{equation}\label{ev:j}
\lambda_{\pm} = \tfrac{J_{11}+J_{22}\pm \sqrt{(J_{11}-J_{22})^2+4 J_{12} J_{21}}}{2}.
\end{equation}
To prove the eigenvalues $\lambda_{\pm}$ are real, we will show $J_{12}\le0$ and $J_{21}<0$.
For $J_{12}$, from the lower bound \eqref{bound:V2}, we have 
\begin{equation} \label{j12:b}
J_{12}= -6\alpha (V_2-\crr)^2(V_2-V_\infty) \le 0.
\end{equation}
Showing that $J_{21}$ is negative is equivalent to proving
\begin{equation} \label{eq:ineq:V2}
2-\alpha+3\alpha(\crr-V_\infty)-2(1+2\alpha)V_2>0,
\end{equation}
and this inequality follows from  \eqref{bound:V2}, \eqref{eq:v0:ub} and $\crr>1$.
Indeed, we observe that \eqref{eq:v0:ub} and $\crr>1$ imply the inequality
\[
2-\alpha+3\alpha(\crr-V_\infty)>2-\alpha+3\alpha(1-\tfrac{2}{3}V_0)
=2+2\alpha-\tfrac{2\alpha}{1+3\alpha}.
\]
Then,  $J_{21}<0$ is a simple consequence of the upper bound \eqref{bound:V2}
\begin{align} \label{ineq:tp:V2}
V_0>V_2
& \implies V_0+\tfrac{\alpha}{1+2\alpha} > V_2 \implies \tfrac{1}{1+3\alpha}+\tfrac{\alpha}{1+2\alpha} > V_2 \notag \\
& \implies 2+2\alpha - \tfrac{2 \alpha}{1+3\alpha} > 2(1+2\alpha)V_2  \implies 2-\alpha+3\alpha(\crr-V_\infty)>2(1+2\alpha)V_2 \notag .
\end{align}
We now turn to proving that $\lambda_{\pm}<0$. We start by observing that
\[\operatorname{Tr}J=J_{11}+J_{22}=-(V_2-\crr)^2(1+3\crr-3\gamma V_\infty).\]
This trace is negative. Indeed, from \eqref{eq:v0:ub} and $\crr>1$ we have
\begin{equation*}
3\gamma V_\infty < \tfrac{2\gamma}{1+3\alpha}
=\tfrac{2(1+2\alpha)}{1+3\alpha}<2<1+3\crr.
\end{equation*}
We now prove that $\operatorname{det}J = J_{11}J_{22}-J_{12}J_{21} = - 2(\crr-V_2)^4 ((1+3\alpha)V_2-1)(3 V_\infty-4 V_2 + 3 \crr -1)>0$.
From the definition of $V_2$ \eqref{def:V2} we have
\begin{equation} \label{eq:mag}
3 V_\infty-4 V_2 + 3 \crr -1
=\sqrt{(1-3\crr-3V_\infty)^2-24\crr V_\infty}>0.
\end{equation}
Moreover, $V_2<V_0$ and hence $((1+3\alpha)V_2-1) <0$. This implies $\operatorname{det}J >0$; combined with $\operatorname{Tr} J <0$, this implies
$\lambda_- \le \lambda_+<0$.
Using $\crr>1$,  \eqref{bound:V2} and $V_\infty \ge 0$, we have
\begin{equation} \label{ineq:uff}
J_{11}-J_{22}=(\crr-V_2)^2 (3(1-\crr)-V_2-3(V_2-V_\infty)-6\alpha V_\infty) <0,
\end{equation}
which in particular implies the strict inequalities
\begin{equation} \label{sign:lambda}
\lambda_- < \lambda_+<0.
\end{equation}

The eigenvectors of $J$ have the expression
\begin{equation} \label{eq:evec:J}
\eta_\pm= \begin{pmatrix}
    \tfrac{J_{11}-J_{22}\pm \sqrt{(J_{11}-J_{22})^2 + 4 J_{12} J_{21}}}{2} \\
    J_{21}
\end{pmatrix}.
\end{equation}
We use $\mmp$ and $\mmm$ to denote the $\frac{dV}{dQ}$ slopes associated to $\eta_+$ and $\eta_-$, respectively:
\begin{equation} \label{eq:slope:ev}
    \mmp = \tfrac{J_{11}-J_{22}+ \sqrt{(J_{11}-J_{22})^2 + 4 J_{12} J_{21}}}{2 J_{21}},
    \quad
    \mmm = \tfrac{J_{11}-J_{22}- \sqrt{(J_{11}-J_{22})^2 + 4 J_{12} J_{21}}}{2 J_{21}}.
\end{equation}

We now prove the ordering
\begin{equation} \label{eq:slope:ordering}
    \mmm > \mmp>-\alpha.
\end{equation}

From \eqref{j12:b} and \eqref{eq:ineq:V2}, we have $J_{12}J_{21}\ge0$. Combining this with \eqref{ineq:uff}, we deduce $\mmm>0\ge\mmp$.
We observe the slopes are the two roots of
\begin{equation} \label{eq:slope:quadratic}
    F_{\mm}(\mm):=J_{21}\mm^2+(J_{22}-J_{11})\mm-J_{12}=0.
\end{equation}
Since $\mmm>0\ge\mmp$ and $-\alpha<0$, the point $-\alpha$ lies to the left of $\mmm$. Since $J_{21}<0$, the quadratic $F_{\mm}$ is negative outside the interval with endpoints $\mmp$ and $\mmm$. Therefore, to prove $\mmp>-\alpha$, it is enough to show $F_{\mm}(-\alpha)<0$.  Using the explicit expressions for $J_{ij}$, we have
\[
\begin{aligned}
F_{\mm}(-\alpha)
&=\alpha^2J_{21}-\alpha(J_{22}-J_{11})-J_{12} \\
&=-\alpha(\crr-V_2)^2(1+\alpha)(3\crr+3V_\infty-1-4V_2)<0,
\end{aligned}
\]
where the last inequality follows from \eqref{eq:mag}.

\subsection{Main result: existence of self-similar shock profiles}
We now state the main result of this section, and provide a short roadmap to its proof.
\begin{theorem} \label{th:main:sec}
Fix $d=3$, $1<\gamma \le 3+ 2\sqrt{2}$, and $\NNN=1$, and define $\crr=\crrs(3, \gamma, 1)$ and $\cbb=\cbbs(3, \gamma, 1)$ using the expressions \eqref{cr:formula} and \eqref{cb:formula}. Fix also the constants $\underline v_1$ and $\underline q_1>0$ defined in \eqref{eq:lim:infty}. Then, there exists a unique renormalized shock profile $(V,Q)$ for the forward self-similar system \eqref{eq:euler:ODE} such that
\begin{enumerate}[leftmargin=2em]
\item Piecewise smoothness: there exists $0<\xiRH<+\infty$ such that both $V$ and $Q$ are $C^\infty$ on $(0, \xiRH)$ and $(\xiRH, +\infty)$,
\item  $(V,Q)$ solves \eqref{eq:euler:ODE} on $(0, \xiRH)$ and $(\xiRH, +\infty)$,
\item Rankine--Hugoniot jump conditions: across the jump at $\xi=\xiRH$, the traces $(V,Q)(\xiRH^\pm)$ satisfy the $(V,Q)$-components of the Rankine--Hugoniot relations \eqref{RH:jump} and the Lax entropy inequality \eqref{lax:ineq},
\item Asymptotics at $\xi=0$: there exists a constant $\overline q_0>0$, with $\overline v_0=V_\infty$, such that as $\xi\to0^+$ the solution has the following local behavior
\begin{equation*}
V(\xi) =\overline v_0 + O(\xi^{\frac{2(2+3(\gamma-1))}{2+3\gamma}}), \quad Q(\xi)=\overline q_0 \xi^{-\frac{2+3(\gamma-1)}{2+3\gamma}} + O(\xi^{\frac{2+3(\gamma-1)}{2+3\gamma}}),
\end{equation*}
\item Positivity: for $\xi >0$ we have $Q(\xi)>0$,
\item Matching at $\xi=+\infty$: the asymptotic behavior of $(V,Q)$ matches the renormalized form of the implosion asymptotics \eqref{eq:lim:infty}
\begin{equation} \label{lim:infty:ex:ode}
\lim_{\xi \to +\infty} \xi^{\frac{1}{\crr}}V(\xi) = \underline v_1, \quad \lim_{\xi \to +\infty}  \xi^{\frac{1}{\crr}} Q(\xi) =\underline q_1.
\end{equation}
\end{enumerate}

\end{theorem}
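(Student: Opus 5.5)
The proof has three steps: build an outer branch from $\xi=+\infty$, build an inner branch near $\xi=0$, and glue them at a shock. The roadmap in the introduction fixes the order.

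\textbf{Step 1: the outer branch.} As $\xi\to+\infty$ we have $(V,Q)\to(0,0)=P_0$. This is a stationary point of \eqref{main:ODE} with $\Delta[0,0]=-\crr^3\neq0$, and linearizing there gives a star node with both eigenvalues equal to $-1/\crr$ in $\log\xi$. Hence for each prescribed pair $(\underline v_1,\underline q_1)$ there is a unique trajectory with $\xi^{1/\crr}(V,Q)\to(\underline v_1,\underline q_1)$, which gives uniqueness of the outer branch. Since $\underline v_1<0$, this trajectory leaves $P_0$ into $\{V<0,\,Q>0\}$ along the ray of slope $\underline v_1/\underline q_1$. We then continue it as $\xi$ decreases and show it stays in the region $\{V<V_2,\,Q>0,\,V+\alpha Q<\crr\}$. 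There Lemma~\ref{lemma:pos:Pq} gives $P_q>0$, and $\Delta>0$ because $V-\crr<0$ and $V-\crr-\alpha Q<0$. So $Q$ is monotone in $\xi$, it increases as $\xi$ decreases, and the trajectory can be parametrized by $Q$ as $V=\mathcal{V}_E(Q)$.

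The main obstacle is showing that the branch reaches the sonic line $V+\alpha Q=\crr$ at a point with $V<V_2$, so that it misses the triple point $P_2$. For this I will build a polynomial lower barrier $\mathcal{W}_E(q)$ in the explosion portrait with $\mathcal{W}_E'(0)\ge\rr$. Proposition~\ref{prop:ub:imp} gives $\underline v_1/\underline q_1<\rr$, so the trajectory starts strictly below the barrier in the sense of slope. The barrier should pass strictly to the left of $P_2$ when it meets the sonic line. The barrier inequality, namely the sign of $P_v-\mathcal{W}_E' P_q$ along the curve, reduces after dividing out the vanishing at $q=0$ to the sign of a degree-5 polynomial $\QQ(t)$. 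I would handle it with Bernstein coefficients $\BB_j(\alpha)$, checked by rigorous interval arithmetic over $\alpha\in(0,1+\sqrt2]$, as the introduction indicates. The outer branch is then smooth on $(\xi_s,\infty)$, and $Q$ stays bounded there, so the branch cannot blow up before it reaches the sonic line.

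\textbf{Step 2: the inner branch.} Next I construct the inner trajectory $(V^\infty,Q^\infty)$ joining $P_6=(V_\infty,+\infty)$ as $\xi\to0^+$ to the sink $P_2$. Set $Z=1/Q$ and desingularize near $Z=0$. Then $P_6$ becomes a hyperbolic saddle, and its unique unstable manifold gives the expansions in item (4), with $Q\sim\overline q_0\xi^{-(3\gamma-1)/(2+3\gamma)}$. Using the $\psi$-flow \eqref{eq:tran:ODE} and the invariant regions bounded by $\{P_v=0\}$, $\{P_q=0\}$ and the sonic line, I show this trajectory enters the supersonic region $V+\alpha Q>\crr$ and converges to $P_2$. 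By \eqref{sign:lambda} and the slope ordering \eqref{eq:slope:ordering}, it arrives tangent to $\eta_+$ or $\eta_-$ and does not cross the sonic line before doing so. Because the system is autonomous in $\log\xi$, the inner branch is determined only up to the scaling $\xi\mapsto\lambda\xi$.

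\textbf{Step 3: gluing at the shock.} Apply the $(V,Q)$ Rankine--Hugoniot map $\mathrm{RH}$ to the outer branch on $[\xi_s,\infty)$. Near $\xi=\infty$ the jump is trivially weak, with the image near $P_0$ and on the subsonic side of the curve $P_6P_2$. At $\xi=\xi_s$ the outer point is sonic, so its $\mathrm{RH}$ image is the point itself, which lies left of $P_2$ and therefore on the other side of the inner curve. The intermediate value theorem then gives an intersection of the Hugoniot locus with the inner trajectory. I would prove uniqueness of this crossing by showing monotonicity of a signed distance along the locus. Finally, rescale the inner branch in $\xi$ so that the intersection occurs at a common $\xiRH$. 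The Lax inequality holds because the outer state is supersonic relative to the shock speed and the inner state is subsonic. Positivity of $Q$ follows from the invariant regions used in Steps 1 and 2.
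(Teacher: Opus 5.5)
\textbf{Step 3 fails as written at the $\xi=+\infty$ end of the Hugoniot locus.} As $(V,Q)\to(0,0)$ the upstream sound speed vanishes while $\crr-V\to\crr$. So the shock becomes infinitely strong there, not weak. By \eqref{RH:jump}, $\mathsf{G}_{\mathrm{RH}}(\xi)\to\mathrm{RH}(0,0)=\bigl(\tfrac{\crr}{1+\alpha},\tfrac{\crr\sqrt{\gamma\alpha}}{\alpha(1+\alpha)}\bigr)$. By Lemma~\ref{lemma:RH:properties}, the locus never enters $\{V+\alpha Q<\crr\}$ at all, so it cannot approach $P_0$.

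This is not cosmetic. Close off the inner curve $\{(\mathcal{V}^\infty(q),q):q\ge Q_2\}$ by the sonic segment from $P_2$ to $P_1=(\crr,0)$. A point near $P_0$ and the sonic point $(V_s,Q_s)$ then lie on the same side, so with your endpoint the intermediate value argument produces no crossing. The sign change actually comes from $\tfrac{\crr}{1+\alpha}>V_0>V_2\ge\mathcal{V}^\infty(q)$ (cf.\ \eqref{PRH:loc}), which puts $\mathrm{RH}(0,0)$ on the far side. You also need a continuous side function on $\{v+\alpha q\ge\crr\}$, such as $\mathcal{F}$ in \S\ref{subsec:proof:th}, which handles the part of the locus with $q<Q_2$.

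\textbf{Two further steps are asserted rather than proved.}

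First, the claim that $Q$ stays bounded on the outer branch. Monotonicity of $Q$ and the constraint $V+\alpha Q<\crr$ do not by themselves exclude $Q\to+\infty$, $V\to-\infty$ before the sonic line is reached. The paper needs Lemma~\ref{lemma:escape} for this. By \eqref{eq:ell}, the line $V=-\kappa Q$ is a barrier, which confines $\tfrac{\crr-V}{Q}$ to $[\alpha,\tilde\kappa]$. Then $q\tfrac{d}{dq}\tfrac{\crr-\mathcal{V}}{q}\le-\mathsf{C}(\tilde\kappa)$ for large $q$ forces this ratio down to $\alpha$ at finite $q$. A bound on $P_v/P_q$ is also needed to get a limit $V_s$.

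Second, uniqueness of the crossing ``by monotonicity of a signed distance'' is a placeholder. The paper combines four facts:
\begin{enumerate}
\item $\partial_\xi V^\infty\ge0$ and $\partial_\xi\tfrac{\crr-V^\infty}{Q^\infty}>0$, from \eqref{mono:Q:infty};
\item $\partial_\xi\mathsf{G}_{\mathrm{RH},v}\ge0$, from Lemma~\ref{transv:hug}; this is a genuine sign computation for $\mathsf{T}[V,Q]$ that uses the barrier to get $V<0$ when $Q\le Q_2/2$;
\item the opposite monotonicity of the Mach ratio along $\mathsf{G}_{\mathrm{RH}}$, from Lemma~\ref{lemma:supersonic:ratio}.
\end{enumerate}

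\textbf{Smaller points.}
\begin{enumerate}
\item In $\{V+\alpha Q<\crr,\ Q>0\}$ all three factors of $\Delta$ are negative, so $\Delta<0$. With your $\Delta>0$, $Q$ would decrease as $\xi$ decreases, contradicting your own conclusion.
\item The paper's barrier ends exactly at $P_2$ and excludes the sink through $F'(1)/Q_2>\mmm$. A barrier meeting the sonic line strictly left of $P_2$ is a different design. On the sonic line $P_v=P_q$, so such a barrier would need slope at least $1$ there, and it would not produce the factor $(1-t)$ behind the degree-$5$ polynomial $\QQ$.
\end{enumerate}

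\textbf{Your Step 2 is a valid alternative.} Using the unstable manifold of the saddle at $P_6$ in the coordinates $(V,1/Q)$ is a valid substitute for Lemma~\ref{lemma:power:R:0}. Since item (4) is among the conditions, it also gives uniqueness of the inner branch up to scaling directly.
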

The proof will consist of three steps:
\begin{itemize}[leftmargin=2em]
    \item In Proposition~\ref{prop:power:implosion}, given $\underline v_1$ and $\underline q_1$, we construct a local solution near $\xi=+\infty$ to the autonomous system \eqref{eq:euler:ODE} such that $V$ and $Q$ satisfy \eqref{lim:infty:ex:ode}. In Proposition~\ref{prop:sub:sonic}, we then show that such a solution $(V,Q)$ can be extended up to $\xi_s$, and that at such point the solution hits the sonic line $V+\alpha Q=\crr$ \textit{on the left} of the sonic point $(V_2, Q_2)$. This is achieved by constructing a suitable, explicit lower barrier (see Proposition~\ref{prop:barrier:ex} and Figure \ref{fig:supersonic:region}),
    \item We construct a trajectory $(V^\infty, Q^\infty)$ in Proposition~\ref{prop:infty} that connects $P_2$ to the stationary point at infinity $P_6$,
    \item Using the Rankine--Hugoniot jump conditions \eqref{RH:jump}, we identify a point $\xiRH$ such that the Hugoniot image of the outer state, $\mathrm{RH}(V(\xiRH^+),Q(\xiRH^+))$, lies on the trajectory $(V^\infty,Q^\infty)$ (see Figure \ref{fig:RH:locus}).
\end{itemize}
\subsection[The subsonic region V + alpha Q < crr]{The subsonic region \texorpdfstring{$V+\alpha Q < \crr$}{V + alpha Q < crr}}
The goal of this section is to construct a solution $(V,Q)$ of the reduced forward self-similar system \eqref{eq:euler:ODE} near $\xi=+\infty$; $H$ will be recovered later from \eqref{eq:H}. The terminal data from the backward problem impose only the leading asymptotics of $V$ and $Q$ at $\xi=+\infty$, so the first task is to show that these leading coefficients determine a local branch of solutions to this reduced system. In this section we carry this out by constructing a convergent power series at $\xi=+\infty$.

\begin{figure}[h]
\centering
\includegraphics[width=0.7\textwidth]{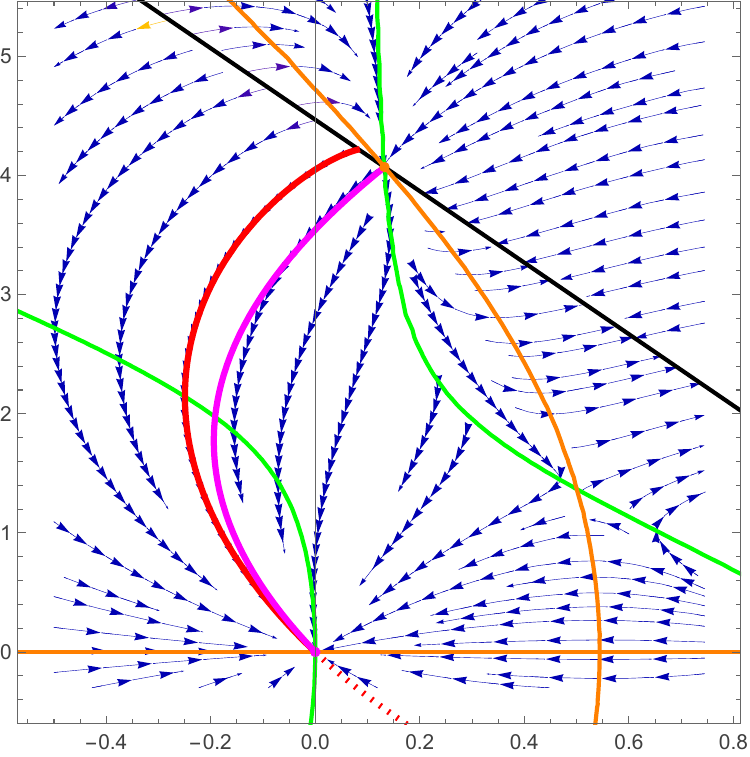}
\caption{\textbf{The subsonic region for $\gamma=\frac{5}{3}$, $d=3$ and $\NNN=1$.} The vertical axis represents $Q$, while the horizontal axis represents $V$. The pink point represents $(0,0)$. The orange point represents the triple point $P_2$. The green curve represents $\{ P_{v}[ V,  Q]=0\}$, the orange one $\{ P_{q}[ V,  Q]=0\}$ and the black one $\{ \Delta[V,Q]=0\}$. In red we have the trajectory $(V,Q)$ from Proposition~\ref{prop:sub:sonic}, extended until it hits the sonic line at $(V_s, Q_s)$. The magenta curve represents the barrier $\mathcal{W}_E$.}
\label{fig:supersonic:region}
\end{figure}

\begin{proposition}[\bf Convergence of power series for $V$ and $Q$ near $\xi=\infty$]
\label{prop:power:series:infty}
For any given $|v_1|<+\infty$ and $0<q_1 < +\infty$, there exist unique real coefficients $\{ v_n \}_{n \ge 2}$ and  $\{ q_n \}_{n \ge 2}$, defined recursively in \eqref{rec:infty} below, such that the functions defined by the convergent power series expansions
\begin{equation}
V(\xi) =  \sum_{n\geq 1} v_n \xi^{- \frac{n}{\crr}},
\qquad
Q(\xi) =  \sum_{n\geq 1}  q_n  \xi^{- \frac{n}{\crr}},
\label{eq:V:Q:R=infty}
\end{equation}
solve \eqref{eq:euler:ODE} in its domain of convergence. Moreover, there exists a constant $\mathsf{C}(\gamma, \crr) \gg 1$ such that the power series in~\eqref{eq:V:Q:R=infty} converges uniformly and absolutely for $\xi > \mathsf{C}^{\crr} (|v_1|^2 + |q_1|^2)^{\frac{\crr}{2}}$.
\end{proposition}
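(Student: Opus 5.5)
We change variables to $z:=\xi^{-1/\crr}$, so that $\xi\partial_\xi=-\tfrac{1}{\crr}z\partial_z$ and the ansatz \eqref{eq:V:Q:R=infty} becomes an ordinary power series $V=\sum_{n\ge1}v_nz^n$, $Q=\sum_{n\ge1}q_nz^n$ vanishing at $z=0$. We work with the polynomial form \eqref{eq:new} (equivalently $\Delta\,\xi\partial_\xi(V,Q)=(P_v,P_q)$) rather than the rational form. This avoids dividing by $\Delta$, whose leading factor near $(V,Q)=(0,0)$ is $\Delta[0,0]=-\crr^3\neq0$.

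\textbf{Step 1: linearization at the origin.} Near $(0,0)$ the system reads
\[
-\tfrac{1}{\crr}\,z\partial_z V=\frac{P_v}{\Delta},\qquad -\tfrac{1}{\crr}\,z\partial_z Q=\frac{P_q}{\Delta}.
\]
By the computation already used in Lemma~\ref{pol:division} (now with the explosion fields), the linear parts are $P_v\approx -\crr^2 V$ and $P_q\approx-\crr^2 Q$, so the Jacobian of $(P_v,P_q)/\Delta$ at the origin is $\tfrac{1}{\crr}I$. Hence $(0,0)$ is a star node, and the order-one equations $-\tfrac1\crr v_1=\tfrac1\crr v_1$ type identities become trivial. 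This is consistent with $v_1,q_1$ being free, and it explains why the exponent is exactly $1/\crr$.

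\textbf{Step 2: the recursion.} Matching the coefficients of $z^n$ in the polynomial form gives, for $n\ge2$, a linear system
\[
(n-1)\,M\begin{pmatrix}v_n\\ q_n\end{pmatrix}=\mathcal{N}_n(v_1,\dots,v_{n-1},q_1,\dots,q_{n-1}),
\]
with $M$ an invertible constant matrix (essentially $\crr^2 I$ up to normalization). The right-hand side $\mathcal{N}_n$ is a finite sum of products of at most three lower-order coefficients, because $P_v$, $P_q$ and $\Delta$ are cubic or quartic polynomials. Solvability of this system for every $n\ge2$ gives existence and uniqueness of the coefficients; this is the recursion \eqref{rec:infty}.

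\textbf{Step 3: convergence (main obstacle).} We argue by induction with a Catalan-type majorant: show
\[
|v_n|+|q_n|\le \frac{\mathsf{C}^{n-1}\bigl(|v_1|^2+|q_1|^2\bigr)^{n/2}}{n^2}
\]
(or the analogous bound with $1/n^2$ replaced by a $\delta$-weight). The convolution sums in $\mathcal{N}_n$ are controlled by the standard estimate $\sum_{j+k=n} j^{-2}k^{-2}\lesssim n^{-2}$ and its three-fold analogue. Cubic and quartic terms then contribute extra powers of the small quantity, and the factor $(n-1)$ on the left absorbs polynomial losses. The care needed here is bookkeeping: the terms in which $v_n,q_n$ multiply $z^0$ coefficients of $\Delta$ must all be moved to the left side before any estimate is made, so that the lower-order terms on the right carry strictly earlier indices. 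Choosing $\mathsf{C}\gg1$ depending on $\gamma$ and $\crr$ closes the induction. The bound yields absolute and uniform convergence for
\[
z<\mathsf{C}^{-1}\bigl(|v_1|^2+|q_1|^2\bigr)^{-1/2},
\]
which is exactly $\xi>\mathsf{C}^{\crr}(|v_1|^2+|q_1|^2)^{\crr/2}$. Finally, a convergent series can be differentiated termwise, so the sums solve \eqref{eq:euler:ODE} wherever $\Delta\ne0$, and $\Delta\neq0$ holds near $z=0$.
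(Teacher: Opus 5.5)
Your proposal is correct and follows essentially the paper's route: substitute the series into the polynomial system \eqref{eq:new}, obtain a recursion with the factor $\crr(n-1)$ on the left (the paper's \eqref{rec:infty}), and close an induction with a geometric majorant weighted by a power of $n$. The paper uses the weight $\mathsf{C}'\mathsf{C}^n n^{-3/2}$ with $\mathsf{C}'$ small and $\mathsf{C}'\mathsf{C}\ge1$, and then enlarges $\mathsf{C}$ so that $|V|+\alpha|Q|<\crr/2$, hence $\Delta\neq0$, on the whole exterior region, not just near $z=0$.

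Two small slips need correcting.

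\textbf{The sign of the linear parts.} At the origin $P_v\approx+\crr^2V$ and $P_q\approx+\crr^2Q$, so the linearization of $(P_v,P_q)/\Delta$ is $-\tfrac{1}{\crr}I$, not $+\tfrac{1}{\crr}I$. It is this sign that makes the order-one equation trivial and gives the factor $n-1$ you use in Step~2, rather than $n+1$.

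\textbf{The base case.} Your base case requires $|v_1|+|q_1|\le(|v_1|^2+|q_1|^2)^{1/2}$, which fails whenever $v_1\neq0$. Use the weight $\sqrt{2}\,\mathsf{C}^{n-1}$ instead, or bound $|v_n|$ and $|q_n|$ separately as the paper does.
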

\begin{proof}[Proof of Proposition~\ref{prop:power:series:infty}]
To compute the recursive relation that defines the coefficients in \eqref{eq:V:Q:R=infty}, it is more convenient to substitute the ansatz \eqref{eq:V:Q:R=infty} into the system \eqref{eq:new}; this leads to the relations
\begin{subequations} \label{rec:infty}
\begin{align}
\crr (n-1) q_n &= \sum_{\substack{m+j = n, \\ m, j \ge 1}} (  \alpha j + m -\crr (1+3 \alpha ) ) q_m v_j,   \\
\crr(n-1)\,v_n
&=
\sum_{\substack{m+j=n\\ m,j\ge1}}
\left[
(2j-1-\crr)v_m v_j
+\alpha\left(j-\crr+\tfrac{\cbb}{\gamma}\right)q_m q_j
\right]
\\
&
+{\bf 1}_{n\geq 3}  \!\!\sum_{\substack{m+\ell+j=n\\ m,\ell,j\ge1}}
(1-\tfrac{j}{\crr})\,
 v_m\bigl(v_\ell v_j+\alpha q_\ell q_j\bigr).
\end{align}
\end{subequations}
For any $n \ge 2$,  \eqref{rec:infty} defines a recursive relation that determines the values of $v_n, q_n$ from knowledge of $v_1$ and $q_1$. To conclude the proof of the proposition, it remains to prove a positive radius of convergence, with the stated exterior lower bound for $\xi$. We will prove that for a sufficiently large constant $\mathsf{C}=\mathsf{C}(\gamma, \crr) \ge 1$ and a sufficiently small $0 <\mathsf{C}'=\mathsf{C}'(\gamma, \crr)<1$, the following bounds hold for $n\ge 1$
\begin{subequations} \label{bound:infty}
\begin{align}
| v_n| \le \mathsf{C}'  \mathsf{C}^n ( | v_1 |^2 + |q_1|^2 )^{ \frac{n}{2}} n^{-\frac{3}{2}},  \\
| q_n| \le \mathsf{C}'  \mathsf{C}^n ( | v_1 |^2 + |q_1|^2 )^{ \frac{n}{2}} n^{-\frac{3}{2}}.
\end{align}
\end{subequations}
We proceed by induction. The bound \eqref{bound:infty} is trivially true for $n=1$ as long as $\mathsf{C}'  \mathsf{C} \ge 1$.
We use the convolution bounds $\sum_{j=1}^{n-1} j^{-\frac{3}{2}} (n-j)^{-\frac{3}{2}} \le 12 n^{-\frac{3}{2}}$ and $\sum_{\substack{j,k\ge 1\\ j+k\le n-1}} j^{-\frac{3}{2}}  k^{-\frac{3}{2}} (n-j-k)^{-\frac{3}{2}} \le 36 n^{-\frac{3}{2}}$. Substituting the inductive bounds \eqref{bound:infty} into the relations \eqref{rec:infty}, and applying these convolution estimates, gives for $n \ge 2$
\begin{subequations} \label{bound:ugly:infty}
\begin{align}
| q_n |&  \le \tfrac{24 (\mathsf{C}')^2 (1+\alpha+\crr(1+3\alpha))}{\crr}   \mathsf{C}^n ( | v_1 |^2 + |q_1|^2 )^{ \frac{n}{2}} n^{-\frac{3}{2}}, \\
| v_n | &\le \tfrac{ (\mathsf{C}')^2 }{\crr }( 24 ( (2+\alpha)  + \crr(1+\alpha) +\tfrac{\cbb}{\gamma}  ) + 72 (1+\tfrac{1}{\crr}) ) \mathsf{C}^n ( | v_1 |^2 + |q_1|^2 )^{ \frac{n}{2}} n^{-\frac{3}{2}}.
\end{align}
\end{subequations}
From \eqref{bound:ugly:infty}, we deduce that if we choose $\mathsf{C}'$ small enough so that $ \frac{1}{\mathsf{C}'} \ge \frac{1}{\crr} \max\{ 24 (1+\alpha+\crr(1+3\alpha)),  24 ( (2+\alpha)  + \crr(1+\alpha) +\frac{\cbb}{\gamma}  ) + 72 (1+\frac{1}{\crr}) \}  $, and $\mathsf{C}$ large enough such that $\mathsf{C} \mathsf{C}' \ge 1$, the inductive bounds \eqref{bound:infty} hold for any $n \ge 1$. The claim about the radius of convergence follows now from \eqref{bound:infty}. After increasing $\mathsf{C}$ if necessary, the convergent series satisfies $|V|+\alpha |Q|<\frac{\crr}{2}$ on the stated exterior interval, so $\Delta[V,Q]\neq0$ there. Hence the system \eqref{eq:new} is equivalent to \eqref{eq:euler:ODE} on this interval.
\end{proof}

 Proposition~\ref{prop:power:series:infty} gives a local branch for each finite $v_1$ and each positive finite $q_1$. We now select the branch relevant to the continuation problem by taking these coefficients to be precisely $\underline v_1$ and $\underline q_1$ from \eqref{eq:lim:infty}. This chooses the solution near $\xi=+\infty$ that matches the data at the time of implosion.
 We record this specialization as the following proposition.
\begin{proposition}
\label{prop:power:implosion}
Let $d =3$,  $1< \gamma \le 3+2\sqrt{2}$ and $\NNN = 1$. Consider $\underline v_1$ and $\underline q_1>0$ as determined in \eqref{eq:lim:infty}.

There exists a unique smooth solution $(V,Q)$ to \eqref{eq:euler:ODE} and a constant $\mathsf{C}(d, \gamma, \crr) \gg 1$ such that $(V,Q)$ is defined on $\xi > \mathsf{C}^{\crr} (|\underline v_1|^2 + |\underline q_1|^2)^{\frac{\crr}{2}}$, and $(V,Q)$ satisfies the boundary conditions
\begin{equation} \label{bc:infty:explosion}
\lim_{\xi \to +\infty} \xi^{\frac{1}{\crr}}V(\xi) = \underline v_1, \quad \lim_{\xi \to +\infty}  \xi^{\frac{1}{\crr}}Q(\xi) =\underline q_1.
\end{equation}
\end{proposition}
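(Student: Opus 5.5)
Existence follows directly from Proposition~\ref{prop:power:series:infty} applied with $v_1:=\underline v_1$ and $q_1:=\underline q_1$. This is admissible because $|\underline v_1|<+\infty$ and, by Theorem~\ref{th:csv}, $0<\underline q_1<+\infty$. The recursion \eqref{rec:infty} then fixes the coefficients $\{v_n,q_n\}_{n\ge2}$, and the series \eqref{eq:V:Q:R=infty} converge absolutely and uniformly on $\xi>\mathsf{C}^{\crr}(|\underline v_1|^2+|\underline q_1|^2)^{\crr/2}$. On that set $|V|+\alpha|Q|<\crr/2$, so $\Delta[V,Q]\neq0$ and $(V,Q)$ solves \eqref{eq:euler:ODE}. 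Smoothness holds because the sums are convergent power series in $\zeta:=\xi^{-1/\crr}$, hence real-analytic in $\zeta$ near $0$ and smooth in $\xi$. The boundary conditions \eqref{bc:infty:explosion} come from multiplying by $\xi^{1/\crr}$: we get $\xi^{1/\crr}V=\underline v_1+\sum_{n\ge2}v_n\zeta^{n-1}\to\underline v_1$ as $\zeta\to0$, and likewise for $Q$.

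Uniqueness is the substantive part. Here I would show that any smooth solution of \eqref{eq:euler:ODE} satisfying \eqref{bc:infty:explosion} coincides with the series. Set $s:=\log\xi$ and $\zeta=e^{-s/\crr}$, and write $V=\zeta\tilde v$, $Q=\zeta\tilde q$, with $\tilde v\to\underline v_1$ and $\tilde q\to\underline q_1$ as $\zeta\to0$. Since $(V,Q)\to P_0=(0,0)$, we have $\Delta\approx-\crr^3$ near $P_0$. From the Jacobian computation in the proof of Lemma~\ref{pol:division}, carried over to the forward system, the linearization of \eqref{eq:euler:ODE} at $P_0$ is $-\tfrac1\crr\,\mathrm{Id}$, i.e.\ $P_0$ is a star node. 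In the variable $\zeta$, using $\zeta\partial_\zeta=-\crr\,\xi\partial_\xi$, the system becomes
\[
\zeta\partial_\zeta(\tilde v,\tilde q)=\zeta\,\mathcal F(\zeta,\tilde v,\tilde q),
\]
where $\mathcal F$ is analytic near $(0,\underline v_1,\underline q_1)$. The $\zeta^0$ terms cancel exactly because the linear part is a multiple of the identity.

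The reduced equation is $\partial_\zeta(\tilde v,\tilde q)=\mathcal F(\zeta,\tilde v,\tilde q)$, which is regular at $\zeta=0$. By \eqref{bc:infty:explosion}, $(\tilde v,\tilde q)$ extends continuously to $\zeta=0$ with the prescribed value $(\underline v_1,\underline q_1)$. Cauchy--Lipschitz with data at $\zeta=0$, applied in integral form $(\tilde v,\tilde q)(\zeta)=(\underline v_1,\underline q_1)+\int_0^\zeta\mathcal F$, then shows the solution is unique; the integral form is legitimate because $\mathcal F$ is bounded near $\zeta=0$. The power-series solution is one such solution, so the two agree on a neighbourhood of $\xi=+\infty$. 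Standard ODE uniqueness away from $\Delta=0$ extends the agreement to the whole domain $\xi>\mathsf{C}^{\crr}(\cdots)^{\crr/2}$. This also matches the ``star'' remark from the proof of Proposition~\ref{prop:ub:imp}: the limiting ratio, together with the scale $\underline q_1$, pins down the trajectory and its parametrization.

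I expect the main obstacle to be checking that the desingularized system really has the form $\zeta\,\mathcal F$, i.e.\ that no resonant $\zeta^0$ term survives. I would verify this directly from the recursion \eqref{rec:infty}: the coefficient $\crr(n-1)$ vanishes only at $n=1$, which is exactly what allows $(v_1,q_1)$ to be free and makes every higher coefficient uniquely determined.
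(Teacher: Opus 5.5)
Your proposal is correct and follows the paper's proof: existence is the specialization of Proposition~\ref{prop:power:series:infty}, and uniqueness rests on the same star-node cancellation at $(0,0)$. The paper runs Gronwall on the weighted difference $\xi^{1/\crr}(\widetilde V-V)$ with integrating factor $\exp\bigl(\int_\xi^{\xi'} s^{-1/\crr}\,\tfrac{ds}{s}\bigr)$, which is exactly your Cauchy--Lipschitz estimate written in $\zeta=\xi^{-1/\crr}$; your regular-ODE formulation with data at $\zeta=0$ is a cleaner packaging of the same argument, followed by the same ODE continuation to the full domain.
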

\begin{proof}[Proof of Proposition~\ref{prop:power:implosion}]
Existence is the specialization of Proposition~\ref{prop:power:series:infty} with
$v_1=\underline v_1$ and $q_1=\underline q_1$. We prove uniqueness of such a trajectory. Let
$(\widetilde V,\widetilde Q)$ be another solution, defined for all sufficiently large
$\xi$, satisfying the same boundary conditions as $(V,Q)$:
\[
\lim_{\xi\to+\infty}\xi^{\frac{1}{\crr}}\widetilde V(\xi)=\underline v_1,
\qquad
\lim_{\xi\to+\infty}\xi^{\frac{1}{\crr}}\widetilde Q(\xi)=\underline q_1.
\]
In particular, both $(V,Q)$ and $(\widetilde V, \widetilde Q)$ converge to $(0,0)$ as $\xi \to +\infty$.  Expanding the vector field \eqref{main:ODE} at $(0,0)$ gives, for $\xi$ large,
\begin{align*}
\left|\xi\pp_\xi\left(\xi^{\frac{1}{\crr}}(\widetilde V-V)\right)\right|
\lesssim
|\widetilde V-V|+|\widetilde Q-Q|,
    \\
\left|\xi\pp_\xi\left(\xi^{\frac{1}{\crr}}(\widetilde Q-Q)\right)\right|
\lesssim
|\widetilde V-V|+|\widetilde Q-Q|.
\end{align*}
Applying Gronwall, for any $\xi'>\xi$ we get
\begin{align*}
&\xi^{\frac{1}{\crr}}
\left(|\widetilde V(\xi)-V(\xi)|+|\widetilde Q(\xi)-Q(\xi)|\right) \\
&\qquad\lesssim
(\xi')^{\frac{1}{\crr}}
\left(|\widetilde V(\xi')-V(\xi')|+|\widetilde Q(\xi')-Q(\xi')|\right)
\exp\left(\int_\xi^{\xi'}  s^{-\frac{1}{\crr}}\frac{ds}{s}\right).
\end{align*}
The integral is bounded uniformly as $\xi'\to+\infty$, and the common boundary condition implies
\begin{equation*}
\lim_{\xi' \to + \infty}(\xi')^{\frac{1}{\crr}}\left[ |\widetilde V(\xi')-V(\xi')|+|\widetilde Q(\xi')-Q(\xi')|\right] = 0.
\end{equation*}
  Thus
$(\widetilde V,\widetilde Q)=(V,Q)$ for all sufficiently large $\xi$. Standard ODE uniqueness then gives equality on the common interval of definition.
\end{proof}

The remaining task is global. Starting from the analytic solution constructed at $\xi=+\infty$, we will continue the trajectory inward in $\xi$. The next subsection uses the sharper bound \eqref{upp:bound:imp} on the ratio $\tfrac{\underline v_1}{\underline q_1}$ to control the maximal development of the local solution at $\xi=+\infty$.
\subsection[The trajectory (V,Q) does not hit the triple point (V2, Q2)]{The trajectory \texorpdfstring{$(V,Q)$}{(V,Q)} does not hit the triple point \texorpdfstring{$(V_2, Q_2)$}{(V2, Q2)}}
The goal of this section is to prove that the solutions constructed in Proposition~\ref{prop:power:implosion} extend up to some radius $ \xi_s$ at which $ V + \alpha  Q = \crr$. Using in addition the information from \eqref{upp:bound:imp}
 \[
 \tfrac{\underline v_1}{\underline q_1} < \rr,
 \]
we will show that the trajectory does not hit the sonic point $(V_2,Q_2)$.
The conclusion is contained in the following proposition.
\begin{proposition} \label{prop:sub:sonic}
    Let  $d=3$, $\NNN=1$ and $\alpha\in(0, 1+\sqrt{2}]$. Consider the unique local solution $(V,Q)$ to \eqref{main:ODE} at $\xi=+\infty$ that satisfies the boundary conditions \eqref{bc:infty:explosion} constructed in Proposition~\ref{prop:power:implosion}. There exists $0 \le \xi_s<+\infty$ such that $(V,Q)$ extends smoothly and uniquely on $(\xi_s, +\infty)$. Moreover, as $\xi \to \xi_s^+$ the solution $(V, Q)$ hits the sonic line $V+\alpha Q = \crr$ \textit{on the left} of the sonic point $(V_2, Q_2)$. That is
    \begin{equation} \label{eq:xis:expl}
\lim_{\xi \to \xi_s^+} V(\xi) = V_s, \quad \lim_{\xi \to \xi_s^+} Q(\xi) = Q_s, \quad V_s+\alpha Q_s = \crr, \quad Q_s > Q_2.
\end{equation}

\end{proposition}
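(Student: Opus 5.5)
The plan is to follow the trajectory from Proposition~\ref{prop:power:implosion} backward in $\xi$ (decreasing $\xi$) inside a trapping region of the subsonic half-plane $\{V+\alpha Q<\crr\}$. We show that it cannot escape except through the sonic segment to the left of $P_2$.

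\emph{Initial position.} Near $\xi=+\infty$, Proposition~\ref{prop:power:implosion} gives $(V,Q)\approx \xi^{-1/\crr}(\underline v_1,\underline q_1)$. By Proposition~\ref{prop:ub:imp} the trajectory therefore leaves $(0,0)$ into the open sector $\{Q>0,\ V<\rr Q\}$, more precisely between the lines $V=-\sqrt{2\alpha/(3\gamma)}\,Q$ and $V=\rr Q$. Since $\Delta\neq0$ there and $\crr>0$, the solution is smooth and unique as long as it stays in $\{V+\alpha Q<\crr,\ Q>0\}$.

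\emph{Trapping region.} The region $\mathcal R$ is bounded by four pieces: the ray $Q=0$; on the right, a lower barrier curve $V=\mathcal W_E(Q)$, a polynomial with $\mathcal W_E(0)=0$ and slope $\mathcal W_E'(0)=\rr$ (the announced Proposition~\ref{prop:barrier:ex}) that joins the sonic line at a point with $Q\ge Q_2$, i.e.\ strictly to the left of $P_2$; on the left, a curve such as the line of slope $-\sqrt{2\alpha/(3\gamma)}$ or the $P_v=0$ branch; and on top, the sonic line. Each piece is handled as follows.
\begin{enumerate}
\item On $Q=0$ the field is tangent, so $Q>0$ is preserved.
\item On the right barrier we need the vector field, traversed in the decreasing-$\xi$ direction, to point into $\mathcal R$. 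As in the implosion argument, this reduces to the sign of $\Delta_V$-type combinations $P_v-\mathcal W_E'(Q)P_q$ along $(\mathcal W_E(Q),Q)$, using $\Delta>0$ in the relevant region. Parametrizing by $t=Q/Q_2$ turns this into a polynomial $\QQ(t)$ after dividing off the double root at $t=0$, exactly as in Lemma~\ref{pol:division}. Its sign is checked through Bernstein coefficients $\BB_j(\alpha)$ (Lemma~\ref{lemma:Bernstein:q}).
\item On the left we use $P_q>0$ from Lemma~\ref{lemma:pos:Pq}, valid since $V<V_2$ there. This shows $Q$ is monotone in $\xi$ (decreasing in $\xi$, since $\Delta>0$ and $\xi\partial_\xi Q=P_q/\Delta$ has a fixed sign), so $Q$ increases as $\xi$ decreases, and it also controls the left edge.
\end{enumerate}

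\emph{Reaching the sonic line.} Within $\mathcal R$ there are no stationary points other than $(0,0)$ (which the trajectory leaves). Since $Q$ is strictly monotone and bounded by the sonic constraint, with $Q\le (\crr-V)/\alpha$ and $V$ bounded below by the left boundary, the trajectory must reach the boundary $\{V+\alpha Q=\crr\}\cap\partial\mathcal R$ in finite or degenerate $\xi$-time. Note that $\xi\partial_\xi Q=P_q/\Delta$ is bounded away from zero, relative to $Q$, on compact pieces, which gives $\xi_s>0$ or $\xi_s=0$ allowed. Because the right barrier hits the sonic line to the left of $P_2$ (and the barrier inequality is strict), the exit point $(V_s,Q_s)$ satisfies $Q_s>Q_2$; strictness follows as in Proposition~\ref{prop:ub:imp} by comparison with a strictly interior trajectory. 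Existence of the limits follows from monotonicity of $Q$ and boundedness.

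\emph{Main obstacle.} The main obstacle is choosing $\mathcal W_E$ explicitly (degree two in $Q$, say) so that it starts with slope exactly $\rr$ and meets the sonic line at height above $Q_2$, and then verifying the sign of the resulting degree-five polynomial uniformly in $\alpha\in(0,1+\sqrt2]$. For that sign check I would first try Bernstein degree elevation, and fall back on a rational interval cover in $\alpha$ with rigorous arithmetic if needed.
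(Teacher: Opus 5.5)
Your outline follows the paper's architecture: a lower barrier $\mathcal W_E$ leaving the origin with slope $\rr$, monotonicity of $Q$ from Lemma~\ref{lemma:pos:Pq}, and Bernstein sign checks. However, it skips the step that actually forces $Q_s>Q_2$.

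\textbf{The triple point.} The barrier you invoke, whose barrier polynomial reduces to the degree-five $\QQ$ of Lemma~\ref{lemma:Bernstein:q}, does not meet the sonic line strictly to the left of $P_2$. It ends \emph{at} $P_2$. The extra factor $(1-t)$ in $\mathcal B(t)=t^2(1-t)\QQ(t)$, which you need to get degree five rather than six, exists precisely because $P_v=P_q=0$ there. (If you really wanted a barrier ending strictly left of $P_2$, none of the cited polynomial machinery applies, and a new barrier would have to be built and verified.) So the barrier inequality degenerates at the endpoint, $\mathcal B(1)=0$. Nothing in your argument then prevents the trajectory from converging to $P_2$ while staying left of $\mathcal W_E$, which would give $Q_s=Q_2$.

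The comparison argument of Proposition~\ref{prop:ub:imp} does not rescue this. That argument relied on $(0,0)$ being a star node, with at most one trajectory per direction. By contrast, $P_2$ is a hyperbolic sink of the desingularized field $\partial_\psi=\xi\Delta\,\partial_\xi$ with $\lambda_-<\lambda_+<0$. It attracts a full two-dimensional family of trajectories (all but two tangent to $\eta_+$), so strictly interior trajectories can converge to $P_2$ as well.

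The missing ingredient is the eigendirection condition $\mathcal W_E'(Q_2)>\mmm>\mmp>-\alpha$. Every trajectory converging to the node is tangent to $\eta_+$ or $\eta_-$. For $Q<Q_2$, curves through $P_2$ with these slopes lie to the right of $\mathcal W_E$; for $Q>Q_2$ they lie in the supersonic region. Hence the trajectory cannot reach $P_2$ from inside the trapping region. The paper reduces this condition to $\QQ(1)=\BB_5<0$.

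The same analysis gives existence of $V_s$. Away from $P_2$ and $\{Q=0\}$, $P_q$ is bounded away from zero on the compact set visited, so $dV/dQ=P_v/P_q$ is bounded. Monotonicity of $Q$ plus boundedness of $V$ alone would not exclude oscillation of $V$.

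\textbf{Escape to $Q=+\infty$.} You assert that $Q\le(\crr-V)/\alpha$ together with the left boundary bounds $Q$. But a ray $V=-\kappa Q$ only gives $V\ge-\kappa Q$, and this closes the region against the sonic line only when $\kappa<\alpha$. For $\kappa=\sqrt{2\alpha/(3\gamma)}$ that condition reads $6\alpha^2+3\alpha>2$, which fails for $\gamma=\tfrac53$ and $\gamma=\tfrac75$. Since all you know is $-\sqrt{2\alpha/(3\gamma)}\le\underline v_1/\underline q_1<\rr$, you cannot guarantee a ray with $\kappa<\alpha$ to the left of the trajectory. The $P_v=0$ branch is no alternative: near the origin it is $V\approx-3\alpha^2V_\infty Q^2/\crr$, so the trajectory starts to its left.

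The paper handles this with an asymptotic computation. Along $V=\crr-KQ$, with $K\ge\alpha$ confined to a compact interval by the ray barrier, one has $P_q=K(\gamma K^2-\alpha^2)Q^4+O(Q^3)$ and $P_v+KP_q=2\alpha K^2(K^2+\alpha)Q^4+O(Q^3)$. Hence $q\,\tfrac{d}{dq}\bigl(\tfrac{\crr-\mathcal V}{q}\bigr)\le-\mathsf C<0$ for large $q$. The Mach ratio therefore falls to $\alpha$ within a bounded range of $\log q$, and $Q_s<\infty$.

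\textbf{A sign slip.} In the subsonic region $\Delta<0$, not $\Delta>0$. With your sign, $P_q>0$ would make $Q$ increase in $\xi$, contrary to what you correctly conclude.
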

The proof has two steps. First, we show that the solution $(V,Q)$ cannot hit the sonic point $(V_2,Q_2)$ by constructing a suitable lower barrier; see Proposition~\ref{prop:barrier:ex}. Second, we show that $(V,Q)$ actually hits the sonic line by ruling out escape to infinity; see Lemma~\ref{lemma:escape}.

Similarly to what we have done for the implosion, we will construct a suitable lower barrier
\[
\mathcal{W}_E \colon [0,Q_2] \to (-\infty, V_2].
\]
We introduce the variable $t:=\tfrac{Q}{Q_2}$, a function $F \colon [0, 1] \to (-\infty, V_2]$ such that
\begin{equation*}
\mathcal{W}_E(Q) := F\left(\tfrac{Q}{Q_2} \right).
\end{equation*}

We seek a function $F$ such that
\begin{enumerate}[leftmargin=2em]
\item \label{cond:bar:ex:i} Endpoint conditions: $F(0)=0$ and $F(1)=V_2$. This condition ensures that the barrier curve connects the origin $(0,0)$ to the triple point $(V_2,Q_2)$.
\item \label{cond:bar:ex:ii} Derivative lower bound: $ F'(0) \ge  Q_2 \rr$. This is equivalent to $\mathcal{W}_E'(0)\ge \rr$. Together with the strict bound $\underline v_1/\underline q_1<\rr$, it ensures that near the origin the trajectory $(V,Q)$ lies to the left of the barrier $\mathcal{W}_E$.
\item \label{cond:bar:ex:iii} Barrier condition: we must ensure that for $t \in (0,1)$ we have
\begin{equation} \label{eq:lower:barrier}
\tfrac{P_v}{P_q}[F(t), Q_2 t] \le \tfrac{F'(t)}{Q_2}
\end{equation}
\item \label{cond:bar:ex:iv} Eigendirection condition at $P_2$: if $\mm_{\pm}$
denotes the slopes of the eigendirections of the desingularized system at $P_2$ \eqref{eq:tran:ODE} (see the definitions \eqref{eq:slope:ev}), we ask
\[
\frac{F'(1)}{Q_2} >
\mmm>\mmp >-\alpha.
\]
This condition guarantees that solutions to the left of the barrier $\mathcal{W}_E$ cannot enter the point $P_2$.
\end{enumerate}

\begin{proposition}\label{prop:barrier:ex}
Let $d=3$, $\alpha \in (0, 1 + \sqrt{2}]$ and $\NNN=1$. Then, the function
\begin{equation} \label{def:F}
F(t) := V_2 t^2 + Q_2 \rr (t-t^2)
\end{equation}
satisfies the conditions \eqref{cond:bar:ex:i}--\eqref{cond:bar:ex:iv}.
\end{proposition}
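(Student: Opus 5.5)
Conditions \eqref{cond:bar:ex:i} and \eqref{cond:bar:ex:ii} follow directly from \eqref{def:F}. We have $F(0)=0$ and $F(1)=V_2$. Moreover $F'(t)=2V_2t+Q_2\rr(1-2t)$, so $F'(0)=Q_2\rr$, which gives $\mathcal{W}_E'(0)=\rr$ with equality. The remaining work is \eqref{cond:bar:ex:iv} and \eqref{cond:bar:ex:iii}, and I would treat them in that order.

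For \eqref{cond:bar:ex:iv} we have $F'(1)/Q_2=\tfrac{2V_2}{Q_2}-\rr$. The ordering $\mmm>\mmp>-\alpha$ is already \eqref{eq:slope:ordering}, so it remains to show $\tfrac{2V_2}{Q_2}-\rr>\mmm$. Since $J_{21}<0$ and $F_{\mm}$ from \eqref{eq:slope:quadratic} is negative outside $[\mmp,\mmm]$, and since $\tfrac{2V_2}{Q_2}-\rr>0\ge\mmp$ (because $V_2\ge V_\infty\ge 0$ and $\rr<0$), it suffices to prove $F_{\mm}(\tfrac{2V_2}{Q_2}-\rr)<0$. Using $Q_2=(\crr-V_2)/\alpha$ and the explicit $J_{ij}$ from \eqref{jacobian:P2}, every term carries the factor $(\crr-V_2)^2$. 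After dividing it out, the inequality becomes an algebraic inequality in $(\alpha,\crr,V_\infty,V_2)$. I would then substitute $\crr(\alpha)$ from \eqref{eq:cr:1}, $V_\infty(\alpha)$ from \eqref{Vinfty:def}, and $V_2$ from \eqref{def:V2}, and check the sign on $(0,1+\sqrt2]$. The bounds \eqref{bound:V2} and \eqref{V2:crr:2} should be enough to close it where possible; otherwise I would fall back on a rational-cover check in $\alpha$.

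For \eqref{cond:bar:ex:iii}, note that along the barrier curve $(F(t),Q_2t)$ with $t\in(0,1)$ we have $F(t)<V_2$ (for $t<1$, $F(t)$ is a convex combination of $V_2$ and a negative quantity) and $F(t)+\alpha Q_2t<\crr$. Lemma~\ref{lemma:pos:Pq} then gives $P_q>0$ there. Hence \eqref{eq:lower:barrier} is equivalent to
\[
\mathcal{B}^E(t):=P_v[F(t),Q_2t]-\tfrac{F'(t)}{Q_2}P_q[F(t),Q_2t]\le 0 .
\]
This is a polynomial in $t$. As in Lemma~\ref{pol:division}, I expect a double root at $t=0$: the linearization of $(P_v,P_q)$ at the origin is $(-\crr^2 V,-\crr^2 Q)$ up to sign, so the orders $0$ and $1$ cancel. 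There should also be a root at $t=1$, because $P_v=P_q=0$ at $P_2$. Removing these factors leaves a polynomial $\QQ(t)$ of degree about $5$ whose coefficients are explicit in $\alpha$ through $\crr$, $V_\infty$, $V_2$ and $\rr$.

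I would then expand $\QQ$ in the degree-$5$ Bernstein basis and show that every coefficient $\BB_j(\alpha)$ has the correct sign for all $\alpha\in(0,1+\sqrt2]$; if one coefficient fails, I would control it by its neighbours as in Lemma~\ref{lemma:Bernstein}. This uniform-in-$\alpha$ sign check is the main obstacle. The coefficients involve the nested radicals $\sqrt{1+102\alpha+249\alpha^2}$ (through $\crr$), the square root in $V_2$, and $\sqrt{2\alpha/(3\gamma)}$ (through $\rr$), so an elementary symbolic proof is unwieldy. My plan is to verify each $\BB_j$ exactly for rational $\alpha$, and to cover the full interval with interval arithmetic on small subintervals, paying separate attention to the endpoint $\alpha\to0$, where the coefficients degenerate and must be handled asymptotically.
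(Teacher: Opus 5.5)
Your proposal follows the paper's proof: the same reduction of \eqref{cond:bar:ex:iii} via Lemma~\ref{lemma:pos:Pq} to $\mathcal{B}^E(t)\le 0$, the same factorization $\mathcal{B}^E(t)=t^2(1-t)\QQ(t)$, and the same degree-$5$ Bernstein sign check, done by computer-assisted rational-cover or interval arguments with the small-$\alpha$ end handled separately. The one step you can skip is the separate check for \eqref{cond:bar:ex:iv}: differentiating at $t=1$ gives $-\QQ(1)=(\mathcal{B}^E)'(1)=-Q_2\,F_{\mm}\bigl(\tfrac{F'(1)}{Q_2}\bigr)$, so the inequality $F_{\mm}\bigl(\tfrac{F'(1)}{Q_2}\bigr)<0$ is exactly $\BB_5(\alpha)=\QQ(1)<0$, which the Bernstein computation already supplies.
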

We define
\begin{equation} \label{def:b:ex}
\mathcal{B}(t):=P_v[F(t), Q_2 t] - \tfrac{F'(t)}{Q_2} P_q[F(t), Q_2 t].
\end{equation}
For $0<t<1$, the definition of $F$ gives $F(t)<V_2$ and $F(t)+\alpha Q_2t<\crr$; hence Lemma~\ref{lemma:pos:Pq} gives $P_q[F(t),Q_2t]>0$. Thus, the inequality \eqref{eq:lower:barrier} is equivalent to
\begin{equation} \label{eq:B:equiv}
\mathcal{B}(t) \le 0.
\end{equation}

 $\mathcal{B}(t)$ is a polynomial of degree at most $8$ in $t$. However, we have the following lemma, analogous to Lemma~\ref{pol:division}.
 \begin{lemma}\label{lemma:pol:division}
 Let $\mathcal{B}(t)$ be the polynomial defined in \eqref{def:b:ex}. Then, $\mathcal{B}(t)$ is divisible by $t^2(1-t)$. In particular we have
\begin{equation} \label{def:q}
 \QQ(t):=\mathcal{B}(t) t^{-2} (1-t)^{-1},
 \end{equation}
 where $\QQ$ is a polynomial of degree at most $5$ in $t$.
 \end{lemma}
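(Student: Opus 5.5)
Since $\mathcal{B}$ is a polynomial in $t$ (of degree at most $8$, because $F$ is quadratic, $P_v$ and $P_q$ are quartic in $(V,Q)$, and $F'$ is linear), it suffices to show three things. First, $t=0$ is a root of multiplicity at least two: $\mathcal{B}(0)=0$ and $\mathcal{B}'(0)=0$. Second, $t=1$ is a root: $\mathcal{B}(1)=0$. Third, the degree count: once $t^2(1-t)$ divides $\mathcal{B}$, the quotient $\QQ$ has degree at most $8-3=5$.

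\emph{Double root at $t=0$.} This mirrors the proof of Lemma~\ref{pol:division}. Since $F(0)=0$, we have $\mathcal{B}(0)=P_v[0,0]-\tfrac{F'(0)}{Q_2}P_q[0,0]$. From \eqref{P:exp}, $P_v[0,0]=0$, and $P_q[0,0]=0$ because of the overall factor $Q$. Next we compute the gradients at the origin, keeping only the terms linear in $(V,Q)$:
\begin{itemize}
\item in $P_v$, the term $(V-\crr)\bigl[-V(V-1)(V-\crr)\bigr]$ has linear part $-\crr^2V$, so $\nabla P_v[0,0]=(-\crr^2,0)$;
\item in $P_q$, the bracket evaluated at the origin is $-\crr^2\cdot(-1)=\crr^2$, so $\nabla P_q[0,0]=(0,\crr^2)$.
\end{itemize}
Differentiating $\mathcal{B}$ and using $P_q[0,0]=0$ to drop the $F''$ term, we get
\[
\mathcal{B}'(0)=-\crr^2F'(0)-\tfrac{F'(0)}{Q_2}\,\crr^2Q_2 .
\]
This simplifies to $-2\crr^2F'(0)$, which is \emph{not} zero. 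The sign convention therefore needs rechecking. In the forward system the origin is a star node for the ODE $dV/dQ=P_v/P_q$, meaning $\tfrac{d}{dQ}(V/Q)\to 0$ along every direction, so $\mathcal{B}'(0)$ should vanish. I will recompute the linearization carefully. Including the $-V(V-1)$ factor, the linear part of $P_v$ is $(V-\crr)\cdot(-(V-1)\cdot(-\crr))V$, which at the origin gives $(-\crr)(-\crr)V=\crr^2V$, so in fact $\nabla P_v[0,0]=(\crr^2,0)$. Then
\[
\mathcal{B}'(0)=\crr^2F'(0)-\tfrac{F'(0)}{Q_2}\,\crr^2Q_2=0.
\]
This is the expected star-node cancellation, and the key point is that $\partial_VP_v=\partial_Q(P_q)$ at $(0,0)$ with vanishing off-diagonal entries. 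In the written proof I will present exactly this linearization computation.

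\emph{Root at $t=1$.} Here $F(1)=V_2$ and the point $(F(1),Q_2)=(V_2,Q_2)=P_2$ is a triple point, so $P_v[P_2]=P_q[P_2]=0$. Hence $\mathcal{B}(1)=0$ directly, regardless of the value of $F'(1)$.

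The only step I expect to need real care is the sign bookkeeping in the linearization at the origin. Everything else is immediate from the choice of $F$ through the endpoint values $F(0)=0$ and $F(1)=V_2$, together with the definition of the triple point $P_2$.
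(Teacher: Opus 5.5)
Your proposal is correct and follows the paper's proof. The double root at $t=0$ comes from $P_v[0,0]=P_q[0,0]=0$ together with the Jacobian $\crr^2 I$ at the origin; your corrected value $\nabla P_v[0,0]=(\crr^2,0)$ is the right one, unlike the $-\crr^2$ in the implosion case of Lemma~\ref{pol:division}. The root at $t=1$ comes from $(F(1),Q_2)=P_2$ being a triple point, and the degree count then gives $\deg \QQ \le 5$. In the final write-up, drop the initial erroneous linearization and present only the corrected computation.
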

 \begin{proof}[Proof of Lemma~\ref{lemma:pol:division}]
 The proof is identical to the proof of Lemma~\ref{pol:division}, once we observe that
 \begin{equation*}
\begin{gathered}
P_v[0,0]=P_q[0,0]=P_v[V_2, Q_2]=P_q[V_2, Q_2]=0, \\
(\pp_V P_v,\pp_Q P_v)[0,0]=(\crr^2,0),\quad
(\pp_V P_q,\pp_Q P_q)[0,0]=(0,\crr^2).
\end{gathered}
 \end{equation*}
 \end{proof}
We write
\begin{equation} \label{coeff:QQ}
\QQ(t) := \cq_0(\alpha) + \cq_1(\alpha) t + \cq_2(\alpha) t^2 + \cq_3(\alpha) t^3 + \cq_4(\alpha) t^4 + \cq_5(\alpha) t^5.
\end{equation}
The explicit expressions for the coefficients $\cq_i$ are recorded in Appendix~\ref{app:coef:b}; see \eqref{coeff:qq:i}.

\begin{lemma} \label{lemma:Bernstein:q}
We introduce the Bernstein coefficients of degree 5 associated to the polynomial \eqref{coeff:QQ}. For any integer $0 \le j \le 5$ we define
\begin{equation} \label{Bernstein:coeff:exp}
\BB_j(\alpha) :=  \sum_{i=0}^j \cq_i(\alpha) \binom{j}{i}\binom{5}{i}^{-1}.
\end{equation}
For every $0 \le j \le 5$ and $\alpha \in (0, 1+\sqrt{2}]$, we have
\begin{equation} \label{bound:BB}
\BB_j(\alpha) < 0.
\end{equation}
\end{lemma}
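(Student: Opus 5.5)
The plan has two parts: first reduce each inequality $\BB_j(\alpha)<0$ to sign checks of explicit algebraic expressions in $\alpha$, then certify those signs uniformly on $(0,1+\sqrt{2}]$. The coefficients $\cq_i(\alpha)$ recorded in \eqref{coeff:qq:i} depend on $\alpha$ through $\crr$, $\cbb$, $V_\infty$, $V_2$, $Q_2$ and $\rr$. After \eqref{eq:cr:1}, these involve two nested radicals: $S:=\sqrt{1+102\alpha+249\alpha^2}$, which enters through $\crr$, $\cbb$ and $V_\infty$, and the discriminant root $D:=\sqrt{(1-3\crr-3V_\infty)^2-24\crr V_\infty}$ from \eqref{def:V2}. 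The factor $\sqrt{2\alpha/(3\gamma)}$ enters through $\rr$. Since $Q_2\rr=\frac{1}{\alpha}(\crr-V_2)\rr$, I would write each $\BB_j$ as a polynomial in $V_2$ and $\rr$ with coefficients rational in $(\alpha,S)$. Then $\BB_j=A_j+\sqrt{2\alpha/(3\gamma)}\,B_j$, where $A_j$ and $B_j$ are polynomial in $V_2$ with coefficients in $\mathbb{Q}(\alpha,S)$. I would reduce the powers of $V_2$ modulo the quadratic $F_T(V_2)=0$ from \eqref{triple:eq}, so that $\BB_j$ becomes affine in $V_2$, and hence affine in $D$. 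Each $\BB_j$ is then a combination of the type $a+bD+c\sqrt{2\alpha/(3\gamma)}+eD\sqrt{2\alpha/(3\gamma)}$ with $a,b,c,e\in\mathbb{Q}(\alpha,S)$. This explains the remark in the paper that, for fixed rational $\alpha$, one must determine the signs of six constant algebraic expressions.

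The endpoint coefficients should be handled structurally. We have $\BB_0=\cq_0$, and by the construction in Lemma~\ref{lemma:pol:division}, $\cq_0=\lim_{t\to0}\mathcal{B}(t)/t^2$. Expanding $P_v$ and $P_q$ to second order at $(0,0)$, where their linear parts are $\crr^2 V$ and $\crr^2 Q$, gives $\cq_0$ in terms of $F''(0)=2(V_2-Q_2\rr)$, $\rr$ and the quadratic terms of the vector field. Its sign should follow from inequalities already established in the paper, such as \eqref{bound:V2}, \eqref{V2:crr:2}, $\crr>1$ and $V_\infty\ge 0$. Similarly, $\BB_5=\QQ(1)=-\mathcal{B}'(1)$. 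This is governed by the linearization $J$ at $P_2$, and its sign should be equivalent to the eigendirection condition $F'(1)/Q_2>\mmm$. That in turn reduces to the sign of $F_{\mm}(F'(1)/Q_2)$ with $F_{\mm}$ from \eqref{eq:slope:quadratic}, which I would check using \eqref{eq:mag} and the sign information on the $J_{ij}$ already obtained. Thus condition \eqref{cond:bar:ex:iv} and $\BB_5<0$ should come out together.

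The main obstacle is the interior coefficients $\BB_1,\dots,\BB_4$, which have no structural interpretation. I would first try to obtain explicit inequalities between the radicals on subintervals: rational upper and lower bounds for $S$, $D$ and $\sqrt{2\alpha/(3\gamma)}$ as functions of $\alpha$, chosen via Taylor or Padé approximants on a partition of $(0,1+\sqrt2]$. Substituting these with the appropriate monotonicity should reduce each sign claim to positivity of explicit polynomials in $\alpha$ on intervals, which can then be checked by Sturm sequences or by Bernstein expansions on each subinterval. Special care is needed near $\alpha\to0^+$, where $V_\infty$, $\rr$ and $\alpha$ degenerate simultaneously and several coefficients may vanish. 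There I would factor out the leading power of $\alpha$ and verify the sign of the rescaled leading term analytically. Realistically, this uniform covering of the full interval is where a rigorous computer-assisted interval-arithmetic verification (ball arithmetic over a finite cover of $\alpha$) is the practical route. For each rational $\alpha$, the six constant expressions can also be decided exactly by repeated squaring.
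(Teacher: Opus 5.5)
Your main route is the paper's. Appendices~\ref{app:rational-cover-bernstein-q} and~\ref{app:CA} also reduce each $\BB_j$ to an expression affine in the radicals. They do this through $\mathsf c=(1+3\alpha)\crr$, $\mathsf v=(1+3\alpha)V_2$ and $\rr$, whose squares are eliminated by quadratic relations. They then certify signs on a cover of $(0,1+\sqrt2]$, using box enclosures with Bernstein expansions or interval arithmetic, and factor out a power of $\sqrt\alpha$ near $\alpha=0$. Because the reduced expression is multi-affine, its minimum over a box is attained at a corner. So you never need coefficient signs to decide which bound to substitute. (The paper's ``six constant algebraic expressions'' are just $\BB_0(\alpha),\dots,\BB_5(\alpha)$, not the four components of your decomposition.)

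\textbf{The gap is your structural treatment of $\BB_0$ and $\BB_5$.} Both signs depend on the \emph{size} of $\rr$, not only on $\rr<0$. Every inequality you cite is independent of $\rr$: \eqref{bound:V2}, \eqref{V2:crr:2}, $\crr>1$, $V_\infty\ge0$, \eqref{eq:mag}, and the signs of the $J_{ij}$. So these inequalities cannot decide the two signs.

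For $\BB_0$, carrying out your expansion at $(0,0)$ gives
\[
\BB_0=\cq_0=\crr\Bigl[-\crr\,(V_2-Q_2\rr)+\alpha Q_2^2\bigl((3\crr-1)\rr^2+3\alpha V_\infty\bigr)\Bigr].
\]
This is a difference of two positive terms; at $\alpha=\frac13$ they are about $1.526$ and $1.496$.

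For $\BB_5$, you have $\BB_5=\QQ(1)=Q_2F_{\mm}(m)$ with $m=F'(1)/Q_2=2V_2/Q_2-\rr$. In $F_{\mm}(m)=J_{21}m^2+(J_{22}-J_{11})m-J_{12}$ the first term is negative and the other two are nonnegative, so sign information decides nothing. At $\alpha=\frac13$ one finds $m\approx0.284$ against $\mmm\approx0.269$.

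If you replace $\rr$ by $-0.05$ at $\alpha=\frac13$, every inequality you list still holds, yet both $\BB_0$ and $\BB_5$ become positive. At $\alpha=\frac13$ the two endpoint conditions alone confine $\rr$ to roughly $(-0.231,-0.204)$, and $\rr=-\frac35\sqrt{2\alpha/(3\gamma)}\approx-0.219$ lies inside with margins of a few percent. This matches the paper's certified lower bounds for $\tilde\BB_0$ and $\tilde\BB_5$ (namely $0.009$ and $0.006$), which are the smallest in Table~\ref{table:bb}. The endpoint coefficients are the most delicate ones, not the easy ones.

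The logic for $\BB_5$ also runs the other way in the paper: the eigendirection condition \eqref{cond:bar:ex:iv} is \emph{deduced from} $\BB_5<0$. Since the two are equivalent, invoking \eqref{cond:bar:ex:iv} gains nothing unless you prove $F'(1)/Q_2>\mmm$ independently, and that is the same tight quantitative inequality. The fix is simply to put $\BB_0$ and $\BB_5$ into the same certified computation you plan for $\BB_1,\dots,\BB_4$.
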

\begin{proof}[Proof of Lemma~\ref{lemma:Bernstein:q}]
The explicit expressions for the coefficients $\BB_j$ are recorded in Appendix~\ref{app:coef:b}; see \eqref{coeff:bb:i}. For each fixed rational value of $\alpha$, the bound \eqref{bound:BB} can be verified elementarily, since it reduces to determining the signs of six constant algebraic expressions $\BB_j(\alpha)$. In \S\ref{app:elementary}, we give an elementary proof for monatomic and diatomic gases.

Extending the same proof to the whole interval $\alpha\in(0,1+\sqrt{2}]$ is also elementary, but requires substantial computation. We provide two computer-assisted approaches: Appendix~\ref{app:rational-cover-bernstein-q} gives an implementation using a rational cover of the whole interval (in \textsc{Python}), while Appendix~\ref{app:CA} gives a proof based on interval arithmetic (using \textsc{SageMath}).
 \end{proof}
At this point we have all the elements we need to prove Proposition~\ref{prop:barrier:ex}.
\begin{proof}[Proof of Proposition~\ref{prop:barrier:ex}]
From the definition \eqref{def:F}, we have $F(0)=0$ and $F(1)=V_2$. Thus, \eqref{cond:bar:ex:i} is satisfied. Moreover,
\begin{equation*}
F'(t)=Q_2\rr+2(V_2-Q_2\rr)t.
\end{equation*}
Hence we have $F'(0) = Q_2\rr$, and \eqref{cond:bar:ex:ii} holds.

We now verify the barrier condition \eqref{cond:bar:ex:iii}. By the definition \eqref{def:b:ex}, this is equivalent to showing that
\begin{equation*}
\mathcal{B}(t)\le0 \quad \forall t\in[0,1].
\end{equation*}
By Lemma~\ref{lemma:pol:division}, $\mathcal{B}(t)=t^2(1-t)\QQ(t)$. Writing $\QQ$ in the degree $5$ Bernstein basis and using \eqref{Bernstein:coeff:exp}, we have
\begin{equation*}
\QQ(t)=\sum_{j=0}^{5}\binom{5}{j}\BB_j t^j(1-t)^{5-j}.
\end{equation*}
By Lemma~\ref{lemma:Bernstein:q}, $\BB_j < 0$ for every $0\le j\le5$. Hence $\QQ(t)\le0$ on $[0,1]$, and therefore $\mathcal{B}(t)\le0$ on $[0,1]$. 

To complete the proof, we are left with verifying the eigendirection condition \eqref{cond:bar:ex:iv}. 
By \eqref{eq:slope:ordering}, $\mmm>\mmp>-\alpha$, so it is enough to prove $\frac{F'(1)}{Q_2}>\mmm$. Since $\frac{F'(1)}{Q_2}=2\frac{V_2}{Q_2}-\rr>0\ge\mmp$, and since $\mm_{\pm}$ are the roots of the quadratic equation $F_{\mm}=0$ defined in \eqref{eq:slope:quadratic}, whose quadratic coefficient is negative, this is equivalent to
\[
F_{\mm}\left(\tfrac{F'(1)}{Q_2} \right)<0.
\]
Differentiating \eqref{def:b:ex} at $t=1$, and using $P_v(V_2,Q_2)=P_q(V_2,Q_2)=0$, gives
\[
-\QQ(1)=\mathcal{B}'(1)=-Q_2 F_{\mm}\left(\tfrac{F'(1)}{Q_2} \right).
\]
Hence, because $Q_2>0$, the condition $F_{\mm}\left(\frac{F'(1)}{Q_2} \right)<0$ is equivalent to $\QQ(1)<0$. Evaluating the Bernstein expansion at $t=1$ gives $\QQ(1)=\BB_5(\alpha)$, and Lemma~\ref{lemma:Bernstein:q} gives $\BB_5(\alpha)<0$. Therefore $\frac{F'(1)}{Q_2} >\mmm>\mmp>-\alpha$, which proves \eqref{cond:bar:ex:iv} and completes the proof.
\end{proof}

\begin{lemma}
\label{lemma:escape}
Let $\NNN=1$, $d=3$ and $\alpha\in (0, 1+\sqrt{2}]$.
Consider the region
\begin{equation*}
\mathcal{T}:=\{(V,Q)\in\RR^2:\ Q>0,\ V+\alpha Q<\crr,\, (Q>Q_2 \ \text{or}\ V<\mathcal{W}_E(Q))\}.
\end{equation*}
Consider any solution to the ODE $(V,Q)$ \eqref{eq:euler:ODE} with initial conditions $(V,Q)(\xi_{\mathrm{in}}) \in \mathcal{T}$. Then, $Q(\xi)$ is a strictly monotone decreasing function of $\xi$ and there exists a maximal $0\le\xi_s<\xi_{\mathrm{in}}$ such that $(V,Q)$ can be extended uniquely and smoothly on $(\xi_s, \xi_{\mathrm{in}})$ and
\begin{equation} \label{eq:sonic:2}
\lim_{\xi \to \xi_s^+} V(\xi) = V_s, \quad \lim_{\xi \to \xi_s^+} Q(\xi) = Q_s, \quad V_s+\alpha Q_s = \crr, \quad Q_s > Q_2.
\end{equation}

\end{lemma}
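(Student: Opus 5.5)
The proof rests on three facts about the region $\mathcal{T}$. The rational vector field \eqref{main:ODE} has fixed signs of $\Delta$ and $P_q$ there. The boundary curve $V=\mathcal{W}_E(Q)$ cannot be crossed, by Proposition~\ref{prop:barrier:ex}. And the only remaining exits as $\xi$ decreases are the open sonic segment $V+\alpha Q=\crr$ with $Q>Q_2$, or an escape to infinity, which must be excluded.

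\emph{Step 1 (signs and monotonicity).} For $(V,Q)\in\mathcal{T}$ we have $V<\crr-\alpha Q<\crr$. Hence $V-\crr<0$, $V-\crr-\alpha Q<0$, and $V-\crr+\alpha Q<0$, so $\Delta[V,Q]<0$ on $\mathcal{T}$.

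Next we check $V<V_2$ on $\mathcal{T}$.
\begin{itemize}
\item If $Q>Q_2$, then $V<\crr-\alpha Q<\crr-\alpha Q_2=V_2$ by \eqref{Q2:def}.
\item If $Q\le Q_2$, then $V<\mathcal{W}_E(Q)=F(Q/Q_2)\le V_2$. This holds because $F(t)=V_2t^2+Q_2\rr(t-t^2)\le V_2$ on $[0,1]$, using $\rr<0$.
\end{itemize}
Lemma~\ref{lemma:pos:Pq} therefore gives $P_q>0$ on $\mathcal{T}$. Consequently $\xi\pp_\xi Q=P_q/\Delta<0$, so $Q$ is strictly decreasing in $\xi$ as long as the solution stays in $\mathcal{T}$.

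Since $\Delta\neq 0$ on $\mathcal{T}$, the field is smooth there, and local existence and uniqueness hold. We let $\xi_s$ be the infimum of the $\xi$ down to which the solution exists and remains in $\mathcal{T}$. Because $Q$ increases as $\xi$ decreases, we may reparametrize the trajectory by $Q$, with $\frac{dV}{dQ}=\frac{P_v}{P_q}[V,Q]=:g(V,Q)$, and $g$ is locally Lipschitz on $\mathcal{T}$.

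\emph{Step 2 (the barrier cannot be crossed and $P_2$ cannot be reached).} On $Q\in(0,Q_2)$, condition \eqref{cond:bar:ex:iii} says $\mathcal{W}_E'(Q)\ge g(\mathcal{W}_E(Q),Q)$, so $\mathcal{W}_E$ is a supersolution of the scalar ODE $\frac{dV}{dQ}=g$. The standard comparison principle for Lipschitz $g$ then shows that $V<\mathcal{W}_E(Q)$ is preserved as $Q$ increases, for $Q<Q_2$. The boundary $Q=0$ is irrelevant, since $Q$ increases.

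The delicate point is $P_2$, where the field is singular. Here we pass to the desingularized system \eqref{eq:tran:ODE}. Since $\Delta<0$, decreasing $\xi$ corresponds to increasing $\psi$. By \eqref{sign:lambda}, $P_2$ is a hyperbolic node (a sink) with distinct real eigenvalues. Hence any trajectory converging to $P_2$ does so tangentially to $\eta_+$ or $\eta_-$, with slope $dV/dQ=\mm_\pm$.

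Near $P_2$, write $s=Q-Q_2$ and $c=F'(1)/Q_2$. Then $\mathcal{T}$ is, to first order, the set $\{V-V_2<\min(cs,-\alpha s)\}$. A direction $V-V_2=ms$ lies in this set only in two cases:
\begin{itemize}
\item $s<0$ and $m>c$;
\item $s>0$ and $m<-\alpha$.
\end{itemize}
Condition \eqref{cond:bar:ex:iv} gives $c>\mm_->\mm_+>-\alpha$, which excludes both eigendirections. So the trajectory never reaches $P_2$.

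Thus the only ways to leave $\mathcal{T}$ are:
\begin{itemize}
\item reaching the sonic line at a point with $Q_s>Q_2$ (on the sonic line, $Q\le Q_2$ would force $V\ge V_2\ge\mathcal{W}_E(Q)$, which is impossible inside the closure of the region left of the barrier, except at $P_2$ itself, now excluded);
\item escaping to $Q=+\infty$;
\item accumulating at an interior equilibrium as $\xi\to0^+$.
\end{itemize}
The last possibility is ruled out by strict monotonicity of $Q$ together with $P_q>0$ on $\mathcal{T}$: there are no equilibria inside $\mathcal{T}$. In the first case, the limit exists because $Q$ is monotone and $V$ is then determined by the trajectory reaching the line.

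\emph{Step 3 (no escape to infinity; the expected main obstacle).} If $Q\to+\infty$ inside $\mathcal{T}$, then $V<\crr-\alpha Q\to-\infty$. In particular the stationary point $P_6=(V_\infty,+\infty)$ lies on the supersonic side and cannot be the limit.

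To rule out this escape, I would pass to projective coordinates $w=V/Q$ and $z=1/Q$. In these coordinates the constraint defining $\mathcal{T}$ reads $w<-\alpha+\crr z$. I would then compute the leading-order (degree-four) parts of $P_v$ and $P_q$:
\[
P_v\approx V\left(-V^3+3\alpha^2Q^2V\right),\qquad P_q\approx Q\left((2\alpha-1)V^3\cdots+\alpha^2Q^2V\right).
\]
The aim is to obtain an equation of the form $\frac{dw}{d\log Q}=h(w)+O(z)$, and to show that $h$ is strictly negative on $w\le-\alpha$, or at least in a neighbourhood of $w=-\alpha$. Such a sign would give two things:
\begin{itemize}
\item the quantity $V+\alpha Q-\crr$ is driven to zero at a finite value of $Q$;
\item the variable $\log Q$ (hence also $\xi$) cannot run to infinity without the trajectory first hitting the sonic line.
\end{itemize}

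Alternatively, one could use the direct inequality $\frac{d}{dQ}(V+\alpha Q)=\frac{P_v+\alpha P_q}{P_q}\ge c_0>0$ for large $Q$ in $\mathcal{T}$. This forces $V+\alpha Q$ to increase to $\crr$ in finite $Q$. Checking the sign of $P_v+\alpha P_q$ on the unbounded part of $\mathcal{T}$ is the step I expect to require the most care.

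Once this is established, the limits \eqref{eq:sonic:2} hold with $\xi_s\ge0$.
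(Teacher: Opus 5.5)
Your Steps 1 and 2 are correct and follow the paper's route: positivity of $P_q$ from Lemma~\ref{lemma:pos:Pq} together with $\Delta<0$, non-crossing of $V=\mathcal W_E(Q)$ by comparison, and exclusion of $P_2$ through the eigendirection condition. The genuine gap is Step 3. This is the core of the lemma, and you leave it as a plan; moreover, both plans fail as written.

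\emph{The projective plan has the wrong sign.} The quartic parts are $P_v^{(4)}=-V^4+3\alpha^2Q^2V^2$ and $P_q^{(4)}=Q(\alpha^2Q^2V-\gamma V^3)$; the coefficient of $QV^3$ is $-\gamma$, not $2\alpha-1$. They give $\frac{dw}{d\log Q}=\frac{P_v-wP_q}{P_q}=\frac{2\alpha w(w^2+\alpha)}{\alpha^2-\gamma w^2}+O(z)$. This is \emph{positive} on $w\le-\alpha$; it equals $1+\alpha$ at $w=-\alpha$. So $w$ is pushed up toward $-\alpha$, i.e.\ toward the sonic line. A negative $h$, which is what you propose to prove, would push the trajectory away from the sonic line and would not yield your conclusion.

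\emph{The alternative inequality is false.} The claim $\frac{d}{dQ}(V+\alpha Q)\ge c_0>0$ fails on the unbounded part of $\mathcal T$, because $P_v+\alpha P_q$ has leading term $-V^4$. For instance, take $\alpha=\frac13$ and the ray $V=-Q$, which lies in $\mathcal T$ for $Q$ large. There the quartic part of $P_v+\alpha P_q$ is $-\frac{4}{27}Q^4<0$, while $P_q>0$.

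\emph{What is missing} is an a priori lower bound on $V/Q$, which is what makes the remainders uniform. The paper obtains it from \eqref{eq:ell}. With $\kappa=\max\{-V(\xi_{\mathrm{in}})/Q(\xi_{\mathrm{in}}),0\}$, the ray $V=-\kappa Q$ is a barrier backward in $\xi$, so $\mathcal M=\frac{\crr-V}{Q}\in[\alpha,\tilde\kappa]$ along the trajectory. The expansions of $P_q$ and $P_v+KP_q$ at $V=\crr-KQ$ are then uniform for $K\in[\alpha,\tilde\kappa]$. They give $q\frac{d}{dq}\mathcal M\le-\mathsf C(\tilde\kappa)+O(q^{-1})$. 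Since $\mathcal M\ge\alpha$ in $\mathcal T$, integrating in $\log q$ bounds $Q_s$; this is your projective idea with the correct sign and with compactness in $w$.

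The same ray barrier also rules out an exit that your list omits, namely $V\to-\infty$ while $Q$ stays bounded.

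Finally, your justification for the existence of $V_s$ (``$V$ is then determined by the trajectory reaching the line'') is circular. It follows only once the trajectory is known to stay in a compact subset of $\overline{\mathcal T}$, where $P_q$ vanishes only at $P_2$ and on $\{Q=0\}$. Given also that $P_2$ is excluded, $\frac{dV}{dQ}=\frac{P_v}{P_q}$ is bounded along the trajectory, so $V$ is Lipschitz in $Q$ and has a limit.
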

\begin{proof}[Proof of Lemma~\ref{lemma:escape}]
The monotonicity of $Q$ follows immediately from Lemma~\ref{lemma:pos:Pq}, once we observe that $\mathcal{T} \subset \{(V,Q): Q>0, V<V_2, V+\alpha Q <\crr \}$ and $\Delta[V,Q]<0$ in $\mathcal{T}$.

By standard ODE theory the solution $(V,Q)$ can be extended to maximal $\xi_s \ge 0$ such that $(V,Q)\in\mathcal T$ for all $\xi_s<\xi\le \xi_{\mathrm{in}}$. We also observe that $Q_s = \lim_{\xi \to \xi_s^+} Q(\xi)$ is well defined, by monotonicity of $Q$.
We will now show that the trajectory $(V(\xi), Q(\xi))$ is contained in a compact subset of $\overline{\mathcal{T}}$.
To accomplish this, for $\kappa\ge0$ we define the lower barrier
\begin{equation*}
\ell(t)=(\ell_v(t), \ell_q(t)) := ( -\kappa t,  t),
\end{equation*}
where $\kappa$ is a parameter to be chosen. A direct computation gives
\begin{align}
&P_v[\ell_v(t), \ell_q(t)] + \kappa P_q[\ell_v(t), \ell_q(t)] = \notag\\
& t^2 \left( 2 \kappa^2 \alpha ( \kappa^2 + \alpha)t^2 + \kappa \alpha \left( (5 \crr-1)\kappa^2 + \alpha \tfrac{4+ \crr +6\alpha + 7 \crr \alpha+12 \crr \alpha^2}{1+5\alpha + 6 \alpha^2}\right)t + \alpha \crr \left(\kappa^2(3 \crr-1)+3\alpha V_\infty \right) \right).  \label{eq:ell}
\end{align}
Since  $\alpha>0$, $\crr>1$, and $V_\infty \ge 0$, for $t> 0$ we have $P_v[\ell_v(t), \ell_q(t)] + \kappa P_q[\ell_v(t), \ell_q(t)] \ge 0$.
We define $\kappa:= \max\{ -\tfrac{V(\xi_{\mathrm{in}})}{Q(\xi_{\mathrm{in}})}, 0\}$. We then have $V(\xi_{\mathrm{in}})+\kappa Q(\xi_{\mathrm{in}}) \ge 0$, and the computation \eqref{eq:ell} now guarantees that $\ell$ is a lower barrier for $(V,Q)$ backward-in-$\xi$. That is, for $\xi_s<\xi<\xi_{\mathrm{in}}$ we have
\begin{equation} \label{ineq:slope}
 -\tfrac{V(\xi)}{Q(\xi)} \le \kappa \implies \alpha \le \tfrac{\crr-V}{Q} \le \tilde{\kappa}:= \kappa+\tfrac{\crr}{Q(\xi_{\mathrm{in}})}<+\infty,
\end{equation}
where we used the monotonicity of $Q$.

By taking advantage of the monotonicity of $Q$, we introduce the map $\mathcal{V} \colon [Q(\xi_\mathrm{in}), Q_s] \to \RR$ such that $\mathcal{V}(q) = V(Q^{-1}(q))$. Then, 
\begin{equation}\label{mcV:diff}
\tfrac{d\mathcal{V}}{d q} = \tfrac{P_v}{P_q}[\mathcal{V}(q), q].
\end{equation}
We now prove that there exists a constant $\mathsf{C}(\tilde{\kappa})>0$ such that the following differential inequality holds for $q$ large enough:
\begin{equation} \label{diff:ineq}
q \tfrac{d}{dq} \left( \tfrac{\crr-\mathcal{V}}{q} \right) \le - \mathsf{C}(\tilde{\kappa})<0.
\end{equation}
To prove \eqref{diff:ineq}, we first observe that, for $K \in [\alpha, \tilde{\kappa}]$, we have
\begin{align*}
P_q[\crr - K Q, Q] &= Q^4 K ( \gamma K^2 - \alpha^2) +\mathcal{O}(Q^3) \le Q^4 \tilde{\kappa} ( \gamma \tilde{\kappa}^2 - \alpha^2)  + \mathcal{O}(Q^3), \\
P_v[\crr - K Q, Q]+K P_q[\crr-K Q, Q] &= 2 \alpha Q^4 K^2 (K^2 +\alpha)+\mathcal{O}(Q^3) \ge 2 \alpha^3(\alpha^2+\alpha) Q^4+ \mathcal{O}(Q^3)  ,
\end{align*}
where $\mathcal{O}(Q^3)$ is uniform in $K$ (since $K$ lies in a compact interval). By differentiating and using \eqref{mcV:diff} we obtain
\begin{align*}
q \tfrac{d}{dq} \left( \tfrac{\crr-\mathcal{V}}{q} \right)
&= - \tfrac{P_v[\mathcal{V}(q), q] + \tfrac{\crr-\mathcal{V}(q)}{q} P_q[\mathcal{V}(q), q]}{P_q[\mathcal{V}(q),q]} \\
&\le - \tfrac{2\alpha^3(\alpha^2+\alpha)}{\tilde{\kappa} ( \gamma \tilde{\kappa}^2 - \alpha^2)} + O(\tfrac{1}{q}),
\end{align*}
which proves \eqref{diff:ineq} for $q>q_*$, where $q_*$ is large enough.

If $Q_s>q_*$, integrating from $q_*$ to $Q_s>q_*$ leads to 
\begin{equation*}
\tfrac{\crr-\mathcal{V}(Q_s)}{Q_s} \le \tfrac{\crr-\mathcal{V}(q_*)}{q_*} - \mathsf{C}(\tilde{\kappa}) \log \tfrac{Q_s}{q_*}.
\end{equation*}
Since $\tfrac{\crr-\mathcal{V}(q_*)}{q_*} > \alpha$, we obtain an upper bound for $Q_s$
\begin{equation*}
Q_s \le q_* \exp \left( \tfrac{\crr- \mathcal{V}(q_*)-\alpha q_*}{\mathsf{C}(\tilde{\kappa}) q_*} \right).
\end{equation*}
If $Q_s \le q_*$, observe that the same bound holds since the right-hand side is larger than $q_*$.
Writing $Q_{\mathrm{in}}:=Q(\xi_{\mathrm{in}})$, this upper bound for $Q_s$ together with \eqref{ineq:slope} gives a uniform bound for $\mathcal V(q)$ on $Q_{\mathrm{in}}<q<Q_s$. In particular the trajectory $\{ V(\xi), Q(\xi) \}$ is contained in a compact set. We now show that also $V_s = \lim_{\xi \to \xi_s^+} V(\xi)$ is well defined. We observe that $\frac{d\mathcal V}{dq}=\frac{P_v}{P_q}$ remains bounded, backward-in-$\xi$ along any trajectory. If the trajectory does not accumulate at $P_2$, this is true by compactness, since in $\overline{\mathcal T}$, $P_q$ vanishes only at $P_2$ and $Q=0$. If it does accumulate at $P_2$ we consider the desingularized system \eqref{eq:tran:ODE}
\[
\partial_\psi V=P_v[V,Q],\qquad \partial_\psi Q=P_q[V,Q].
\]
For this system, $P_2$ is a hyperbolic sink. By \eqref{sign:lambda}, its eigenvalues satisfy
\[
\lambda_-<\lambda_+<0,
\]
and the corresponding eigenvectors $\eta_{\pm}$ are given by \eqref{eq:evec:J}. 
Hence every nontrivial trajectory of the desingularized system which converges to $P_2$ has tangent asymptotic to one of the eigendirections $\eta_\pm$. This is incompatible with remaining in $\overline{\mathcal{T}}$: indeed, $\mathcal{W}_E'(Q_2)=\frac{F'(1)}{Q_2}>\mmm>\mmp$, and for $Q<Q_2$ close to $Q_2$ a curve through $P_2$ with slope $\mm_\pm$ lies to the right of the barrier $V=\mathcal{W}_E(Q)$.
In particular, $\frac{P_v}{P_q}$ is bounded along the trajectory $(V(\xi), Q(\xi))$. This implies $\mathcal{V}$ is a Lipschitz function and hence admits a limit as $q \to Q_s^-$. By the discussion in Subsection~\ref{subsec:general:phase:portrait}, there are no stationary points in the interior of $\mathcal T$. Hence $(V_s,Q_s)\in\partial\mathcal T$.
The barrier condition \eqref{cond:bar:ex:iii} excludes the boundary $V=\mathcal{W}_E(Q)$, the eigendirection condition \eqref{cond:bar:ex:iv} excludes $(V_2,Q_2)$, and monotonicity of $Q$ excludes $Q_s=0$. Hence the only remaining possibility is
\[
V_s+\alpha Q_s=\crr.
\]
Since $(V_s,Q_s)\neq (V_2,Q_2)$ and the trajectory stays on the left of the barrier, this gives $Q_s>Q_2$.
\end{proof}

With this Lemma in mind, the conclusion in Proposition~\ref{prop:sub:sonic} is now immediate.
\begin{proof}[Proof of Proposition~\ref{prop:sub:sonic}]
    Consider the trajectory $(V,Q)$ constructed in Proposition~\ref{prop:power:implosion}. There exists a large, but finite $\bar \xi <+\infty$ such that
    \begin{equation*}
V(\bar \xi) < \mathcal{W}_E(Q(\bar \xi)).
\end{equation*}
This follows from~\eqref{bc:infty:explosion}, the strict upper bound \eqref{upp:bound:imp}, and the condition~\eqref{cond:bar:ex:ii} $\mathcal{W}'_E(0) \ge \rr$. In particular $(V(\bar \xi), Q(\bar \xi)) \in \mathcal{T}$, and we can apply Lemma~\ref{lemma:escape}.
\end{proof}

\subsection[The trajectory connecting P2 to P6]{The trajectory connecting \texorpdfstring{$P_2$}{P2} to \texorpdfstring{$P_6$}{P6}}
We first construct the branch of $(V,Q)$ near the origin $\xi=0$ by means of a power series.
 It will be convenient to denote
 \begin{equation*}
 \mu :=\tfrac{2+3(\gamma-1)}{2+3 \gamma}.
 \end{equation*}
 We aim to find a solution for \eqref{eq:new} by postulating
 \begin{equation} \label{ansatz:R:0}
V^\infty (\xi) =  \sum_{n \ge 0} \overline v_n \xi^{ 2 \mu n}, \quad Q^\infty (\xi) = \xi^{-\mu} \left( \sum_{n \ge 0} \overline q_n \xi^{ 2 \mu n} \right).
 \end{equation}
 \subsubsection{Recursion relation for the coefficients $\overline v_n, \overline q_n$}
Plugging the ansatz \eqref{ansatz:R:0} into the equations \eqref{eq:new} leads to the following relations for $\overline v_0$ and $\mu$
\begin{subequations} \label{1st:order:R}
\begin{equation}
\mu ( \overline v_0 - \crr) = (1+3\alpha) \overline v_0 - 1, \quad
(1-\mu)(\overline v_0 - \crr) + \tfrac{\cbb}{\gamma}=0.
\end{equation}
\end{subequations}
Solving the linear system gives
\begin{equation} \label{values:infty}
\overline v_0 = \tfrac{1-\crr + \tfrac{\cbb}{\gamma}}{3\alpha} = V_\infty, \quad \mu = -\tfrac{1 + (1+3\alpha)( \tfrac{\cbb}{\gamma}-\crr)}{(1+3\alpha) \crr -1 -\tfrac{\cbb}{\gamma}} = \tfrac{2 + 3 (\gamma -1)}{2 + 3 \gamma}.
\end{equation}
For $n \ge 1$, from \eqref{eq:Q:new} we have
\begin{equation} \label{eq:R:0:Q}
2 \mu n ( V_\infty - \crr) \overline q_n + \overline q_0 (1+3\alpha + \mu (2 \alpha n - 1)) \overline v_n = F_1(n),
\end{equation}
where
\begin{equation} \label{def:F:R:0}
F_1(n) :=- \sum_{m+j=n, \, m, j \ge 1} (1+3\alpha + \mu (2 n -1 + 2 (\alpha-1) j ))\overline v_j \overline q_m.
\end{equation}
From \eqref{eq:V:new}, we obtain
\begin{equation} \label{eq:R:0:V}
\alpha (1-\mu) \overline q_0^2 \overline v_n+ 2 \alpha \mu n ( \overline v_0 - \crr) \overline q_0 \overline q_n = F_2(n),
\end{equation}
where
\begin{align}
F_2(n) &:=
-2 \mu \sum_{j+m+l=n-1, j,m,l \ge 0} j \overline v_j
(\overline v_m - \delta_{m0}\crr)(\overline v_l - \delta_{l0}\crr)
\notag\\
&\quad
- \sum_{j+m+l=n-1, j,m,l \ge 0} \overline v_j
(\overline v_m - \delta_{m0})(\overline v_l - \delta_{l0}\crr)
\notag\\
&\quad
-\alpha \sum_{\substack{j+m+l=n\\0\le j,m,l\le n-1}}
\left[
\mu(2m-1)(\overline v_j-\delta_{j0}\crr)
+\overline v_j-\delta_{j0}\left(\crr-\tfrac{\cbb}{\gamma}\right)
\right]\overline q_m\overline q_l.
 \end{align}
 We observe that the previous recursion does not fix $\overline q_0$, which is a scaling parameter corresponding to the scaling symmetry generated by $\xi \partial_\xi$. 

 If we introduce the matrix
 \begin{equation}
 M_n := \begin{bmatrix}
(1+3\alpha + \mu (2 \alpha n -1)) \overline q_0 & 2 \mu n (V_\infty - \crr) \\
\alpha (1-\mu) \overline q_0^2 & 2 \alpha \mu n ( V_\infty - \crr) \overline q_0
 \end{bmatrix}
 \end{equation}
 we can summarize the above system as
 \begin{equation} \label{summ:rec:0}
 M_n \begin{bmatrix} \overline v_n \\ \overline q_n
 \end{bmatrix} =
 \begin{bmatrix}F_1(n) \\ F_2(n)
 \end{bmatrix},
 \end{equation}
 for $n \ge 1$.
 To show the system is solvable, we compute
 \begin{equation}
 \det M_n =  2\alpha^2 \mu n (3+ 2 \mu n)( V_\infty - \crr) \overline q_0^2 \neq 0,
 \end{equation}
where the nonvanishing follows from $\overline q_0>0$, $\mu>0$, and $V_\infty<1<\crr$.
 We can now prove the following lemma.
 \begin{lemma}\label{lemma:power:R:0}
Define $\mu$ and $\overline v_0$ as in \eqref{values:infty}, fix $\overline q_0>0$ and define the coefficients $\overline v_n, \overline q_n$ by \eqref{summ:rec:0}. There exists a constant $\mathsf{C} \ge 1$ such that the series
\begin{equation*}
V^\infty (\xi) =  \sum_{n \ge 0} \overline v_n \xi^{ 2 \mu n}, \quad Q^\infty (\xi) = \xi^{-\mu} \left( \sum_{n \ge 0} \overline q_n \xi^{ 2 \mu n} \right),
\end{equation*}
converge uniformly and absolutely in $0 < \xi < \left(\mathsf{C}^{-1}\overline q_0^2\right)^{\frac{1}{2\mu}}$. The functions $V^\infty$ and $Q^\infty$ solve the system \eqref{main:ODE} for all $\xi \in \left(0, \left(\mathsf{C}^{-1}\overline q_0^2\right)^{\frac{1}{2\mu}} \right)$.
 \end{lemma}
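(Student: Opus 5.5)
We will follow the majorant/induction scheme of Proposition~\ref{prop:power:series:infty}, now in the variable $z:=\xi^{2\mu}$. First we exploit the scaling symmetry generated by $\xi\partial_\xi$. If $(V,Q)(\xi)$ solves \eqref{main:ODE}, then so does $(V,Q)(\lambda\xi)$. Under this rescaling $\overline q_0\mapsto \lambda^{-\mu}\overline q_0$ and $\overline v_n \mapsto \lambda^{2\mu n}\overline v_n$, $\overline q_n\mapsto\lambda^{2\mu n-\mu}\overline q_n$. By uniqueness of the recursion \eqref{summ:rec:0} (since $\det M_n\neq0$), the coefficients therefore satisfy the homogeneity
\[
\overline v_n=\overline q_0^{-2n}\,\overline v_n^{(1)},\qquad \overline q_n=\overline q_0^{1-2n}\,\overline q_n^{(1)},
\]
where the superscript $(1)$ denotes the coefficients for $\overline q_0=1$. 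It thus suffices to treat $\overline q_0=1$ and prove $|\overline v_n^{(1)}|+|\overline q_n^{(1)}|\le \mathsf C'\mathsf C^n n^{-3/2}$. The claimed radius $(\mathsf C^{-1}\overline q_0^2)^{1/(2\mu)}$ then follows immediately. Alternatively, one can carry $\overline q_0$ through the induction directly.

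With $\overline q_0=1$, we invert $M_n$ explicitly. Since $\det M_n = 2\alpha^2\mu n(3+2\mu n)(V_\infty-\crr)$ grows like $n^2$, and the entries of $M_n$ are $O(n)$, the adjugate formula gives
\[
|\overline v_n|\lesssim \tfrac{1}{n}\bigl(|F_1(n)|+|F_2(n)|\bigr),\qquad |\overline q_n|\lesssim \tfrac1n |F_1(n)| + \tfrac{1}{n^2}|F_2(n)|\lesssim \tfrac1n\bigl(|F_1|+|F_2|\bigr).
\]
The gain of $1/n$ is essential, because $F_1$ and $F_2$ carry weights linear in the indices, such as $\mu(2n-1+2(\alpha-1)j)$ and $2\mu j$.

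Next we bound the quadratic sum $F_1(n)$ and the cubic sums $F_2(n)$ under the inductive hypothesis, using the convolution estimates $\sum j^{-3/2}(n-j)^{-3/2}\le 12n^{-3/2}$ and its triple analogue. The terms in $F_2$ in which one index equals $0$ involve $\overline v_0=V_\infty$, the shifted constants $\overline v_0-\crr$ and $\overline v_0-1$, or $\overline q_0$. These reduce to quadratic or linear convolutions, but only over indices strictly less than $n$: the first two sums run over $j+m+l=n-1$, and the third excludes the index $n$. Hence the induction closes.

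The main obstacle is the bookkeeping of these degenerate terms. Terms in which two indices vanish are linear in a single lower coefficient $\overline v_{n-1}$ or $\overline q_{n-1}$, possibly weighted by $n$. Such a term is bounded by $\mathsf C'\mathsf C^{n-1}(n-1)^{-3/2}\cdot n$. After dividing by $n$ from the inversion, it is at most $\mathsf C^{-1}\cdot\mathsf C'\mathsf C^n n^{-3/2}$ up to a constant. It is therefore absorbed by choosing $\mathsf C$ large, not $\mathsf C'$ small. The genuinely nonlinear terms carry a factor $(\mathsf C')^2$ and are absorbed by choosing $\mathsf C'$ small. Thus we first fix $\mathsf C'$ small, then $\mathsf C\ge 1/\mathsf C'$ large.

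Convergence then follows: the series in $z=\xi^{2\mu}$ converge absolutely and uniformly on $z<\mathsf C^{-1}\overline q_0^2$, perhaps after shrinking the constant for uniformity up to the boundary. By construction they solve \eqref{eq:new} termwise.

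Finally, to pass to \eqref{main:ODE}, we must check that $\Delta\neq0$ on the interval, after possibly enlarging $\mathsf C$. There $V^\infty$ is close to $V_\infty<\crr$ and $Q^\infty\sim \overline q_0\xi^{-\mu}$ is large, so $|V-\crr|\ne\alpha Q$ and $V\neq\crr$. Hence \eqref{eq:new} is equivalent to \eqref{main:ODE}, exactly as at the end of the proof of Proposition~\ref{prop:power:series:infty}.
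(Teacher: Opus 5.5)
Your proposal is correct and follows the same majorant induction as the paper. The paper carries the weights $\overline q_0^{-2n}$, $\overline q_0^{1-2n}$ through the induction rather than rescaling to $\overline q_0=1$, and it omits the details you supply: the $1/n$ gain from $M_n^{-1}$, the linear $\overline v_{n-1}$ terms (and the constant at $n=1$) absorbed by taking $\mathsf C$ large after fixing $\mathsf C'$, and the check $\Delta\neq0$. One harmless slip: in the formula for $\overline q_n$, the entries of $M_n^{-1}$ give the factor $1/n^2$ on $F_1$ and $1/n$ on $F_2$, not the reverse. Your combined bound $|\overline q_n|\lesssim n^{-1}(|F_1|+|F_2|)$ is unaffected.
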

 \begin{proof}[Proof of Lemma~\ref{lemma:power:R:0}]
We only have to verify the radius of convergence of the power series. Similarly to what we have done in the proof of Proposition~\ref{prop:power:series:infty}, keeping track of the powers of $\overline q_0$, we can show that there are constants $0<\mathsf{C} < + \infty$ and $0 < \mathsf{C}' \le 1$ such that, for every $n\ge1$,
\begin{subequations} \label{bound:power:R:0}
\begin{align}
| \overline v_n | \le \mathsf{C}' \mathsf{C}^n \overline q_0^{-2n}  n^{-\frac{3}{2}}, \\
| \overline q_n | \le \mathsf{C}' \mathsf{C}^n \overline q_0^{1-2n}  n^{-\frac{3}{2}}.
\end{align}
\end{subequations}
The proof of \eqref{bound:power:R:0} follows the same induction as the proof of the bounds \eqref{bound:infty}, with the above $\overline q_0$-weights, and we do not repeat it for the sake of brevity.
 \end{proof}
Next, we show that the trajectory $(V^\infty, Q^\infty)$ converges to the sonic point $P_2$. This will be achieved by using the inverse branch $V=\Gamma_1(Q)$ of $P_q=0$ and the vertical line $V=V_2$ as barriers. We will also show some monotonicity properties of the trajectory $(V^\infty, Q^\infty)$ that we will use later to prove uniqueness.
\begin{proposition} \label{prop:infty}
Fix $d=3$, $\NNN=1$ and $\alpha \in (0, 1+\sqrt{2}]$.
 There exists a trajectory $(V^\infty, Q^\infty)$ such that
 \begin{equation} \label{tr:infty}
\lim_{\xi \to 0^+}  (V^\infty, Q^\infty)(\xi) = (V_\infty, +\infty).
 \end{equation}
As functions of $\xi$, we have
 \begin{subequations} \label{lim:vinfty}
 \begin{align}
V^\infty(\xi) &= V_\infty + O\left(\xi^{\frac{2(2+3(\gamma-1))}{2+3\gamma}}\right), \\
Q^\infty(\xi) &= \overline q_0 \xi^{-\frac{2+3(\gamma-1)}{2+3 \gamma}} + O\left(\xi^{\frac{2+3(\gamma-1)}{2+3 \gamma}}\right).
 \end{align}
 \end{subequations}
 Moreover, the solution can be continued up to some $0 < \xi_2 \le +\infty$\footnote{It is easy to show that $\xi_2<+\infty$, but here we do not need such information.} such that
 \begin{equation} \label{lim:xi2}
\lim_{\xi \to \xi_2^-}  (V^\infty, Q^\infty)(\xi) = (V_2, Q_2).
\end{equation}
Moreover, we have the following monotonicity properties
\begin{equation} \label{mono:Q:infty}
\pp_\xi V^\infty(\xi) \ge 0,\qquad \pp_\xi Q^\infty(\xi)<0,\qquad
\pp_\xi\left(\tfrac{\crr-V^\infty(\xi)}{Q^\infty(\xi)}\right)>0.
\end{equation}

 \end{proposition}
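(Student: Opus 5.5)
The plan is to start from the local power series solution of Lemma~\ref{lemma:power:R:0}, which already gives \eqref{tr:infty} and the asymptotics \eqref{lim:vinfty} once $\overline q_0>0$ is fixed. For $\xi$ small we have $V^\infty\approx V_\infty$ and $Q^\infty\to+\infty$, so the trajectory sits in the supersonic region $V+\alpha Q>\crr$, where $\Delta>0$. The remaining work is the global continuation forward in $\xi$ from this initial point, and identifying the endpoint as $P_2$.

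\textbf{Trapping region.} I would work in the region
\[
\mathcal{S}:=\{V_\infty\le V\le V_2,\ V+\alpha Q>\crr,\ V \ge \Gamma_1(Q)\},
\]
where $V=\Gamma_1(Q)$ denotes the inverse of the branch $\Gamma_1^Q$ of $P_q=0$. Since $V_0<V_-^Q$, this branch lies to the right of $V_2$ only for small $Q$, so I first have to check how $\Gamma_1^Q$ sits relative to the segment $V\le V_2$. If it stays to the right of $V_2$ for all large $Q$, the relevant barriers are simply $V=V_\infty$ and $V=V_2$, together with the sign of $P_q$.

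\textbf{Signs and monotonicity.} Inside $\mathcal S$ I will show $P_q<0$ and $\Delta>0$, so that $\xi\pp_\xi Q<0$. For this I use the representation $P_q/Q=\alpha^2(V-V_*^Q)Q^2-P_{2,Q}[V](V-\crr)$ from the proof of Lemma~\ref{lemma:pos:Pq}: the $Q^2$ coefficient is negative for $V<V_2$, and on the supersonic side $Q>(\crr-V)/\alpha$ the expression is dominated by that term. I also check that $P_v\ge0$, which gives $\pp_\xi V^\infty\ge0$, and that $\pp_\xi\bigl((\crr-V)/Q\bigr)>0$. The last quantity is $-\bigl(P_v+\tfrac{\crr-V}{Q}P_q\bigr)/(Q\,\xi\Delta)$, so its sign reduces to a polynomial inequality, handled like \eqref{eq:ell}. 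The boundary $V=V_\infty$ is repelling because $P_v>0$ there: at large $Q$, $P_v\approx\alpha Q^2(V-\crr)\cdot3\alpha(V-V_\infty)$ vanishes at leading order, so this needs a next-order check. The line $V=V_2$ with $V+\alpha Q>\crr$ cannot be crossed because $P_v<0$ just to its right or, equivalently, because $P_v$ changes sign only along the green curve that passes through $P_2$.

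\textbf{Endpoint and main obstacle.} By the monotonicity of $Q$ and $V$ the trajectory converges to a limit point in $\overline{\mathcal S}$. There are no stationary points in the interior, so the limit lies on the sonic line. On the sonic line $\Delta=0$, and crossing it at a point where $P_v$ or $P_q$ is nonzero would produce an infinite derivative, that is, a non-continuable solution. Since the numerators keep a fixed sign on the boundary arc $V<V_2$ of the sonic line, the trajectory cannot reach that arc smoothly. Hence the limit must be the triple point $P_2$, which gives \eqref{lim:xi2} with $\xi_2\le+\infty$. The main obstacle is this last step: verifying the sign conditions of $P_v$ and $P_q$ on the whole boundary of $\mathcal S$ uniformly in $\alpha\in(0,1+\sqrt2]$, and showing the trajectory actually enters $P_2$ rather than terminating at a regular sonic point. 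For the latter I would use the sink structure \eqref{sign:lambda}, the fact that the vector field on the sonic-line segment $V_\infty<V<V_2$ points away from the supersonic side, and the ordering of the eigendirections $\mmm>\mmp>-\alpha$ to confirm that the approach occurs inside $\mathcal S$.
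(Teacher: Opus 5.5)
Your local start and your trapping region are essentially the paper's, but the step that fails is $\pp_\xi V^\infty\ge0$. You propose to verify $P_v\ge0$ on the whole region, and that is false. Using $\frac{\cbb}{\gamma}=\crr-1+3\alpha V_\infty$, for $V_\infty<V\le V_2$ one has $P_v=3\alpha^2(V-\crr)(V-V_\infty)\bigl(Q^2-f_V(V)^2\bigr)$ with $f_V(V)^2=\frac{V(1-V)(\crr-V)}{3\alpha^2(V-V_\infty)}$. So $P_v<0$ above the curve $Q=f_V(V)$, which joins $P_6$ to $P_2$. For large $Q$ this curve is $V=\Gamma_V(Q)=V_\infty+\frac{V_\infty(V_\infty-1)(V_\infty-\crr)}{3\alpha^2Q^2}+O(Q^{-4})$. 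Hence $P_v>0$ only on a thin sliver along $V=V_\infty$; consistently, you yourself need $P_v\le0$ on the edge $V=V_2$.

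Monotonicity of $V^\infty$ is therefore a property of this particular trajectory, not of the region, and it needs two ingredients your plan lacks.
\begin{enumerate}
\item The branch starts inside the sliver. This is a second-order comparison: the recursion at $n=1$ gives $\overline v_1=\frac{V_\infty(V_\infty-1)(V_\infty-\crr)}{\alpha^2\overline q_0^2(3+2\mu)}$. With $\xi^{2\mu}\approx\overline q_0^2/(Q^\infty)^2$, this puts $V^\infty$ strictly below $\Gamma_V(Q^\infty)$ because $3+2\mu>3$. This, not the behaviour of $P_v$ on $V=V_\infty$, is the relevant next-order check.
\item The sliver is forward invariant. On $V=\Gamma_V(Q)$ one has $\pp_\psi(V-\Gamma_V(Q))=-\Gamma_V'(Q)P_q\le0$, since $\Gamma_V'<0$ and $P_q\le0$.
\end{enumerate}

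The Mach ratio has the same problem. One has $P_v+\frac{\crr-V}{Q}P_q=(\crr-V)\bigl((\crr-V)\mathcal C(V)-\alpha^2Q^2\mathcal D(V)\bigr)$, where $\mathcal C(V)=(\crr-V)(1-(1+3\alpha)V)-(1+\alpha)V(V-1)$ and $\mathcal D(V)=1+2V-3(1+\alpha)V_\infty$. On the edge $V=\Gamma_1(Q)$ of your region this expression equals $P_v>0$, so no region-wide polynomial inequality in the spirit of \eqref{eq:ell} is available. What works is a first-touching argument along the branch against $\mathcal M^*(V)=\alpha\sqrt{(\crr-V)\mathcal D(V)/\mathcal C(V)}$, which is increasing on $(V_\infty,V_2)$. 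That argument uses $\pp_\xi V^\infty>0$ from the previous step.

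The trapping part can be salvaged with your region, which is the paper's $\{Q\ge Q_2,\ \Gamma_1(Q)\le V\le V_2\}$ cut by $V\ge V_\infty$, but several justifications must change.
\begin{enumerate}
\item Negativity of $P_q/Q$ does not follow from being supersonic: on the sonic arc with $V<V_2$ one has $P_q=-(\crr-V)^2F_T(V)>0$. It holds exactly to the right of $\Gamma_1$, because for fixed $V<V_2$ the quantity $P_q/Q=\alpha^2(V-V_*^Q)Q^2-P_{2,Q}[V](V-\crr)$ decreases in $Q$ and vanishes at $Q=f_Q(V)$.
\item Your picture of $\Gamma_1^Q$ is reversed. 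Since $f_Q$ is decreasing on $V<V_2$, with $f_Q(V_2)=Q_2$ and $f_Q\to+\infty$ as $V\to-\infty$, for $Q>Q_2$ the branch lies to the left of $V_2$ and crosses the strip $[V_\infty,V_2]$.
\item Consequently the closure of your region meets the sonic line only at $P_2$, so the discussion of regular sonic points is beside the point. Moreover, \emph{non-continuable} is no contradiction: the outer branch of Proposition~\ref{prop:sub:sonic} ends exactly that way.
\item The edges you must check are these three.
\begin{itemize}
\item On $V=\Gamma_1(Q)$: $P_q=0$ and $P_v=-(V-\crr)^2F_T(V)\frac{(1+3\alpha)V-1}{V-1+3\alpha V_\infty}>0$.
\item On $V=V_\infty$: $P_v=(\crr-V_\infty)^2V_\infty(1-V_\infty)\ge0$.
\item On $V=V_2$: $P_v=3\alpha^2(V_2-\crr)(V_2-V_\infty)(Q^2-Q_2^2)\le0$.
\end{itemize}
\end{enumerate}
With these, $Q^\infty$ decreases and $P_2$ is the only zero of the desingularized field in the closed region. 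Poincar\'e--Bendixson together with the sink structure then gives convergence to $P_2$; the ordering of the slopes $\mm_\pm$ is not needed. Finally, treat $\alpha=1+\sqrt2$ separately: there $V_\infty=V_2=0$, the region degenerates, and $V^\infty\equiv0$.
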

 \begin{proof}[Proof of Proposition \ref{prop:infty}.]
We introduce the function $\Gamma_1 \colon [Q_2, +\infty) \to (-\infty,V_2]$ (where $f_Q$ is defined in \eqref{eq:Q:lvset})
\begin{equation} \label{eq:gamma1:def}
\Gamma_1(Q):= \left(f_Q|_{(-\infty,V_2]}\right)^{-1}(Q).
\end{equation}
To prove that $\Gamma_1$ is well defined we show that $f_Q$ is monotone for $V<V_2$. We recall that from \eqref{eq:Q:lvset} we have
\begin{equation*}
f_Q(V)^2 = \tfrac{1+2\alpha}{\alpha^2} \tfrac{(\crr-V)(V_-^Q - V)(V_+^Q-V)}{V_*^Q-V},
\end{equation*}
where, from \eqref{ineq:series}, we have $ V_2<V_-^Q<V_*^Q <V_+^Q$.  For $V<V_2$, we compute
\begin{equation} \label{monotone:pr}
\partial_V \log f_Q(V)^2
=-\tfrac{1}{\crr - V} -\tfrac{1}{V_-^Q - V} -\tfrac{1}{V_+^Q - V}+\tfrac{1}{V_*^Q - V} < 0.
\end{equation}
Moreover, $f_Q(V_2)=Q_2$ and $f_Q(V)\to+\infty$ as $V\to-\infty$, so this strict monotonicity gives a well-defined inverse branch on $[Q_2,+\infty)$.
We define the region
\begin{equation} \label{region:inv:V}
\mathcal{R} := \{(V,Q): Q\ge Q_2,\ \Gamma_1(Q) \le V \le V_2 \}.
\end{equation}
We will show that the only way to escape the region $\mathcal{R}$ is through the point $(V_2, Q_2)$.
 To prove this, it is enough to observe the following two inequalities for $Q>Q_2$ and $V=\Gamma_1(Q)$
\begin{align*}
P_v[\Gamma_1(Q), Q] - \Gamma_1'(Q) P_q[\Gamma_1(Q), Q] &= P_v[\Gamma_1(Q), Q] = P_v[V, f_Q(V)]  \\
& =- (V-\crr)^2 F_T(V) \tfrac{(1+3\alpha)V-1}{V-1+3\alpha V_\infty} >0, \\
P_v[V_2, Q] &=3\alpha^2 (V_2-\crr)(V_2-V_\infty)(Q^2-Q_2^2) \le 0,
\end{align*}
where we used \eqref{bound:V2}, \eqref{ineq:series}, and that $V_2$ is the smallest root of $F_T(V)$.

Since in $\mathcal R$, $P_q$ is nonpositive and $\Delta$ is nonnegative, this implies that if $Q^\infty(\xi)>Q_2$, then
\begin{equation*}
\Gamma_1(Q^\infty(\xi))< V^\infty(\xi)<V_2, \quad \pp_\xi Q^\infty(\xi)<0.
\end{equation*}
 Since $Q^\infty$ is monotone in $\xi$, and since there are no stationary points for the ODE in the interior of $\mathcal R$ from the discussion in Subsection~\ref{subsec:general:phase:portrait}, the Poincaré--Bendixson theorem implies that for some $\xi_2 \le +\infty$, $(V^\infty, Q^\infty)$ approaches $P_2$ as $\xi \to \xi_2^-$. Combining this observation with Lemma~\ref{lemma:power:R:0} proves \eqref{tr:infty}--\eqref{lim:xi2}.

 To prove $\pp_\xi V^\infty(\xi) \ge 0$, we start by recording the sign of $P_v$ along the trajectory. In the case
$0<\alpha<1+\sqrt2$, the proof of \eqref{bound:V2} gives $V_\infty<V_2$. By
\eqref{P:exp} and the identity
$\frac{\cbb}{\gamma}=\crr-1+3\alpha V_\infty$, the equation $P_v[V,Q]=0$ in the
strip $V_\infty<V\le V_2$ is the graph
\begin{equation}\label{eq:Pv:zero:branch}
Q=f_V(V):=\tfrac1\alpha\sqrt{\tfrac{-V(V-1)(\crr-V)}{3(V-V_\infty)}}.
\end{equation}
This graph connects $P_6$ to $P_2$:  $f_V(V_2)=Q_2$ follows from
\eqref{triple:eq} and \eqref{Q2:def}. Using
$V_\infty<V<V_2<V_0<1$ from \eqref{bound:V2}, a computation similar to \eqref{monotone:pr} shows that $f_V'(V) <0$ for $V \in (V_\infty, V_2)$.

We now observe that if $\xi$ is small enough we have $P_v[V^\infty(\xi), Q^\infty(\xi)] >0$. Indeed, from \eqref{summ:rec:0} with
$n=1$, using $F_1(1)=0$ and
$F_2(1)=-V_\infty(V_\infty-1)(V_\infty-\crr)$, we get
\begin{equation} \label{over:v1}
\overline v_1
=\tfrac{V_\infty(V_\infty-1)(V_\infty-\crr)}{\alpha^2 \overline q_0^2 (3+2\mu)}.
\end{equation}
On the other hand, if $V=\Gamma_V(Q)$ denotes
the inverse of the graph $Q=f_V(V)$ from \eqref{eq:Pv:zero:branch}, the inverse function theorem yields
\[
\Gamma_V(Q)=V_\infty+
\tfrac{V_\infty(V_\infty-1)(V_\infty-\crr)}{3\alpha^2Q^2}
+O(Q^{-4})
\qquad\text{as }Q\to+\infty.
\]
Since $3+2\mu>3$, \eqref{over:v1} shows that, for $\xi>0$ sufficiently small, we have $V^\infty(\xi) < \Gamma_V(Q^\infty(\xi))$. That is equivalent to $f_V(V^\infty(\xi))> Q^\infty(\xi)$, and since $P_v[V,Q]= 3\alpha^2(V-\crr)(V-V_\infty) (Q^2 - f_V(V)^2) $, we have $P_v[V^\infty(\xi), Q^\infty(\xi)] >0$.

We now show that $(V^\infty, Q^\infty)$ cannot cross the graph $V=\Gamma_V(Q)$.
We already observed, from the definition of $\mathcal{R}$ in \eqref{region:inv:V}, that $P_q\le0$ in $\mathcal R$. Hence, on $V=\Gamma_V(Q)$,
\[
\pp_\psi(V-\Gamma_V(Q))=P_v[V,Q]-\Gamma_V'(Q)P_q[V,Q]
=-\Gamma_V'(Q)P_q[V,Q]\le0.
\]
Thus a trajectory starting on the $P_v>0$ side cannot cross to the $P_v<0$
side. Therefore
\[
P_v[V^\infty(\xi),Q^\infty(\xi)]>0
\]
for $0<\xi<\xi_2$. Since $\Delta>0$ in the interior of $\mathcal R$, from
\eqref{main:ODE} we
obtain $\partial_\xi V^\infty>0$ for $0<\xi<\xi_2$.

We also record the monotonicity of the Mach-type ratio along this branch. Set
\[
 \mathcal{M}:=\frac{\crr-V}{Q},
\]
we wish to prove
\begin{equation*}
\pp_\xi \mathcal{M}(\xi)>0.
\end{equation*}
Using \eqref{main:ODE}, we have
\[
\xi\partial_\xi \mathcal{M}
=-\tfrac{P_v[V,Q]+\frac{\crr-V}{Q}P_q[V,Q]}{Q\Delta[V,Q]}.
\]
Since $\Delta>0$ on the interior of $\mathcal R$, it is enough to prove
\begin{equation} \label{ineq:mach:n}
P_v[V,Q]+\tfrac{\crr-V}{Q}P_q[V,Q]<0
\end{equation}
along $(V^\infty,Q^\infty)$. A direct computation from \eqref{P:exp}, using
$\frac{\cbb}{\gamma}=\crr-1+3\alpha V_\infty$, gives
\[
P_v[V,Q]+\tfrac{\crr-V}{Q}P_q[V,Q]=(\crr-V)\left((\crr-V)\mathcal C(V)-\alpha^2Q^2\mathcal D(V)\right),
\]
where
\[
\mathcal C(V):=(\crr-V)(1-(1+3\alpha)V)-(1+\alpha)V(V-1),
\qquad
\mathcal D(V):=1+2V-3(1+\alpha)V_\infty.
\]
For $V_\infty<V<V_2$, the bounds \eqref{bound:V2} imply
$\mathcal C(V)>0$ and $\mathcal D(V)>0$. Hence \eqref{ineq:mach:n} is equivalent to
\[
\mathcal{M}<\mathcal M^*(V):=\alpha\sqrt{\tfrac{(\crr-V)\mathcal D(V)}{\mathcal C(V)}}.
\]
We claim that $\mathcal M^*$ is strictly increasing on $(V_\infty,V_2)$. Set
\[
R(V):=\tfrac{(\crr-V)\mathcal D(V)}{\mathcal C(V)}.
\]
Since $R(V)>0$, the sign of $(\mathcal M^*)'(V)$ is the sign of $R'(V)$. A direct computation gives
\begin{align*}
\pp_V \left(\mathcal{C}(V)^2 \pp_V \tfrac{(\crr-V)\mathcal D(V)}{\mathcal C(V)}\right)
&=4(1+\alpha)(\crr-3\alpha V_\infty)V
-4\crr(1+\alpha (1-3(1+\alpha)V_\infty)) \\
&\le -12\alpha\crr(1+\alpha)(V_0-V_\infty)<0
 \end{align*}
for $V_\infty<V<V_2$, where we used $0 \le V_\infty<V<V_2<V_0$, and $\crr-3\alpha V_\infty>0$. Moreover,
using \eqref{triple:eq} and \eqref{Q2:def},
\[
\mathcal{C}(V_2)^2 \pp_V \left( \tfrac{(\crr-V)\mathcal D(V)}{\mathcal C(V)} \right)|_{V=V_2}
=(1+\alpha)\mathcal C(V_2)(3\crr+3V_\infty-1-4V_2)>0.
\]
Here $\mathcal C(V_2)>0$, and the last factor is positive by \eqref{def:V2}.
It follows that for $V \in (V_\infty,V_2)$ we have
 \begin{equation*}
\mathcal{C}(V)^2 \pp_V \left( \tfrac{(\crr-V)\mathcal D(V)}{\mathcal C(V)} \right)>0.
\end{equation*}
As $\xi\to0^+$, we have $\mathcal M(\xi)\to0 < \mathcal M^*(V_\infty)$. If $\mathcal{M}(\xi)-\mathcal{M}^*(V^\infty(\xi))$ vanished for the first time along the branch, then $\pp_\xi \mathcal{M} =0$, while
\[
\pp_\xi \mathcal M^*(V^\infty)>0,
\]
a contradiction. Therefore $\mathcal M<\mathcal M^*(V^\infty)$ throughout the branch, and hence
\[
\partial_\xi\left(\tfrac{\crr-V^\infty(\xi)}{Q^\infty(\xi)}\right)>0.
\]
At the endpoint $\alpha=1+\sqrt2$, we have $V_\infty=V_2=0$ and $P_v[0,Q]=0$, so $V^\infty\equiv0$ and the same monotonicity follows from $\partial_\xi Q^\infty<0$.
This concludes the proof of \eqref{mono:Q:infty}.
\end{proof}

\subsection{The Rankine--Hugoniot jump conditions and Lax entropy conditions}
Assume we have a jump in the phase diagram for $(V,Q)$, and that it occurs at some value $\xiRH>0$. Let
\[
(\VR^+,  \QR^+) := \lim_{\xi \to \xiRH^+} (V(\xi), Q(\xi)), \qquad
(\VR^-,  \QR^-) := \lim_{\xi \to \xiRH^-} (V(\xi), Q(\xi))
\]
be the upstream $(+)$ and downstream $(-)$ values of $(V,Q)$ at the shock.
Whenever $H$ is defined on both sides of the shock, set
\[
H_{\mathrm{RH}}^\pm:=\lim_{\xi \to \xiRH^\pm}H(\xi).
\]
\begin{itemize}[leftmargin=2em]
\item At any time $t>0$, the shock location is given by $\s(t) := \xiRH t^{\crr}$. Thus, the speed of the shock is
\begin{equation*}
\ds(t) := \crr \xiRH t^{\crr-1}.
\end{equation*}
\item The outward normal is $e_r:=x/|x|$, pointing from downstream (the $-$ side) to upstream (the $+$ side). Throughout this section $f|_\s^\pm(t) := f(\s(t)^\pm,t)$. The jump of a function across the shock is $\jump{f}(t) := f|_\s^-(t) - f|_\s^+(t)$.

\item Velocity on the shock is given by
\[
u|_\s^\pm(t):= u(\s(t)^\pm,t) \cdot e_r = t^{\crr-1} \xiRH \VR^\pm
\]
and the relative velocity is
\[
(u|_\s^\pm - \ds)(t):= u(\s(t)^\pm,t) \cdot e_r - \dot{\s}(t)
= t^{\crr-1} \xiRH (\VR^\pm - \crr)
\]
\item The rescaled sound speed on the shock is given by
    \[
    \sigma|_\s^\pm(t):= \sigma(\s(t)^\pm,t) = t^{\crr-1} \xiRH \QR^\pm
    \]

\item The square root of the pseudo-entropy on the shock is
    \[
    b|_\s^\pm(t):= b(\s(t)^\pm,t) = t^{\cbb} \xiRH H_{\mathrm{RH}}^\pm
    \]

\item Density on the shock is
    \[
    \rho|_\s^\pm(t):= \rho(\s(t)^\pm,t) = t^{\frac{\crr-1-\cbb}{\alpha}} (\frac{\alpha \QR^\pm}{H_{\mathrm{RH}}^\pm})^{\frac{1}{\alpha}}
    =: t^{\frac{\crr-1-\cbb}{\alpha}} \Theta_{\mathrm{RH}}^\pm
    \]

\item The pressure on the shock is
    \[
    p|_\s^\pm(t):= p(\s(t)^\pm,t) = \tfrac{1}{\gamma}t^{\frac{\gamma(\crr-1)-\cbb}{\alpha}} \Theta_{\mathrm{RH}}^\pm \xiRH^2 (\alpha \QR^\pm)^2
    \]
\end{itemize}

For a $3$-shock with shock curve $\s$ and upstream state $(\usp,\rsp,\ssp)$ satisfying the upstream Lax inequality, the Rankine--Hugoniot relations give the downstream state by
\begin{equation} \label{RH:re}
\begin{aligned}
\usm &= \ds+\tfrac{  \alpha^2 (\ssp)^2 + \alpha (\usp-\ds)^2 }{(1+\alpha)\,(\usp-\ds)}, \\
\rsm &= \tfrac{ (1+\alpha) (\usp-\ds)^2 \rsp}{\alpha^2 (\ssp)^2 + \alpha (\usp-\ds)^2 }, \\
\ssm &= \tfrac{\sqrt{( -\alpha^3 (\ssp)^2 + \gamma (\usp-\ds)^2)( \alpha^2 (\ssp)^2 + \alpha (\usp-\ds)^2 ) }}{\alpha (1+\alpha) (\ds-\usp)}.
\end{aligned}
\end{equation}
We also recall that the Lax inequalities are
\begin{equation*}
\usp + \alpha \ssp < \ds < \usm + \alpha \ssm, \quad \usm< \ds.
\end{equation*}
Translated into the self-similar ansatz, these inequalities become
 \begin{equation} \label{lax:ineq}
  \VR^+ + \alpha \QR^+ < \crr <  \VR^- + \alpha \QR^-, \quad \VR^- <\crr. 
 \end{equation}
  Using the itemized list, we are able to rewrite~\eqref{RH:re} just in terms of $(\VR^\pm, \QR^\pm)$:
\begin{subequations}\label{RH:jump}
\begin{align} 
 \VR^- &= \crr + \tfrac{\alpha^2 (\QR^+)^2 + \alpha ( \VR^+-\crr)^2 }{(1+\alpha)( \VR^+-\crr)},
 \\
 \QR^- &=\tfrac{\sqrt{(-\alpha^3 (\QR^+)^2 + \gamma (\VR^+-\crr)^2)(\alpha^2 (\QR^+)^2 + \alpha (\VR^+-\crr)^2)}}{\alpha(1+\alpha)( \crr-\VR^+)}.
 \end{align}
 While not directly relevant in the analysis of the phase portrait, we also record the jump conditions for $H$. For $\QR^+>0$, from the density identity in \eqref{RH:re}, we have
\begin{equation} \label{RH:jump:H}
 H_{\mathrm{RH}}^- =
 H_{\mathrm{RH}}^+\tfrac{\QR^-}{\QR^+}
 \left(\tfrac{\alpha^2(\QR^+)^2+\alpha(\VR^+-\crr)^2}{(1+\alpha)(\VR^+-\crr)^2}\right)^\alpha.
\end{equation}
\end{subequations}
 
  We introduce the map $(\VR^-, \QR^-) := \mathrm{RH}(\VR^+, \QR^+)$. We observe $\mathrm{RH}(V,Q)$ is well defined as long as
  \begin{itemize}[leftmargin=2em]
  \item $Q\ge0$ (nonnegativity of sound speed),
  \item $V + \alpha Q \le \crr$ and $V<\crr$ (Lax condition),
 \end{itemize}
 and that it maps the {\textit{subsonic} region $\{ Q \in [0, +\infty), \; V+\alpha Q \le \crr,\; V<\crr \}$} into the {\textit{supersonic} region $\{ Q \in [0, +\infty), \; V+\alpha Q \ge \crr, \; V< \crr \}$}.
We now look at the Rankine--Hugoniot jump locus associated with the trajectory $(V,Q)$ from Proposition~\ref{prop:sub:sonic}. This is the \textit{Hugoniot locus}
\begin{equation}
 \mathsf{G}_{\mathrm{RH}}(\xi)= (\mathsf{G}_{\mathrm{RH}, v}, \mathsf{G}_{\mathrm{RH}, q}) := \mathrm{RH}(  V(\xi),  Q(\xi) ), \label{hugoniot}
\end{equation}
for $\xi \in (\xi_s, +\infty)$. Using \eqref{eq:xis:expl}, we can extend $\mathsf{G}_{\mathrm{RH}}(\xi)$ continuously to $\xi=\xi_s$, by defining $\mathsf{G}_{\mathrm{RH}}(\xi_s)=(V_s, Q_s)$.
Then, from~\eqref{RH:jump} we have
\begin{equation} \label{hugoniot:limit}
\lim_{\xi \to +\infty} \mathrm{RH}( V(\xi), Q(\xi) ) = \mathrm{RH}(0,0) = \left(\tfrac{\crr}{1+\alpha}, \tfrac{\crr \sqrt{\gamma \alpha}}{\alpha(1+\alpha)} \right), \quad  \mathsf{G}_{\mathrm{RH}}(\xi_s) = (  V_s,  Q_s ).
 \end{equation}
 The formula for $\mathrm{RH}(0,0)$ follows directly from \eqref{RH:jump}. The endpoint extension is consistent with \eqref{RH:jump}, since $V_s+\alpha Q_s=\crr$ implies $\mathrm{RH}(V_s,Q_s)=(V_s,Q_s)$.
 We observe that
 \begin{equation} \label{PRH:loc}
 V_2 < \tfrac{\crr}{1+\alpha}.
 \end{equation}
 Indeed, by \eqref{bound:V2}, $V_2<V_0=\frac{1}{1+3\alpha}$. Since $\alpha>0$ and $\crr>1$, we have
 $\frac{1}{1+3\alpha}<\frac{\crr}{1+\alpha}$, which proves the claim.

 We collect these facts about the Rankine--Hugoniot jump conditions and a simple consequence into the next Lemma.
\begin{figure}[ht]
\centering
\includegraphics[width=0.7\textwidth]{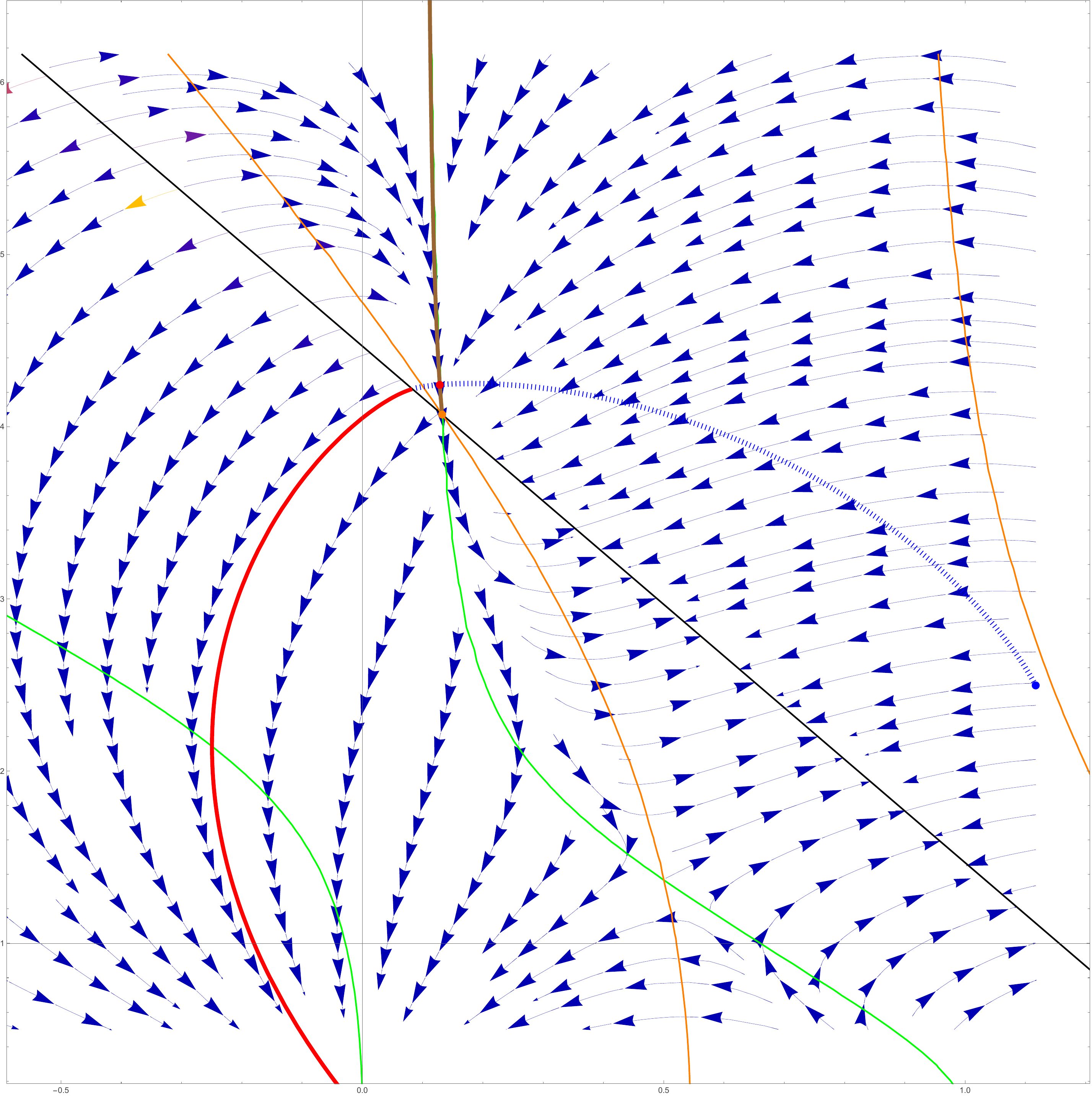}
\caption{\textbf{The Rankine--Hugoniot jump locus for $\gamma=\frac{5}{3}$, $d=3$ and $\NNN=1$.} The vertical axis represents $Q$, while the horizontal axis represents $V$. The orange point represents the triple point $P_2$. The green curve represents $\{ P_{v}[ V,  Q]=0\}$, the orange one $\{ P_{q}[ V,  Q]=0\}$, and the black one $\{ \Delta[V,Q]=0\}$. The brown curve is the downstream trajectory $(V^\infty, Q^\infty)$ from Proposition~\ref{prop:infty}, extended until it hits the triple point $(V_2, Q_2)$. The red curve represents the upstream trajectory, while the dotted blue line represents the Hugoniot locus $\mathsf{G}_{\mathrm{RH}}$. The blue point represents $\mathsf{G}_{\mathrm{RH}}(+\infty) = \mathrm{RH}(0,0)$.}
\label{fig:RH:locus}
\end{figure}

\begin{lemma}\label{lemma:RH:properties}
Let $Q\ge0$, $V<\crr$, and $V+\alpha Q\le\crr$. Define
$\mathrm{RH}(V,Q)$ by the Rankine--Hugoniot jump conditions \eqref{RH:jump}. Then the following properties hold:
\begin{enumerate}[leftmargin=2em]
\item $\mathrm{RH}(V,Q)_q\ge0$.
\item $\mathrm{RH}(V,Q)_v<\crr$ and
\[
\mathrm{RH}(V,Q)_v+\alpha \mathrm{RH}(V,Q)_q\ge\crr.
\]
If $V+\alpha Q<\crr$, then the last inequality is strict.
\item Set $\frac{\crr-V}{Q}=\mathcal M$; if $Q>0$ then
\begin{equation}\label{eq:RH:Mach:map}
\tfrac{\crr-\mathrm{RH}(V,Q)_v}{\mathrm{RH}(V,Q)_q}
=\tfrac{\alpha^2(\mathcal M^2+\alpha)}
{\sqrt{(\gamma\mathcal M^2-\alpha^3)(\alpha^2+\alpha\mathcal M^2)}}.
\end{equation}
In particular, the strict upstream Lax inequality $V+\alpha Q<\crr$ implies the strict downstream Lax inequality $\mathrm{RH} (V,Q)_v+\alpha \mathrm{RH}(V,Q)_q>\crr$.
\item If $V+\alpha Q=\crr$, then $\mathrm{RH}(V,Q)=(V,Q)$.
\end{enumerate}
\end{lemma}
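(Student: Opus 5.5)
The plan is to reduce everything to the single variable $w:=\crr-V>0$ and the Mach ratio $\mathcal M=\frac{w}{Q}$. From \eqref{RH:jump} we have the two expressions
\[
\crr-\mathrm{RH}(V,Q)_v=\tfrac{\alpha^2Q^2+\alpha w^2}{(1+\alpha)w},
\qquad
\mathrm{RH}(V,Q)_q=\tfrac{\sqrt{(\gamma w^2-\alpha^3Q^2)(\alpha^2Q^2+\alpha w^2)}}{\alpha(1+\alpha)w},
\]
where we use $\VR^+-\crr=-w$, so that the denominator $\alpha(1+\alpha)(\crr-\VR^+)$ is positive.

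\textbf{Items (i) and (ii).} The hypothesis $V+\alpha Q\le\crr$ gives $w\ge\alpha Q$. Since $\gamma=1+2\alpha>\alpha$, this yields
\[
\gamma w^2\ge\gamma\alpha^2Q^2\ge\alpha^3Q^2 .
\]
So the radicand is nonnegative, the square root is real, and $\mathrm{RH}_q\ge0$, which is (i). For the first part of (ii), the first formula shows $\crr-\mathrm{RH}_v\ge\frac{\alpha w}{1+\alpha}>0$, hence $\mathrm{RH}_v<\crr$.

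\textbf{Item (iii).} For $Q>0$, divide numerator and denominator of $\frac{\crr-\mathrm{RH}_v}{\mathrm{RH}_q}$ by $Q^2$. The factors $(1+\alpha)w$ cancel, and we obtain
\[
\tfrac{\alpha(\alpha^2Q^2+\alpha w^2)}{\sqrt{(\gamma w^2-\alpha^3Q^2)(\alpha^2Q^2+\alpha w^2)}}
=\tfrac{\alpha^2(\mathcal M^2+\alpha)}{\sqrt{(\gamma\mathcal M^2-\alpha^3)(\alpha^2+\alpha\mathcal M^2)}} ,
\]
which is \eqref{eq:RH:Mach:map}.

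\textbf{Second inequality of (ii) and the strict statements.} The downstream Lax condition $\mathrm{RH}_v+\alpha\mathrm{RH}_q\ge\crr$ is equivalent, when $\mathrm{RH}_q>0$, to the downstream Mach ratio being $\le\alpha$. Squaring \eqref{eq:RH:Mach:map} and writing $m=\mathcal M^2$, this becomes
\[
\alpha^4(m+\alpha)^2\le\alpha^2(\gamma m-\alpha^3)(\alpha^2+\alpha m),
\]
that is,
\[
\alpha(m+\alpha)^2\le(\gamma m-\alpha^3)(\alpha+m).
\]
Dividing by $m+\alpha>0$ reduces it to $\alpha m+\alpha^2\le\gamma m-\alpha^3$. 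Using $\gamma-\alpha=1+\alpha$, this is $(1+\alpha)m\ge\alpha^2(1+\alpha)$, i.e. $m\ge\alpha^2$, i.e. $\mathcal M\ge\alpha$. This is exactly the upstream hypothesis, and each step is an equivalence, so strict upstream gives strict downstream.

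The two degenerate cases are handled separately. If $Q=0$, then $\mathrm{RH}_q=\frac{\sqrt{\gamma\alpha}\,w}{\alpha(1+\alpha)}$ and $\crr-\mathrm{RH}_v=\frac{\alpha w}{1+\alpha}$. The inequality reduces to $\alpha\le\sqrt{\gamma\alpha}$, which is strict because $\gamma>\alpha$. If instead $\mathrm{RH}_q=0$ with $Q>0$, the radicand vanishes; this forces $\gamma w^2=\alpha^3Q^2$, contradicting $w\ge\alpha Q$, so this case does not occur.

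\textbf{Item (iv).} If $w=\alpha Q$, substitute into the formulas. The first gives
\[
\crr-\mathrm{RH}_v=\tfrac{\alpha^2Q^2(1+\alpha)}{(1+\alpha)\alpha Q}=\alpha Q=w ,
\]
so $\mathrm{RH}_v=V$. The radicand becomes $\alpha^2Q^2(\gamma-\alpha)\cdot\alpha^2Q^2(1+\alpha)=\alpha^4Q^4(1+\alpha)^2$, hence
\[
\mathrm{RH}_q=\tfrac{\alpha^2Q^2(1+\alpha)}{\alpha(1+\alpha)\alpha Q}=Q .
\]
The case $Q=0$ is excluded here, since it would force $V=\crr$.

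The only delicate step is the squaring argument in (ii)/(iii). I will check that all the quantities involved are positive before squaring, and keep the identity $\gamma-\alpha=1+\alpha$ in view, since it is what makes the factorization collapse to $m\ge\alpha^2$.
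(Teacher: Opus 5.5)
Your proposal is correct and takes essentially the same approach as the paper. Like the paper, you estimate directly from the explicit formulas \eqref{RH:jump} for (i)--(ii), substitute $\crr-V=\mathcal M Q$ to get \eqref{eq:RH:Mach:map}, square the downstream ratio to reduce the Lax inequality to $\mathcal M\ge\alpha$ (treating $Q=0$ separately), and substitute directly for (iv); your explicit check that $\mathrm{RH}(V,Q)_q>0$ when $Q>0$, before dividing, is a detail the paper leaves implicit.
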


\begin{proof}
The nonnegativity of the $q$-component, and the inequality $\mathrm{RH}(V,Q)_v<\crr$ follow directly from \eqref{RH:jump}. Indeed, $\crr-V\ge\alpha Q$ and $\gamma=1+2\alpha$ give
\[
-\alpha^3Q^2+\gamma(\crr-V)^2
\ge \alpha^2(\gamma-\alpha)Q^2
=\alpha^2(1+\alpha)Q^2\ge0,
\]
so the square root in \eqref{RH:jump} is real and nonnegative. Moreover,
\[
\crr-\mathrm{RH}(V,Q)_v
=\tfrac{\alpha^2Q^2+\alpha(\crr-V)^2}{(1+\alpha)(\crr-V)}>0.
\]

Assume now that $Q>0$. Substituting $\crr-V=\mathcal M Q$ in \eqref{RH:jump} gives \eqref{eq:RH:Mach:map}. Since $V+\alpha Q\le\crr$, we have $\mathcal M\ge\alpha$, and from
\eqref{eq:RH:Mach:map},
\[
\left(\tfrac{\crr-\mathrm{RH}(V,Q)_v}{\mathrm{RH}(V,Q)_q}\right)^2
=\tfrac{\alpha^3(\mathcal M^2+\alpha)}{\gamma\mathcal M^2-\alpha^3}
\le \alpha^2,
\]
with equality exactly when $\mathcal M=\alpha$. This proves $\mathrm{RH}(V,Q)_v+\alpha \mathrm{RH}(V,Q)_q\ge\crr$, and shows that the inequality is strict if $Q>0$ and $V+\alpha Q<\crr$. If $Q=0$, then \eqref{RH:jump} gives
\[
\tfrac{\crr-\mathrm{RH}(V,0)_v}{\mathrm{RH}(V,0)_q}
=\tfrac{\alpha^{3/2}}{\sqrt{\gamma}}<\alpha,
\]
so the downstream Lax inequality is again strict. Finally, substituting $\crr-V=\alpha Q$ in \eqref{RH:jump} gives $\mathrm{RH}(V,Q)=(V,Q)$.
\end{proof}
\subsection{Uniqueness of the supersonic branch}
The goal of this subsection is twofold. First, we show that in the supersonic region $\{Q>0, \, V+\alpha Q>\crr,\, V<\crr\}$ there is a unique physically relevant trajectory, $(V^\infty,Q^\infty)$ from Proposition~\ref{prop:infty}. Second, we show monotonicity properties of the trajectory $\mathsf{G}_{\mathrm{RH}}$, that we will use in \S~\ref{subsec:proof:th} to prove that $\mathsf{G}_{\mathrm{RH}}$ intersects uniquely $(V^\infty,Q^\infty)$.

We start by proving that any trajectory in the supersonic region either hits the sonic line $\{V+\alpha Q=\crr\}$ as $\xi$ decreases, or else is a rescaling of the trajectory $(V^\infty,Q^\infty)$ constructed in Proposition~\ref{prop:infty}.
\begin{lemma}\label{lemma:supersonic:uniqueness}
Fix $d=3$, $\NNN=1$, and $\alpha\in(0,1+\sqrt{2}]$.
Set
\[
\mathcal{S}:=\{(V,Q): Q>0,\ V+\alpha Q>\crr,\ V<\crr\}.
\]
Let $(\widetilde V,\widetilde Q)$ be a trajectory of \eqref{main:ODE} such that,
for some $\xi_{\mathrm{in}}>0$, $(\widetilde V(\xi_{\mathrm{in}}),\widetilde Q(\xi_{\mathrm{in}}))\in \mathcal{S}$. Continue this trajectory backward in $\xi$. Then, either there exists $\xi_0\ge0$ such that $(\widetilde V,\widetilde Q)$ hits the sonic line as $\xi\to\xi_0^+$, or $(\widetilde V,\widetilde Q)$ is a rescaling of the trajectory $(V^\infty,Q^\infty)$ from Proposition~\ref{prop:infty}.
\end{lemma}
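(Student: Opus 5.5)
Within the supersonic region $\mathcal S$ we have $\Delta[V,Q]>0$, since $V-\crr<0$, $V-\crr-\alpha Q<0$ and $V-\crr+\alpha Q>0$. The plan is to follow the trajectory backward in $\xi$, which amounts to running the desingularized flow \eqref{eq:tran:ODE} in reverse orientation, and to determine every possible limit set. We use the sign structure of $P_v$ and $P_q$ in $\mathcal S$, which is organized by the branch $\Gamma_1$ of $\{P_q=0\}$, the branch $Q=f_V(V)$ of $\{P_v=0\}$ joining $P_6$ to $P_2$, and the triple point $P_2$. The only possible endpoints of a maximal backward trajectory in $\overline{\mathcal S}$ (including points at $Q=+\infty$) are the following.

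\begin{itemize}
\item A point of the sonic line $V+\alpha Q=\crr$.
\item The line $V=\crr$.
\item The stationary points at infinity, $P_6=(V_\infty,+\infty)$ and $P_7=(\crr,+\infty)$.
\item An escape with $V\to-\infty$.
\item The triple point $P_2$ or the interior stationary point $P_4$, if it lies in $\mathcal S$.
\end{itemize}

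The key steps, in order, are as follows.

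First, I would rule out the line $V=\crr$. Near that line we have $P_v\approx(V-\crr)\alpha Q^2(\crr-\cbb/\gamma-1+3\alpha\crr)$, and the factor $(V-\crr)$ shows that the vector field is transverse or degenerate there. Together with the sign of $P_q$, I would check that backward trajectories are pushed away from $V=\crr$, toward decreasing $V$.

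Second, I would treat the region $Q\to+\infty$ using the compactification $Q\mapsto 1/Q$. The leading-order vector field at $Q=\infty$ has rest points only at $V=V_\infty$ and $V=\crr$. The linearization at $P_6$ in the variables $(V-V_\infty,Q^{-2})$ will be computed using the exponent $\mu$ from \eqref{values:infty}. I expect $P_6$ to be a saddle-type point whose unique trajectory entering the interior (backward in $\xi$) is the one-parameter family of power series of Lemma~\ref{lemma:power:R:0}, parametrized by $\overline q_0$. By the scaling symmetry $\xi\mapsto\lambda\xi$, this family is exactly the set of rescalings of $(V^\infty,Q^\infty)$. A uniqueness argument of the same Gronwall type as in Proposition~\ref{prop:power:implosion} would show that any trajectory with $(V,Q)\to P_6$ coincides with one of these series. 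For $P_7$ and for escape with $V\to-\infty$, I would use a Mach-ratio monotonicity analogous to \eqref{diff:ineq}: the quantity $\mathcal M=(\crr-V)/Q$ is monotone in suitable subregions and forces the trajectory onto the sonic line in finite backward time.

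Third, I would handle $P_2$ and $P_4$. At $P_2$, \eqref{sign:lambda} shows that $P_2$ is a sink for \eqref{eq:tran:ODE}, so backward-in-$\xi$ trajectories can reach it only in the orientation relevant here. I would check, using the orientation of $\Delta>0$, that approaching $P_2$ corresponds to $\xi\to\xi_2^-$ increasing rather than decreasing. I would check whether $P_4=(V_0,Q_0)$ lies in $\mathcal S$, using $V_0+\alpha Q_0$ versus $\crr$. If it does, I would show by linearization that it is not reachable backward from $\mathcal S$ except along curves that exit through the sonic line. Finally, Poincaré--Bendixson excludes periodic orbits, using a Dulac-type function or the monotonicity of $Q$ in the relevant subregions. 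This leaves only two alternatives: the trajectory hits the sonic line, or it converges to $P_6$ and is therefore a rescaling of $(V^\infty,Q^\infty)$.

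I expect the main obstacle to be the uniqueness analysis at $P_6$, namely showing that the only interior trajectories converging to $P_6$ form the single scaling family. This requires a careful blow-up at $Q=\infty$ and a check that the other eigendirection is the invariant line $\{Q^{-1}=0\}$ itself.
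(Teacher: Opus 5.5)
\textbf{There is a genuine gap.} Your first step, and the $P_7$ part of your second step, rely on signs that are actually reversed, so as written they fail. Write $w:=\crr-V>0$ and use $\tfrac{\cbb}{\gamma}=\crr-1+3\alpha V_\infty$. In $\mathcal S$ one then has
\[
P_v=-w\bigl[V(V-1)\,w+3\alpha^2Q^2(V-V_\infty)\bigr],\qquad
\Delta=w\,(\alpha^2Q^2-w^2)>0,\qquad
\tfrac{P_q}{Q}=\tfrac{\alpha^2\cbb}{\gamma}Q^2+O(w).
\]
Hence $\xi\partial_\xi V=P_v/\Delta\to-3(\crr-V_\infty)<0$ as $w\to0$ with $Q$ bounded below. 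So as $\xi$ decreases, $V$ is driven \emph{toward} $\crr$ at a nondegenerate rate, not away from it.

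The Mach ratio fails in the same way. Let $\mathcal M=(\crr-V)/Q$ and $\mathcal D(V)=1+2V-3(1+\alpha)V_\infty$. The identity used in Proposition~\ref{prop:infty} gives $P_v+\mathcal M P_q=-\alpha^2wQ^2\mathcal D(V)+O(w^2)$, and $\mathcal D(\crr)>0$. Therefore, near $V=\crr$, and in particular near $P_7$, $\mathcal M$ \emph{decreases} as $\xi$ decreases, moving away from the sonic value $\alpha$. A Mach-ratio argument that forces the trajectory onto the sonic line only works where $\mathcal D<0$, e.g.\ for $V$ very negative. That part is essentially the paper's Step~1, which uses the barrier $V=-\kappa Q$ and a lower bound on $\partial_\psi(V/Q)$.

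\textbf{What actually happens near $V=\crr$.} One has $\partial_\psi Q\approx\tfrac{\alpha^2\cbb}{\gamma}Q^3>0$ and $dQ/dw\approx kQ/w$, with $k=\tfrac{\cbb}{3\gamma(\crr-V_\infty)}\in(0,\tfrac12)$ because $3\gamma V_\infty<2$. So trajectories starting close to $\{V=\crr\}$ (say with $w\ll Q^2$) reach the corner $P_1=(\crr,0)$ at a finite $\xi_0>0$, along $Q\sim w^k$ and with $\mathcal M\to0$. This happens for a nonempty open set of initial data in $\mathcal S$. The lemma's dichotomy survives only because $P_1$ lies on the sonic line.

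\textbf{How to repair your argument.} Instead of ruling out $\{V=\crr\}$, you must prove this convergence to $P_1$. The same computation shows that neither a point $(\crr,Q^*)$ with $Q^*>0$ nor $P_7$ can be a backward limit, since $Q$ decreases backward there. The paper does not spell this case out either: its Step~2 only notes that $Q$ decreases backward in $\{V>\Gamma_2(Q)\}$, and its Step~3 treats trajectories confined to compact subsets of $\mathcal S$.

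\textbf{The rest of your plan is viable, and partly different from the paper.} At $P_6$, in the variables $(V-V_\infty,Q^{-2})$ and in $\xi$-time, the field is smooth. Its eigenvalues are $-3$ along the invariant line $\{Q^{-1}=0\}$ (where $\xi\partial_\xi V=-3(V-V_\infty)$) and $2\mu$ transversally. The stable manifold theorem therefore gives uniqueness of the scaling family directly, in place of the paper's Gronwall comparison with the series. So this is not the real obstacle. The paper instead obtains convergence to $P_6$ from the $\Gamma_2$ barrier and the sign of $Q\,dV/dQ$.

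Two smaller corrections:
\begin{itemize}
\item $P_4\notin\mathcal S$, since $V_0+\alpha Q_0<\crr$, so $\mathcal S$ contains no equilibria.
\item Poincar\'e--Bendixson produces periodic orbits rather than excluding them. You need an explicit Dulac function, e.g.\ the paper's $B=(Q(\crr-V)^2)^{-1}$.
\end{itemize}
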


\begin{proof}
We use the desingularized system \eqref{eq:tran:ODE},
\[
\pp_\psi V=P_v[V,Q],\qquad \pp_\psi Q=P_q[V,Q].
\]
Since $\Delta>0$ in $\mathcal{S}$, the variables $\psi$ and $\xi$ have the same orientation in this region. Thus hitting the sonic line backward in $\xi$ is equivalent to hitting it backward in the phase portrait
\eqref{eq:tran:ODE}.
The proof is divided in three steps:
\begin{itemize}[leftmargin=2em]
    \item first, we prove that for any solution $(\widetilde V,\widetilde Q)$ that does not hit the sonic line, the component $\widetilde V$ is bounded from below;
    \item then, we show that if $\widetilde Q$ is unbounded it must coincide,
    up to rescaling, with $(V^\infty,Q^\infty)$;
    \item we conclude by applying Poincaré--Bendixson to trajectories contained in a compact set.
\end{itemize}
\emph{Step 1: exclusion of the escape $\widetilde V\to-\infty$.}
For $\kappa>0$ and $t>0$, the identity \eqref{eq:ell} gives
\[
P_v[-\kappa t,t]+\kappa P_q[-\kappa t,t]\ge
( 2\alpha^2\kappa^2 +  2 \alpha \kappa^4 )t^4>0.
\]
Equivalently, the line $V=-\kappa Q$ is a backward barrier: on this line,
\[
\pp_\psi(V+\kappa Q)=P_v+\kappa P_q\ge0.
\]
Thus, in backward time, a trajectory starting on this line can only enter the region $V+\kappa Q\le0$.

If, before hitting the sonic line, there is a time
$\xi'\le\xi_{\mathrm{in}}$ such that $\widetilde V(\xi')<0$, set
\[
\kappa:=-\tfrac{\widetilde V(\xi')}{\widetilde Q(\xi')}>0.
\]
Then
\[
\tfrac{\widetilde V}{\widetilde Q}\le -\kappa
\]
for all earlier times. Moreover, while the trajectory remains in $\mathcal{S}$ and $\widetilde V<0$,
\[
\widetilde Q>\tfrac{\crr-\widetilde V}{\alpha}>\tfrac{\crr}{\alpha}.
\]
At any such point, we have
\[
\pp_\psi\left(\tfrac{\widetilde V}{\widetilde Q}\right)
=\tfrac{P_v[\widetilde V,\widetilde Q]
-\tfrac{\widetilde V}{\widetilde Q}P_q[\widetilde V,\widetilde Q]}
{\widetilde Q}
\ge 2\alpha^2\left(-\tfrac{\widetilde V}{\widetilde Q}\right)^2
\widetilde Q^3
\ge 2\alpha^2\kappa^2\widetilde Q^3.
\]
Therefore, as $\psi$ decreases,
\[
\tfrac{\widetilde V}{\widetilde Q}(\psi_1)
\le \tfrac{\widetilde V}{\widetilde Q}(\psi_0)
-2\alpha^2\kappa^2\left(\tfrac{\crr}{\alpha}\right)^3(\psi_0-\psi_1),
\qquad \psi_1<\psi_0.
\]
It follows that $\tfrac{\widetilde V}{\widetilde Q}$ reaches $-\alpha$ in finite backward time unless the trajectory has already left $\mathcal{S}$. Since in $\mathcal{S}$ we have
\[
\tfrac{\crr-\widetilde V}{\widetilde Q}
= -\tfrac{\widetilde V}{\widetilde Q}+\tfrac{\crr}{\widetilde Q} < \alpha,
\]
it follows that $( \tilde V, \tilde Q)$ must intersect the sonic line $V+\alpha Q=\crr$. In particular, we conclude that if $(\tilde V, \tilde Q)$ is contained for all times $0<\xi \le \xi_{\mathrm{in}}$ in $\mathcal{S}$ we must have $\tilde V(\xi)>0$.

\emph{Step 2: the unbounded $Q$ alternative.}
It follows from Step 1 that any non-compact backward trajectory avoiding the sonic line has $\widetilde V$ contained in a compact subinterval of $[0,\crr]$. We now prove that, if $\widetilde Q$ is unbounded in backward time, then $(\widetilde V,\widetilde Q)$ converges to $P_6$.

Let $\Gamma_2(Q)$ denote the inverse of the branch $\Gamma_2^Q$ of $P_q=0$ defined in \eqref{eq:Q:lvset}; thus $\Gamma_2(Q)\in(V_*^Q,\crr)$ and $P_q[\Gamma_2(Q),Q]=0$. Moreover, $\Gamma_2(Q)\to V_*^Q$ and $\Gamma_2(Q)-V_*^Q=O(Q^{-2})$ as $Q\to+\infty$. On this branch,
\[
P_v[\Gamma_2(Q),Q]=-\tfrac{(1+3\alpha)\Gamma_2(Q)-1}{\Gamma_2(Q)-1+3\alpha V_\infty}(\Gamma_2(Q)-\crr)^2F_T(\Gamma_2(Q))<0,
\]
because $V_2<V_*^Q<\Gamma_2(Q)<\crr<V_3$. Hence, on $V=\Gamma_2(Q)$,
\[
\pp_\psi\bigl(V-\Gamma_2(Q)\bigr)
=P_v-\Gamma_2'(Q)P_q=P_v<0.
\]
Consequently, if for some time $\widetilde V>\Gamma_2(\widetilde Q)$, then the same inequality holds for all earlier times. In that region $P_q[\widetilde V,\widetilde Q]>0$, so $\widetilde Q$ decreases as $\xi$ decreases. This is incompatible with $\widetilde Q$ being unbounded.

We are left with the case where $\widetilde V(\xi) \le\Gamma_2(\widetilde Q(\xi))$ for $\xi<\xi_{\mathrm{in}}$. We may also assume that $\widetilde Q$ is large enough, so that in this region, from the definition of $\Gamma_2$, we have  $P_q[\widetilde V,\widetilde Q]<0$.
 Thus $\widetilde Q$ is monotone in backward time, and since there are no stationary points in $\mathcal{S}$ (following the discussion in \S~\ref{subsec:general:phase:portrait}), we must have $\tilde Q \to +\infty$. From \eqref{P:exp}, for bounded $\widetilde V$ and large $\widetilde Q$,
\[
\widetilde Q\tfrac{d\widetilde V}{d\widetilde Q}
=\tfrac{3\alpha^2(\widetilde V-\crr)(\widetilde V-V_\infty)\widetilde Q^2+O(1)}
{\alpha^2(\widetilde V-V_*^Q)\widetilde Q^2
-P_{2,Q}[\widetilde V](\widetilde V-\crr)}.
\]
The denominator is negative, and therefore for $\tilde Q$ large the sign of $\widetilde Q\,\frac{d\widetilde V}{d\widetilde Q}$ is the sign of $\widetilde V-V_\infty$. This sign is uniform away from $V_\infty$. Hence, if for some $\varepsilon>0$ and arbitrarily large $\widetilde Q$ we had $\widetilde V\ge V_\infty+\varepsilon$, then $\widetilde V$ would increase as
$\widetilde Q$ increases until it enters the already excluded region
$\widetilde V>\Gamma_2(\widetilde Q)$.
Similarly, $\widetilde V\le V_\infty-\varepsilon$ for arbitrarily large
$\widetilde Q$ would imply that the trajectory enters the region $\widetilde V<0$. Since
$\varepsilon>0$ is arbitrary,
\[
\widetilde Q\to+\infty,\qquad \widetilde V\to V_\infty
\]
in backward time.

We identify this alternative with the branch from Proposition~\ref{prop:infty}.
Since $\widetilde Q\to+\infty$ and $\widetilde V\to V_\infty$ in backward time,
the expansion near $P_6$ gives
\[
\xi\pp_\xi(\widetilde V-V_\infty)
=-3(\widetilde V-V_\infty)+O((\widetilde V-V_\infty)^2+\widetilde Q^{-2}),
\]
and
\[
\xi\pp_\xi(\widetilde Q^{-2})
=2\mu \widetilde Q^{-2}
+O(|\widetilde V-V_\infty|\widetilde Q^{-2}+\widetilde Q^{-4}).
\]
As in Lemma~\ref{lemma:power:R:0}, it follows that
$\widetilde V(\xi)=V_\infty+O(\xi^{2\mu})$ and that
\[
\overline q_0:=\lim_{\xi\to0^+}\xi^\mu \widetilde Q(\xi)
\]
exists in $(0,+\infty)$. Comparing with the convergent series solution from Lemma~\ref{lemma:power:R:0}, and applying Gronwall to the difference after multiplying the $Q$ equation by $\xi^\mu$, gives equality with the local branch with leading coefficient $\overline q_0$. Since changing $\xi$ to $\lambda\xi$ changes the leading coefficient by $\overline q_0\mapsto \overline q_0\lambda^{-\mu}$, this curve is a rescaling of $(V^\infty,Q^\infty)$.

\emph{Step 3: exclusion of compact trajectories that avoid the sonic line.}
It remains to rule out the case in which $(\widetilde V,\widetilde Q)$ stays in a compact subset of $\mathcal{S}$ for all backward time $0<\xi<\xi_{\mathrm{in}}$. In such a compact set the desingularized vector field has no stationary points. Hence Poincaré--Bendixson would force the backward limit set to contain a periodic orbit. This is impossible by the Bendixson--Dulac criterion. Indeed, with $B(V,Q)=\bigl(Q(\crr-V)^2\bigr)^{-1}$, a direct computation gives
\[
\pp_V(BP_v)+\pp_Q(BP_q)
=-\tfrac{2V-1}{Q}
+\tfrac{\alpha^2 Q}{(\crr-V)^2}
\left(2V-5\crr+\tfrac{3}{1+3\alpha}\right)<0
\]
throughout $\mathcal{S}$. The last inequality follows from $V<\crr$, $Q>\tfrac{\crr-V}{\alpha}$, and $\crr>1$.

Therefore, if $(\widetilde V,\widetilde Q)$ avoids the sonic line, it is a rescaling of the trajectory $(V^\infty, Q^\infty)$ from Proposition~\ref{prop:infty}. Equivalently, every trajectory in $\mathcal{S}$ that is not a rescaling of $(V^\infty,Q^\infty)$ hits the sonic line backward in $\xi$.
\end{proof}

The next lemma proves monotonicity of the $v$-component of the Hugoniot locus along the outer trajectory. This monotonicity will be used in the proof of Theorem~\ref{th:main:sec} to show that the Hugoniot locus intersects uniquely the trajectory $(V^\infty, Q^\infty)$.
\begin{lemma}
\label{transv:hug}
Consider the Hugoniot locus $\mathsf{G}_{\mathrm{RH}}$ defined in \eqref{hugoniot}. For $\xi \in (\xi_s, +\infty)$ we have
\begin{equation} \label{claim:tr}
\pp_\xi \mathsf{G}_{\mathrm{RH},v}(\xi) \ge 0.
\end{equation}
\end{lemma}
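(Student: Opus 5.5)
Write $\mathsf{G}_{\mathrm{RH},v}(\xi)=\crr-\Phi(V(\xi),Q(\xi))$, where
\[
\Phi(V,Q):=\crr-\mathrm{RH}(V,Q)_v=\tfrac{\alpha^2Q^2+\alpha(\crr-V)^2}{(1+\alpha)(\crr-V)} .
\]
Then \eqref{claim:tr} is equivalent to $\pp_\xi\Phi(V(\xi),Q(\xi))\le0$ along the outer trajectory on $(\xi_s,+\infty)$.

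We work in the region $\mathcal{T}$ of Lemma~\ref{lemma:escape}. There $\Delta<0$ and, by Lemma~\ref{lemma:pos:Pq}, $P_q>0$, so $\pp_\xi Q<0$. Using $Q$ as the parameter (as in \eqref{mcV:diff}), the claim becomes
\[
\tfrac{d}{dQ}\Phi(\mathcal V(Q),Q)\ge0 .
\]
Set $w:=\crr-V>0$, so that $\Phi=\tfrac{\alpha}{1+\alpha}\bigl(w+\alpha Q^2/w\bigr)$. Then
\[
\tfrac{1+\alpha}{\alpha}\tfrac{d\Phi}{dQ}=\Bigl(1-\tfrac{\alpha Q^2}{w^2}\Bigr)\tfrac{dw}{dQ}+\tfrac{2\alpha Q}{w}.
\]
In $\mathcal T$ we have $w\ge\alpha Q$, hence $\alpha Q^2/w^2\le1/\alpha$. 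We also have $\tfrac{dw}{dQ}=-\tfrac{P_v}{P_q}$. Multiplying by $P_q w^2>0$, the inequality to prove is
\[
\mathcal{K}(V,Q):=-(w^2-\alpha Q^2)P_v[V,Q]+2\alpha Q\,w\,P_q[V,Q]\ge0
\quad\text{on }\mathcal T\cap\{\text{trajectory}\}.
\]
Equivalently, with the Mach variable $\mathcal M=w/Q$, one can read this through \eqref{eq:RH:Mach:map} as a sign condition on $\tfrac{d}{dQ}$ of a combination of $w$ and $\mathcal M$.

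\textbf{Plan of steps.}

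\emph{Step 1.} Expand $\mathcal K$ explicitly from \eqref{P:exp}. It is a polynomial in $(V,Q)$ that factors $Q^2$ out, because $P_q$ has a factor $Q$ and $P_v$ has a factor $w$. Then try to show positivity on the whole region $\{Q>0,\ V<V_2,\ V+\alpha Q<\crr\}$ by the same device as Lemma~\ref{lemma:pos:Pq}. For fixed $V$, treat $\mathcal K/(Q^2 w)$ as a polynomial in $Q^2$ and check the endpoints $Q\to0$ and $Q=(\crr-V)/\alpha$. At $Q\to0$ the dominant term is $w^3 V(V-1)+2\alpha w^3P_{2,Q}[V]$, which is positive near $V\approx0$. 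At the sonic endpoint $w^2-\alpha Q^2$ is positive but $P_v$ is controlled through $F_T(V)$. If concavity in $Q^2$ fails, fall back on the fact that the trajectory lies in $\mathcal T$, to the left of $\mathcal W_E$, and bounded by the line barrier $\ell$ from Lemma~\ref{lemma:escape}.

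\emph{Step 2.} If Step~1 only yields positivity on a subregion containing the trajectory, use the barriers already built. Near $(0,0)$, linearization gives $P_v\approx\crr^2V$ and $P_q\approx\crr^2Q$. Then
\[
\mathcal K\approx\crr^2\bigl(-\crr^2V+2\alpha\crr Q^2\bigr)+\dots,
\]
and this is positive since $V<0$ there ($\underline v_1<0$). Along the rest of the trajectory, combine $V<\mathcal W_E(Q)$ with the slope bound \eqref{ineq:slope}.

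\emph{Step 3.} Near $\xi_s$, i.e.\ the sonic line, $w^2-\alpha Q^2\to(\alpha^2-\alpha)Q^2$ may have either sign depending on $\alpha\gtrless1$. Here $P_q>0$ and $P_v$ has the sign of $(V_s-\crr)F_T(V_s)$-type expressions with $V_s<V_2$, so each term's sign is checked directly.

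\textbf{Main obstacle.} The main obstacle is Step~1: proving global positivity of the explicit polynomial $\mathcal K$ uniformly in $\alpha\in(0,1+\sqrt2]$. I expect to need a reduction to a one-variable polynomial along the boundary curves, plus possibly a Bernstein-coefficient sign check as in Lemma~\ref{lemma:Bernstein:q}.
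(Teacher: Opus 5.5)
Your reduction is correct: with $w=\crr-V$, your quantity $\mathcal K$ equals $w\,\mathsf T$, where $\mathsf T$ is the numerator the paper isolates. So the lemma is equivalent to $\mathsf T\ge0$ along the outer trajectory. The gap is in how you propose to prove this sign.

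\textbf{Step 1 targets a false statement.} It aims at positivity on all of $\{Q>0,\ V<V_2,\ V+\alpha Q<\crr\}$, and that fails. $\mathcal K$ does not carry a factor $Q^2$, only the factor $w$. At $Q=0$ one has $\mathsf T=w^3V(V-1)$, which is strictly negative for $0<V<V_2$. Your ``dominant term'' $w^3V(V-1)+2\alpha w^3P_{2,Q}[V]$ mixes the $Q^0$ coefficient with part of the $Q^2$ coefficient. Hence the endpoint check at $Q\to0$ fails exactly on the strip $V\ge0$, and the concavity-plus-endpoints device cannot work there.

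\textbf{The fallbacks do not close the gap.} Steps 2--3 only treat neighbourhoods of the origin and of the sonic line. They say nothing quantitative about the intermediate part of the trajectory, which is precisely where $V$ may become nonnegative. The slope bound \eqref{ineq:slope} does not help: it bounds $V$ from below, whereas you need an upper bound on $V$ for small $Q$. Step 3's sign-by-sign check also fails for $\alpha>1$, since then $-(w^2-\alpha Q^2)P_v<0$ on the sonic line; only the combined expression there has a definite sign.

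\textbf{Two ingredients are missing.}
\begin{enumerate}
\item You must show the trajectory avoids $\{V\ge0,\ Q\text{ small}\}$. The paper proves $V<0$ whenever $Q\le Q_2/2$, using three facts: $V<\mathcal W_E(Q)$; $\mathcal W_E$ is convex with $\mathcal W_E(0)=0$; and $\mathcal W_E(Q_2/2)=\tfrac14(V_2+\rr Q_2)<0$. The last inequality is a genuine estimate, requiring $\crr>\frac{6+13\alpha}{4(1+3\alpha)}$ and $V_2<V_0$.
\item You must prove $\mathsf T>0$ on $\{V<0\}\cup\{0\le V<V_2,\ Q>Q_2/2\}$. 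The paper works in the Mach variable $Z=((\crr-V)/Q)^2$, in which $(\crr-V)\mathsf T/Q^4$ is quadratic. Its $Z$-derivative is positive for $V<0$. For $0\le V<V_2$ it is positive once $Z<16\alpha^2$, which follows from $Q>Q_2/2$ and $2V_2\le\crr$ (see \eqref{V2:crr:2}). The minimum is therefore at $Z=\alpha^2$, where the value is $-\alpha^3(1+\alpha)F_T(V)>0$.
\end{enumerate}
No Bernstein-type computation is needed.
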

\begin{proof}
By the definition \eqref{hugoniot} of $\mathsf{G}_{\mathrm{RH},v}$, the expression \eqref{RH:jump}, and using \eqref{eq:euler:ODE}, the claim \eqref{claim:tr} is equivalent to
\begin{align} \label{tbp:ineq}
\pp_\xi \mathsf{G}_{\mathrm{RH},v}(\xi)
&=\tfrac{\alpha}{(1+\alpha)\xi\Delta[V,Q]}
\left[
\left(1-\tfrac{\alpha Q^2}{(V-\crr)^2}\right)P_v[V,Q]
+\tfrac{2\alpha Q}{V-\crr}P_q[V,Q]
\right] \notag\\
&=\tfrac{\alpha\,\mathsf{T}[V,Q]}{(1+\alpha)\xi(\crr-V)^2(\crr-V-\alpha Q)(\crr-V+\alpha Q)}\ge0,
\end{align}
where $(V(\xi),Q(\xi))$ is the trajectory from Proposition~\ref{prop:sub:sonic}. A direct computation gives
\begin{align}
\mathsf{T}[V,Q]
&=\bigl((V-\crr)^2-\alpha Q^2\bigr)
\left[-V(V-1)(V-\crr)+\alpha Q^2\left(\crr-\tfrac{\cbb}{\gamma}-1+3\alpha V\right)\right] \label{def:T:T} \\
&\quad+2\alpha Q^2
\left[-(V-\crr)^2(V(1+3\alpha)-1)+\alpha(V-\crr)(V(V-1)+\alpha Q^2)
+\tfrac{\alpha^2\cbb}{\gamma}Q^2\right]. \notag
\end{align}
We first observe that along the trajectory $V<V_2$. If $Q\le \tfrac{Q_2}{2}$, then $V<0$; we treat the case $Q>\tfrac{Q_2}{2}$ below. Indeed, from Lemma~\ref{lemma:escape},
$(V,Q)$ stays in $\mathcal{T}$, so in particular $V<V_2$. Moreover, since
$\mathcal{W}_E(Q)=\rr Q+\tfrac{V_2-Q_2\rr}{Q_2^2}Q^2$, we get
\[
\mathcal{W}_E(\tfrac{Q_2}{2})
=\tfrac14(V_2+\rr Q_2).
\]
We observe that  $V_2+\rr Q_2$ is negative.
We first note the elementary bound
\[
\crr>\tfrac{6+13\alpha}{4(1+3\alpha)}.
\]
Indeed, by the explicit formula for $\crr$ \eqref{cr:formula}, this follows from squaring
\[
\sqrt{1+102\alpha+249\alpha^2}>1+16\alpha,
\]
and using $0<\alpha\le 1+\sqrt{2}<3$.

Now, since $Q_2=\tfrac{\crr-V_2}{\alpha}$, $\rr<0$, and $V_2<V_0$, we get
\[
\begin{aligned}
V_2+\rr Q_2
&=
V_2-\sqrt{\tfrac{6\alpha}{25(1+2\alpha)}}\tfrac{\crr-V_2}{\alpha}  \\
&<
\frac1{1+3\alpha}
-\sqrt{\tfrac{6\alpha}{25(1+2\alpha)}}
\tfrac{\tfrac{6+13\alpha}{4(1+3\alpha)}-\tfrac1{1+3\alpha}}{\alpha} \\
&=
\tfrac1{1+3\alpha}
\left[
1-\sqrt{\tfrac{6\alpha}{25(1+2\alpha)}}\tfrac{2+13\alpha}{4\alpha}
\right].
\end{aligned}
\]
The bracket is negative because
\[
\tfrac{6\alpha}{25(1+2\alpha)}(2+13\alpha)^2-16\alpha^2
=
\tfrac{2\alpha(107\alpha^2-44\alpha+12)}{25(1+2\alpha)}>0,
\]
and $107\alpha^2-44\alpha+12>0$ since its discriminant is negative. Therefore
\[
V_2+\rr Q_2<0.
\]

Since $\mathcal{W}_E(0)=0$ and $\mathcal{W}_E$ is convex, this implies $\mathcal{W}_E(Q)<0$ for $0<Q\le \tfrac{Q_2}{2}$. Therefore, if $Q\le \tfrac{Q_2}{2}$, the lower barrier gives $V<\mathcal{W}_E(Q)<0$.

It remains to prove
\begin{equation} \label{eq:T:pos:region}
\mathsf{T}[V,Q]>0
\end{equation}
in the subsonic region whenever either $V<0$, or $V<V_2$ and
$Q>\tfrac{Q_2}{2}$. Set
\[
Z:=\left(\tfrac{\crr-V}{Q}\right)^2.
\]
Since $V+\alpha Q<\crr$, we have $Z>\alpha^2$. Dividing \eqref{def:T:T} by
$(\crr-V)^3$, multiplying by $Z^2$, and using
$\frac{\cbb}{\gamma}=\crr-1+3\alpha V_\infty$ gives
\begin{align} \label{RHS:mono}
\tfrac{(\crr-V)\mathsf{T}[V,Q]}{Q^4}
&=Z^2V(V-1)\\ 
&\quad+\alpha Z\left((\crr-V)\bigl(2-(2+3\alpha)V-3\alpha V_\infty\bigr)
-\gamma V(V-1)\right)\notag\\
&\quad+\alpha^3(\crr-V)(3\gamma V_\infty-V-2).\notag
\end{align}
The derivative of the right-hand side with respect to $Z$ is
\begin{equation} \label{der:T:pr}
2ZV(V-1)+\alpha\left((\crr-V)\bigl(2-(2+3\alpha)V-3\alpha V_\infty\bigr)
-\gamma V(V-1)\right).
\end{equation}
 From $3\alpha V_\infty <2 \alpha  V_0<2$ and $\gamma=1+2\alpha$ this derivative is positive for $V<0$ and $Z>0$. For $0\le V<V_2$ and $Q>\tfrac{Q_2}{2}$, using  $2V_2<\crr$ from \eqref{V2:crr:2}, we have
\[
\alpha^2<Z<4\alpha^2\left(\tfrac{\crr-V}{\crr-V_2}\right)^2 \le 16 \alpha^2,
\]
and $V(V-1)<0$. Hence the derivative \eqref{der:T:pr} is bounded below by
\begin{align*}
    A_T(V):= \alpha\left[
\crr(2-3\alpha V_\infty)
-\bigl(\crr(2+3\alpha)+1+30\alpha-3\alpha V_\infty\bigr)V
+(1+33\alpha)V^2
\right].
\end{align*}
 We now observe that $A_T(V)$ is positive for $0\le V\le V_2$. At $V=V_2$, we have
\[
A_T(V_2)=\tfrac{\alpha}{2}\left[ \left((3V_\infty-\crr-3 +3\alpha(35V_\infty+31\crr-31)\right)V_2+\crr\left(4-3(1+35\alpha)V_\infty\right) \right]>0,
\]
where the inequality follows from $0\le V_\infty\le V_2<V_0$ from \eqref{pos:vinfty:range} and
\eqref{bound:V2}, $V_2\le \frac{\crr}{2}$ from \eqref{V2:crr:2}, and
$\crr>\frac{3+2\alpha}{2(1+3\alpha)}$ from \eqref{eq:crr:lb}. 
To conclude that $A_T$ is positive on $0 \le V \le V_2$ we observe that $A_T$ is monotone decreasing
\[
A'_T(V)=\alpha(-\crr(2+3\alpha)-1-30\alpha+3\alpha V_\infty+2(1+33\alpha)V)<0,
\]
for $V \in (0, V_2)$, where we used $V_2<V_0$ from \eqref{bound:V2},
$V_\infty<\frac23V_0$ from \eqref{eq:v0:ub}, and $\crr>1$.
 
Hence the right-hand side of \eqref{RHS:mono} is increasing in $Z$ in both cases. To conclude that $\mathsf{T}[V,Q]>0$ it is enough to check the sign at the left endpoint $Z=\alpha^2$, where we have
\[
\tfrac{(\crr-V)\mathsf{T}[V,Q]}{Q^4}
=-\alpha^3(1+\alpha)F_T(V)>0,
\]
because $F_T(V)<0$ for $V<V_2$. Since $\crr-V>0$ and $Q>0$, \eqref{eq:T:pos:region} follows, and then \eqref{tbp:ineq} follows.

\end{proof}

Lastly, we prove that the Mach ratio $\frac{\crr-V}{Q}$ is monotone along the trajectory $(V,Q)$ from Proposition~\ref{prop:sub:sonic}; as a consequence, we obtain the monotonicity of $\tfrac{\crr-\mathsf{G}_{\mathrm{RH},v}}{\mathsf{G}_{\mathrm{RH},q}}$
\begin{lemma}\label{lemma:supersonic:ratio}
Let $Q>0$, $V<V_2$, and $V+\alpha Q<\crr$. Then
\begin{equation}\label{eq:supersonic:ratio:sign}
P_v[V,Q]+\tfrac{\crr-V}{Q}P_q[V,Q]>0.
\end{equation}
Consequently, for the trajectory $(V,Q)$ from Proposition~\ref{prop:sub:sonic},
\begin{equation}\label{eq:supersonic:ratio:mono}
\xi\partial_\xi\left(\tfrac{\crr-V(\xi)}{Q(\xi)}\right)>0,
\qquad \xi\in(\xi_s,+\infty).
\end{equation}
Moreover, for the associated Hugoniot locus $\mathsf{G}_{\mathrm{RH}}$ defined in \eqref{hugoniot},
\begin{equation}\label{eq:supersonic:RH:ratio:mono}
\xi\partial_\xi\left(
\tfrac{\crr-\mathsf{G}_{\mathrm{RH},v}(\xi)}
{\mathsf{G}_{\mathrm{RH},q}(\xi)}
\right)<0,
\qquad \xi\in(\xi_s,+\infty).
\end{equation}
\end{lemma}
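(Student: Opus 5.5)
The plan is to reduce \eqref{eq:supersonic:ratio:sign} to the identity already derived in the proof of Proposition~\ref{prop:infty}:
\[
P_v[V,Q]+\tfrac{\crr-V}{Q}P_q[V,Q]=(\crr-V)\Bigl((\crr-V)\,\mathcal C(V)-\alpha^2Q^2\,\mathcal D(V)\Bigr).
\]
Here $\mathcal C(V)=(\crr-V)(1-(1+3\alpha)V)-(1+\alpha)V(V-1)$ and $\mathcal D(V)=1+2V-3(1+\alpha)V_\infty$. Since $\crr-V>0$, it suffices to show that the bracket is positive.

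For fixed $V<V_2$, the bracket is an affine function of $Q^2$. The subsonic condition $V+\alpha Q<\crr$ confines $Q^2$ to the interval $(0,(\crr-V)^2/\alpha^2)$. An affine function is positive on an interval if it is nonnegative at both closed endpoints and positive at one of them, so the sign of $\mathcal D$ never needs to be determined; I will check the two endpoints.

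\emph{Sonic endpoint} $\alpha^2Q^2=(\crr-V)^2$. The bracket equals $(\crr-V)\bigl(\mathcal C(V)-(\crr-V)\mathcal D(V)\bigr)$. Expanding gives
\[
\mathcal C-(\crr-V)\mathcal D=(1+\alpha)\bigl[-3(\crr-V)(V-V_\infty)-V(V-1)\bigr]=-(1+\alpha)F_T(V),
\]
where $F_T$ is the quadratic in \eqref{triple:eq}. Since $V_2$ is the smaller root of $F_T$ and the leading coefficient is negative, $F_T(V)<0$ for $V<V_2$. Hence the bracket is positive at this endpoint.

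\emph{Endpoint} $Q=0$. The bracket equals $(\crr-V)\,\mathcal C(V)$, so I need $\mathcal C(V)>0$ for all $V<V_2$. Expanding,
\[
\mathcal C(V)=2\alpha V^2-\bigl(\crr(1+3\alpha)-\alpha\bigr)V+\crr .
\]
For $V\le 0$, every term is nonnegative and $\crr>0$, using $\crr(1+3\alpha)>\alpha$ because $\crr>1$. For $0<V<V_2<V_0=\frac1{1+3\alpha}$, we have $\crr(1+3\alpha)V<\crr$, so $\crr-\crr(1+3\alpha)V>0$, while $\alpha V+2\alpha V^2>0$. Thus $\mathcal C>0$ on $(-\infty,V_2)$, and \eqref{eq:supersonic:ratio:sign} follows.

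\emph{Monotonicity along the trajectory.} By \eqref{main:ODE},
\[
\xi\partial_\xi\tfrac{\crr-V}{Q}=-\tfrac{P_v+\frac{\crr-V}{Q}P_q}{Q\,\Delta[V,Q]}.
\]
In the subsonic region with $V<\crr$, all three factors of $\Delta$ are negative, so $\Delta<0$. By Lemma~\ref{lemma:escape}, the trajectory of Proposition~\ref{prop:sub:sonic} stays in $\mathcal T$, so $V<V_2$ and $Q>0$ on $(\xi_s,\infty)$. Combining these with \eqref{eq:supersonic:ratio:sign} gives \eqref{eq:supersonic:ratio:mono}.

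\emph{Monotonicity of the Hugoniot ratio.} By \eqref{eq:RH:Mach:map}, the Hugoniot ratio equals $g(\mathcal M)$ with $g(\mathcal M)^2=\frac{\alpha^3(\mathcal M^2+\alpha)}{\gamma\mathcal M^2-\alpha^3}$, for $\mathcal M>\alpha$ (so $\gamma\mathcal M^2-\alpha^3>0$). Differentiating in $\mathcal M^2$, the numerator of the derivative is
\[
\alpha^3\bigl[(\gamma\mathcal M^2-\alpha^3)-\gamma(\mathcal M^2+\alpha)\bigr]=-\alpha^4(\alpha^2+\gamma)<0 .
\]
So $g$ is strictly decreasing in $\mathcal M$. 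Composing $g$ with the strictly increasing $\mathcal M(\xi)$ yields \eqref{eq:supersonic:RH:ratio:mono}.

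The only step requiring real care is the sonic-endpoint algebra that collapses to $-(1+\alpha)F_T(V)$, and that is a direct expansion.
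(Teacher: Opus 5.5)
Your proof is correct and follows essentially the same route as the paper. The paper divides by $(\crr-V)^3$ and writes the quantity as a convex combination, with weight $\alpha^2Q^2/(\crr-V)^2\in(0,1)$, of $\mathcal C(V)/(\crr-V)$ and $-(1+\alpha)F_T(V)/(\crr-V)$; this is exactly your affine-in-$Q^2$ endpoint check, and the positivity of $\mathcal C$ (split at $V=0$), the sign of $\Delta$, and the decreasing Hugoniot map via \eqref{eq:RH:Mach:map} are handled the same way.
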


\begin{proof}
Since $V<V_2<\crr$ by \eqref{bound:V2}, and since $V+\alpha Q<\crr$, we have
$\crr-V>0$ and
\begin{equation}\label{eq:supersonic:ratio:theta}
0<\tfrac{\alpha^2Q^2}{(\crr-V)^2}<1.
\end{equation}
Using the formulas for $P_v$ and $P_q$ in \eqref{P:exp}, together with the identity $\frac{\cbb}{\gamma}=\crr-1+3\alpha V_\infty$ following from the definition of $V_\infty$ in Subsection~\ref{subsec:general:phase:portrait}, we compute
\begin{align}
&P_v[V,Q]+\tfrac{\crr-V}{Q}P_q[V,Q]\notag\\
&\quad=(\crr-V)^2\left((\crr-V)(1-(1+3\alpha)V)-(1+\alpha)V(V-1)\right)\notag\\
&\qquad+\alpha^2(\crr-V)Q^2\left(3(1+\alpha)V_\infty-2V-1\right).
\label{eq:supersonic:ratio:identity}
\end{align}
After dividing \eqref{eq:supersonic:ratio:identity} by the positive quantity $(\crr-V)^3$, the right-hand side becomes a convex combination, with coefficient $\frac{\alpha^2Q^2}{(\crr-V)^2}$, of
\begin{equation}\label{eq:supersonic:first:term}
\tfrac{(\crr-V)(1-(1+3\alpha)V)-(1+\alpha)V(V-1)}{\crr-V}
\end{equation}
and
\begin{equation}\label{eq:supersonic:second:term}
-\tfrac{(1+\alpha)F_T(V)}{\crr-V}.
\end{equation}
Here we used the definition of $F_T$ in \eqref{triple:eq} to identify the second expression.

We now check that both quantities are positive. For \eqref{eq:supersonic:second:term}, this follows from \eqref{triple:eq} and \eqref{def:V2}: the quadratic coefficient of the polynomial $F_T$ is negative and hence $F_T$ is negative for $V<V_2$. Since $\crr-V>0$, the expression in \eqref{eq:supersonic:second:term} is positive.

For \eqref{eq:supersonic:first:term}, the denominator is positive. If $0\le V<V_2$, then \eqref{bound:V2} and the definition $V_0=\frac{1}{1+3\alpha}$ give $V<V_0<1$, and therefore $1-(1+3\alpha)V>0$ and $V(V-1)\le0$. Thus the numerator in \eqref{eq:supersonic:first:term} is positive. If $V<0$, the numerator in \eqref{eq:supersonic:first:term} equals
\[
\crr+\bigl(\alpha-(1+3\alpha)\crr\bigr)V+2\alpha V^2,
\]
which is positive because $\alpha>0$ and $\crr>1$ by the discussion following \eqref{eq:cr:1}. Hence both terms in the convex combination are positive, and \eqref{eq:supersonic:ratio:sign} follows.

We now prove the monotonicity statement. By Proposition~\ref{prop:sub:sonic} and Lemma~\ref{lemma:escape}, the trajectory $(V,Q)$ lies in this region for $\xi\in(\xi_s,+\infty)$. Using \eqref{main:ODE}, we obtain
\[
\xi\partial_\xi\left(\tfrac{\crr-V(\xi)}{Q(\xi)}\right)
=-\tfrac{P_v[V(\xi),Q(\xi)]+\tfrac{\crr-V(\xi)}{Q(\xi)}P_q[V(\xi),Q(\xi)]}
{Q(\xi)\Delta[V(\xi),Q(\xi)]}.
\]
The numerator is positive by \eqref{eq:supersonic:ratio:sign}. Also $Q(\xi)>0$, and $\Delta[V(\xi),Q(\xi)]<0$ by \eqref{P:exp} and $V(\xi)+\alpha Q(\xi)<\crr$. This proves \eqref{eq:supersonic:ratio:mono}.

It remains to prove the monotonicity along the Hugoniot locus. Set $\mathcal M(\xi):=\frac{\crr-V(\xi)}{Q(\xi)}$. By
\eqref{eq:supersonic:ratio:mono}, $\mathcal M$ is strictly increasing. On the
other hand, Lemma~\ref{lemma:RH:properties} gives
\[
\left(
\tfrac{\crr-\mathsf{G}_{\mathrm{RH},v}(\xi)}
{\mathsf{G}_{\mathrm{RH},q}(\xi)}
\right)^2
=\tfrac{\alpha^3(\mathcal M(\xi)^2+\alpha)}
{\gamma \mathcal M(\xi)^2-\alpha^3}.
\]
The function $x\mapsto \frac{\alpha^3(x+\alpha)}{\gamma x-\alpha^3}$ is strictly decreasing for $x>\alpha^2$, since its derivative is $-\frac{\alpha^3(\gamma\alpha+\alpha^3)}{(\gamma x-\alpha^3)^2}<0$. Since $\mathcal M(\xi)>\alpha$ and the downstream ratio is positive, \eqref{eq:supersonic:RH:ratio:mono} follows.
\end{proof}

\subsection{Proof of Theorem~\ref{th:main:sec}} \label{subsec:proof:th}
\begin{proof}[Proof of Theorem~\ref{th:main:sec}]
   Consider the trajectory $(V,Q)(\xi)$ constructed in Proposition~\ref{prop:sub:sonic} for $\xi \in (\xi_s, +\infty)$, and the associated Hugoniot locus  $\mathsf{G}_{\mathrm{RH}}(\xi)$, defined in \eqref{hugoniot}. By \eqref{hugoniot:limit}, \eqref{eq:xis:expl}, \eqref{PRH:loc}, and Lemma~\ref{lemma:RH:properties}, we have
   \begin{equation} \label{lim:ineq}
  \mathsf{G}_{\mathrm{RH}, v}(\xi_s)=V_s< V_2 < \lim_{\xi \to + \infty} \mathsf{G}_{\mathrm{RH}, v}(\xi), \quad \mathsf{G}_{\mathrm{RH},v}(\xi) + \alpha \mathsf{G}_{\mathrm{RH},q}(\xi) \ge \crr.
\end{equation}
 The proof of Proposition~\ref{prop:infty} gives a function $\mathcal{V}^\infty:[Q_2,\infty)\to\mathbb{R}$ defined by $\mathcal{V}^\infty(q):=V^\infty((Q^\infty)^{-1}(q))$. Since $Q^\infty$ is strictly monotone, it has an inverse for $q>Q_2$, and we extend $\mathcal{V}^\infty$ continuously by setting $\mathcal{V}^\infty(Q_2)=V_2$. For $v+\alpha q \ge \crr$, define the continuous function
 \begin{equation*}
     \mathcal{F}(v,q):= \begin{cases}
     \mathcal{V}^\infty(q) - v, \quad& q \ge Q_2, \\
     V_2 -v, \quad& \text{otherwise}.
     \end{cases}
 \end{equation*}
 Observe that $\mathcal{F}(v,q) =0$ on $v+\alpha q \ge \crr$ implies $q \ge Q_2$.

For $q>Q_2$, the branch $(\mathcal{V}^\infty(q),q)$ lies in the supersonic region, whereas $\mathsf{G}_{\mathrm{RH}}(\xi_s)=(V_s,Q_s)$ lies on the sonic line with $Q_s>Q_2$; hence $\mathcal{V}^\infty(Q_s)>V_s$. Also $\mathcal{V}^\infty(q)\le V_2$, while \eqref{lim:ineq} gives $\lim_{\xi\to\infty}\mathsf{G}_{\mathrm{RH},v}(\xi)>V_2$. Therefore,
\begin{equation*}
	\mathcal{F}(\mathsf{G}_{\mathrm{RH}}(\xi_s))>0, \quad \lim_{\xi \to +\infty} \mathcal{F}(\mathsf{G}_{\mathrm{RH}}(\xi)) <0.
\end{equation*}
Applying the intermediate value theorem, we deduce that there exists $\xiRH$ such that $\mathcal{F}(\mathsf{G}_{\mathrm{RH}}(\xiRH))=0$; that is, there exists $\eta>0$ such that $\mathsf{G}_{\mathrm{RH}}(\xiRH)=(V^\infty(\eta),Q^\infty(\eta))$. Moreover, the uniqueness of such intersection follows from the fact that $V^\infty(\xi)$ and $\mathsf{G}_{\mathrm{RH},v}(\xi)$ are both monotone increasing (respectively, from \eqref{mono:Q:infty} and \eqref{claim:tr}), while $\frac{\crr-V^\infty(\xi)}{Q^\infty(\xi)}$ is monotone increasing from \eqref{mono:Q:infty}, and $\frac{\crr-\mathsf{G}_{\mathrm{RH},v}(\xi)}{\mathsf{G}_{\mathrm{RH},q}(\xi)}$  is monotone decreasing from \eqref{eq:supersonic:RH:ratio:mono}.
After rescaling the trajectory $(V^\infty, Q^\infty)$, such that $(V^\infty, Q^\infty)(\xiRH) = \mathsf{G}_{\mathrm{RH}}(\xiRH)$, we extend the definition of $(V,Q)$ for $\xi \in (0,+\infty)$ 
\begin{equation*}
(V,Q)(\xi)= \begin{cases} 
     (V,Q)(\xi) \quad & \xiRH < \xi < +\infty, \\
(V^\infty,Q^\infty)(\xi) \quad & 0 < \xi <\xiRH.
\end{cases}
\end{equation*}
The piecewise smoothness claim is then evident from standard ODE theory.  $(V,Q)$ solve \eqref{eq:euler:ODE} by construction. Positivity of $Q$ follows also from construction. The Rankine--Hugoniot jump conditions follow from the definition of the Hugoniot locus $\mathsf{G}_{\mathrm{RH}}$ in \eqref{hugoniot}, and the Lax entropy inequality is satisfied by construction. The local behavior at $\xi=0$ is described in Proposition~\ref{prop:infty}. The asymptotic behavior at $\xi=+\infty$ is described in Proposition~\ref{prop:power:implosion}.
Also, by Lemma~\ref{lemma:supersonic:uniqueness}, any trajectory starting from a point $\mathsf{G}_{\mathrm{RH}}(\xiRH)$ that is not the rescaling of $(V^\infty,Q^\infty)$ fixed above must hit the sonic line as $\xi\to\xi_0^+$ for some $\xi_0\ge0$.\footnote{It is possible to show that such a solution cannot be continued as an Euler solution.} Thus, after the Rankine--Hugoniot matching point and the rescaling of the inner branch are fixed, the two pieces of the profile are uniquely determined.
\end{proof}


\section{Proof of the main results}
In this section we translate Theorem~\ref{th:main:sec} to the variables $(U,\Sigma,B)$ in order to prove Theorem~\ref{th:main}, and then translate back to the original variables $(\rho,\rho u,E)$ in order to prove Corollary~\ref{cor:main}.

The construction of $(V,Q)$ is supplied by Theorem~\ref{th:main:sec}. It remains to recover $H$, fix the unique normalization giving the $\underline h_1$ asymptotic at infinity, impose the $H$ jump condition at $\xiRH$, and then translate the resulting asymptotics for $H$ into the stated properties of $B=\xi H$.
\begin{proof}[Proof of Theorem~\ref{th:main}]
Integrating \eqref{eq:H}, from $\xiRH$ to $\xi>\xiRH$ we get
\begin{equation*}
H(\xi) = H^+_{\mathrm{RH}} \exp\left( - \int_{\xiRH}^\xi \tfrac{V(\xi')-V_0}{(V(\xi')-\crr)} \tfrac{1}{\xi'} d \xi' \right).
\end{equation*}
Using Proposition~\ref{prop:power:series:infty}, specialized in Proposition~\ref{prop:power:implosion}, for $\xi \to +\infty$, we have
\begin{equation*}
H(\xi ) = H^+_{\mathrm{RH}}\xiRH^{\frac{\crr-\cbb}{\crr}}\xi^{\frac{\cbb-\crr}{\crr}}  \exp\left( - \int_{\xiRH}^\infty \left(\tfrac{V(\xi')-V_0}{V(\xi')-\crr}-\tfrac{\crr-\cbb}{\crr}\right) \tfrac{1}{\xi'} d \xi' + \mathcal{O}(\xi^{-\frac{1}{\crr}}) \right).
\end{equation*}
In particular, we can choose $H^+_{\mathrm{RH}}$ such that
\begin{equation}
H(\xi ) = \underline h_1  \xi^{\frac{\cbb-\crr}{\crr}} + O( \xi^{\frac{\cbb-\crr-1}{\crr}}).
\end{equation}
We then compute $ H^-_{\mathrm{RH}}$ using the jump condition \eqref{RH:jump:H}, and define $H$ for $0<\xi<\xiRH$
\begin{equation*}
H(\xi) = H^-_{\mathrm{RH}} \exp\left(  \int_{\xi}^{\xiRH} \tfrac{V(\xi')-V_0}{V(\xi')-\crr} \tfrac{1}{\xi'} d \xi' \right).
\end{equation*}
Then, using Proposition~\ref{prop:infty}, we obtain, for some constant $\overline h_0$
\begin{equation*}
H(\xi) =\overline h_0 \xi^{-\frac{2}{2+3\gamma}}\exp\left(  \int_{\xi}^1 \left(\tfrac{V(\xi')-V_0}{V(\xi')-\crr}-\tfrac{2}{2+3\gamma}\right) \tfrac{1}{\xi'} d \xi' \right).
\end{equation*}
The constants $H^+_{\mathrm{RH}}$ and $H^-_{\mathrm{RH}}$ are thus fixed uniquely by the normalization at infinity and by \eqref{RH:jump:H}, and the integral formulas solve \eqref{eq:H:main} on both sides of the shock. We then have a solution $(V, Q, H)$ to \eqref{eq:main}; by translating it back to the variables $(U, \Sigma, B)$ Theorem~\ref{th:main} follows naturally from Theorem~\ref{th:main:sec}.
\end{proof}
In order to prove Corollary~\ref{cor:main}, we have to show that the solution naturally associated to the imploding profiles $(\bar U, \bar \Sigma, \bar B)$, and the consequent explosion $(U, \Sigma, B)$ is a weak solution to the Euler equations. The only delicate points are the implosion-explosion transition at $t=0$ and the limited regularity at $r=0$ for $t>0$. We first record the radial weak formulation used below (see also~\cite{JK18}). We will use 
\[ C_c^1((-1,+\infty)\times\RR^+_0) := \{  \varphi(r,t) \in C^1((-1,+\infty)\times[0,+\infty)) \makebox{ with compact support}\} \]
 and 
 \[C_0^1((-1,+\infty)\times\RR^+_0) :=\{ \varphi\in C_c^1((-1,+\infty)\times\RR^+_0) : \varphi(0,t) = 0 \}.\]
\begin{definition} \label{def:radial:weak:solution}
    Let $\rho \in L^1_{loc}([-1, +\infty)\times \RR^3; \RR)$, $\rho \uu \in L^1_{loc}([-1, +\infty)\times \RR^3; \RR^3)$ and $E\in L^1_{loc}([-1, +\infty)\times \RR^3; \RR)$ given. Assume moreover that $\rho, \uu, E$ are given in spherical symmetry, that is
    \begin{equation*}
\rho(x,t) = \rho(r,t), \quad \uu(x,t) = \tfrac{x}{|x|} u(r, t), \quad E(x,t)= E(r,t), 
\end{equation*}
where $r=|x|$. 
Then, we say that $(\rho, \rho \uu, E)$ is a radial weak solution of \eqref{eq:euler:cl} on $[-1, +\infty)$ if
\begin{enumerate}[leftmargin=2em]
  \item   \label{weak:i} $\rho(r,t) > 0$ and $E(r,t) \ge 0$ almost everywhere,
    \item  \label{weak:ii} $(\rho, \rho u, E)(r,t) \in C([-1, +\infty), L^1_{loc}(r^2 dr))$,
    \item   \label{weak:iii} $\rho u^2, p, (p+E)u  \in L^1_{loc}(dt \times r^2 dr)$,  
    \item   \label{weak:iv} the conservation laws for mass, momentum and energy are satisfied in a weak sense
    \begin{subequations} \label{weak:iv:s}
         \begin{align}
    \int_{-1}^\infty \int_{r>0} (\rho \varphi_t + \rho u \varphi_r)r^2 dr dt =0, \quad & \forall \varphi \in C_c^1((-1, +\infty) \times \RR^+_0), \\
    \int_{-1}^\infty \int_{r>0} (\rho u \varphi_t + \rho u^2 \varphi_r + p (\varphi_r + \tfrac{2 \varphi}{r} ))r^2 dr dt=0, \quad & \forall \varphi \in C_0^1((-1, +\infty) \times \RR^+_0), \\
\int_{-1}^\infty \int_{r>0} (E \varphi_t + (E+p)u \varphi_r)r^2 dr dt=0, \quad & \forall \varphi \in C_c^1((-1, +\infty) \times \RR^+_0).
    \end{align} 
    \end{subequations}
  
\end{enumerate}

\end{definition}
We observe that, with these definitions, $(\rho,\rho\uu,E)$ is naturally a weak solution of \eqref{eq:euler:cl}; see Proposition 5.1 in~\cite{JK18}.

We are now ready to prove Corollary~\ref{cor:main}.
\begin{proof}
Given the imploding profiles $(\bar U,\bar \Sigma, \bar B)$ from Theorem~\ref{th:csv} and the exploding solutions $(U, \Sigma, B)$ from Theorem~\ref{th:main}, we define
\begin{align*}
(u,\,\sigma,\,b)(r,t):= \begin{cases}
\left((-t)^{\crr-1} \bar U(\tfrac{r}{(-t)^{\crr}}),\,(-t)^{\crr-1} \bar \Sigma(\tfrac{r}{(-t)^{\crr}}),\,(-t)^{\cbb} \bar B(\tfrac{r}{(-t)^{\crr}})\right) \quad & t<0, \\
\left(\underline v_1 r^{1-\frac{1}{\crr}},\,\underline q_1 r^{1-\frac{1}{\crr}},\,\underline h_1 r^{\frac{\cbb}{\crr}}\right) \quad & t=0, \\
\left(t^{\crr-1} U(\tfrac{r}{t^{\crr}}),\,t^{\crr-1} \Sigma(\tfrac{r}{t^{\crr}}),\,t^{\cbb} B(\tfrac{r}{t^{\crr}})\right) \quad & t>0.
\end{cases},
\end{align*}
Using \eqref{def:pres}, \eqref{def:b} and \eqref{def:sigma}, we can recover the variables $(\rho, E)$:
\begin{equation}
\rho(r,t) =( \alpha \tfrac{\sigma(r,t)}{b(r,t)})^{\tfrac{1}{\alpha}}, \quad E = \tfrac{\rho^\gamma b^2}{\gamma(\gamma-1)} + \tfrac{1}{2} \rho u^2.
\end{equation}

For $t<0$, using the fact that $(\bar U, \bar \Sigma, \bar B)$ are smooth it is immediate to verify that \eqref{weak:i}--\eqref{weak:iv} are satisfied. We now verify that $(\rho(r,t), \rho u(r,t), E(r,t))$ is continuous in $L^1_{loc}$ as $t \to 0^-$. We start with the density $\rho$. By Theorem~\ref{th:csv}, the profiles $(\bar \Sigma, \bar B)(R)$ are locally bounded, vanish exactly to first order at the origin, and have the asymptotics \eqref{eq:lim:infty}; in particular we have the global bound
\begin{equation*}
\left(\tfrac{\bar \Sigma}{\bar B}(R ) \right)^{\frac{1}{\alpha}} \lesssim R^{-\frac{3}{(1+3\alpha)\crr}}.
\end{equation*}
Translating this back in physical variables, we have the uniform bound, for $t<0$, 
\begin{equation*}
|\rho(r,t)| \lesssim r^{-\frac{3}{(1+3\alpha)\crr}}.
\end{equation*}
Together with the pointwise limit \eqref{eq:fields:at:time:of:implosion:b}, this bound gives continuity of $\rho$ at $t=0^-$ by the dominated convergence theorem; its local integrability is exactly \eqref{loc:fin:m}. The pointwise limit for $\rho u$ follows from \eqref{eq:fields:at:time:of:implosion:a} and \eqref{eq:fields:at:time:of:implosion:b}, and the pointwise limit for $E$ is \eqref{eq:fields:at:time:of:implosion:f}. The bounds
\begin{equation*}
| \rho(r,t) u(r,t)| \lesssim r^{-\frac{3}{(1+3\alpha)\crr}-\frac{1}{\crr}+1}, \quad | E(r,t)| \lesssim r^{\frac{\gamma(\crr-1)-\cbb}{\alpha \crr}}.
\end{equation*}
together with \eqref{loc:fin:e} and \eqref{loc:fin:mom} give continuity of $\rho u$ and $E$ at $t=0^-$ by the dominated convergence theorem.

We now observe that, thanks to \eqref{lim:infty:ex}, the following limits hold as $t \to 0^+$
\begin{subequations} \label{lim:explosion}
\begin{align}
&\lim_{t\to 0^+}  \rho(r,t) = \underline \rho_1 r^{\frac{\crr - 1 - \cbb}{\alpha \crr}} 
= \underline \rho_1  r^{-\frac{3}{(1+3\alpha) \crr}}
\label{eq:fields:at:time:of:explosion:a}
, \\
&\lim_{t\to 0^+} p(r,t) = \underline p_1   r^{\frac{\gamma (\crr - 1) - \cbb}{\alpha \crr}},
\label{eq:fields:at:time:of:explosion:b}\\
&\lim_{t\to 0^+}  E(r,t) = \underline e_1 r^{\frac{\gamma (\crr - 1) - \cbb}{\alpha \crr}} 
\label{eq:fields:at:time:of:explosion:c}.
\end{align}
\end{subequations}
The corresponding pointwise limit for the momentum is
\[
\lim_{t\to0^+}\rho(r,t)u(r,t)
=\underline \rho_1 \underline v_1 r^{-\frac{3}{(1+3\alpha)\crr}+1-\frac{1}{\crr}}.
\]
To show the continuity as $t \to 0^+$ we use a similar argument as for $t \to 0^-$, but we will have to take more care of the asymptotics at $\xi=0$ for the density. From the asymptotics \eqref{asy:origin}, we have that for $\xi<1$ 
\begin{equation*}
\left(\tfrac{\Sigma}{B} \right)^{\frac{1}{\alpha}}
\lesssim \xi^{-\frac{6}{2+3\gamma}}
= \xi^{-\frac{3}{(1+3\alpha)(\crr+\frac{3\gamma}{2}V_\infty)}}
\lesssim \xi^{-\frac{3}{(1+3\alpha)\crr}},
\end{equation*}
where the equality uses \eqref{Vinfty:def} and where we used $V_\infty \ge 0$. Then, from \eqref{lim:infty:ex}, we obtain the global bound
\begin{equation*}
\left(\tfrac{ \Sigma}{ B}(\xi ) \right)^{\frac{1}{\alpha}} \lesssim \xi^{-\frac{3}{(1+3\alpha)\crr}},
\end{equation*}
which, translated in original variables, gives the uniform bound for $t>0$
\begin{equation*}
| \rho(r,t)| \lesssim r^{-\frac{3}{(1+3\alpha)\crr}}.
\end{equation*}
A similar argument gives the bounds
\begin{equation*}
| \rho(r,t) u(r,t)| \lesssim r^{-\frac{3}{(1+3\alpha)\crr}-\frac{1}{\crr}+1}, \quad |E(r,t)| \lesssim r^{\frac{\gamma(\crr-1)-\cbb}{\alpha \crr}},
\end{equation*}
and allows us to deduce the continuity of $\rho$, $\rho u$, and $E$ as $t \to 0^+$.

For $t \ge 0$, the conditions \eqref{weak:i} and \eqref{weak:iii} follow from Theorem~\ref{th:main}. We now verify condition \eqref{weak:iv}. Since we have already shown the continuity at $t=0$, and for $t<0$ the identities in \eqref{weak:iv:s} follow from smoothness of the imploding solution and integration by parts, we can consider test functions supported on $[\tfrac{1}{T}, T] \times [0, X]$, for fixed $T>1$. Choose $X>\xiRH T^\crr$ so that the radial support is contained in $[0,X]$.
We then divide the region of integration into the three regions
\begin{equation*}
\mathsf{B}_1 =\{ t \in [\tfrac{1}{T}, T], 0<r\le \delta \},  
\mathsf{B}_2 =\{ t \in [\tfrac{1}{T}, T], \delta<r \le \xiRH t^{\crr} \},
\mathsf{B}_3 =\{ t \in [\tfrac{1}{T}, T], \xiRH t^{\crr} \le r< X\},
\end{equation*}
where $\delta$ is small enough such that $\delta < \xiRH T^{-\crr}$.
For the mass equation, write $\s(t)=\xiRH t^\crr$. Integrating by parts on $\mathsf{B}_2$ and $\mathsf{B}_3$ gives
\begin{align*}
&\int_{\mathsf{B}_2} (\rho \varphi_t + \rho u \varphi_r)r^2 \,dr\,dt
 + \int_{\mathsf{B}_3} (\rho \varphi_t + \rho u \varphi_r)r^2 \,dr\,dt = \\
 &- \int_{\tfrac{1}{T}}^{T} \delta^2 \rho(\delta,t)u(\delta,t)\varphi(\delta,t)\,dt
 + \int_{\tfrac{1}{T}}^{T} \s(t)^2 \varphi(\s(t),t)
\jump{\rho(u-\ds)}(t)\,dt = - \int_{\tfrac{1}{T}}^{T} \delta^2 \rho(\delta,t)u(\delta,t)\varphi(\delta,t)\,dt,
\end{align*}
where in the last equality we used the Rankine--Hugoniot jump condition for mass. Hence
\begin{align*}
\int_{-1}^\infty \int_{r>0} (\rho \varphi_t + \rho u \varphi_r)r^2 \,dr\,dt= \int_{\mathsf{B}_1} (\rho \varphi_t + \rho u \varphi_r)r^2 \,dr\,dt  - \int_{\tfrac{1}{T}}^{T} \delta^2 \rho(\delta,t)u(\delta,t)\varphi(\delta,t)\,dt.
\end{align*}
Similarly, integration by parts on $\mathsf{B}_2$ and $\mathsf{B}_3$, together with the Rankine--Hugoniot jump conditions, gives
\begin{align*}
&\int_{-1}^\infty \int_{r>0}
\left(\rho u \varphi_t + \rho u^2 \varphi_r
+ p\left(\varphi_r + \tfrac{2\varphi}{r}\right)\right)r^2 \,dr\,dt \\
&\qquad =
\int_{\mathsf{B}_1}
\left(\rho u \varphi_t + \rho u^2 \varphi_r
+ p\left(\varphi_r + \tfrac{2\varphi}{r}\right)\right)r^2 \,dr\,dt 
- \int_{\tfrac{1}{T}}^{T}
\delta^2(\rho u^2+p)(\delta,t)\varphi(\delta,t)\,dt.
\end{align*}
For the energy equation, the same argument gives
\begin{align*}
&\int_{-1}^\infty \int_{r>0}
(E\varphi_t + (E+p)u\varphi_r)r^2 \,dr\,dt \\
 &=
\int_{\mathsf{B}_1} (E\varphi_t + (E+p)u\varphi_r)r^2 \,dr\,dt 
- \int_{\tfrac{1}{T}}^{T}
\delta^2(E+p)(\delta,t)u(\delta,t)\varphi(\delta,t)\,dt.
\end{align*}
Using the asymptotics \eqref{asy:origin}, we have the bounds (depending on $T$)
\begin{equation*}
\begin{aligned}
|\rho(r,t)r^2| &\lesssim r^{\frac{6 \gamma -2}{2+3\gamma}}, &
|\rho(r,t)u(r,t)r^2| &\lesssim r^{\frac{9 \gamma}{2+3\gamma}}, \\
|\rho(r,t)u(r,t)^2r^2| &\lesssim r^{\frac{12\gamma+2}{2+3\gamma}}, &
|p(r,t)| + |E(r,t)| &\lesssim 1, \\
|u(r,t)| &\lesssim r, &
|(E+p)(r,t)u(r,t)r^2| &\lesssim r^3.
\end{aligned}
\end{equation*}
In particular,
\begin{align*}
&\left|
\int_{\mathsf{B}_1} (\rho \varphi_t + \rho u \varphi_r)r^2 \,dr\,dt
\right|
+ \left|
\int_{\tfrac{1}{T}}^{T}
\delta^2 \rho(\delta,t)u(\delta,t)\varphi(\delta,t)\,dt
\right|  \lesssim \delta^{\frac{9 \gamma}{2+3\gamma}}, \\
&\left|
\int_{\mathsf{B}_1}
\left(\rho u \varphi_t + \rho u^2 \varphi_r
+ p\left(\varphi_r + \tfrac{2\varphi}{r}\right)\right)r^2 \,dr\,dt
\right|
+ \left|
\int_{\tfrac{1}{T}}^{T}
\delta^2(\rho u^2+p)(\delta,t)\varphi(\delta,t)\,dt
\right|
\lesssim \delta^2, \\
&\left|
\int_{\mathsf{B}_1} (E\varphi_t + (E+p)u\varphi_r)r^2 \,dr\,dt
\right|
+ \left|
\int_{\tfrac{1}{T}}^{T}
\delta^2(E+p)(\delta,t)u(\delta,t)\varphi(\delta,t)\,dt
\right|
\lesssim \delta^3.
\end{align*}
Sending $\delta \to 0$ we obtain all three identities in \eqref{weak:iv:s}.
\end{proof}


\section{Smooth explosions} \label{smooth:explosion}
 In this section we prove Theorem~\ref{thm:2}. The proof could be carried out in the phase portrait by mirroring the proof of existence of implosion profiles in~\cite{CSV26}; however, it is simpler to use the time-reversal observation in Remark~\ref{remark:time:rev}.
 \begin{proof}
Let $(\bar U,\bar \Sigma,\bar B)$ be the smooth profiles from Theorem~\ref{th:csv} for the fixed parameters $d,\gamma,\NNN,\crr$ and $\cbb$ as in Theorem~\ref{thm:2}. Define new profiles
\begin{equation*} 
U(\xi)=-\bar U(\xi), \quad \Sigma(\xi)=\bar \Sigma(\xi), \quad B(\xi) = \bar B(\xi).
\end{equation*}
Consider the global self-similar imploding solution $(\bar u, \bar \sigma, \bar b)$ defined for $t<0$ by the transformation \eqref{bw:ss}. Using the time reversibility of the Euler equations, we define for $t>0$
\begin{equation*}
\begin{aligned}
 u(r,t) &= -\bar u(r, -t) = t^{\crr-1} U\left(\tfrac{r}{t^{\crr}} \right), \\
 \sigma(r,t) &= \bar \sigma(r, -t) = t^{\crr-1} \Sigma\left(\tfrac{r}{t^{\crr}} \right), \\
 b(r,t) &= \bar b(r, -t) = t^{\cbb} B\left(\tfrac{r}{t^{\crr}} \right). 
\end{aligned}
\end{equation*}
This is a global self-similar explosion, and in particular $(U, \Sigma, B)$ is a solution to the forward self-similar Euler equations \eqref{fw:ss:eq}. Equivalently, after identifying $R=\xi$, the system \eqref{bw:ss:euler} becomes \eqref{fw:ss:eq} under $\bar U=-U$, $\bar\Sigma=\Sigma$, and $\bar B=B$.
 \end{proof}

 \begin{remark}[Smooth explosions do not arise as the continuation of the implosions from~\cite{CSV26}]
The smooth explosion we constructed does not arise as a continuation of the implosions $(\bar U, \bar \Sigma, \bar B)$ from Theorem~\ref{th:csv}. Indeed, the time-reversed smooth explosion has asymptotic velocity coefficient $-\underline v_1$, whereas a continuation of the implosions from~\cite{CSV26} would have to match the coefficient $\underline v_1$ at $t=0$. Since $\frac{\underline v_1}{\underline q_1}<0$, these are different phase-plane directions.
 \end{remark}

 \begin{remark}[A family of explosions]
 In the parameter regime treated in Section~\ref{exp:pp}, namely $d=3$, $\NNN=1$, and $\alpha\in(0,1+\sqrt{2}]$, arguments similar to those in that section suggest that one can construct a family of self-similar explosions $(U,\Sigma,B)$ with the same exponents $\crr$ and $\cbb$. These explosions \textit{do not} arise as continuations of the smooth implosions from~\cite{CSV26}. Indeed, there is a family of continuous explosions such that the trajectory $(V,Q)$ in the phase portrait crosses the triple point $(V_2, Q_2)$. The expected generic regularity through the triple point is governed by the exponent $\tfrac{\lambda_-}{\lambda_+}>1$, where $\lambda_\pm$ are defined in \eqref{ev:j}. There is also a family of shock explosions different from the explosions $(U, \Sigma, B)$ constructed in Theorem~\ref{th:main}. Apart from the time-reversed smooth branch in Theorem~\ref{thm:2}, for $\xi$ close to $0$ these continuous and shock explosions all coincide, up to the scaling symmetry in $\xi$, with the solution $(V^\infty,Q^\infty)$ from Proposition~\ref{prop:infty}. However, they all have different behavior as $\xi \to + \infty$; indeed the ratio
 \begin{equation*}
\mathsf{M} := \lim_{\xi \to +\infty} \tfrac{V(\xi)}{Q(\xi)}
\end{equation*}
determines the shock and regularity regime of the explosion. In particular, one expects thresholds $\mathsf{M}_\mathrm{s}$ and $\mathsf{M}_\mathrm{t}$, such that for $- \infty < \mathsf{M} <\mathsf{M}_\mathrm{s}$ the associated explosion cannot be continued smoothly and contains a shock, while for $\mathsf{M}_{\mathrm{s}} < \mathsf{M} < \mathsf{M}_\mathrm{t}$ the profile should be continuous at the triple point. The threshold $\mathsf{M}_\mathrm{t}$ corresponds to the smooth explosion of Theorem~\ref{thm:2}, while the shock threshold $\mathsf{M}_{\mathrm{s}}$ corresponds to the unique solution that reaches the triple point $P_2$ along the eigendirection $\eta_-$.
\end{remark}

\appendix


\section[The polynomial P]{The coefficients of the polynomial \texorpdfstring{$\PP$}{P}} \label{app:coef:p}
The purpose of this Appendix is to prove the bounds stated earlier in~\eqref{eq:lemma:pr}.
We record here the explicit expressions of the coefficients $\cp_j$, for $0 \le j \le 6$, defined in \eqref{polynomial:coeff}.

\begin{subequations}\label{coeff:PP} \allowdisplaybreaks 
\begin{align}
&\begin{aligned}
\cp_0(\alpha)&=\tfrac{3}{20000(1+3\alpha)^4}
\Big(-25(50-36\alpha-810\alpha^2)\sqrt{1+102\alpha+249\alpha^2} \\
&\qquad \qquad \qquad \qquad -25(130-798\alpha+3462\alpha^2+13710\alpha^3)\Big),
\end{aligned}\\[0.3em]
&\begin{aligned}
\cp_1(\alpha)&=\tfrac{3}{20000(1+3\alpha)^4}
\Big(30(45+130\alpha-425\alpha^2)\sqrt{1+102\alpha+249\alpha^2} \\
&\qquad \qquad \qquad \qquad  +30(165-447\alpha-\alpha^2+6675\alpha^3)\Big),
\end{aligned}\\[0.3em]
&\begin{aligned}
\cp_2(\alpha)&=\tfrac{3}{20000(1+3\alpha)^4}
\Big(-45-9372\alpha+30165\alpha^2+65250\alpha^3 \\
&\qquad \qquad \qquad \qquad  +(315-1725\alpha-3750\alpha^2)\sqrt{1+102\alpha+249\alpha^2}\Big),
\end{aligned}\\[0.3em]
&\begin{aligned}
\cp_3(\alpha)&=\tfrac{3}{20000(1+3\alpha)^4}
\Big(-18(116+45\alpha-1275\alpha^2)  -18(20+125\alpha)\sqrt{1+102\alpha+249\alpha^2}\Big),
\end{aligned}\\[0.3em]
&\begin{aligned}
\cp_4(\alpha)&=\tfrac{3}{20000(1+3\alpha)^4}
\Big(-18(-1-146\alpha-235\alpha^2)  -18(5+25\alpha)\sqrt{1+102\alpha+249\alpha^2}\Big),
\end{aligned}\\[0.3em]
&\begin{aligned}
\cp_5(\alpha)&=\tfrac{3}{20000(1+3\alpha)^4}
\Big(108(3+14\alpha)\Big),
\end{aligned}\\[0.3em]
&\begin{aligned}
\cp_6(\alpha)&=\tfrac{3}{20000(1+3\alpha)^4}
\Big(54(1+4\alpha)\Big).
\end{aligned}
\end{align}
\end{subequations}
We recall the Bernstein coefficients introduced in \eqref{Bernstein:coeff:imp}
\begin{equation*}
\beta_j(\alpha) = \tfrac{20000(1+3\alpha)^4}{3} \sum_{i=0}^8 \cp_i(\alpha) \binom{j}{i}\binom{8}{i}^{-1},
\end{equation*}
 and we introduce the following notation to indicate the combinations used in Lemma \ref{lemma:Bernstein}
\begin{equation*}
\beta_{56}(\alpha):= \beta_5(\alpha) + \tfrac{2}{3} \beta_6(\alpha), \quad  \beta_{76}(\alpha):=\beta_7(\alpha) + \tfrac{2}{3} \beta_6(\alpha).
\end{equation*}
Using \eqref{coeff:PP}, a direct but long computation yields the explicit expressions of the Bernstein coefficients (and combinations) $\beta_j$, for $0 \le j \le 8$ and $j=56,76.$
For each $j$, we can introduce $M_j$ and $N_j$ such that
\begin{equation} \label{bernstein:12} 
\beta_j(\alpha) = M_j(\alpha) + \sqrt{1+102\alpha+249\alpha^2} N_j(\alpha).
\end{equation}
A direct computation from \eqref{coeff:PP} gives
\begin{subequations} \label{expr:M}
\begin{align}\allowdisplaybreaks
M_0&=50(-65+399\alpha-1731\alpha^2-6855\alpha^3),\\
M_1&=\tfrac54(-2105+14619\alpha-69243\alpha^2-254175\alpha^3),\\
M_2&=\tfrac1{28}(-56395+455358\alpha-2393445\alpha^2-8130000\alpha^3),\\
M_3&=\tfrac1{28}(-40204+389274\alpha-2321745\alpha^2-7298625\alpha^3),\\
M_4&=\tfrac1{70}(-65347+785148\alpha-5488095\alpha^2-16005000\alpha^3),\\
M_5&=\tfrac1{28}(-15067+232167\alpha-1999065\alpha^2-5440125\alpha^3),\\
M_6&=\tfrac1{28}(-7471+148830\alpha-1716675\alpha^2-4413000\alpha^3),\\
M_7&=\tfrac14(-454+10464\alpha-189975\alpha^2-474375\alpha^3),\\
M_8&=-37+714\alpha-29235\alpha^2-77250\alpha^3,\\
M_{56}&=\tfrac1{84}(-60143+994161\alpha-9430545\alpha^2
-25146375\alpha^3),\\
M_{76}&=\tfrac1{84}(-24476+517404\alpha-7422825\alpha^2
-18787875\alpha^3),
\end{align}
\end{subequations}
and
\begin{subequations} \label{expr:N}
\begin{align}\allowdisplaybreaks
N_0&=50(-25+18\alpha+405\alpha^2),\\
N_1&=\tfrac54(-865+1110\alpha+14925\alpha^2),\\
N_2&=\tfrac1{28}(-25235+50775\alpha+474000\alpha^2),\\
N_3&=\tfrac1{28}(-20060+59850\alpha+421875\alpha^2),\\
N_4&=\tfrac1{70}(-37415+161925\alpha+915000\alpha^2),\\
N_5&=\tfrac1{28}(-10205+64050\alpha+306375\alpha^2),\\
N_6&=\tfrac1{28}(-6065+56025\alpha+243000\alpha^2),\\
N_7&=\tfrac14(-410+5550\alpha+25125\alpha^2),\\
N_8&=-35+375\alpha+3750\alpha^2,\\
N_{56}&=\tfrac1{84}(-42745+304200\alpha+1405125\alpha^2),\\
N_{76}&=\tfrac1{84}(-20740+228600\alpha+1013625\alpha^2).
\end{align}
\end{subequations}

\subsection{Proof of the algebraic inequalities~\eqref{eq:lemma:pr}}\label{app:proof:PP}
We now give the proof of the inequalities \eqref{eq:lemma:pr}; for $0 \le j \le 8$ and $j \neq 6$, or $j=56,76$ those are
\begin{equation*}
\beta_j(\alpha) <0.
\end{equation*}
We first show a useful upper bound on the radical $\sqrt{1+102\alpha+249\alpha^2}$.
\begin{lemma}[Radical bounds]\label{lemma:radical:bounds:new}
For $\alpha\ge0$, set
\[
\mathscr{R}(\alpha):=\sqrt{1+102\alpha+249\alpha^2}.
\]
Then, for $\alpha \ge 0$, we have
\[
\mathscr{R}(\alpha)\le \tfrac{2061\alpha+271}{121},
\qquad
\mathscr{R}(\alpha)\le 18\alpha+2 .
\]
\end{lemma}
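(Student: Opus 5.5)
Both bounds are upper bounds on a square root by linear functions that are positive for $\alpha\ge0$, so the plan is to square them. For $\alpha\ge 0$ both $\tfrac{2061\alpha+271}{121}$ and $18\alpha+2$ are positive, so each inequality is equivalent to the corresponding squared inequality; $\mathscr{R}(\alpha)\ge0$ is automatic. This reduces each claim to showing that a quadratic polynomial in $\alpha$ is nonnegative on $[0,\infty)$.

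For the second bound, we expand $(18\alpha+2)^2=324\alpha^2+72\alpha+4$ and compare with $1+102\alpha+249\alpha^2$. The difference is
\[
(18\alpha+2)^2-(1+102\alpha+249\alpha^2)=75\alpha^2-30\alpha+3=3(25\alpha^2-10\alpha+1)=3(5\alpha-1)^2\ge0 .
\]
This gives the bound for every real $\alpha$, with equality at $\alpha=\tfrac15$.

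For the first bound, we multiply through by $121^2=14641$ and need
\[
(2061\alpha+271)^2-14641(1+102\alpha+249\alpha^2)\ge0 .
\]
Computing the coefficients: $2061^2=4247721$ and $14641\cdot249=3645609$, so the $\alpha^2$ coefficient is $602112$. Next, $2\cdot2061\cdot271=1117062$ and $14641\cdot102=1493382$, so the $\alpha$ coefficient is $-376320$. Finally, $271^2=73441$ and subtracting $14641$ gives the constant term $58800$. The resulting quadratic $602112\alpha^2-376320\alpha+58800$ has common factor $58800\cdot\{\cdot\}$ structure suggesting a perfect square. Indeed, $376320^2=4\cdot602112\cdot58800$, which we confirm by dividing out $37632$: the quadratic equals $37632\bigl(16\alpha^2-10\alpha+\tfrac{25}{16}\bigr)=37632\,(4\alpha-\tfrac54)^2\ge0$.

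So the first bound is tangent to $\mathscr{R}$ at $\alpha=\tfrac{5}{16}$, just as the second is tangent at $\alpha=\tfrac15$. This tangency explains the specific rational constants: they were chosen to make each linear bound touch $\mathscr{R}$ at a point. The only real obstacle is arithmetic care in verifying these two perfect-square identities; no estimates are lost, and the bounds hold for all real $\alpha$ at which the right-hand sides are nonnegative.
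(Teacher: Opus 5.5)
Your proof is correct, and the arithmetic checks out: $(18\alpha+2)^2-\mathscr{R}(\alpha)^2=3(5\alpha-1)^2$ and $(2061\alpha+271)^2-121^2\mathscr{R}(\alpha)^2=37632\,(4\alpha-\tfrac54)^2$. Squaring is legitimate because both right-hand sides are positive for $\alpha\ge0$.

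The paper argues differently. It computes $\mathscr{R}''(\alpha)=-2352\,(1+102\alpha+249\alpha^2)^{-3/2}<0$, so $\mathscr{R}$ is concave on $[0,\infty)$ and lies below each of its tangent lines. It then takes the tangents at $\alpha_*=\tfrac{5}{16}$ and $\alpha_*=\tfrac15$. The check that these tangents are exactly the stated lines is left implicit; it uses $\mathscr{R}(\tfrac{5}{16})=\tfrac{121}{16}$, $\mathscr{R}'(\tfrac{5}{16})=\tfrac{2061}{121}$ and $\mathscr{R}(\tfrac15)=\tfrac{28}{5}$, $\mathscr{R}'(\tfrac15)=18$.

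Both routes certify the same tangency, but they trade off differently:
\begin{itemize}
\item Your route is purely algebraic. It needs no calculus, exhibits the equality cases explicitly, and effectively supplies the tangent-line verification the paper omits.
\item The concavity route makes the tangent-line origin of the constants the mechanism of the proof rather than an after-the-fact observation. It also yields, from one second-derivative computation, a whole family of linear upper bounds, one for each $\alpha_*\ge0$. This is convenient if further bounds of this type are needed.
\end{itemize}

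One small caveat on your closing remark about extending beyond $\alpha\ge0$. You also need the radicand $1+102\alpha+249\alpha^2$ to be nonnegative for $\mathscr{R}$ to be defined. It is negative for $\alpha$ strictly between $\tfrac{-51-28\sqrt3}{249}\approx-0.40$ and $\tfrac{-51+28\sqrt3}{249}\approx-0.01$. This does not affect the lemma as stated.
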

\begin{proof}[Proof of Lemma~\ref{lemma:radical:bounds:new}]
The radical $\mathscr{R}$ is concave on $[0,\infty)$, since
\[
\mathscr{R}''(\alpha)
=-\tfrac{2352}{(1+102\alpha+249\alpha^2)^{3/2}}<0.
\]
Hence, given $\alpha_* \ge 0$, for every $\alpha \ge 0$ we have the inequality
\begin{equation*}
\mathscr{R}(\alpha) \le
\mathscr{R}(\alpha_*) + (\alpha-\alpha_*)\mathscr{R}'(\alpha_*).
\end{equation*}
The lemma follows by choosing, respectively, $\alpha_*=\frac{5}{16}$ and $\alpha_*=\frac{1}{5}$.
\end{proof}
We now collect two elementary cubic positivity criteria.
\begin{lemma}[A cubic positivity criterion]\label{lemma:cubic:criterion:new}
Let $\mathsf{a}_0,\mathsf{a}_1,\mathsf{a}_2,\mathsf{a}_3>0$ and assume
\[
\mathsf{a}_1^2-4\mathsf{a}_0\mathsf{a}_2<0.
\]
Then
\[
\mathsf{a}_3\alpha^3+\mathsf{a}_2\alpha^2-\mathsf{a}_1\alpha
+\mathsf{a}_0>0
\qquad\text{for every }\alpha\ge0.
\]
\end{lemma}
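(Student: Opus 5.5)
The plan is to split $\alpha\ge0$ into two regimes, one where the linear term is negligible and one where it is absorbed by the quadratic and constant terms. Write
\[
p(\alpha):=\mathsf{a}_3\alpha^3+\mathsf{a}_2\alpha^2-\mathsf{a}_1\alpha+\mathsf{a}_0 .
\]
Because $\mathsf{a}_3>0$ and $\alpha\ge0$, the cubic term is nonnegative, so
\[
p(\alpha)\ge \mathsf{a}_2\alpha^2-\mathsf{a}_1\alpha+\mathsf{a}_0 =: q(\alpha).
\]
It therefore suffices to show that the quadratic $q$ is strictly positive on $[0,\infty)$, and in fact on all of $\RR$.

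This follows from the discriminant hypothesis. Since $\mathsf{a}_2>0$, the quadratic $q$ opens upward. Its discriminant is $\mathsf{a}_1^2-4\mathsf{a}_0\mathsf{a}_2<0$, so $q$ has no real roots. A continuous function with no zeros that tends to $+\infty$ must be positive everywhere. If one prefers a direct check, complete the square:
\[
q(\alpha)=\mathsf{a}_2\Bigl(\alpha-\tfrac{\mathsf{a}_1}{2\mathsf{a}_2}\Bigr)^2+\tfrac{4\mathsf{a}_0\mathsf{a}_2-\mathsf{a}_1^2}{4\mathsf{a}_2},
\]
and the second summand is strictly positive. Hence $p(\alpha)\ge q(\alpha)>0$ for every $\alpha\ge0$.

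There is no real obstacle here: the only point to notice is that the cubic term can simply be discarded because $\alpha\ge0$. The hypotheses $\mathsf{a}_0>0$ and $\mathsf{a}_3>0$ are then not strictly needed beyond $\mathsf{a}_3\ge0$, since the discriminant condition together with $\mathsf{a}_2>0$ already forces $\mathsf{a}_0>0$. In the later applications, the real work is instead in bringing each $\beta_j$ to this form. Using the radical bounds of Lemma~\ref{lemma:radical:bounds:new}, one replaces $\sqrt{1+102\alpha+249\alpha^2}$ by a linear upper bound when $N_j\ge0$, turning $-\beta_j$ into a cubic with the sign pattern above, and then checks a single discriminant inequality between explicit rational constants.
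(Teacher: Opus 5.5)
Your proof is correct and is the paper's argument: drop the nonnegative term $\mathsf{a}_3\alpha^3$, then use the negative discriminant to get positivity of $\mathsf{a}_2\alpha^2-\mathsf{a}_1\alpha+\mathsf{a}_0$. The paper closes with positivity at $\alpha=0$, while you use $\mathsf{a}_2>0$ and complete the square, which is equivalent. Your opening sentence about splitting into two regimes describes a plan you never carry out, so delete it.
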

\begin{proof}[Proof of Lemma~\ref{lemma:cubic:criterion:new}]
Since $\mathsf{a}_3\alpha^3\ge0$, it is enough to prove that
$\mathsf{a}_2\alpha^2-\mathsf{a}_1\alpha+\mathsf{a}_0>0$.  This quadratic has negative discriminant by the assumed inequality, and it is positive at $\alpha=0$; hence the claim follows.
\end{proof}

\begin{lemma}[A one-point cubic criterion]\label{lemma:cubic:onepoint:new}
Let
\[
f(\alpha)=\mathsf{a}_3\alpha^3+\mathsf{a}_2\alpha^2-\mathsf{a}_1\alpha
+\mathsf{a}_0,
\qquad
\mathsf{a}_0,\mathsf{a}_1,\mathsf{a}_2,\mathsf{a}_3>0.
\]
Suppose that, for some $\alpha^*>0$,
\[
f'(\alpha^*)>0,
\qquad
\mathsf{a}_0-\mathsf{a}_2(\alpha^*)^2-2\mathsf{a}_3(\alpha^*)^3>0.
\]
Then $f(\alpha)>0$ for every $\alpha\ge0$.
\end{lemma}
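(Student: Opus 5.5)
The approach is to show that $f$ stays positive on the two intervals $[0,\alpha^*]$ and $[\alpha^*,\infty)$, each by a separate elementary monotonicity or convexity argument. The hypotheses are built to match this split: the condition $f'(\alpha^*)>0$ will handle large $\alpha$, and the condition $\mathsf{a}_0-\mathsf{a}_2(\alpha^*)^2-2\mathsf{a}_3(\alpha^*)^3>0$ will handle small $\alpha$.

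\emph{Region $\alpha\ge\alpha^*$.} Here $f''(\alpha)=6\mathsf{a}_3\alpha+2\mathsf{a}_2>0$ for $\alpha\ge0$, so $f'$ is increasing on $[0,\infty)$. Hence $f'(\alpha)\ge f'(\alpha^*)>0$ for $\alpha\ge\alpha^*$, and $f$ is increasing there. It therefore suffices to show $f(\alpha^*)>0$, which the next step supplies.

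\emph{Region $0\le\alpha\le\alpha^*$.} Since $\mathsf{a}_1>0$, the linear term satisfies $-\mathsf{a}_1\alpha\ge-\mathsf{a}_1\alpha^*$, and the cubic and quadratic terms are nonnegative, so
\[
f(\alpha)\ge \mathsf{a}_0-\mathsf{a}_1\alpha^*.
\]
To bound $\mathsf{a}_1\alpha^*$, use $f'(\alpha^*)>0$, which reads $\mathsf{a}_1<3\mathsf{a}_3(\alpha^*)^2+2\mathsf{a}_2\alpha^*$. Multiplying by $\alpha^*>0$ gives $\mathsf{a}_1\alpha^*<3\mathsf{a}_3(\alpha^*)^3+2\mathsf{a}_2(\alpha^*)^2$. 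This yields
\[
f(\alpha)>\mathsf{a}_0-2\mathsf{a}_2(\alpha^*)^2-3\mathsf{a}_3(\alpha^*)^3,
\]
which is slightly weaker than the stated hypothesis, so this crude bound is not quite enough.

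\emph{Sharper route (the main step).} Instead of dropping the cubic and quadratic terms, evaluate $f(\alpha^*)$ exactly and substitute the bound on $\mathsf{a}_1$:
\[
f(\alpha^*)=\mathsf{a}_3(\alpha^*)^3+\mathsf{a}_2(\alpha^*)^2-\mathsf{a}_1\alpha^*+\mathsf{a}_0
> \mathsf{a}_0-\mathsf{a}_2(\alpha^*)^2-2\mathsf{a}_3(\alpha^*)^3>0 .
\]
This is exactly the stated second hypothesis, so $f(\alpha^*)>0$, which completes the region $\alpha\ge\alpha^*$. For $\alpha\in[0,\alpha^*]$, convexity of $f$ on $[0,\infty)$ gives
\[
f(\alpha)\ge f(\alpha^*)+f'(\alpha^*)(\alpha-\alpha^*).
\]
Since $f'(\alpha^*)>0$ and $\alpha-\alpha^*\le0$, this lower bound is useless when $\alpha$ is small, so near $0$ I would use $f(0)=\mathsf{a}_0>0$ instead.

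The cleanest way to combine the two pieces uses the shape of $f'$. Because $f'$ is increasing, either $f'>0$ on all of $[0,\alpha^*]$, or $f'$ has a single sign change at some $\alpha_c<\alpha^*$, in which case $f$ decreases then increases on $[0,\alpha^*]$. In both cases the minimum of $f$ on $[0,\alpha^*]$ is attained at $\alpha_c$ (or at $0$), and $f(0)=\mathsf{a}_0>0$. At the critical point $\alpha_c$ we have $\mathsf{a}_1=3\mathsf{a}_3\alpha_c^2+2\mathsf{a}_2\alpha_c$, so
\[
f(\alpha_c)=\mathsf{a}_0-\mathsf{a}_2\alpha_c^2-2\mathsf{a}_3\alpha_c^3 .
\]
The right-hand side is decreasing in $\alpha_c$, and $\alpha_c<\alpha^*$, so
\[
f(\alpha_c)>\mathsf{a}_0-\mathsf{a}_2(\alpha^*)^2-2\mathsf{a}_3(\alpha^*)^3>0 .
\]
Together with the monotonicity of $f$ on $[\alpha^*,\infty)$, this gives $f>0$ on $[0,\infty)$.

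The only delicate point is recognising that the hypothesis is precisely the value of $f$ at a critical point, bounded through the monotonicity in $\alpha_c$; once that is seen, every remaining step is elementary.
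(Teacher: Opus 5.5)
Your final argument is correct and is essentially the paper's proof: $f''>0$ makes $f'$ increasing, the unique critical point $\alpha_c$ lies below $\alpha^*$, and substituting $\mathsf{a}_1=3\mathsf{a}_3\alpha_c^2+2\mathsf{a}_2\alpha_c$ gives $f(\alpha_c)=\mathsf{a}_0-\mathsf{a}_2\alpha_c^2-2\mathsf{a}_3\alpha_c^3$, which you bound using its monotonicity in $\alpha_c$. The differences are cosmetic. The paper notes $f'(0)=-\mathsf{a}_1<0$, so your case ``$f'>0$ on $[0,\alpha^*]$'' never occurs; $\alpha_c$ is then the global minimizer on $[0,\infty)$, and your separate treatment of $[\alpha^*,\infty)$ via $f(\alpha^*)>0$ is not needed.
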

\begin{proof}[Proof of Lemma~\ref{lemma:cubic:onepoint:new}]
We have $f''(\alpha)=6\mathsf{a}_3\alpha+2\mathsf{a}_2>0$ for
$\alpha\ge0$, so $f'$ is
strictly increasing.  Since $f'(0)=-\mathsf{a}_1<0$ and
$f'(\alpha^*)>0$, the unique critical point $\alpha_0$ of $f$ on
$[0,\infty)$ lies in $(0,\alpha^*)$.  At this point,
$\mathsf{a}_1=3\mathsf{a}_3\alpha_0^2+2\mathsf{a}_2\alpha_0$, and therefore
\[
f(\alpha_0)=\mathsf{a}_0-\mathsf{a}_2\alpha_0^2-2\mathsf{a}_3\alpha_0^3
\ge \mathsf{a}_0-\mathsf{a}_2(\alpha^*)^2-2\mathsf{a}_3(\alpha^*)^3>0.
\]
Thus the global minimum of $f$ on $[0,\infty)$ is positive.
\end{proof}
We can now prove \eqref{eq:lemma:pr}.
\begin{proposition}\label{prop:new:proof:p:sign}
For every $\alpha\in(0,1+\sqrt2]$ and $j \in \{0,1,2,3,4,5,7,8,56,76\}$ we have
\[
\beta_j(\alpha)<0.
\]
\end{proposition}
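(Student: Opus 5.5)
We bound the radical $\mathscr{R}(\alpha)=\sqrt{1+102\alpha+249\alpha^2}$ from above and below by linear functions of $\alpha$. This turns each inequality $\beta_j=M_j+\mathscr{R}N_j<0$ into the negativity of two explicit cubics, which we then certify with Lemma~\ref{lemma:cubic:criterion:new} or Lemma~\ref{lemma:cubic:onepoint:new}.

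First observe from \eqref{expr:N} that every $N_j$, for $j\in\{0,1,2,3,4,5,7,8,56,76\}$, has a negative constant term and positive coefficients of $\alpha$ and $\alpha^2$. Hence $N_j$ has exactly one positive root $\alpha_j$: it is negative on $[0,\alpha_j)$ and positive afterwards. Instead of locating $\alpha_j$, we use a pair of linear bounds on $\mathscr{R}$, valid for all $\alpha\ge0$:
\begin{equation*}
1+15\alpha\le\mathscr{R}(\alpha)\le L(\alpha),
\end{equation*}
where $L$ is one of the two bounds of Lemma~\ref{lemma:radical:bounds:new}, namely $\tfrac{2061\alpha+271}{121}$ or $18\alpha+2$. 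The lower bound follows from $(1+15\alpha)^2=1+30\alpha+225\alpha^2\le 1+102\alpha+249\alpha^2$. Consequently, for every $\alpha\ge0$,
\begin{equation*}
\beta_j(\alpha)\le \max\bigl\{M_j+(1+15\alpha)N_j,\;M_j+L(\alpha)N_j\bigr\},
\end{equation*}
where the first entry is the relevant one where $N_j<0$ and the second where $N_j\ge0$. Both entries are cubic polynomials in $\alpha$ with rational coefficients. It therefore suffices to show that each of these $2\times10=20$ cubics is negative on $(0,1+\sqrt2]$, and preferably on all of $[0,\infty)$, which removes the upper endpoint entirely.

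Second, we verify the 20 cubics. For each $j$, expand $f_j^{\mathrm{lo}}:=-(M_j+(1+15\alpha)N_j)$ and $f_j^{\mathrm{up}}:=-(M_j+L N_j)$. From \eqref{expr:M}, $-M_j$ has the sign pattern $+,-,+,+$ in the powers $\alpha^0,\alpha^1,\alpha^2,\alpha^3$; for instance $-M_0=50(65-399\alpha+1731\alpha^2+6855\alpha^3)$. Adding the $N_j$ contributions modifies the coefficients but should preserve this pattern, because the positive constant and the large $\alpha^2,\alpha^3$ terms dominate. For each cubic we first try Lemma~\ref{lemma:cubic:criterion:new}, which requires $\mathsf a_1^2<4\mathsf a_0\mathsf a_2$. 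Where the discriminant test fails, typically because the linear coefficient is comparatively large, we use Lemma~\ref{lemma:cubic:onepoint:new} with a rational $\alpha^*$ chosen just to the right of the minimiser. For the upper-bound cubics we also use the freedom to choose $L$ between the two bounds of Lemma~\ref{lemma:radical:bounds:new}: the bound $18\alpha+2$ is sharper near $\alpha=0$, while $\tfrac{2061\alpha+271}{121}$ is sharper for moderate $\alpha$.

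The main obstacle is the small-$\alpha$ regime. There the constant terms (for example $M_0+N_0=50(-65-25)<0$ at $\alpha=0$, though with $\mathscr{R}(0)=1$ exactly) interact with a linear coefficient of the opposite sign, so the margins can be thin. If one of the twenty cubics fails both criteria on $[0,\infty)$, we fall back in two steps. First, we use the restriction $\alpha\le1+\sqrt2<3$, for example by splitting at a rational point and applying the criteria on each piece. Second, we refine the linear bound via concavity of $\mathscr{R}$, using tangents for upper bounds and chords for lower bounds at another rational $\alpha_*$, on the subinterval where the sign of $N_j$ is known. The combinations $\beta_{56}$ and $\beta_{76}$ are handled in exactly the same way, since their $M$ and $N$ parts are recorded in \eqref{expr:M}--\eqref{expr:N}.
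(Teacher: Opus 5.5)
Your reduction is the paper's own. You use the linear upper bounds of Lemma~\ref{lemma:radical:bounds:new} and the estimate $\beta_j\le\max\{\cdot,\cdot\}$ split by the sign of $N_j$, which leaves twenty cubics to be certified by Lemmas~\ref{lemma:cubic:criterion:new} and~\ref{lemma:cubic:onepoint:new}. The only change is that, where $N_j<0$, you use $\mathscr R\ge1+15\alpha$ instead of the paper's trivial $\mathscr R N_j\le0$, which gives the entry $M_j$. Your version is valid but buys nothing, since every $-M_j$ already passes the discriminant test of Lemma~\ref{lemma:cubic:criterion:new}.

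The gap is that the proposal never carries out the verification, which is the entire content of the proposition, and the one concrete prediction you make about it is wrong.

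\emph{The cases $j=0,1$.} The upper cubics do not keep the sign pattern $+,-,+,+$. With $L=\frac{2061\alpha+271}{121}$ one finds
\[
M_0+LN_0=-\tfrac{300}{121}\bigl(2440-272\alpha+10433\alpha^2-875\alpha^3\bigr),\qquad M_1+LN_1=-\tfrac{15}{242}\bigl(81520-47824\alpha+341003\alpha^2-875\alpha^3\bigr).
\]
The reason is that the slope $\frac{2061}{121}$ exceeds both $\frac{6855}{405}$ and $\frac{254175}{14925}$. So neither cubic lemma applies, and negativity on $[0,\infty)$ is false. With $L=18\alpha+2$ negativity even fails on the interval itself: $M_0+(18\alpha+2)N_0=50(435\alpha^3-597\alpha^2-15\alpha-115)>0$ for $\alpha\ge\frac85$. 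For these $j$ you are forced to take the tangent at $\frac5{16}$. You must then absorb $-875\alpha^3$ into the quadratic term using $\alpha\le1+\sqrt2<\frac{29}{12}$, as the paper does. Here the delicate regime is the right endpoint, not small $\alpha$.

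\emph{The case $j=76$.} Conversely, here the tangent at $\frac15$ is forced. At $\alpha=\frac15$ one has $\mathscr R=18\alpha+2=\frac{28}{5}$ exactly and $\beta_{76}\approx-15$. However, $M_{76}+\frac{2061\alpha+271}{121}N_{76}\approx+21>0$ there.

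\emph{The cases $j=5,56,76$.} These fail the discriminant test and need Lemma~\ref{lemma:cubic:onepoint:new}; the paper uses $\alpha^*=\frac5{16},\frac7{24},\frac5{24}$.

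Your listed fallbacks (using $\alpha\le1+\sqrt2$, re-choosing tangents) would in principle cover all of this. But until the choice of $L$ is fixed for each $j$ and the twenty sign checks are actually made, what you have is a plan rather than a proof.
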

\begin{proof}[Proof of Proposition~\ref{prop:new:proof:p:sign}]
To prove the coefficients $\beta_j$ are negative we will use the following argument. Assume we have an upper bound $\mathscr{R}(\alpha) \le L(\alpha)$ for $\alpha \in (0, 1+\sqrt{2}]$. Then, 
\begin{equation} \label{beta:negative:arg}
\beta_j(\alpha) = M_j(\alpha) + \mathscr{R} N_j(\alpha)  \le \max\{ M_j(\alpha), M_j(\alpha) + L(\alpha) N_j(\alpha) \}, \qquad \alpha \in (0, 1+\sqrt{2}].
\end{equation}
In particular, by using Lemma \ref{lemma:radical:bounds:new}, we reduce the proof of $\beta_j(\alpha) < 0 $ to proving that the $20$ cubics in $\alpha$, $ \{ M_j(\alpha), M_j(\alpha) + L(\alpha) N_j(\alpha) \}$, are negative on the interval $(0, 1+\sqrt{2}]$. 

\textit{The $M_j$ are negative.}
We will first prove that $M_j(\alpha)<0$ for $\alpha \in (0, 1+\sqrt{2}]$ and for each index $j\in\{0,1,2,3,4,5,7,8,56,76\}$.  The cubics $-M_j(\alpha)$ satisfy the assumptions
of Lemma~\ref{lemma:cubic:criterion:new}.  Let $\Delta_j$ denote the discriminant $\mathsf{a}_1^2-4\mathsf{a}_0\mathsf{a}_2$ associated to the cubic $-M_j$, then
a direct computation gives
\[
\begin{alignedat}{3}
\Delta_0&=-727147500<0,\quad&
\Delta_1&=-\tfrac{9232772475}{16}<0,\quad&
\Delta_2&=-\tfrac{41570301867}{98}<0,\\
\Delta_3&=-\tfrac{7922839173}{28}<0,\quad&
\Delta_4&=-\tfrac{204516198489}{1225}<0,\quad&
\Delta_5&=-\tfrac{1358737419}{16}<0,\\
\Delta_7&=-\tfrac{29437413}{2}<0,\quad&
\Delta_8&=-3816984<0,\quad&
\Delta_{56}&=-\tfrac{426789659273}{2352}<0,\\
\Delta_{76}&=-\tfrac{195160442}{3}<0.
\end{alignedat}
\]

\textit{ $\beta_0$ and $\beta_1$ are negative.} Using Lemma \ref{lemma:radical:bounds:new}, and \eqref{beta:negative:arg}, it is enough to prove
\begin{align*}
M_0+\tfrac{2061\alpha+271}{121}N_0&=-\tfrac{300}{121}
(-875\alpha^3+10433\alpha^2-272\alpha+2440)<0,
\\
M_1+\tfrac{2061\alpha+271}{121} N_1&=-\tfrac{15}{242}
(-875\alpha^3+341003\alpha^2-47824\alpha+81520)<0.
\end{align*}
Since $0<\alpha\le1+\sqrt2<\frac{29}{12}$, we have
\begin{align*}
-875\alpha^3+10433\alpha^2-272\alpha+2440
&>
\tfrac{99821}{12}\alpha^2-272\alpha+2440,\\
-875\alpha^3+341003\alpha^2-47824\alpha+81520
&>
\tfrac{4066661}{12}\alpha^2-47824\alpha+81520.
\end{align*}
The two quadratics on the right have discriminants
$-\frac{243341288}{3}$ and $-\frac{324652799792}{3}$ respectively, and hence are positive.
Thus $\beta_0<0$ and $\beta_1<0$.

\textit{ $\beta_2$, $\beta_3$,  $\beta_4$,  $\beta_7$ and $\beta_8$ are negative.}
Using Lemma \ref{lemma:radical:bounds:new}, and \eqref{beta:negative:arg}, it is enough to prove
\begin{align*}
M_2+\tfrac{2061\alpha+271}{121}N_2
&=-\tfrac{6}{3388}
(1136000\alpha^3+9417595\alpha^2-2808168\alpha+2277080)<0,\\
M_3+\tfrac{2061\alpha+271}{121}N_3
&=-\tfrac{6}{3388}
(2274875\alpha^3+7208695\alpha^2-3662974\alpha+1716824)<0,\\
M_4+\tfrac{2061\alpha+271}{121}N_4
&=-\tfrac{6}{8470}
(8465000\alpha^3+13727845\alpha^2-10295378\alpha+3007742)<0,\\
M_7+(18\alpha+2)N_7
&=-\tfrac14(22125\alpha^3+39825\alpha^2-14184\alpha+1274)<0,\\
M_8+\tfrac{2061\alpha+271}{121}N_8
&=-\tfrac{6}{121}
(269750\alpha^3+291385\alpha^2-19314\alpha+2327)<0.
\end{align*}
The cubics on the right hand side satisfy the assumptions of Lemma~\ref{lemma:cubic:criterion:new}. Let
$\tilde{\Delta}_j$ denote the discriminant
$\mathsf{a}_1^2-4\mathsf{a}_0\mathsf{a}_2$ associated to the cubic
$-M_j-L N_j$, a direct computation gives
\[
\begin{alignedat}{3}
\tilde{\Delta}_2&=-\tfrac{15932589826536}{65219}<0,\quad&
\tilde{\Delta}_3&=-\tfrac{81195443581599}{717409}<0,\quad&
\tilde{\Delta}_4&=-\tfrac{76068585952812}{2562175}<0,\\
\tilde{\Delta}_7&=-\tfrac{220293}{2}<0,\quad&
\tilde{\Delta}_8&=-\tfrac{84210515424}{14641}<0.
\end{alignedat}
\]
Thus $\beta_i<0$ for $i=2,3,4,7,8$.

\textit{ $\beta_5$, $\beta_{56}$ and $\beta_{76}$ are negative.}
Again, using
Lemma \ref{lemma:radical:bounds:new}, and \eqref{beta:negative:arg}, it is enough
to prove
\begin{align*}
f_5(\alpha)&:=M_5+\tfrac{2061\alpha+271}{121}N_5
=-\tfrac{6}{3388}
(4469375\alpha^3+4475365\alpha^2-4069542\alpha+764777)<0,\\
f_{56}(\alpha)&:=M_{56}+\tfrac{2061\alpha+271}{121}N_{56}
=-\tfrac{6}{10164}
(24458125\alpha^3+22225145\alpha^2-19105706\alpha+3143533)<0,\\
f_{76}(\alpha)&:=M_{76}+(18\alpha+2)N_{76}
=-\tfrac1{84}(542625\alpha^3+1280775\alpha^2-601284\alpha+65956)<0.
\end{align*}
The cubics $-f_5$, $-f_{56}$, and $-f_{76}$ are positive by
Lemma~\ref{lemma:cubic:onepoint:new}.  A direct computation gives
\[
\begin{aligned}
\alpha_5^*&=\tfrac5{16},&
(-f_5)'(\alpha_5^*)&=\tfrac{28376319}{433664},&
\mathsf{a}_0-\mathsf{a}_2(\alpha_5^*)^2-2\mathsf{a}_3(\alpha_5^*)^3
&=\tfrac{337555263}{3469312}>0,\\
\alpha_{56}^*&=\tfrac7{24},&
(-f_{56})'(\alpha_{56}^*)&=\tfrac{6456271}{108416},&
\mathsf{a}_0-\mathsf{a}_2(\alpha_{56}^*)^2-2\mathsf{a}_3(\alpha_{56}^*)^3
&=\tfrac{270577961}{11708928}>0,\\
\alpha_{76}^*&=\tfrac5{24},&
(-f_{76})'(\alpha_{76}^*)&=\tfrac{193699}{5376},&
\mathsf{a}_0-\mathsf{a}_2(\alpha_{76}^*)^2-2\mathsf{a}_3(\alpha_{76}^*)^3
&=\tfrac{1275749}{193536}>0.
\end{aligned}
\]
This concludes the proof of the fact that $\beta_j<0$, for $j=0,1,2,3,4,5,7,8,56,76$. \end{proof}


\section[The polynomial Q]{The coefficients of the polynomial \texorpdfstring{$\QQ$}{Q}}\label{app:coef:b}
 We recall that $\crr=\crr(\alpha)$ is defined in~\eqref{formula:NNN:1} as
	 \begin{equation*}
	\crr(\alpha)=\tfrac{5-3\alpha+\sqrt{1+102\alpha+249\alpha^2}}{4+12\alpha},
	\end{equation*}
while $V_2=V_2(\alpha)$ is defined in~\eqref{def:V2} as 
	\begin{equation*}
	V_2(\alpha)=\tfrac{1}{4}\left(
	-1+3\crr+\tfrac{\tfrac{3}{1+3\alpha}+2-2\crr}{1+2\alpha}
	-\sqrt{\left(1-3\crr-\tfrac{\tfrac{3}{1+3\alpha}+2-2\crr}{1+2\alpha}\right)^2
	-\tfrac{8\crr\left(\tfrac{3}{1+3\alpha}+2-2\crr\right)}{1+2\alpha}}
	\right).
	\end{equation*}
 We now record here the explicit expressions of the coefficients $\cq_j$, for $0 \le j \le 5$, defined in \eqref{coeff:QQ}, 
\begingroup\scriptsize
\allowdisplaybreaks
\begin{subequations}\label{coeff:qq:i}
\begin{align}
\cq_0(\alpha)
&= {}-\tfrac{1}{25 \alpha (1 + 5 \alpha + 6 \alpha^2)} \crr \Bigl(32 \crr^3 \alpha (1 + 3 \alpha) - V_2^2 \alpha (119 + 132 \alpha)\notag\\
&\qquad {} + \crr V_2 (150 \alpha^3 - 5 \sqrt{6} \sqrt{\tfrac{\alpha}{1 + 2 \alpha}} + \alpha^2 (389 + 96 V_2 - 30 \sqrt{6} \sqrt{\tfrac{\alpha}{1 + 2 \alpha}}) + \alpha (263 + 32 V_2 - 25 \sqrt{6} \sqrt{\tfrac{\alpha}{1 + 2 \alpha}})) \notag\\
&\qquad {} + \crr^2 (5 \sqrt{6} \sqrt{\tfrac{\alpha}{1 + 2 \alpha}}  + 6 \alpha^2 (-22 - 32 V_2 + 5 \sqrt{6} \sqrt{\tfrac{\alpha}{1 + 2 \alpha}}) + \alpha (-119 - 64 V_2 + 25 \sqrt{6} \sqrt{\tfrac{\alpha}{1 + 2 \alpha}}))\Bigr), \\[0.5ex]
\cq_1(\alpha)
&= \tfrac{1}{125 \alpha (1 + 5 \alpha + 6 \alpha^2)} \Bigl(-2 \sqrt{6} V_2^3 \sqrt{\tfrac{\alpha}{1 + 2 \alpha}} (47 + 66 \alpha)  + 5 \crr^4 (1 + 3 \alpha) (-6 + 11 \sqrt{6} \sqrt{\tfrac{\alpha}{1 + 2 \alpha}} + \alpha (-86 + 20 \sqrt{6} \sqrt{\tfrac{\alpha}{1 + 2 \alpha}})) \notag\\
&\qquad {}- \crr V_2^2 (60 - 332 \sqrt{6} \sqrt{\tfrac{\alpha}{1 + 2 \alpha}} + 55 \sqrt{6} V_2 \sqrt{\tfrac{\alpha}{1 + 2 \alpha}} + 450 \sqrt{6} \alpha^3 \sqrt{\tfrac{\alpha}{1 + 2 \alpha}} 
+ 15 \alpha^2 (-62 + 5 \sqrt{6} \sqrt{\tfrac{\alpha}{1 + 2 \alpha}} + 20 \sqrt{6} V_2 \sqrt{\tfrac{\alpha}{1 + 2 \alpha}}) \notag\\
&\qquad {}+ \alpha (  -505 - 571 \sqrt{6} \sqrt{\tfrac{\alpha}{1 + 2 \alpha}} + 265 \sqrt{6} V_2 \sqrt{\tfrac{\alpha}{1 + 2 \alpha}})) + \crr^3 (-60 + 69 \sqrt{6} \sqrt{\tfrac{\alpha}{1 + 2 \alpha}} 
- 30 \alpha^2 (-31 + 5 \sqrt{6} \sqrt{\tfrac{\alpha}{1 + 2 \alpha}}) \notag\\
&\qquad {}+ \alpha (505 + 7 \sqrt{6} \sqrt{\tfrac{\alpha}{1 + 2 \alpha}}) 
- 5 V_2 (1 + 3 \alpha) (-12 + 38 \sqrt{6} \sqrt{\tfrac{\alpha}{1 + 2 \alpha}} + 90 \sqrt{6} \alpha^2 \sqrt{\tfrac{\alpha}{1 + 2 \alpha}} + \alpha (-172 + 115 \sqrt{6} \sqrt{\tfrac{\alpha}{1 + 2 \alpha}})))  \notag\\
&\qquad {}+ \crr^2 V_2 (120 - 307 \sqrt{6} \sqrt{\tfrac{\alpha}{1 + 2 \alpha}} 
+ 150 \alpha^3 (-5 + 3 \sqrt{6} \sqrt{\tfrac{\alpha}{1 + 2 \alpha}}) + 5 \alpha^2 (-497 + 45 \sqrt{6} \sqrt{\tfrac{\alpha}{1 + 2 \alpha}}) - \alpha (1135 + 446 \sqrt{6} \sqrt{\tfrac{\alpha}{1 + 2 \alpha}})  \notag\\
&\qquad {}+ 5 V_2 (1 + 3 \alpha) (-6 + 38 \sqrt{6} \sqrt{\tfrac{\alpha}{1 + 2 \alpha}} 
+ 90 \sqrt{6} \alpha^2 \sqrt{\tfrac{\alpha}{1 + 2 \alpha}} + \alpha (-86 + 115 \sqrt{6} \sqrt{\tfrac{\alpha}{1 + 2 \alpha}})))\Bigr), \\[0.5ex]
\cq_2(\alpha)
&= \tfrac{1}{625 \alpha^3 (1 + 3 \alpha)} (\tfrac{\alpha}{1 + 2 \alpha})^{\frac{3}{2}} 
\Bigl(2 \crr^4 (1 + 3 \alpha) (-30 \sqrt{6}  + 186 \sqrt{\tfrac{\alpha}{1 + 2 \alpha}} + 125 \alpha^2 (\sqrt{6} - 10 \sqrt{\tfrac{\alpha}{1 + 2 \alpha}}) - 25 \alpha (9 \sqrt{6} + 13 \sqrt{\tfrac{\alpha}{1 + 2 \alpha}})) \notag\\
&\qquad {}+ V_2^3 (372 V_2 \sqrt{\tfrac{\alpha}{1 + 2 \alpha}} + 1716 V_2 \alpha \sqrt{\tfrac{\alpha}{1 + 2 \alpha}}  - 3900 \alpha^3 \sqrt{\tfrac{\alpha}{1 + 2 \alpha}} + 30 (\sqrt{6} - 5 \sqrt{\tfrac{\alpha}{1 + 2 \alpha}}) - 25 \alpha (5 \sqrt{6} + 81 \sqrt{\tfrac{\alpha}{1 + 2 \alpha}})  \notag\\
&\qquad {}- 90 \alpha^2 (3 \sqrt{6} + 60 \sqrt{\tfrac{\alpha}{1 + 2 \alpha}} - 20 V_2 \sqrt{\tfrac{\alpha}{1 + 2 \alpha}}))
+ \crr^3 (-5 (6 \sqrt{6} - 625 \alpha \sqrt{\tfrac{\alpha}{1 + 2 \alpha}} + \alpha^2 (71 \sqrt{6} - 2000 \sqrt{\tfrac{\alpha}{1 + 2 \alpha}}) \notag\\
&\qquad {}+ 150 \alpha^3 (\sqrt{6} - 10 \sqrt{\tfrac{\alpha}{1 + 2 \alpha}})) + V_2 (1 + 3 \alpha) (180 \sqrt{6} - 1788 \sqrt{\tfrac{\alpha}{1 + 2 \alpha}} 
+ 25 \alpha (59 \sqrt{6} - 165 \sqrt{\tfrac{\alpha}{1 + 2 \alpha}}) \notag\\
&\qquad {}+ 250 \alpha^3 (3 \sqrt{6} - 10 \sqrt{\tfrac{\alpha}{1 + 2 \alpha}}) 
- 125 \alpha^2 (\sqrt{6} + 28 \sqrt{\tfrac{\alpha}{1 + 2 \alpha}}))) - \crr^2 V_2 (-V_2 (1 + 3 \alpha) (-180 \sqrt{6} + 2832 \sqrt{\tfrac{\alpha}{1 + 2 \alpha}} \notag\\
&\qquad {}+ 15000 \alpha^4 \sqrt{\tfrac{\alpha}{1 + 2 \alpha}} - 25 \alpha (59 \sqrt{6} - 457 \sqrt{\tfrac{\alpha}{1 + 2 \alpha}}) - 750 \alpha^3 (\sqrt{6}  - 30 \sqrt{\tfrac{\alpha}{1 + 2 \alpha}}) + 125 \alpha^2 (\sqrt{6} + 166 \sqrt{\tfrac{\alpha}{1 + 2 \alpha}})) \notag\\
&\qquad {}+ 5 (-18 \sqrt{6} + 30 \sqrt{\tfrac{\alpha}{1 + 2 \alpha}}  + 1780 \alpha \sqrt{\tfrac{\alpha}{1 + 2 \alpha}} + 150 \alpha^4 (\sqrt{6} + 10 \sqrt{\tfrac{\alpha}{1 + 2 \alpha}}) + \alpha^3 ( -325 \sqrt{6} + 5780 \sqrt{\tfrac{\alpha}{1 + 2 \alpha}})  \notag\\
&\qquad {}+ \alpha^2 (-188 \sqrt{6} + 5955 \sqrt{\tfrac{\alpha}{1 + 2 \alpha}})))  - \crr V_2^2 (-5 (-18 \sqrt{6} + 60 \sqrt{\tfrac{\alpha}{1 + 2 \alpha}} - 3000 \alpha^5 \sqrt{\tfrac{\alpha}{1 + 2 \alpha}} + 50 \alpha^4 (3 \sqrt{6} - 20 \sqrt{\tfrac{\alpha}{1 + 2 \alpha}})  \notag\\
&\qquad {}+ 5 \alpha (5 \sqrt{6} + 337 \sqrt{\tfrac{\alpha}{1 + 2 \alpha}}) + \alpha^3 (-175 \sqrt{6} 
+ 5310 \sqrt{\tfrac{\alpha}{1 + 2 \alpha}}) + \alpha^2 (-63 \sqrt{6} + 5660 \sqrt{\tfrac{\alpha}{1 + 2 \alpha}}))  \notag\\
&\qquad {}+ V_2 (1 + 3 \alpha) ( -60 \sqrt{6} + 1788 \sqrt{\tfrac{\alpha}{1 + 2 \alpha}} + 2500 \alpha^3 \sqrt{\tfrac{\alpha}{1 + 2 \alpha}} + 250 \alpha^2 (\sqrt{6} + 34 \sqrt{\tfrac{\alpha}{1 + 2 \alpha}}) + \alpha (-450 \sqrt{6} + 6625 \sqrt{\tfrac{\alpha}{1 + 2 \alpha}})))\Bigr), \\[0.5ex]
\cq_3(\alpha)
&= \tfrac{1}{625 \alpha^2 (1 + 2 \alpha)^2} \Bigl(6 \crr^4 (-6 + 10 \sqrt{6} \sqrt{\tfrac{\alpha}{1 + 2 \alpha}} + 50 \alpha^2 (-1 + 2 \sqrt{6} \sqrt{\tfrac{\alpha}{1 + 2 \alpha}}) + \alpha (-73 + 70 \sqrt{6} \sqrt{\tfrac{\alpha}{1 + 2 \alpha}}))  \notag\\
&\qquad {}- 2 \crr^3 (15 \sqrt{6} \sqrt{\tfrac{\alpha}{1 + 2 \alpha}} ( -1 + 4 \alpha^2) + V_2 (-72 + 105 \sqrt{6} \sqrt{\tfrac{\alpha}{1 + 2 \alpha}} + 500 \alpha^3 (-9 + \sqrt{6} \sqrt{\tfrac{\alpha}{1 + 2 \alpha}}) \notag\\
&\qquad {}+ 50 \alpha^2 (-75 + 34 \sqrt{6} \sqrt{\tfrac{\alpha}{1 + 2 \alpha}}) + 17 \alpha (-78 + 55 \sqrt{6} \sqrt{\tfrac{\alpha}{1 + 2 \alpha}}))) \notag\\
&\qquad {}+ \crr V_2^2 (-15 (-1 + 4 \alpha^2) (6 \sqrt{6} \sqrt{\tfrac{\alpha}{1 + 2 \alpha}} + 50 \sqrt{6} \alpha^2 \sqrt{\tfrac{\alpha}{1 + 2 \alpha}} + 5 \alpha (-12 + 5 \sqrt{6} \sqrt{\tfrac{\alpha}{1 + 2 \alpha}}))  \notag\\
&\qquad {}+ V_2 (144 + 25000 \alpha^5 - 150 \sqrt{6} \sqrt{\tfrac{\alpha}{1 + 2 \alpha}}  
+ \alpha (2652 - 3125 \sqrt{6} \sqrt{\tfrac{\alpha}{1 + 2 \alpha}}) - 1000 \alpha^4 (-35 + 16 \sqrt{6} \sqrt{\tfrac{\alpha}{1 + 2 \alpha}}) \notag\\
&\qquad {}- 250 \alpha^3 (-101 + 90 \sqrt{6} \sqrt{\tfrac{\alpha}{1 + 2 \alpha}}) - 100 \alpha^2 (-100 + 129 \sqrt{6} \sqrt{\tfrac{\alpha}{1 + 2 \alpha}}))) \notag\\
&\qquad {}+ \crr^2 V_2 (90 (-1 + 4 \alpha^2) (-5 \alpha + \sqrt{6} \sqrt{\tfrac{\alpha}{1 + 2 \alpha}}) + V_2 (54 (-4 + 5 \sqrt{6} \sqrt{\tfrac{\alpha}{1 + 2 \alpha}}) + 2500 \alpha^4 (-1 + 6 \sqrt{6} \sqrt{\tfrac{\alpha}{1 + 2 \alpha}})  \notag\\
&\qquad {}+ 500 \alpha^3 (-41 + 42 \sqrt{6} \sqrt{\tfrac{\alpha}{1 + 2 \alpha}})  + 12 \alpha (-369 + 320 \sqrt{6} \sqrt{\tfrac{\alpha}{1 + 2 \alpha}}) + 25 \alpha^2 (-601 + 534 \sqrt{6} \sqrt{\tfrac{\alpha}{1 + 2 \alpha}}))) \notag\\
&\qquad {}+ V_2^3 (5 (-1 + 4 \alpha^2) (-250 \alpha^3 + 6 \sqrt{6} \sqrt{\tfrac{\alpha}{1 + 2 \alpha}} + 15 \alpha (-6 + 5 \sqrt{6} \sqrt{\tfrac{\alpha}{1 + 2 \alpha}}) \notag\\
&\qquad {}+ 25 \alpha^2 (-5 + 6 \sqrt{6} \sqrt{\tfrac{\alpha}{1 + 2 \alpha}})) + V_2 (-10000 \alpha^5 + 1000 \alpha^4 (-15 + \sqrt{6} \sqrt{\tfrac{\alpha}{1 + 2 \alpha}}) + 2500 \alpha^3 (-3 + \sqrt{6} \sqrt{\tfrac{\alpha}{1 + 2 \alpha}})  \notag\\
&\qquad {}+ 6 (-6 + 5 \sqrt{6} \sqrt{\tfrac{\alpha}{1 + 2 \alpha}}) + 50 \alpha^2 (  -31 + 47 \sqrt{6} \sqrt{\tfrac{\alpha}{1 + 2 \alpha}}) + \alpha (-438 + 735 \sqrt{6} \sqrt{\tfrac{\alpha}{1 + 2 \alpha}})))\Bigr), \\[0.5ex]
\cq_4(\alpha)
&= \tfrac{1}{625 \alpha^2 (1 + 2 \alpha)^2} \Bigl(72 \crr^4 (1 + 5 \alpha) + 6 \crr^3 V_2 (-48 + 25 \sqrt{6} \sqrt{\tfrac{\alpha}{1 + 2 \alpha}} + 260 \sqrt{6} \alpha^2 \sqrt{\tfrac{\alpha}{1 + 2 \alpha}} + 60 \alpha (-4 + 3 \sqrt{6} \sqrt{\tfrac{\alpha}{1 + 2 \alpha}})) \notag\\
&\qquad {}+ V_2^4 (72 - 10000 \alpha^5 - 150 \sqrt{6} \sqrt{\tfrac{\alpha}{1 + 2 \alpha}} + \alpha (810 - 955 \sqrt{6} \sqrt{\tfrac{\alpha}{1 + 2 \alpha}}) + \alpha^2 (2975 - 810 \sqrt{6} \sqrt{\tfrac{\alpha}{1 + 2 \alpha}})  \notag\\
&\qquad {}+ 500 \alpha^4 (-25 + 2 \sqrt{6} \sqrt{\tfrac{\alpha}{1 + 2 \alpha}})  + 100 \alpha^3 (4 + 15 \sqrt{6} \sqrt{\tfrac{\alpha}{1 + 2 \alpha}})) - 18 \crr^2 V_2^2 (-24 - 300 \alpha^3 + 25 \sqrt{6} \sqrt{\tfrac{\alpha}{1 + 2 \alpha}} \notag\\
&\qquad {}+ 20 \alpha^2 (-10 + 13 \sqrt{6} \sqrt{\tfrac{\alpha}{1 + 2 \alpha}}) + 5 \alpha (-29 + 36 \sqrt{6} \sqrt{\tfrac{\alpha}{1 + 2 \alpha}}))  - \crr V_2^3 (288 - 450 \sqrt{6} \sqrt{\tfrac{\alpha}{1 + 2 \alpha}} + 1000 \sqrt{6} \alpha^4 \sqrt{\tfrac{\alpha}{1 + 2 \alpha}} \notag\\
&\qquad {}+ \alpha^2 (7200 - 3930 \sqrt{6} \sqrt{\tfrac{\alpha}{1 + 2 \alpha}}) + 300 \alpha^3 (36 + 5 \sqrt{6} \sqrt{\tfrac{\alpha}{1 + 2 \alpha}}) - 5 \alpha (-468 + 623 \sqrt{6} \sqrt{\tfrac{\alpha}{1 + 2 \alpha}}))\Bigr), \\[0.5ex]
\cq_5(\alpha)
&= -\tfrac{1}{625 \alpha^2 (1 + 2 \alpha)^2} \Bigl(36 (\crr - V_2)^4 + 144 (\crr - V_2)^4 \alpha + 900 (\crr - V_2)^2 V_2^2 \alpha (1+ 2 \alpha) + 3600 (\crr - V_2)^2 V_2^2 \alpha^2 (1 + 2 \alpha)  \notag\\
&\qquad {}+ 625 V_2^4 \alpha^2 (1 + 2 \alpha)^2 + 2500 V_2^4 \alpha^3 (1 + 2 \alpha)^2 
+  ( 120 (\crr - V_2)^3 V_2 \alpha) \sqrt{\tfrac{6 + 12 \alpha}{\alpha}}
+ ( 480 (\crr - V_2)^3 V_2 \alpha^2) \sqrt{\tfrac{6 + 12 \alpha}{\alpha}} \notag\\
&\qquad {}+  ( 2000 (\crr - V_2) V_2^3 \alpha^3 (1 + 2 \alpha))  \sqrt{\tfrac{6 + 12 \alpha}{\alpha}} + 500 \sqrt{6} (\crr - V_2) (\tfrac{\alpha}{1 + 2 \alpha})^{\frac{3}{2}} (V_2 + 2 V_2 \alpha)^3\Bigr).
\end{align}
\end{subequations}
\endgroup

The corresponding Bernstein coefficients $\BB_j$, for $0 \le j \le 5$, defined in \eqref{Bernstein:coeff:exp} can be computed from \eqref{coeff:qq:i}.
\begingroup\scriptsize
\allowdisplaybreaks
\begin{subequations}\label{coeff:bb:i}
\begin{align}
\BB_0(\alpha)
&= -\tfrac{1}{25 \alpha^{\frac{1}{2}} (1 + \alpha (5 + 6 \alpha))} \crr \Bigl(32 \crr^{3} \sqrt{\alpha} (1 + 3 \alpha) - V_2^{2} \sqrt{\alpha} (119 + 132 \alpha) + \crr^{2} (5 \sqrt{6 + 12 \alpha} + 15 \alpha \sqrt{6 + 12 \alpha}  \notag\\
&\qquad {}- \sqrt{\alpha} (119 + 132 \alpha + 64 V_2 (1 + 3 \alpha))) + \crr V_2 (-5 \sqrt{6 + 12 \alpha} - 15 \alpha \sqrt{6 + 12 \alpha}  \notag\\
&\qquad {}+ \sqrt{\alpha} (263 + 32 V_2 (1 + 3 \alpha) + \alpha (389 + 150 \alpha)))\Bigr), \\[0.5ex]
\BB_1(\alpha)
&= -\tfrac{1}{625 \alpha \sqrt{1 + 2 \alpha} (1 + \alpha (5 + 6 \alpha))} \Bigl(2 \sqrt{6} V_2^{3} \sqrt{\alpha} (47  + 66 \alpha) + 5 \crr^{4} (1 + 3 \alpha) (-11 \sqrt{6} \sqrt{\alpha} - 20 \sqrt{6} \alpha^{\frac{3}{2}} + 6 \sqrt{1 + 2 \alpha}+ 246 \alpha \sqrt{1 + 2 \alpha}) \notag\\
&\qquad {} + \crr^{3} (2 \sqrt{6} (28 + 95 V_2) \sqrt{\alpha} + \sqrt{6} (618 + 1145 V_2) \alpha^{\frac{3}{2}} + 75 \sqrt{6} (12 + 29 V_2) \alpha^{\frac{5}{2}} + 1350 \sqrt{6} V_2 \alpha^{\frac{7}{2}} - 60 (-1 + V_2) \sqrt{1 + 2 \alpha} \notag\\
&\qquad {}- 120 (29 + 22 V_2) \alpha \sqrt{1 + 2 \alpha} - 90 (47 + 82 V_2) \alpha^{2} \sqrt{1 + 2 \alpha}) + \crr^{2} V_2 (2 \sqrt{6} (91- 95 V_2) \sqrt{\alpha} - \sqrt{6} (179 + 1145 V_2) \alpha^{\frac{3}{2}}  \notag\\
&\qquad {}- 75 \sqrt{6} (13 + 29 V_2) \alpha^{\frac{5}{2}} - 450 \sqrt{6} (1 + 3 V_2) \alpha^{\frac{7}{2}} + 30 (-4 + V_2) \sqrt{1 + 2 \alpha} + 30 (257 + 44 V_2) \alpha \sqrt{1 + 2 \alpha} \notag\\
&\qquad {}+ 30 (407 + 123 V_2) \alpha^{2} \sqrt{1 + 2 \alpha} + 4500 \alpha^{3} \sqrt{1 + 2 \alpha}) + \crr V_2^{2} (60 \sqrt{1 + 2 \alpha} - 3480 \alpha \sqrt{1 + 2 \alpha} - 4230 \alpha^{2} \sqrt{1 + 2 \alpha} \notag\\
&\qquad {}+ 5 \sqrt{6} V_2 \sqrt{\alpha} (1 + 3 \alpha) (11 + 20 \alpha) + \sqrt{6} \sqrt{\alpha} (-332 + \alpha (-571 + 75 \alpha (1 + 6 \alpha))))\Bigr), \\[0.5ex]
\BB_2(\alpha)
&= \tfrac{1}{6250 \alpha^{\frac{3}{2}} (1 + 2 \alpha)^{\frac{5}{2}} (1 + 3 \alpha)} 
\Bigl(2 \crr^{4} (1  + 3 \alpha) (-30 \sqrt{6} + 265 \sqrt{6} \alpha + 1775 \sqrt{6} \alpha^{2} + 2250 \sqrt{6} \alpha^{3}  - 114 \sqrt{\alpha} \sqrt{1 + 2 \alpha} - 9225 \alpha^{\frac{3}{2}} \sqrt{1 + 2 \alpha} \notag\\
&\qquad {}- 17850 \alpha^{\frac{5}{2}} \sqrt{1 + 2 \alpha}) + V_2^{3} (30 \sqrt{6} - 1945 \sqrt{6} \alpha - 6920 \sqrt{6} \alpha^{2} - 5820 \sqrt{6} \alpha^{3} + 6 (-25 + 62 V_2) \sqrt{\alpha} \sqrt{1 + 2 \alpha}  \notag\\
&\qquad {}+ 3 (-675+ 572 V_2) \alpha^{\frac{3}{2}} \sqrt{1 + 2 \alpha} + 1800 (-3 + V_2) \alpha^{\frac{5}{2}} \sqrt{1 + 2 \alpha}  - 3900 \alpha^{\frac{7}{2}} \sqrt{1 + 2 \alpha})  \notag\\
&\qquad {}+ \crr V_2^{2} (-V_2 (1 + 3 \alpha) (-60 \sqrt{6} + 530 \sqrt{6} \alpha+ 3550 \sqrt{6} \alpha^{2} + 4500 \sqrt{6} \alpha^{3} + 1788 \sqrt{\alpha} \sqrt{1 + 2 \alpha} + 6625 \alpha^{\frac{3}{2}} \sqrt{1 + 2 \alpha}  \notag\\
&\qquad {}+ 8500 \alpha^{\frac{5}{2}} \sqrt{1 + 2 \alpha}  + 2500 \alpha^{\frac{7}{2}} \sqrt{1 + 2 \alpha}) - 5 (1 + 2 \alpha) (18 \sqrt{6} - 1353 \sqrt{6} \alpha - 2221 \sqrt{6} \alpha^{2}  + 475 \sqrt{6} \alpha^{3} + 1650 \sqrt{6} \alpha^{4} \notag\\
&\qquad {}+ 180 \sqrt{\alpha} \sqrt{1 + 2 \alpha}  - 9535 \alpha^{\frac{3}{2}} \sqrt{1 + 2 \alpha} - 12850 \alpha^{\frac{5}{2}} \sqrt{1 + 2 \alpha}  - 250 \alpha^{\frac{7}{2}} \sqrt{1 + 2 \alpha} + 1500 \alpha^{\frac{9}{2}} \sqrt{1 + 2 \alpha}))  \notag\\
&\qquad {}+ \crr^{2} V_2 (5 (1+ 2 \alpha) (18 \sqrt{6} - 978 \sqrt{6} \alpha - 346 \sqrt{6} \alpha^{2} + 2725 \sqrt{6} \alpha^{3} + 1650 \sqrt{6} \alpha^{4}  + 450 \sqrt{\alpha} \sqrt{1 + 2 \alpha} - 19410 \alpha^{\frac{3}{2}} \sqrt{1 + 2 \alpha} \notag\\
&\qquad {}- 31905 \alpha^{\frac{5}{2}} \sqrt{1 + 2 \alpha} - 11250 \alpha^{\frac{7}{2}} \sqrt{1 + 2 \alpha}) + 3 V_2 (1 + 3 \alpha) (-60 \sqrt{6} + 655 \sqrt{6} \alpha + 5425 \sqrt{6} \alpha^{2} + 10500 \sqrt{6} \alpha^{3} + 5500 \sqrt{6} \alpha^{4} \notag\\
&\qquad {}+ 744 \sqrt{\alpha} \sqrt{1 + 2 \alpha} - 2125 \alpha^{\frac{3}{2}} \sqrt{1 + 2 \alpha} - 4150 \alpha^{\frac{5}{2}} \sqrt{1 + 2 \alpha} + 7500 \alpha^{\frac{7}{2}} \sqrt{1 + 2 \alpha} + 5000 \alpha^{\frac{9}{2}} \sqrt{1 + 2 \alpha}))  \notag\\
&\qquad {}+ \crr^{3} (-1200 \sqrt{\alpha} \sqrt{1 + 2 \alpha} - 588 V_2 \sqrt{\alpha} \sqrt{1 + 2 \alpha} + 40575 \alpha^{\frac{3}{2}} \sqrt{1 + 2 \alpha}  + 29711 V_2 \alpha^{\frac{3}{2}} \sqrt{1 + 2 \alpha} + 141300 \alpha^{\frac{5}{2}} \sqrt{1 + 2 \alpha} \notag\\
&\qquad {}+ 157325 V_2 \alpha^{\frac{5}{2}} \sqrt{1 + 2 \alpha} + 110700 \alpha^{\frac{7}{2}} \sqrt{1 + 2 \alpha}  + 186200 V_2 \alpha^{\frac{7}{2}} \sqrt{1 + 2 \alpha} - 7500 V_2 \alpha^{\frac{9}{2}} \sqrt{1 + 2 \alpha} - 5 \sqrt{6} (1 \notag\\
&\qquad {}+ 2 \alpha) (6 + \alpha (-26 + 3 \alpha (431 + 750 \alpha)) + 3 V_2 (1 + 3 \alpha) (-12 + 5 \alpha (31 + 5 \alpha (31 + 22 \alpha)))))\Bigl), \\[0.5ex]
\BB_3(\alpha)
&= -\tfrac{1}{6250 \alpha^{2} \sqrt{1 + 2 \alpha} (1 + \alpha (5 + 6 \alpha))} 
\Bigl(2 \crr^{4} (1  + 3 \alpha) (60 \sqrt{6} \sqrt{\alpha} - 300 \sqrt{6} \alpha^{\frac{3}{2}} - 1875 \sqrt{6} \alpha^{\frac{5}{2}}+ 18 \sqrt{1 + 2 \alpha}   \notag\\
&\qquad {}+ 75 \alpha \sqrt{1 + 2 \alpha} + 12325 \alpha^{2} \sqrt{1 + 2 \alpha})  + \crr^{2} V_2 (90 \sqrt{6} (-2  + 3 V_2) \sqrt{\alpha} + 5 \sqrt{6} (1610 - 753 V_2) \alpha^{\frac{3}{2}} + 10 \sqrt{6} (377 - 3810 V_2) \alpha^{\frac{5}{2}} \notag\\
&\qquad {}- 375 \sqrt{6} (51 + 245 V_2) \alpha^{\frac{7}{2}} - 11250 \sqrt{6} (1 + 5 V_2) \alpha^{\frac{9}{2}} + 216 V_2 \sqrt{1 + 2 \alpha}- 72 (50 + 41 V_2) \alpha \sqrt{1 + 2 \alpha}   \notag\\
&\qquad {}+ 50 (2503 - 3 V_2) \alpha^{2} \sqrt{1 + 2 \alpha} + 25 (8489  + 428 V_2) \alpha^{3} \sqrt{1 + 2 \alpha} + 3750 (19 - 23 V_2) \alpha^{4} \sqrt{1 + 2 \alpha} - 67500 V_2 \alpha^{5} \sqrt{1 + 2 \alpha})  \notag\\
&\qquad {}+ V_2^{3} (-30 \sqrt{6} (2 + V_2) \sqrt{\alpha} + 45 \sqrt{6} (80 - 17 V_2) \alpha^{\frac{3}{2}} + 5 \sqrt{6} (1143 - 605 V_2) \alpha^{\frac{5}{2}} - 500 \sqrt{6} (3 + 7 V_2) \alpha^{\frac{7}{2}} - 1500 \sqrt{6} (3 + V_2) \alpha^{\frac{9}{2}}  \notag\\
&\qquad {}+ 36 V_2 \sqrt{1 + 2 \alpha} - 642 V_2 \alpha \sqrt{1 + 2 \alpha} + 100 (41  - 10 V_2) \alpha^{2} \sqrt{1 + 2 \alpha} + 25 (267 + 350 V_2) \alpha^{3} \sqrt{1 + 2 \alpha}  \notag\\
&\qquad {}+ 2500 (1+ 8 V_2) \alpha^{4} \sqrt{1 + 2 \alpha} + 7500 (1 + 2 V_2) \alpha^{5} \sqrt{1 + 2 \alpha}) + \crr V_2^{2} (-30 \sqrt{6} (-6 + V_2) \sqrt{\alpha} + 5 \sqrt{6} (-2160 + 607 V_2) \alpha^{\frac{3}{2}} \notag\\
&\qquad {}+ 5 \sqrt{6} (-3354 + 4075 V_2) \alpha^{\frac{5}{2}}  + 125 \sqrt{6} (51 + 328 V_2) \alpha^{\frac{7}{2}} + 750 \sqrt{6} (21 + 32 V_2) \alpha^{\frac{9}{2}} - 144 V_2 \sqrt{1 + 2 \alpha} \notag\\
&\qquad {} + 24 (75 + 107 V_2) \alpha \sqrt{1 + 2 \alpha} + 25 (-2699 + 515 V_2) \alpha^{2} \sqrt{1 + 2 \alpha} + 375 (-278+ 11 V_2) \alpha^{3} \sqrt{1 + 2 \alpha} \notag\\
&\qquad {} - 1250 (3 + 28 V_2) \alpha^{4} \sqrt{1 + 2 \alpha} - 7500 (-3 + 5 V_2) \alpha^{5} \sqrt{1 + 2 \alpha}) + \crr^{3} (-144 V_2 \sqrt{1 + 2 \alpha} + 1800 \alpha \sqrt{1 + 2 \alpha} \notag\\
&\qquad {}+ 768 V_2 \alpha \sqrt{1 + 2 \alpha} - 54275 \alpha^{2} \sqrt{1 + 2 \alpha} - 39325 V_2 \alpha^{2} \sqrt{1 + 2 \alpha}  - 72150 \alpha^{3} \sqrt{1 + 2 \alpha} - 125025 V_2 \alpha^{3} \sqrt{1 + 2 \alpha}  \notag\\
&\qquad {} + 11250 V_2 \alpha^{4} \sqrt{1 + 2 \alpha} + 5 \sqrt{6} \sqrt{\alpha} (12 + \alpha (-170 + \alpha (1457 + 2850 \alpha)) + V_2 (1 + 3 \alpha) (-66 + 5 \alpha (109 + 25 \alpha (29 + 18 \alpha)))))\Bigr), \\[0.5ex]
\BB_4(\alpha)
&= \tfrac{1}{625 \alpha^{\frac{3}{2}} \sqrt{1 + 2 \alpha} (1 + \alpha (5 + 6 \alpha))} 
\Bigl(2 \crr^{4} (1 + 3 \alpha) (-6 \sqrt{6} + 5 \alpha (7 \sqrt{6} + 55 \sqrt{6} \alpha - 327 \sqrt{\alpha} \sqrt{1 + 2 \alpha})) \notag\\
&\qquad {}+ V_2^{3} (90 \sqrt{\alpha} \sqrt{1 + 2 \alpha} + 210 V_2 \sqrt{\alpha} \sqrt{1 + 2 \alpha}  - 605 \alpha^{\frac{3}{2}} \sqrt{1 + 2 \alpha} + 545 V_2 \alpha^{\frac{3}{2}} \sqrt{1 + 2 \alpha} - 1500 \alpha^{\frac{5}{2}} \sqrt{1 + 2 \alpha} \notag\\
&\qquad {} - 3005 V_2 \alpha^{\frac{5}{2}} \sqrt{1 + 2 \alpha}  - 1000 \alpha^{\frac{7}{2}} \sqrt{1 + 2 \alpha} - 11250 V_2 \alpha^{\frac{7}{2}} \sqrt{1 + 2 \alpha} - 3000 \alpha^{\frac{9}{2}} \sqrt{1 + 2 \alpha} - 9000 V_2 \alpha^{\frac{9}{2}} \sqrt{1 + 2 \alpha}  \notag\\
&\qquad {}+ \sqrt{6} V_2 (1 + 2 \alpha) (1 + 3 \alpha) (-18 + 25 \alpha (7 + 6 \alpha)) + \sqrt{6} (6 + \alpha (-613 + 12 \alpha (-89 + 50 \alpha (1 + 3 \alpha))))) \notag\\
&\qquad {}+ \crr^{2} V_2 (570 \sqrt{\alpha} \sqrt{1 + 2 \alpha} + 330 V_2 \sqrt{\alpha} \sqrt{1 + 2 \alpha}  
- 16095 \alpha^{\frac{3}{2}} \sqrt{1 + 2 \alpha} - 865 V_2 \alpha^{\frac{3}{2}} \sqrt{1 + 2 \alpha} - 28290 \alpha^{\frac{5}{2}} \sqrt{1 + 2 \alpha}  \notag\\
&\qquad {}- 1565 V_2 \alpha^{\frac{5}{2}} \sqrt{1 + 2 \alpha}  - 9000 \alpha^{\frac{7}{2}} \sqrt{1 + 2 \alpha} + 16500 V_2 \alpha^{\frac{7}{2}} \sqrt{1 + 2 \alpha} + 13500 V_2 \alpha^{\frac{9}{2}} \sqrt{1 + 2 \alpha} \notag\\
&\qquad {}+ \sqrt{6} V_2 (1 + 3 \alpha) (-90 + \alpha (727 + 725 \alpha (7 + 6 \alpha)))  + \sqrt{6} (18 + \alpha (-1139 + \alpha (-379 + 75 \alpha (35 + 18 \alpha)))))  \notag\\
&\qquad {}+ \crr V_2^{2} (-420 \sqrt{\alpha} \sqrt{1 + 2 \alpha} - 480 V_2 \sqrt{\alpha} \sqrt{1 + 2 \alpha} + 9330 \alpha^{\frac{3}{2}} \sqrt{1 + 2 \alpha} - 1895 V_2 \alpha^{\frac{3}{2}} \sqrt{1 + 2 \alpha} + 16770 \alpha^{\frac{5}{2}} \sqrt{1 + 2 \alpha} \notag\\
&\qquad {}+ 2385 V_2 \alpha^{\frac{5}{2}} \sqrt{1 + 2 \alpha} + 750 \alpha^{\frac{7}{2}} \sqrt{1 + 2 \alpha}  + 16250 V_2 \alpha^{\frac{7}{2}} \sqrt{1 + 2 \alpha} - 4500 \alpha^{\frac{9}{2}} \sqrt{1 + 2 \alpha} \notag\\
&\qquad {}+ 15000 V_2 \alpha^{\frac{9}{2}} \sqrt{1 + 2 \alpha} - \sqrt{6} V_2 (1 + 3 \alpha) (-66 + \alpha (637 + 50 \alpha (71 + 66 \alpha))) \notag\\
&\qquad {}+ \sqrt{6} (-18 + \alpha (1589 + \alpha (2329 - 75 \alpha (19 + 42 \alpha))))) + \crr^{3} (-240 \sqrt{\alpha} \sqrt{1 + 2 \alpha}  - 60 V_2 \sqrt{\alpha} \sqrt{1 + 2 \alpha} + 6870 \alpha^{\frac{3}{2}} \sqrt{1 + 2 \alpha} \notag\\
&\qquad {}+ 5985 V_2 \alpha^{\frac{3}{2}} \sqrt{1 + 2 \alpha} + 9270 \alpha^{\frac{5}{2}} \sqrt{1 + 2 \alpha} 
+ 17745 V_2 \alpha^{\frac{5}{2}} \sqrt{1 + 2 \alpha} - 2250 V_2 \alpha^{\frac{7}{2}} \sqrt{1 + 2 \alpha} \notag\\
&\qquad {}- \sqrt{6} (6 + \alpha (-163 + 18 \alpha (49 + 100 \alpha)) + V_2 (1 + 3 \alpha) (-54 + \alpha (299 + 25 \alpha (103 + 54 \alpha)))))\Bigr), \\[0.5ex]
\BB_5(\alpha)&
= \tfrac{1}{25 \alpha (1 + 2 \alpha)^{\frac{3}{2}} (1 + 3 \alpha)} \Bigl(\crr^{4} \sqrt{\alpha} (1 + 3 \alpha) (5 \sqrt{6} + 30 \sqrt{6} \alpha - 168 \sqrt{\alpha} \sqrt{1 + 2 \alpha}) + \crr^{2} V_2 (15 \sqrt{6} (-4 + 5 V_2) \sqrt{\alpha}  \notag\\
&\qquad {}+ 5 \sqrt{6} (-1 + 123 V_2) \alpha^{\frac{3}{2}} + 10 \sqrt{6} (14 + 153 V_2) \alpha^{\frac{5}{2}} + 60 \sqrt{6} (1 + 18 V_2) \alpha^{\frac{7}{2}} \notag\\
&\qquad {} - 36 (-1 + V_2) \sqrt{1 + 2 \alpha} - 2 (408 + 191 V_2) \alpha \sqrt{1 + 2 \alpha}   - (1497 + 572 V_2) \alpha^{2} \sqrt{1 + 2 \alpha}   \notag\\
&\qquad {}+ 150 (-3 + 7 V_2) \alpha^{3} \sqrt{1 + 2 \alpha} + 900 V_2 \alpha^{4} \sqrt{1 + 2 \alpha}) + \crr V_2^{2} (5 \sqrt{6} (18- 19 V_2) \sqrt{\alpha}  + 5 \sqrt{6} (25 - 149 V_2) \alpha^{\frac{3}{2}}  \notag\\
&\qquad {}- 10 \sqrt{6} (11 + 188 V_2) \alpha^{\frac{5}{2}} - 60 \sqrt{6} (4 + 25 V_2) \alpha^{\frac{7}{2}} + 36 (-1 + V_2) \sqrt{1 + 2 \alpha} + 7 (71 + 45 V_2) \alpha \sqrt{1 + 2 \alpha} \notag\\
&\qquad {}+ (1040 + 1021 V_2) \alpha^{2} \sqrt{1 + 2 \alpha} + 50 (1 + 34 V_2) \alpha^{3} \sqrt{1 + 2 \alpha} + 300 (-1+ 5 V_2) \alpha^{4} \sqrt{1 + 2 \alpha})  \notag\\
&\qquad {}+ V_2^{3} (40 \sqrt{6} (-1 + V_2) \sqrt{\alpha} + 75 \sqrt{6} (-1  + 4 V_2) \alpha^{\frac{3}{2}} + 20 \sqrt{6} (3 + 37 V_2) \alpha^{\frac{5}{2}} + 60 \sqrt{6} (3 + 10 V_2) \alpha^{\frac{7}{2}} \notag\\
&\qquad {}- 12 (-1 + V_2) \sqrt{1 + 2 \alpha} - 2 (13 + 86 V_2) \alpha \sqrt{1 + 2 \alpha} - (111 + 908 V_2) \alpha^{2} \sqrt{1 + 2 \alpha} \notag\\
&\qquad {}- 100 (1 + 21 V_2) \alpha^{3} \sqrt{1 + 2 \alpha} - 300 (1 + 6 V_2) \alpha^{4} \sqrt{1 + 2 \alpha}) + \crr^{3} (-12 \sqrt{1 + 2 \alpha} \notag\\
&\qquad {}+ 12 V_2 \sqrt{1 + 2 \alpha} + 345 \alpha \sqrt{1 + 2 \alpha} + 407 V_2 \alpha \sqrt{1 + 2 \alpha} + 468 \alpha^{2} \sqrt{1 + 2 \alpha} \notag\\
&\qquad {}+ 1063 V_2 \alpha^{2} \sqrt{1 + 2 \alpha} - 150 V_2 \alpha^{3} \sqrt{1 + 2 \alpha} - 5 \sqrt{6} \sqrt{\alpha} (-2 + 9 \alpha + 18 \alpha^{2} + V_2 (1 + 3 \alpha) (5 + 4 \alpha (7 + 3 \alpha))))\Bigr).
\end{align}
\end{subequations}
\endgroup
We also introduce the rescaled coefficients (which will be used in Appendix~\ref{app:CA}):
\begin{subequations}\label{Bernstein:coeff:exp:rescaled}
\begin{align}
\tilde{\BB}_0(\alpha)
&:=-\tfrac{25\alpha^{\frac{1}{2}}(1+2\alpha)(1+3\alpha)}
{\crr}\BB_0(\alpha),\\
\tilde{\BB}_1(\alpha)
&:=-625\alpha(1+2\alpha)^{\frac{3}{2}}(1+3\alpha)
\BB_1(\alpha),\\
\tilde{\BB}_2(\alpha)
&:=-6250\alpha^{\frac{3}{2}}(1+2\alpha)^{\frac{5}{2}}(1+3\alpha)
\BB_2(\alpha),\\
\tilde{\BB}_3(\alpha)
&:=-6250\alpha^2(1+2\alpha)^{\frac{3}{2}}(1+3\alpha)
\BB_3(\alpha),\\
\tilde{\BB}_4(\alpha)
&:=-625\alpha^{\frac{3}{2}}(1+2\alpha)^{\frac{3}{2}}(1+3\alpha)
\BB_4(\alpha),\\
\tilde{\BB}_5(\alpha)
&:=-25\alpha(1+2\alpha)^{\frac{3}{2}}(1+3\alpha)
\BB_5(\alpha).
\end{align}
\end{subequations}

\subsection{Elementary proof of Lemma~\ref{lemma:Bernstein:q} for monatomic and diatomic gases} \label{app:elementary}
\begin{proof}
We now prove Lemma~\ref{lemma:Bernstein:q} by elementary, though somewhat lengthy, exact computations in two physically important cases: monatomic gases
$(\gamma=\frac{5}{3}$, equivalently $\alpha=\frac{1}{3}$) and diatomic gases
$(\gamma=\frac{7}{5}$, equivalently $\alpha=\frac{1}{5}$). The idea is to substitute the exact values of $\crr$ and $V_2$ into \eqref{coeff:bb:i}, then specialize to $\alpha=\frac{1}{5}$ and $\alpha=\frac{1}{3}$. This gives explicit algebraic expressions for the coefficients $\BB_j$. We then obtain rational bounds for the radicals and check, exactly, that each $\BB_j$ is negative.

\textit{Monatomic gas, $\alpha=\frac13$.} From
\eqref{formula:NNN:1} and \eqref{def:V2}, for $\alpha=\frac{1}{3}$ we have
\[
\crr=\tfrac{6+\sqrt{141}}{12},\qquad
V_2=\tfrac{40+3\sqrt{141}-\sqrt{4229}}{80}.
\]
The resulting Bernstein coefficients are
{\footnotesize
\allowdisplaybreaks
\begin{align*}
\BB_0(\tfrac13)
&=\begin{aligned}[t]
&-\tfrac{6+\sqrt{141}}{28800000}
\bigl(-1952991+155100\sqrt{30}+291992\sqrt{141}
+6600\sqrt{30}\sqrt{141}\\
&\qquad\qquad\qquad \quad
{}+9816\sqrt{4229}+1800\sqrt{30}\sqrt{4229}
-6823\sqrt{141}\sqrt{4229}
+300\sqrt{30}\sqrt{141}\sqrt{4229}\bigr),
\end{aligned}\\
\BB_1(\tfrac13)
&=\begin{aligned}[t]
&-\tfrac{1}{72000000000}
\bigl(120605148000-501746055\sqrt{30}
+1877373000\sqrt{141}\\
&\qquad\qquad\qquad \qquad
{}+7194282\sqrt{30}\sqrt{141}-2290671000\sqrt{4229}
+116329986\sqrt{30}\sqrt{4229}\\
&\qquad\qquad\qquad \qquad
{}-54756000\sqrt{141}\sqrt{4229}
+8412585\sqrt{30}\sqrt{141}\sqrt{4229}\bigr),
\end{aligned}\\
\BB_2(\tfrac13)
&=\begin{aligned}[t]
&-\tfrac{1}{3600000000000}
\bigl(6021730894257-103611339600\sqrt{30}
+50632645000\sqrt{141}\\
&\qquad\qquad\qquad \qquad
{}-14354046150\sqrt{30}\sqrt{141}
-85715415000\sqrt{4229}
+5811076050\sqrt{30}\sqrt{4229}\\
&\qquad\qquad\qquad \qquad
{}+457824721\sqrt{141}\sqrt{4229}
+551521200\sqrt{30}\sqrt{141}\sqrt{4229}\bigr),
\end{aligned}\\
\BB_3(\tfrac13)
&=\begin{aligned}[t]
&-\tfrac{1}{1440000000000}
\bigl(1092494981118-38352798063\sqrt{30}
-168051614000\sqrt{141}\\
&\qquad\qquad\qquad \qquad
{}+2401202400\sqrt{30}\sqrt{141}
-3947622000\sqrt{4229}
+2234455200\sqrt{30}\sqrt{4229}\\
&\qquad\qquad\qquad \qquad
{}+4052182654\sqrt{141}\sqrt{4229}
+76915761\sqrt{30}\sqrt{141}\sqrt{4229}\bigr),
\end{aligned}\\
\BB_4(\tfrac13)
&=\begin{aligned}[t]
&-\tfrac{1}{28800000000}
\bigl(-22169903823+457134285\sqrt{30}
-9666775000\sqrt{141}\\
&\qquad\qquad\qquad \qquad
{}+473814000\sqrt{30}\sqrt{141}
+662925000\sqrt{4229}
+3222000\sqrt{30}\sqrt{4229}\\
&\qquad\qquad\qquad \qquad
{}+184222481\sqrt{141}\sqrt{4229}
-7837395\sqrt{30}\sqrt{141}\sqrt{4229}\bigr),
\end{aligned}\\
\BB_5(\tfrac13)
&=\begin{aligned}[t]
&-\tfrac{1}{2880000000}
\bigl(21083761647-2187904305\sqrt{30}
-667805000\sqrt{141}\\
&\qquad\qquad\qquad \qquad
{}+134044800\sqrt{30}\sqrt{141}
-263265000\sqrt{4229}
+26630400\sqrt{30}\sqrt{4229}\\
&\qquad\qquad\qquad \qquad
{}+11553391\sqrt{141}\sqrt{4229}
-2449665\sqrt{30}\sqrt{141}\sqrt{4229}\bigr).
\end{aligned}
\end{align*}}
The bounds
\[
\tfrac{109}{20}<\sqrt{30}<\tfrac{11}{2},\qquad
\tfrac{237}{20}<\sqrt{141}<\tfrac{119}{10},\qquad
65<\sqrt{4229}<\tfrac{1301}{20}
\]
are easily checked by squaring. Substituting lower bounds for positive terms and upper bounds for negative terms gives lower bounds for the numerators; since the prefactors in the displayed $\BB_j$ formulas are negative, this yields
\[
\begin{alignedat}{2}
\BB_0(\tfrac13)&<-\tfrac{6410453(6+\sqrt{141})}{5760000000}<0,\qquad&
\BB_1(\tfrac13)&<-\tfrac{3424975697977}{9600000000000}<0,\\
\BB_2(\tfrac13)&<-\tfrac{5684038198583}{4800000000000}<0,\qquad&
\BB_3(\tfrac13)&<-\tfrac{241252356853909}{115200000000000}<0,\\
\BB_4(\tfrac13)&<-\tfrac{3891835769729}{2304000000000}<0,\qquad&
\BB_5(\tfrac13)&<-\tfrac{43062662263}{230400000000}<0.
\end{alignedat}
\]

\textit{Diatomic gas, $\alpha=\frac{1}{5}$.}
From \eqref{formula:NNN:1} and \eqref{def:V2}, for $\alpha=\frac15$ we have
\[
\crr=\tfrac{25}{16},\qquad
V_2=\tfrac{473-\sqrt{139729}}{448}.
\]
The resulting Bernstein coefficients are
{\footnotesize
\begin{align*}
\BB_0(\tfrac15)
&=-\tfrac{5(15083317+3178000\sqrt{6}\sqrt{7}-172229\sqrt{139729}
+14000\sqrt{6}\sqrt{7}\sqrt{139729})}{89915392},\\
\BB_1(\tfrac15)
&=-\tfrac{156129187200\sqrt{7}
+324273959\sqrt{6}\sqrt{139729}
-7129546607\sqrt{6}-503126400\sqrt{7}\sqrt{139729}}
{62940774400\sqrt{7}},\\
\BB_2(\tfrac15)
&=-\tfrac{17882225403157
+5805032478\sqrt{6}\sqrt{7}\sqrt{139729}
-441600050094\sqrt{6}\sqrt{7}-24647891309\sqrt{139729}}
{6168195891200},\\
\BB_3(\tfrac15)
&=-\tfrac{817820980253\sqrt{6}+1533157541838\sqrt{7}
+9200520139\sqrt{6}\sqrt{139729}
+10951568994\sqrt{7}\sqrt{139729}}
{1762341683200\sqrt{7}},\\
\BB_4(\tfrac15)
&=-\tfrac{432907974373\sqrt{6}-661583007905\sqrt{7}
-714882301\sqrt{6}\sqrt{139729}
+3207984985\sqrt{7}\sqrt{139729}}
{176234168320\sqrt{7}},\\
\BB_5(\tfrac15)
&=-\tfrac{3(169719216703\sqrt{7}-59859899911\sqrt{6}
-14989793\sqrt{6}\sqrt{139729}
-275553511\sqrt{7}\sqrt{139729})}
{88117084160\sqrt{7}}.
\end{align*}}
The bounds
\[
\tfrac{61}{25}<\sqrt6<\tfrac{49}{20},\qquad
\tfrac{66}{25}<\sqrt7<\tfrac{53}{20},\qquad
\tfrac{1869}{5}<\sqrt{139729}<\tfrac{37381}{100}
\]
are easily checked by squaring.  Substituting lower bounds for positive terms and upper bounds for negative terms gives lower bounds for the numerators; since the prefactors in the displayed $\BB_j$ formulas are negative, this yields
\[
\begin{alignedat}{2}
\BB_0(\tfrac15)&<-\tfrac{488398043}{1798307840}<0,\qquad&
\BB_1(\tfrac15)&<-\tfrac{96039806228149}{31470387200000\sqrt7}<0,\\
\BB_2(\tfrac15)&<-\tfrac{494481821122721831}{154204897280000000}<0,\qquad&
\BB_3(\tfrac15)&<-\tfrac{3155239329996832}{220292710400000\sqrt7}<0,\\
\BB_4(\tfrac15)&<-\tfrac{3628257701088411}{352468336640000\sqrt7}<0,\qquad&
\BB_5(\tfrac15)&<-\tfrac{220671962661}{440585420800\sqrt7}<0.
\end{alignedat}
\]
The bound for $\BB_3(\frac15)$ is immediate from the displayed formula, since
all terms in its numerator have the same sign.
\end{proof}


\section{The sign of Bernstein coefficients for the explosion barrier: rational-cover proof}
\label{app:rational-cover-bernstein-q}

This appendix gives an exact rational-cover proof of the bound \eqref{bound:BB} in Lemma~\ref{lemma:Bernstein:q}.  The proof is implemented in the accompanying file \path{verify_bernstein_q_rational_cover.py}, available in the folder \path{rational_cover} at \url{https://github.com/giorgiocialdea/new_class_explosions_ca}.  The script starts from the vector field \eqref{P:exp}, the barrier \eqref{def:F}, and the polynomial $\QQ$ defined by \eqref{def:b:ex}--\eqref{def:q}; it then computes the coefficients in \eqref{coeff:QQ} and \eqref{Bernstein:coeff:exp}.

We introduce the following auxiliary quantities:
\begin{equation} \label{eq:appD:sf-def}
    \mathsf c=(1+3\alpha)\crr,
    \qquad
    \mathsf v=(1+3\alpha)V_2,
    \qquad
    \rr=-\tfrac15\sqrt{\tfrac{6\alpha}{1+2\alpha}}<0.
\end{equation}
Thus, by \eqref{Q2:def} and \eqref{eq:appD:sf-def}, we have
\begin{equation}
    Q_2=\tfrac{\mathsf c-\mathsf v}{\alpha(1+3\alpha)}.
    \label{eq:appD:Q2-sf}
\end{equation}

The three quantities in \eqref{eq:appD:sf-def} are solutions of the following quadratic polynomials
\begin{subequations}
\label{eq:appD:sf-relations}
\begin{align}
4\mathsf c^2+(6\alpha-10)\mathsf c-60\alpha^2-33\alpha+6&=0,
    \label{eq:appD:sf-c-rel}\\
(4\alpha+2)\mathsf v^2
  +(-6\alpha\mathsf c-\mathsf c+6\alpha^2-\alpha-4)\mathsf v
  -2\mathsf c^2+6\alpha\mathsf c+5\mathsf c&=0,
    \label{eq:appD:sf-v-rel}\\
25(1+2\alpha)\rr^2-6\alpha&=0.
    \label{eq:appD:rr-rel}
\end{align}
\end{subequations}
Here \eqref{eq:appD:sf-c-rel} follows from the equation from $\crr$ in \eqref{eq:cr:1}; \eqref{eq:appD:sf-v-rel} is \eqref{triple:eq}, rewritten using \eqref{Vinfty:def} and \eqref{eq:appD:sf-def}; and \eqref{eq:appD:rr-rel} follows from the definition of $\rr$ in \eqref{upp:bound:imp}. We also recall that we have $\mathsf c>0$, $\mathsf v\ge0$, and $\rr<0$.  The case $\mathsf v=0$ occurs only at $\alpha=1+\sqrt2$.

Let $\BB_j$, $0\le j\le5$, be the Bernstein coefficients of $\QQ$ defined in \eqref{Bernstein:coeff:exp}.  Starting from \eqref{P:exp}, \eqref{def:F}, and \eqref{def:b:ex}--\eqref{def:q}, the script substitutes \eqref{eq:appD:sf-def} and reduces all powers $\mathsf c^2$, $\mathsf v^2$, and $\rr^2$ by \eqref{eq:appD:sf-relations}.  It obtains polynomials $\mathsf L_j=\mathsf L_j(\alpha,\mathsf c,\mathsf v,\rr)$, such that
\begin{equation}
    D_j(\alpha)(-\BB_j(\alpha))
    =\mathsf L_j(\alpha,\mathsf c(\alpha),\mathsf v(\alpha),\rr(\alpha)),
    \qquad 0\le j\le5,
    \label{eq:appD:L-def}
\end{equation}
where, recalling that $\gamma=1+2\alpha$,
\begin{equation}
\label{eq:appD:Dj}
\begin{alignedat}{3}
D_0(\alpha)&=200\alpha\gamma^2(1+3\alpha)^5,&\qquad
D_1(\alpha)&=20000\alpha^2\gamma^4(1+3\alpha)^5,&\qquad
D_2(\alpha)&=200000\alpha^3\gamma^7(1+3\alpha)^5,\\
D_3(\alpha)&=400000\alpha^4\gamma^4(1+3\alpha)^5,&
D_4(\alpha)&=40000\alpha^3\gamma^6(1+3\alpha)^5,&
D_5(\alpha)&=800\alpha^2\gamma^6(1+3\alpha)^5.
\end{alignedat}
\end{equation}
Each $D_j$ is positive for $\alpha>0$.  Therefore it remains to prove
\begin{equation}
    \mathsf L_j(\alpha,\mathsf c(\alpha),\mathsf v(\alpha),\rr(\alpha))>0,
    \qquad 0\le j\le5.
    \label{eq:appD:L-positive-goal}
\end{equation}
Each $ \mathsf L_j$ has an affine structure in the variables $\mathsf c, \mathsf v, \rr$
\begin{equation}
    \mathsf L_j(\alpha,\mathsf c,\mathsf v,\rr)
    =\sum_{i,k,l\in\{0,1\}}
    p^{(j)}_{i k l}(\alpha)
    \mathsf c^i \mathsf v^k \rr^l,
    \label{eq:appD:multi-affine}
\end{equation}
where each $p^{(j)}_{i k l}$  is a polynomial in  $\alpha$ with rational coefficients.
Consequently, once $\alpha$ is restricted to an interval and $(\mathsf c,\mathsf v,\rr)$ is enclosed in a box $[\mathsf c_{I,-}, \mathsf c_{I,+}] \times [\mathsf v_{I,-}, \mathsf v_{I,+}] \times[\mathsf \rr_{I,-}, \mathsf \rr_{I,+}]  $, the minimum over such a box is attained at one of its eight corners  $(\mathsf c,\mathsf v,\rr) \in \{ \mathsf c_{I,-}, \mathsf c_{I,+}\} \times \{\mathsf v_{I,-}, \mathsf v_{I,+}\} \times \{\mathsf \rr_{I,-}, \mathsf \rr_{I,+}\} $.  
\subsection{The rational cover of $\alpha$: $0<\alpha \le\frac{1}{100}$}
  For very small $\alpha$, set $x=\sqrt\alpha$.  On $0<x\le\frac{1}{10}$,
\begin{equation}
    \tfrac32<\mathsf c<\tfrac32+13x^2,
    \qquad
    \tfrac34-7x^2<\mathsf v<\tfrac34,
    \qquad
    -\tfrac12x<\rr<-\tfrac{12}{25}x.
    \label{eq:appD:small-box}
\end{equation}
The bounds for $\mathsf c$ follow from \eqref{eq:cr:1}, namely
\begin{equation*}
    \mathsf c=\tfrac{5-3x^2+\sqrt{1+102x^2+249x^4}}4,
\end{equation*}
and the inequalities $1+3x^2<\sqrt{1+102x^2+249x^4}<1+55x^2$.  The bounds for $\rr$ follow directly from \eqref{eq:appD:rr-rel}.  The bounds for $\mathsf v$ follow by substituting $\mathsf v=\frac{3}{4}$ and $\mathsf v=\frac{3}{4}-7x^2$ into \eqref{eq:appD:sf-v-rel}, using the preceding bounds for $\mathsf c$, and checking the resulting one-variable signs on $0<x\le \frac{1}{10}$.

For each corner of the box \eqref{eq:appD:small-box}, the script writes
\begin{equation*}
    \mathsf L_j(x^2,\mathsf c,\mathsf v,\rr)=x^{m_j}\mathsf S_j(x).
\end{equation*}
It then converts each polynomial $\mathsf S_j$ to Bernstein form on $[0,\frac{1}{10}]$.  The next table gives, for each $j$, the common exponent $m_j$ and the smallest Bernstein coefficient among all eight corners.
\begin{center}
\begin{tabular}{c|c|c}
$j$ & $m_j$ & smallest Bernstein coefficient of $\mathsf S_j$ \\ \hline
0 & 1 & $\frac{46617633}{291200}$ \\ 
1 & 2 & $2835$ \\ 
2 & 3 & $3888$ \\ 
3 & 4 & $729$ \\ 
4 & 3 & $\frac{3402}{5}$ \\ 
5 & 2 & $\frac{2809156631}{70840000}$ \\ 
\end{tabular}
\end{center}
All entries in the last column are positive.  Since $x>0$, this proves \eqref{eq:appD:L-positive-goal} for $0<\alpha\le \frac{1}{100}$.
\subsection{The rational cover of $\alpha$: $\frac{1}{100} \le \alpha \le 1 +\sqrt{2}$.}
For the remaining range we use a rational cover of the full interval
$[10^{-2},1+\sqrt2]$.   If $I=[a_-,a_+]$ is one of the intervals in the table below, we define the six-decimal rational enclosures by
\begin{align*}
\mathsf c_{I,-}&:=10^{-6}\lfloor10^6\mathsf c(a_-)\rfloor,&
\mathsf c_{I,+}&:=10^{-6}\lceil10^6\mathsf c(a_+)\rceil,\\
\mathsf v_{I,-}&:=10^{-6}\lfloor10^6\mathsf v(a_+)\rfloor,&
\mathsf v_{I,+}&:=10^{-6}\lceil10^6\mathsf v(a_-)\rceil,\\
\rr_{I,-}&:=-10^{-6}\lceil10^6|\rr(a_+)|\rceil,&
\rr_{I,+}&:=-10^{-6}\lfloor10^6|\rr(a_-)|\rfloor.
\end{align*}
On the last interval, whose right endpoint is $2.4143>1+\sqrt2$, the lower bound for $\mathsf v$ is taken to be $0$, the value at the true endpoint. The definitions show that $\mathsf c$ is increasing, while $\mathsf v$ and $\rr$ are decreasing.  For example, $\mathsf c' >0$ is equivalent after squaring to
\begin{equation*}
(102+498\alpha)^2-36(1+102\alpha+249\alpha^2)
=576(415\alpha^2+170\alpha+18)>0,
\end{equation*}
and the monotonicity of $\rr$ is immediate from \eqref{eq:appD:rr-rel}; the monotonicity of $\mathsf v$ follows by differentiating \eqref{eq:appD:sf-v-rel}.  Therefore, for $\alpha\in I$,
\begin{equation*}
\mathsf c_{I,-}\le\mathsf c(\alpha)\le\mathsf c_{I,+},
\qquad
\mathsf v_{I,-}\le\mathsf v(\alpha)\le\mathsf v_{I,+},
\qquad
\rr_{I,-}\le\rr(\alpha)\le\rr_{I,+}<0.
\end{equation*}

For every interval $I$, every $j$, and every corner of the corresponding box, the corner substitution in $\mathsf L_j$ gives a polynomial in $\alpha$.  After the affine change of variables $\alpha=a_-+(a_+-a_-)s$, the script converts this polynomial to the
Bernstein basis on $0\le  s \le1$.  The number $\mu_I$ in the last column is the minimum of all Bernstein coefficients obtained from the six values of $j$ and the eight corners, rounded downward.  Thus each displayed $\mu_I$ is an exact rational lower bound.

\begingroup
\scriptsize
\setlength{\tabcolsep}{3pt}
\begin{center}
\begin{tabular}{r@{ }c@{ }r@{\hspace{0.8em}}|@{\hspace{0.8em}}r@{ }c@{ }r@{\hspace{0.8em}}|@{\hspace{0.8em}}r@{ }c@{ }r}
$\#$ & $I$ & $\mu_I$ & $\#$ & $I$ & $\mu_I$ & $\#$ & $I$ & $\mu_I$ \\ \hline
1 & $[0.0100,0.0476]$ & $0.0683$ & 27 & $[0.5637,0.5832]$ & $1.2020$ & 53 & $[1.5981,1.6358]$ & $1202750.1$\\
2 & $[0.0476,0.0997]$ & $0.0009$ & 28 & $[0.5832,0.6042]$ & $3.6619$ & 54 & $[1.6358,1.6728]$ & $1430234.2$\\
3 & $[0.0997,0.1233]$ & $0.1098$ & 29 & $[0.6042,0.6269]$ & $8.4783$ & 55 & $[1.6728,1.7091]$ & $1687089.5$\\
4 & $[0.1233,0.1436]$ & $0.1149$ & 30 & $[0.6269,0.6516]$ & $2.6067$ & 56 & $[1.7091,1.7447]$ & $1975261.2$\\
5 & $[0.1436,0.1638]$ & $0.0770$ & 31 & $[0.6516,0.6785]$ & $3.1022$ & 57 & $[1.7447,1.7797]$ & $2293900.2$\\
6 & $[0.1638,0.1858]$ & $0.4428$ & 32 & $[0.6785,0.7078]$ & $15.0907$ & 58 & $[1.7797,1.8141]$ & $2648053.4$\\
7 & $[0.1858,0.2116]$ & $0.2924$ & 33 & $[0.7078,0.7398]$ & $21.0283$ & 59 & $[1.8141,1.8480]$ & $3036110.0$\\
8 & $[0.2116,0.2435]$ & $1.4567$ & 34 & $[0.7398,0.7748]$ & $20.0246$ & 60 & $[1.8480,1.8814]$ & $3464231.3$\\
9 & $[0.2435,0.2765]$ & $0.2080$ & 35 & $[0.7748,0.8131]$ & $9.4483$ & 61 & $[1.8814,1.9143]$ & $3934438.4$\\
10 & $[0.2765,0.3047]$ & $0.2478$ & 36 & $[0.8131,0.8549]$ & $44.1293$ & 62 & $[1.9143,1.9467]$ & $4449091.1$\\
11 & $[0.3047,0.3294]$ & $0.6852$ & 37 & $[0.8549,0.9006]$ & $18.7492$ & 63 & $[1.9467,1.9787]$ & $5005197.2$\\
12 & $[0.3294,0.3517]$ & $1.1168$ & 38 & $[0.9006,0.9504]$ & $73.9324$ & 64 & $[1.9787,2.0103]$ & $4818939.9$\\
13 & $[0.3517,0.3726]$ & $0.4054$ & 39 & $[0.9504,1.0000]$ & $5655.3$ & 65 & $[2.0103,2.0415]$ & $6268539.8$\\
14 & $[0.3726,0.3919]$ & $0.9711$ & 40 & $[1.0000,1.0569]$ & $2756.4$ & 66 & $[2.0415,2.0723]$ & $6980652.3$\\
15 & $[0.3919,0.4091]$ & $0.4949$ & 41 & $[1.0569,1.1111]$ & $19195.5$ & 67 & $[2.0723,2.1027]$ & $7749602.0$\\
16 & $[0.4091,0.4247]$ & $0.8540$ & 42 & $[1.1111,1.1630]$ & $42932.6$ & 68 & $[2.1027,2.1328]$ & $8570043.5$\\
17 & $[0.4247,0.4394]$ & $1.9542$ & 43 & $[1.1630,1.2129]$ & $75303.0$ & 69 & $[2.1328,2.1625]$ & $9461887.4$\\
18 & $[0.4394,0.4537]$ & $2.1787$ & 44 & $[1.2129,1.2610]$ & $117843.7$ & 70 & $[2.1625,2.1919]$ & $10408735.5$\\
19 & $[0.4537,0.4679]$ & $3.0942$ & 45 & $[1.2610,1.3075]$ & $116202.2$ & 71 & $[2.1919,2.2210]$ & $11424047.2$\\
20 & $[0.4679,0.4823]$ & $1.9909$ & 46 & $[1.3075,1.3525]$ & $239094.2$ & 72 & $[2.2210,2.2498]$ & $12510434.0$\\
21 & $[0.4823,0.4971]$ & $1.3652$ & 47 & $[1.3525,1.3962]$ & $320425.6$ & 73 & $[2.2498,2.2783]$ & $13670815.6$\\
22 & $[0.4971,0.5124]$ & $5.1017$ & 48 & $[1.3962,1.4386]$ & $418745.8$ & 74 & $[2.2783,2.3066]$ & $14895849.3$\\
23 & $[0.5124,0.5285]$ & $2.8464$ & 49 & $[1.4386,1.4799]$ & $533725.0$ & 75 & $[2.3066,2.3346]$ & $16215917.4$\\
24 & $[0.5285,0.5455]$ & $5.5932$ & 50 & $[1.4799,1.5202]$ & $667726.0$ & 76 & $[2.3346,2.3624]$ & $17604877.0$\\
25 & $[0.5455,0.5546]$ & $736.6$ & 51 & $[1.5202,1.5596]$ & $822350.3$ & 77 & $[2.3624,2.3899]$ & $19096815.1$\\
26 & $[0.5546,0.5637]$ & $852.0$ & 52 & $[1.5596,1.5981]$ & $1000677.6$ & 78 & $[2.3899,2.4143]$ & $21108988.8$\\
\end{tabular}
\end{center}
\endgroup
Since every $\mu_I$ is positive, \eqref{eq:appD:L-positive-goal} holds for $\frac{1}{100}\le\alpha\le1+\sqrt2$.  Together with the small-$\alpha$ check, it holds for all $0<\alpha\le1+\sqrt2$.

The tables are regenerated from the repository by running:
\begin{center}
\begin{BVerbatim}
python verify_bernstein_q_rational_cover.py
python verify_bernstein_q_rational_cover.py 
--tex-tables rational_cover_tables.tex
\end{BVerbatim}
\end{center}

All endpoint boxes, corner substitutions, Bernstein conversions, and sign tests are performed with exact rational arithmetic; SymPy is used to take exact floors and ceilings of the algebraic endpoint values.

Finally, since each $D_j(\alpha)$ in \eqref{eq:appD:Dj} is positive,
\eqref{eq:appD:L-positive-goal} gives
\begin{equation*}
    \BB_j(\alpha)<0,
    \qquad 0\le j\le5,
    \qquad 0<\alpha\le1+\sqrt2,
\end{equation*}
which is \eqref{bound:BB}.


\section{The sign of Bernstein coefficients for the explosion barrier: interval arithmetic proof}
\label{app:CA}

This appendix describes the computer-assisted verification of the bounds \eqref{bound:BB} using interval arithmetic.  The purpose of the appendix is to describe the computational procedure, the data read by the program, and the routines used to verify the inequalities.  All computer-assisted steps are carried out using rigorous interval arithmetic: every interval produced by the program contains the exact value represented by the corresponding formula.
The programs are written in \textsc{Python} and are run in a \sage{}~\cite{sagemath} environment.  The code uses \texttt{sage.all} for exact rational arithmetic and for the \texttt{RealBallField}/\texttt{RealIntervalField} interval arithmetic. All source code and data mentioned in this appendix are available in the folder \path{computer_assisted_bounds} at \url{https://github.com/giorgiocialdea/new_class_explosions_ca}.  The main script is
\begin{center}
\path{computer_assisted_bounds/script.py}.
\end{center}
The file \texttt{lemmas.py} is the main verification file.  It defines the six functions $\tilde{\BB}_0,\ldots,\tilde{\BB}_5$ (see their definitions in \eqref{Bernstein:coeff:exp:rescaled}), reads the target bounds from \texttt{supplementary\_data/bounds.txt}, and proves these bounds on
$b:=\sqrt{\alpha} \in[0,14/9]$ by rigorous interval arithmetic with adaptive subdivision.
For the structure of this Appendix, we have taken inspiration from~\cite{CGP26,CGS25,BCG25}.
\subsection{Supplementary data}

The only numerical data file needed for this verification is
\begin{center}
\path{computer_assisted_bounds/supplementary_data/bounds.txt}.
\end{center}
It contains the rounded constants displayed in Table~\ref{table:bb}.  The file is organized as a sequence of entries, one per line.  Each nonempty line has the form
\begin{center}
\texttt{label|value|side|description}.
\end{center}
Here \texttt{label} is the identifier used in the code, \texttt{value} is the numerical bound, \texttt{side} is either \texttt{lower} or \texttt{upper}, and \texttt{description} records the corresponding row of Table~\ref{table:bb}.
The routine \texttt{load\_bounds()} converts the values into elements of \RBF{} and stores them in dictionaries indexed by their labels.

The bounds read by the program are the following.
\begin{longtable}{lll}
\caption{Rounded bounds for the $\tilde{\BB}_j$ coefficients.}\label{table:bb}\\
\toprule
Problem & coefficient & rounded bounds \\
\midrule
\endhead
\texttt{b5b0} & $\tilde{\BB}_0$ & $0.009 < \tilde{\BB}_0 < 127.2$ \\
\texttt{b5b1} & $\tilde{\BB}_1$ & $4.1 < \tilde{\BB}_1 < 12508.1$ \\
\texttt{b5b2} & $\tilde{\BB}_2$ & $60.3 < \tilde{\BB}_2 < 1625863.7$ \\
\texttt{b5b3} & $\tilde{\BB}_3$ & $4.6 < \tilde{\BB}_3 < 571287.3$ \\
\texttt{b5b4} & $\tilde{\BB}_4$ & $4.9 < \tilde{\BB}_4 < 44572.8$ \\
\texttt{b5b5} & $\tilde{\BB}_5$ & $0.006 < \tilde{\BB}_5 < 1304.1$ \\
\bottomrule
\end{longtable}

\subsection{Objects, methods and routines}

We briefly describe the main objects and routines used in the implementation.

\textbf{Formulae.}  The functions \texttt{b5b0()}, \ldots, \texttt{b5b5()} in \path{lemmas.py} build the six rescaled coefficients $\tilde{\BB}_j$ from
\eqref{Bernstein:coeff:exp:rescaled}.  The variable used by the program is $b=\sqrt{\alpha}$, with
\[
    b\in[0,14/9].
\]
This rational interval covers the parameter range used in the paper after the change of variables $\alpha=b^2$.  The common quantities $\crr$ and $V_2$ are constructed from \eqref{formula:NNN:1} and \eqref{def:V2}.  They are wrapped as
shared subexpressions, so that each is evaluated only once on each interval.

\textbf{Expression objects.}  The file \path{lemmas.py} defines a small one-variable expression class.  It supports algebraic operations, division,
square roots, integer powers, and automatic differentiation.  Coefficients and interval endpoints are parsed as exact rational numbers.  Square roots are evaluated only when their radicands are verified nonnegative, and divisions are evaluated only when the denominator is verified away from zero.

\textbf{Methods.}  For an interval $I=[x_0-r,x_0+r]$, the program first tries
the direct interval evaluation
\[
    f(I)\subset J.
\]
It also uses the first-order Taylor enclosure
\begin{equation}
    f(I)\subset f(x_0)+[-r,r] f'(I),
    \label{eq:taylor1-enclosure}
\end{equation}
where the derivative enclosure is obtained by automatic differentiation of the same expression tree.

\textbf{Routine.}  The routine \texttt{prove\_table\_row()} verifies one row of
Table~\ref{table:bb}.  Let $[a,b]\subset\mathbb R$, let $L,U\in\mathbb R$, and
let $f:[a,b]\to\mathbb R$.  The routine verifies whether
\begin{equation}
    L < f(x) < U,\qquad x\in[a,b].
    \label{eq:row-bound-check}
\end{equation}
The algorithm starts with a list of intervals covering $[a,b]$, after inserting
the fixed rational cuts
\[
    2^{-10},\quad 2^{-6},\quad 2^{-3},\quad 2^{-1}.
\]
While the list is nonempty, an interval $I$ is removed from the list and the program tries to enclose $f(I)$ by direct interval arithmetic and by
\eqref{eq:taylor1-enclosure}.  If one of these enclosures lies strictly inside $(L,U)$, the interval is accepted.  Otherwise the interval is bisected and the two subintervals are returned to the list, unless the prescribed maximum depth has already been reached (for our computation, with the parameters in \path{parameters.py}, the maximum depth is never reached).  If all intervals are accepted, then \eqref{eq:row-bound-check} holds on the whole domain.

\subsection{Details of the proof}

We can now give a proof of Lemma~\ref{lemma:Bernstein:q}.

\textbf{Proof of Lemma~\ref{lemma:Bernstein:q}.}  The proof is carried out by the function \texttt{bernstein\_bounds()} in \path{lemmas.py}.  This function
constructs the six expressions $\tilde{\BB}_j$ by calling
\texttt{b5b0()}, \ldots, \texttt{b5b5()}.  For each $j=0,\ldots,5$, it reads from \path{bounds.txt} the constants $L_j$ and $U_j$ displayed in Table~\ref{table:bb}, and calls \texttt{prove\_table\_row()} to verify
\[
    L_j < \tilde{\BB}_j(b) < U_j,\qquad b\in[0,14/9].
\]
All operations in this computation are performed with interval-valued objects, so each accepted interval gives a rigorous enclosure of the exact expression on
that interval.  Once the adaptive subdivision has accepted every interval in the covering of $[0,14/9]$, the six inequalities in Table~\ref{table:bb} are proved.  In particular, each lower bound is positive, and therefore $\tilde{\BB}_j>0$ for $j=0,\ldots,5$.  This proves \eqref{bound:BB}.

The computation is reproduced from the repository root by running
\begin{center}
\begin{BVerbatim}
cd computer_assisted_bounds
python script.py
\end{BVerbatim}
\end{center}
inside a \sage{} environment.  The default command prints only the lemma-level verification status and the final substitution check.  For diagnostic output, one may run
\begin{center}
\begin{BVerbatim}
python script.py --verbose
\end{BVerbatim}
\end{center}
to print row-level progress and the bounds read from \path{bounds.txt}.  The
command
\begin{center}
\begin{BVerbatim}
python script.py --print-subintervals
\end{BVerbatim}
\end{center}
prints every accepted subinterval together with its local enclosure, its lower and upper margins relative to the displayed bounds, and the corresponding relative margins.

\section*{Acknowledgements}
The work of JC was partially supported by NSF Grant DMS--2622139.
The work of GC was partially supported by the Collaborative NSF grant DMS--2307681 and a Simons Dissertation Fellowship.
The work of SS was partially supported by the Collaborative NSF grant DMS--2307680.
The work of VV was partially supported by the Collaborative NSF grant DMS--2307681 and a Simons Investigator Award.

The authors acknowledge the use of \textsc{GPT~5.5 Pro} to assist in \emph{drafting and refining the \textsc{Python} and~\textsc{SageMath} codes} used in the computer-assisted verifications from Appendices~\ref{app:rational-cover-bernstein-q} and~\ref{app:CA}; these codes were verified by the authors.
The authors acknowledge the usage of~\textsc{Mathematica~14} for performing, and the usage of the~\textsc{SymPy} library in~\textsc{Python} for verifying, the \emph{symbolic calculations} which produced the coefficients~$\{\cp_i\}_{i=0}^{6}$ in equation~\eqref{coeff:PP} and~$\{ \beta_i \}_{i=0}^8$ in equation~\eqref{bernstein:12} of Appendix~\ref{app:coef:p}, respectively $\{\cq_i\}_{i=0}^{5}$ in equation~\eqref{coeff:qq:i} and~$\{\BB_i\}_{i=0}^5$ in equation~\eqref{coeff:bb:i} of Appendix~\ref{app:coef:b}.


\end{document}